\providecommand{\U}[1]{\protect\rule{.1in}{.1in}}
\providecommand{\U}[1]{\protect\rule{.1in}{.1in}}
\providecommand{\U}[1]{\protect\rule{.1in}{.1in}}
\newtheorem{theorem}{Theorem}
\newtheorem{corollary}[theorem]{Corollary}
\newtheorem{definition}[theorem]{Definition}
\newtheorem{example}[theorem]{Example}
\newtheorem{lemma}[theorem]{Lemma}
\newtheorem{notation}[theorem]{Notation}
\newtheorem{proposition}[theorem]{Proposition}
\newtheorem{remark}[theorem]{Remark}
\newenvironment{proof}[1][Proof]{\noindent\textbf{#1.} }{\ \rule{0.5em}{0.5em}}
\renewcommand{\thefootnote}{\fnsymbol{footnote}}
\begin{document}

\title{$L^{p}$--Variational Solution of Backward Stochastic Differential Equation
driven by subdifferential operators on a deterministic interval time}
\author{Aurel R\u{a}\c{s}canu$\bigskip$\\{\small \textquotedblleft Octav Mayer\textquotedblright\ Institute of
Mathematics of the Romanian Academy,}\\{\small Carol I Blvd., no. 8, 700506, Ia\c{s}i, Romania}}
\maketitle

\begin{abstract}
Our aim is to study the existence and uniqueness of the $L^{p}$--variational
solution, with $p>1,$ of the following multivalued backward stochastic
differential equation with $p$--integrable data:%
\[
\left\{
\begin{array}
[c]{r}%
-dY_{t}+\partial_{y}\Psi\left(  t,Y_{t}\right)  dQ_{t}\ni H\left(
t,Y_{t},Z_{t}\right)  dQ_{t}-Z_{t}dB_{t},\;t\in\left[  0,T\right]
,\smallskip\\
\multicolumn{1}{l}{Y_{T}=\eta,}%
\end{array}
\,\right.
\]
where $Q$ is a progresivelly measurable increasing continuous stochastic
process and $\partial_{y}\Psi$ is the subdifferential of the convex lower
semicontinuous function $y\mapsto\Psi\left(  t,y\right)  .$

In the framework of \cite{ma-ra/15} (the case $p\geq2$), the strong solution
found it there is the unique variational solution, via the uniqueness property
proved in the present article.

\end{abstract}


AMS Classification subjects: 60H10, 60F25, 47J20, 49J40.\medskip

Keywords: Backward stochastic differential equations; Subdifferential
ope\-rators; Stochastic variational inequalities; $p$--integrable data

\footnotetext{{\scriptsize E--mail address: \texttt{aurel.rascanu@uaic.ro}}} \renewcommand{\thefootnote}{\arabic{footnote}}

\section{Introduction}

The study of the standard backward stochastic differential equations (BSDEs)
was initiated by E. Pardoux and S. Peng in \cite{pa-pe/90}. The authors have
proved the existence and the uniqueness of the solution for the BSDE on fixed
time interval, under the assumption of Lipschitz continuity of the generator
$F$ with respect to $y$ and $z$ and square integrability of $\eta$ and
$F\left(  t,0,0\right)  $. The case of BSDEs on random time interval have been
treated by R.W.R. Darling and E. Pardoux in \cite{da-pa/97}, where it is
obtained, as application, the existence of a continuous viscosity solution to
the elliptic partial differential equations (PDEs) with Dirichlet boundary
conditions. The more general case of reflected BSDEs was considered for the
first time by N. El Karoui et al. in \cite{ka-ka-pa/97}.

In the present paper, we prove the existence and uniqueness of a new type of
solution, called $L^{p}$--variational solution, in the case $p>1,$ of the
genera\-li\-zed backward stochastic variational inequality (BSVI for short)
with $p$--integrable data:%
\begin{equation}
\left\{
\begin{array}
[c]{r}%
\displaystyle Y_{t}+{\int\nolimits_{t}^{T}}dK_{s}=\eta+{\int\nolimits_{t}^{T}%
}\left[  F\left(  s,Y_{s},Z_{s}\right)  ds+G\left(  s,Y_{s}\right)
dA_{s}\right]  -{\int\nolimits_{t}^{T}}Z_{s}dB_{s}\,,\quad t\in\left[
0,T\right]  ,\medskip\\
\multicolumn{1}{l}{dK_{t}\in\partial\varphi\left(  Y_{t}\right)
dt+\partial\psi\left(  Y_{t}\right)  dA_{t}\,,\quad\text{on }\left[
0,T\right]  ,}%
\end{array}
\right.  \label{GBSVI 1}%
\end{equation}
where $\partial\varphi$ and $\partial\psi$ are the subdifferentials of two
convex lower semicontinuous functions $\varphi$ and $\psi$ and $\left\{
A_{t}:t\geq0\right\}  $ is a progressively measurable increasing continuous
stochastic process.

The existence is obtained using the Moreau--Yosida regularization of $\varphi$
and $\psi$ and the mollifier approximations of the generators $F$ and $G.$ The
proof of the existence and uniqueness in the case of a random time interval
and in the case $p=1$ are, for the moment, in work and there will be the
subjects of a future article.$\medskip$

In fact, we will define and prove the existence and uniqueness of the $L^{p}$
variational solution for an equivalent form of (\ref{GBSVI 1}):%
\begin{equation}
\left\{
\begin{array}
[c]{r}%
\displaystyle Y_{t}+{\int\nolimits_{t}^{T}}dK_{s}=\eta+{\int\nolimits_{t}^{T}%
}H\left(  s,Y_{s},Z_{s}\right)  dQ_{s}-{\int\nolimits_{t}^{T}}Z_{s}%
dB_{s}\,,\;t\in\left[  0,T\right]  \medskip\\
\multicolumn{1}{l}{dK_{t}\in\partial_{y}\Psi\left(  t,Y_{t}\right)
dQ_{t}\,,\;\text{on }\left[  0,T\right]  ,}%
\end{array}
\,\right.  \label{GBSVI 2}%
\end{equation}
with $Q,$ $H$ and $\Psi$ adequately defined.

The second condition in (\ref{GBSVI 1}) says, among others, that the first
component $Y$ of the solution is forced to stay in the set $\mathrm{Dom}%
\left(  \partial\varphi\right)  \cap\mathrm{Dom}\left(  \partial\psi\right)
.$ The role of $K$ is to act in the evolution of the process $Y$ and also to
keep $Y$ in these domains.

We mention that the presence of the process $A$ is justified by the possible
applications of equation (\ref{GBSVI 1}) in proving probabilistic proofs for
the existence of a solution of PDEs with Neumann boundary conditions on a
domain from $\mathbb{R}^{m}.$ The stochastic approach of the existence problem
for multivalued parabolic PDEs, was considered by L. Maticiuc and A.
R\u{a}\c{s}canu in \cite{ma-ra/10} and \cite{ma-ra/16}. We emphasize that if
the obstacles are fixed, the reflected BSDEs becomes a particular case of the
BSVI of type (\ref{GBSVI 1}), by taking $\varphi$ as convex indicator of the
interval defined by obstacles. In this case the solution of the BSVI belongs
to the domain of the multivalued operator $\partial\varphi$ and it is
reflected at the boundary of this domain.$\medskip$

The standard work on BSVI in the finite dimensional case is that of E. Pardoux
and A. R\u{a}\c{s}canu \cite{pa-ra/98}, where it is proved the existence and
uniqueness of the solution $\left(  Y,Z,K\right)  $ for the BSVI
(\ref{GBSVI 1}) with $A\equiv0$, under the following assumptions on $F$:
continuity with respect to $y$, monotonicity with respect to $y$ (in the sense
that $\langle y^{\prime}-y,F(t,y^{\prime},z)-F(t,y,z)\rangle\leq
\alpha|y^{\prime}-y|^{2}$), lipschitzianity with respect to $z$ and a
sublinear growth for $F\left(  t,y,0\right)  $. Moreover, it was shown that,
unlike the forward case, the process $K$ is absolute continuous with respect
to $dt$. In \cite{pa-ra/99} the same authors extend these results to the
Hilbert spaces framework.

We mention that assumptions of Lipschitz continuity of the generator $F$ with
respect to $y$ and $z$ and the square integrability of the final condition and
$F\left(  t,0,0\right)  $ (as in articles El Karoui et al. \cite{ka-ka-pa/97}
and E. Pardoux and S. Peng \cite{pa-pe/90}) are sometimes too strong for
applications (see, e.g., D. Duffie and L. Epstein \cite{du-ep/92} and El
Karoui et al. \cite{ka-pe-qu/97} for the applications in mathematical finance
and P. Briand et al. \cite{br-ca/00} and A. Rozkosz and L. S\l omi\'{n}ski
\cite{ro-sl/12b} for the applications to PDEs). A possibility is to weaken the
integrability conditions imposed on $\eta$ and $F$ or to weaken the assumption
which concerns the Lipschitz continuity of the generators. In P. Briand and R.
Carmona \cite{br-ca/00} or E. Pardoux \cite{pa/99} it is considered the case
where the generators are Lipschitz continuous with respect to $z$, continuous
with respect to $y$ and satisfy a monotonicity condition and a growth
condition of the type $\left\vert F\left(  t,y,z\right)  \right\vert
\leq\left\vert F\left(  t,0,z\right)  \right\vert +\phi\left(  \left\vert
y\right\vert \right)  ,$ where $\phi$ is a polynomial or even an arbitrary
positive increasing continuous function.

We recall that the previous assumption was used in \cite{pa/99} in order to
prove the existence of a solution in $L^{2}$. This result was generalized by
P. Briand et al. in \cite{br-de-hu-pa/03}, where it is proved the existence
and uniqueness of $L^{p}$ solutions, with $p\in\lbrack1,2]$, for BSDEs
considered with a random terminal time $T$: in the case $p\in(1,2],$ if
$\eta\in L^{p}$, $\displaystyle\int_{0}^{T}\left\vert F\left(  s,0,0\right)
\right\vert ds\in L^{p}$, for any $r>0$, $\displaystyle\int_{0}^{T}%
\sup_{\left\vert y\right\vert \leq r}\left\vert F\left(  s,y,0\right)
-F\left(  s,0,0\right)  \right\vert ds\in L^{1}$ and $F$ is Lipschitz
continuous with respect to $z$, continuous with respect to $y$ and satisfy a
monotonicity condition, then there exists a unique $L^{p}$ solution; in the
case $p=1$ similar result is proved if $T$ is a fixed deterministic terminal
time and under additional assumptions.

We also note that the study of the reflected BSDEs was the subject, e.g., of
the papers: J.P. Lepeltier et al. \cite{le-ma-xu/05} (in the case of the
general growth condition with respect to $y$ and for $p=2$), S. Hamad\`{e}ne
and A. Popier \cite{ha-po/12} (in the case of Lipschitz continuity with
respect to $y$ the and for $p\in\left(  1,2\right)  $). Studies made, roughly
speaking, under the assumptions of \cite{br-de-hu-pa/03} are, e.g.: A. Aman
\cite{am/09} (in the case of a generalized reflected BSDE and for $p\in\left(
1,2\right)  $), A. Rozkosz and L. S\l omi\'{n}ski \cite{ro-sl/12a} (for
$p\in\left[  1,2\right]  $) and T. Klimsiak \cite{kl/13} (in the case of BSDE
with two irregular reflecting barriers and for $p\in\left[  1,2\right]
$).\medskip

Our paper generalizes the existence and uniqueness results from
\cite{pa-ra/98} by considering the $L^{p}$ solutions in the case $p\in\left(
1,2\right)  ,$ the Lebesgue--Stieltsjes integral terms, and by assuming a
weaker boundedness condition for the generator $F$ (instead of the sublinear
growth):%
\begin{equation}
\mathbb{E}\Big(\int_{0}^{T}F_{\rho}^{\#}(s)ds\Big)^{p}<\infty,\quad\text{where
}F_{\rho}^{\#}\left(  t\right)  :=\sup\limits_{\left\vert y\right\vert
\leq\rho}\left\vert F(t,y,0)\right\vert . \label{local bound 2}%
\end{equation}
We remark that article \cite{ma-ra/15} concerns the same type of backward
equation as in our study (and under the similar assumptions), but considered
in the infinite dimensional framework and in the case $p\geq2.$ In addition,
it is worth pointing out that in the case $p\geq2,$ if we are in the framework
of \cite{ma-ra/15}, our variational solution is a strong one since we have
proved the uniqueness property of the variational solution.$\medskip$

In this paper we use the following notation: $(\Omega,\mathcal{F},\mathbb{P})$
is a complete probability space, the set $N_{\mathbb{P}}:=\{A\in
\mathcal{F}:\mathbb{P}\left(  A\right)  =0\}$, $\left\{  \mathcal{F}%
_{t}\right\}  _{t\geq0}$ is a right continuous and complete filtration
generated by a standard $k$--dimensional Brownian motion $\left(
B_{t}\right)  _{t\geq0}\,.$

$S_{m}^{p}\left[  0,T\right]  $ is the space of (equivalent classes of)
continuous progressively measurable stochastic processes (p.m.s.p. for short)
$X:\Omega\times\left[  0,T\right]  \rightarrow\mathbb{R}^{m}$ such that
$\mathbb{E}\sup_{t\in\left[  0,T\right]  }\left\vert X_{t}\right\vert
^{p}<+\infty,$ if $p>0.$

$\Lambda_{m}^{p}\left(  0,T\right)  $ is the space of p.m.s.p. $X:\Omega
\times\left(  0,T\right)  \rightarrow\mathbb{R}^{m}$ such that such that
$\int_{0}^{T}\left\vert X_{t}\right\vert ^{2}dt<+\infty$, $\mathbb{P}$--a.s.
if $p=0$ and $\mathbb{E}\left(  \int_{0}^{T}\left\vert X_{t}\right\vert
^{2}dt\right)  ^{p/2}<+\infty$, if $p>0.$\medskip

The article is organized as follows: next section is dedicated to the
presentation of the assumptions needed in our study. In the third section we
present firstly a intuitive introduction for the notion of $L^{p}%
$--variational solution and the we prove the uniqueness property. The fourth
section is devoted to the proof of the existence of our type of solution. The
Appendix contains, following \cite{pa-ra/14}, some results useful throughout
the paper.

\section{Assumptions and definitions\label{assumptions}}

In the beginning of this subsection we introduce the assumptions regarding
equation (\ref{GBSVI 1}).

Let $T>0.$ We consider throughout this paper that $p>1.$

\begin{itemize}
\item[\textrm{(A}$_{1}$\textrm{)}] The random variable $\eta:\Omega
\rightarrow\mathbb{R}^{m}$\ is $\mathcal{F}_{T}$--measurable such that
$\mathbb{E}\left\vert \eta\right\vert ^{p}<\infty$\ and $\left(  \xi
,\zeta\right)  \in S_{m}^{p}\left[  0,T\right]  \times\Lambda_{m\times k}%
^{p}\left(  0,T\right)  $\ is the unique pair associated to $\eta$\ given by
the martingale representation formula (see \cite[Corollary 2.44]{pa-ra/14})%
\begin{equation}
\xi_{t}=\eta-{\int_{t}^{T}}\zeta_{s}\,dB_{s}\,,\quad t\in\left[  0,T\right]
,\;\mathbb{P}\text{--a.s..} \label{mart repr th 1}%
\end{equation}

\item[\textrm{(A}$_{2}$\textrm{)}] The process $\left\{  A_{t}:t\geq0\right\}
$\ is a\ increasing and continuous p.m.s.p. such that $A_{0}=0$ and%
\[
\mathbb{E}\left(  e^{\alpha A_{t}}\right)  <\infty,\quad\text{for any }%
\alpha,t>0;
\]

\item[\textrm{(A}$_{3}$\textrm{)}] $\varphi,\psi:\mathbb{R}^{m}\rightarrow
\left[  0,+\infty\right]  $\ are proper convex lower semicontinuous (l.s.c.
for short) functions, $\partial\varphi$ and $\partial\psi$\ denote their
subdifferentials and we suppose that $0\in\partial\varphi\left(  0\right)
\cap\partial\psi\left(  0\right)  $\ (or equivalently $0=\varphi\left(
0\right)  \leq\varphi\left(  y\right)  $\ and $0=\psi\left(  0\right)
\leq\psi\left(  y\right)  $\ for all $y\in\mathbb{R}^{m}$).

\item[\textrm{(A}$_{4}$\textrm{)}] The functions $F:\Omega\times\mathbb{R}%
_{+}\times\mathbb{R}^{m}\times\mathbb{R}^{m\times k}\rightarrow\mathbb{R}^{m}%
$\ and $G:\Omega\times\mathbb{R}_{+}\times\mathbb{R}^{m}\rightarrow
\mathbb{R}^{m}$\ are such that $F\left(  \cdot,\cdot,y,z\right)  $, $G\left(
\cdot,\cdot,y\right)  $\ are p.m.s.p., for all $\left(  y,z\right)
\in\mathbb{R}^{m}\times\mathbb{R}^{m\times k},$ $F\left(  \omega,t,\cdot
,\cdot\right)  $, $G\left(  \omega,t,\cdot\right)  $ are continuous functions,
$d\mathbb{P}\otimes dt$-a.e. and, $\mathbb{P}$--a.s.,%
\begin{equation}
\int_{0}^{T}F_{\rho}^{\#}\left(  s\right)  ds+\int_{0}^{T}G_{\rho}^{\#}\left(
s\right)  dA_{s}<\infty,\quad\text{for all }\rho\geq0, \label{F, G assumpt 1}%
\end{equation}
where%
\begin{equation}
F_{\rho}^{\#}\left(  \omega,s\right)  :=\sup\nolimits_{\left\vert y\right\vert
\leq\rho}\left\vert F\left(  \omega,s,y,0\right)  \right\vert ,\quad G_{\rho
}^{\#}\left(  \omega,s\right)  :=\sup\nolimits_{\left\vert y\right\vert
\leq\rho}\left\vert G\left(  \omega,s,y\right)  \right\vert \,;
\label{def F sharp}%
\end{equation}

\item[\textrm{(A}$_{5}$\textrm{)}] Let%
\begin{equation}
n_{p}:=1\wedge\left(  p-1\right)  \quad\text{and}\quad\lambda\in\left(
0,1\right)  . \label{defnp}%
\end{equation}
Assume there exist three p.m.s.p. $\mu,\nu:\Omega\times\mathbb{R}%
_{+}\rightarrow\mathbb{R},$ $\ell:\Omega\times\mathbb{R}_{+}\rightarrow
\mathbb{R}_{+}\,,$ such that%
\begin{equation}
\mathbb{E}\exp\left(  p\int_{0}^{T}\left(  \left\vert \mu_{s}\right\vert
+\frac{1}{2n_{p}\lambda}\,\ell_{s}^{2}\right)  ds+p\int_{0}^{T}\left\vert
\nu_{s}\right\vert dA_{s}\right)  <\infty\label{ip-mnl}%
\end{equation}
and for all $t\in\left[  0,T\right]  ,$ $y,y^{\prime}\in\mathbb{R}^{m}$,
$z,z^{\prime}\in\mathbb{R}^{m\times k},$ $\mathbb{P}$--a.s.%
\begin{equation}%
\begin{array}
[c]{l}%
\left\langle y^{\prime}-y,F(t,y^{\prime},z)-F(t,y,z)\right\rangle \leq\mu
_{t}\,\left\vert y^{\prime}-y\right\vert ^{2},\medskip\\
\left\langle y^{\prime}-y,G(t,y^{\prime})-G(t,y)\right\rangle \leq\nu
_{t}\,\left\vert y^{\prime}-y\right\vert ^{2},\medskip\\
\left\vert F(t,y,z^{\prime})-F(t,y,z)\right\vert \leq\ell_{t}\,\left\vert
z^{\prime}-z\right\vert .
\end{array}
\label{F, G assumpt 2}%
\end{equation}

\end{itemize}

We define%
\[
Q_{t}\left(  \omega\right)  =t+A_{t}\left(  \omega\right)  ,
\]
and let $\left\{  \alpha_{t}:t\geq0\right\}  $\ be the real positive p.m.s.p.
such that $\alpha\in\left[  0,1\right]  $\ and $dt=\alpha_{t}dQ_{t}$ and
$dA_{t}=\left(  1-\alpha_{t}\right)  dQ_{t}.$

Let us introduce the functions%
\begin{equation}%
\begin{array}
[c]{l}%
\displaystyle H\left(  t,y,z\right)  :=\alpha_{t}F\left(  t,y,z\right)
+\left(  1-\alpha_{t}\right)  G\left(  t,y\right)  ,\medskip\\
\displaystyle\Psi\left(  \omega,t,y\right)  :=\alpha_{t}\left(  \omega\right)
\varphi\left(  y\right)  +\left(  1-\alpha_{t}\left(  \omega\right)  \right)
\psi\left(  y\right)  .
\end{array}
\label{def Phi}%
\end{equation}
Obviously, from (\ref{F, G assumpt 2}) we see that%
\begin{equation}%
\begin{array}
[c]{l}%
\left\langle y^{\prime}-y,H(t,y^{\prime},z)-H(t,y,z)\right\rangle \leq\left[
\mu_{t}\alpha_{t}+\nu_{t}\left(  1-\alpha_{t}\right)  \right]  \left\vert
y^{\prime}-y\right\vert ^{2},\medskip\\
\left\vert H(t,y,z^{\prime})-H(t,y,z)\right\vert \leq\alpha_{t}\ell
_{t}\,\left\vert z^{\prime}-z\right\vert .
\end{array}
\label{F, G assumpt 3}%
\end{equation}
Here and subsequently, $\lambda\in\left(  0,1\right)  $%
\begin{equation}
V_{t}\xlongequal{\hspace{-3pt}\textrm{def}\hspace{-3pt}}%
{\displaystyle\int_{0}^{t}}
\left(  \mu_{r}+\dfrac{1}{2n_{p}\lambda}\ell_{r}^{2}\right)  dr+%
{\displaystyle\int_{0}^{t}}
\nu_{r}dA_{r}\,. \label{defV_1}%
\end{equation}
and%
\[
V_{t}^{\left(  +\right)  }\xlongequal{\hspace{-3pt}\textrm{def}\hspace{-3pt}}%
{\displaystyle\int_{0}^{t}}
\left(  \mu_{r}+\dfrac{1}{2n_{p}\lambda}\ell_{r}^{2}\right)  ^{+}dr+%
{\displaystyle\int_{0}^{t}}
\nu_{r}^{+}\,dA_{r}\,.
\]
By the assumption (\ref{ip-mnl}) we clearly have%
\begin{equation}
\mathbb{E}~e^{pV_{T}}\leq\mathbb{E}~\sup_{t\in\left[  0,T\right]  }e^{pV_{t}%
}\leq\mathbb{E}e^{pV_{T}^{\left(  +\right)  }}<\infty. \label{exp-VT}%
\end{equation}

\begin{definition}
The notation $dK_{t}\in\partial_{y}\Psi\left(  t,Y_{t}\right)  dQ_{t}$ means
that $K$ is $\mathbb{R}^{m}$--valued locally bounded variation stochastic
process, $Q$ is a real increasing stochastic process, $Y$ is $\mathbb{R}^{m}%
$-valued continuous stochastic process such that $\int_{0}^{T}\Psi\left(
t,Y_{t}\right)  dQ_{t}<\infty$, a.s. and, $\mathbb{P}$--a.s., for any $0\leq
t\leq s\leq T,$%
\[%
{\displaystyle\int_{t}^{s}}
\left\langle y\left(  r\right)  -Y_{r},dK_{r}\right\rangle +%
{\displaystyle\int_{t}^{s}}
\Psi\left(  r,Y_{r}\right)  dQ_{r}\leq%
{\displaystyle\int_{t}^{s}}
\Psi\left(  r,y\left(  r\right)  \right)  dQ_{r},\quad\text{for any }y\in
C\left(  \mathbb{R}_{+};\mathbb{R}^{m}\right)  .
\]

\end{definition}

\begin{remark}
The condition $0\in\partial\varphi\left(  0\right)  \cap\partial\psi\left(
0\right)  $ does not restrict the generality of the problem, since from
$Dom\left(  \partial\varphi\right)  \cap Dom\left(  \partial\psi\right)
\neq\emptyset$ it follows that there exists $u_{0}\in Dom\left(
\partial\varphi\right)  \cap Dom\left(  \partial\psi\right)  $ and $\hat
{u}_{01}\in\partial\varphi\left(  u_{0}\right)  $, $\hat{u}_{02}\in
\partial\psi\left(  u_{0}\right)  $. In this case equation (\ref{GBSVI 1}) is
equivalent to%
\[
\left\{
\begin{array}
[c]{r}%
\hat{Y}_{t}+\displaystyle{\int_{t}^{T}}d\hat{K}_{s}=\eta+{\int_{t}^{T}%
}\big[\hat{F}(s,\hat{Y}_{s},\hat{Z}_{s})ds+\hat{G}(s,\hat{Y}_{s}%
)dA_{s}\big]-{\int_{t}^{T}}\hat{Z}_{s}dB_{s},\text{\ a.s.,}\smallskip\\
\multicolumn{1}{l}{d\hat{K}_{t}\in\partial\hat{\varphi}(\hat{Y}_{t}%
)dt+\partial\hat{\psi}(\hat{Y}_{t})dA_{t},\;\text{for all }t\in\left[
0,T\right]  ,}%
\end{array}
\,\right.
\]
where%
\[
\hat{Y}_{t}:=Y_{t}-u_{0}\,,\quad\hat{Z}_{t}:=Z_{t}\,,\quad\hat{\eta}%
:=\eta-u_{0}%
\]
and%
\[%
\begin{array}
[c]{l}%
\hat{F}\left(  s,y,z\right)  =F\left(  t,y+u_{0},z\right)  -\hat{u}%
_{01}\,,\quad\hat{G}\left(  s,y,z\right)  =G\left(  t,y+u_{0}\right)  -\hat
{u}_{02}\,,\medskip\\
\hat{\varphi}\left(  y\right)  =\varphi\left(  y+u_{0}\right)  -\left\langle
\hat{u}_{01},y\right\rangle \,,\quad\hat{\psi}\left(  y\right)  =\psi\left(
y+u_{0}\right)  -\left\langle \hat{u}_{02},y\right\rangle \,,\medskip\\
\partial\hat{\varphi}\left(  y\right)  =\partial\varphi\left(  y+u_{0}\right)
-\hat{u}_{01}\,,\quad\partial\hat{\psi}\left(  y\right)  =\partial\psi\left(
y+u_{0}\right)  -\hat{u}_{02}\medskip\\
\text{and}\medskip\\
d\hat{K}_{t}=dK_{t}-\hat{u}_{01}dt-\hat{u}_{02}dA_{t}\,.
\end{array}
\]

\end{remark}

Let $\varepsilon>0$ and the Moreau--Yosida regularization of $\varphi:$%
\begin{equation}
\varphi_{\varepsilon}\left(  y\right)  :=\inf\big\{\frac{1}{2\varepsilon
}\left\vert y-v\right\vert ^{2}+\varphi\left(  v\right)  :v\in\mathbb{R}%
^{m}\big\}, \label{fi-MY}%
\end{equation}
which is a $C^{1}$--convex function.

The gradient $\nabla\varphi_{\varepsilon}(x)=\partial\varphi_{\varepsilon
}\left(  x\right)  \in\partial\varphi\left(  J_{\varepsilon}\left(  x\right)
\right)  ,$ where $J_{\varepsilon}\left(  x\right)  :=x-\varepsilon
\nabla\varphi_{\varepsilon}(x)$ and satisfies%
\begin{equation}%
\begin{array}
[c]{ll}%
\left(  a\right)  & \left\vert J_{\varepsilon}\left(  x\right)
-J_{\varepsilon}\left(  y\right)  \right\vert \leq\left\vert x-y\right\vert
,\medskip\\
\left(  b\right)  & \left\vert \nabla\varphi_{\varepsilon}\left(  x\right)
-\nabla\varphi_{\varepsilon}\left(  y\right)  \right\vert \leq\dfrac
{1}{\varepsilon}\left\vert x-y\right\vert ,\medskip\\
\left(  c\right)  & \varphi_{\varepsilon}\left(  y\right)  =\dfrac{\left\vert
y-J_{\varepsilon}\left(  y\right)  \right\vert ^{2}}{2\varepsilon}%
+\varphi\left(  J_{\varepsilon}\left(  y\right)  \right)
\end{array}
\label{fi-lip}%
\end{equation}
and%
\begin{align}
-\left\langle u-v,\nabla\varphi_{\varepsilon}\left(  u\right)  -\nabla
\varphi_{\delta}\left(  v\right)  \right\rangle  &  \leq(\varepsilon
+\delta)\left\langle \nabla\varphi_{\varepsilon}(u),\nabla\varphi_{\delta
}(v)\right\rangle \label{fi-Cauchy}\\
&  \leq\dfrac{\varepsilon+\delta}{2}\Big[|\nabla\varphi_{\varepsilon}%
(u)|^{2}+|\nabla\varphi_{\delta}(v)|^{2}\Big]\nonumber
\end{align}
(for other useful inequalities see, e.g., \cite[inequalities $\left(
2.8\right)  $]{ma-ra/15}). Since $0\in\partial\varphi\left(  0\right)  $ we
deduce that%
\begin{equation}%
\begin{array}
[c]{l}%
0=\varphi\left(  0\right)  \leq\varphi\left(  J_{\varepsilon}\left(  u\right)
\right)  \leq\varphi_{\varepsilon}\left(  u\right)  \leq\varphi\left(
u\right)  ,\quad\text{for any }u\in\mathbb{R}^{m},\medskip\\
J_{\varepsilon}\left(  0\right)  =0,\quad\nabla\varphi_{\varepsilon
}(0)=0,\quad\text{and }\varphi_{\varepsilon}\left(  0\right)  =0.
\end{array}
\label{minimum point}%
\end{equation}
Also it holds that $\varphi\left(  J_{\varepsilon}u\right)  \leq
\varphi_{\varepsilon}\left(  u\right)  $, for any $u.$

We introduce the compatibility conditions between $\varphi,\psi$ and $F,G$.

\begin{itemize}
\item[\textrm{(A}$_{6}$\textrm{)}] For all $\varepsilon>0$, $t\in\left[
0,T\right]  $, $y\in\mathbb{R}^{m}$, $z\in\mathbb{R}^{m\times k}$%
\begin{equation}%
\begin{array}
[c]{rl}%
\left(  i\right)  & \left\langle \nabla\varphi_{\varepsilon}\left(  y\right)
,\nabla\psi_{\varepsilon}\left(  y\right)  \right\rangle \geq0,\medskip\\
\left(  ii\right)  & \left\langle \nabla\varphi_{\varepsilon}\left(  y\right)
,G\left(  t,y\right)  \right\rangle \leq\left\vert \nabla\psi_{\varepsilon
}\left(  y\right)  \right\vert \left\vert G\left(  t,y\right)  \right\vert
,\quad\mathbb{P}\text{--a.s.,}\medskip\\
\left(  iii\right)  & \left\langle \nabla\psi_{\varepsilon}\left(  y\right)
,F\left(  t,y,z\right)  \right\rangle \leq\left\vert \nabla\varphi
_{\varepsilon}\left(  y\right)  \right\vert \left\vert F\left(  t,y,z\right)
\right\vert ,\quad\mathbb{P}\text{--a.s..}%
\end{array}
\label{compAssumpt}%
\end{equation}

\end{itemize}

\begin{example}
$\quad$

\begin{enumerate}
\item[$\left(  a\right)  $] If $\varphi=\psi$ then the compatibility
assumptions (\ref{compAssumpt}) are clearly satisfied.

\item[$\left(  b\right)  $] Let $m=1$. Since $\nabla\varphi_{\varepsilon}$ and
$\nabla\psi_{\varepsilon}$ are increasing monotone functions on $\mathbb{R}$,
we see that, if $y\cdot G\left(  t,y\right)  \leq0$ and $y\cdot F\left(
t,y,z\right)  \leq0$, for all $t,y,z,$ then the compatibility assumptions
(\ref{compAssumpt}) are satisfied.

\item[$\left(  c\right)  $] Let $m=1$. If $\varphi,\psi:\mathbb{R}%
\rightarrow(-\infty,+\infty]$ are the convexity indicator functions
$\varphi\left(  y\right)  =\left\{
\begin{array}
[c]{rl}%
0, & \text{if\ }y\in\left[  a,b\right]  ,\smallskip\\
+\infty, & \text{if\ }y\notin\left[  a,b\right]  ,
\end{array}
\right.  $ and $\psi\left(  y\right)  =\left\{
\begin{array}
[c]{rl}%
0, & \text{if\ }y\in\left[  c,d\right]  ,\smallskip\\
+\infty, & \text{if\ }y\notin\left[  c,d\right]  ,
\end{array}
\right.  $ where $-\infty\leq a\leq b\leq+\infty$ and $-\infty\leq c\leq
d\leq+\infty$ are such that $0\in\left[  a,b\right]  \cap\left[  c,d\right]  $
(see (A$_{6}$)), then $\nabla\varphi_{\varepsilon}\left(  y\right)  =\dfrac
{1}{\varepsilon}[\left(  y-b\right)  ^{+}-\left(  a-y\right)  ^{+}]$, and
$\nabla\psi_{\varepsilon}\left(  y\right)  =\dfrac{1}{\varepsilon}[\left(
y-d\right)  ^{+}-\left(  c-y\right)  ^{+}].$

Assumption (A$_{7}-i$) is clearly fulfilled; the remaining compatibility
assumptions are satisfies if, for example, $G\left(  t,y\right)  \geq0$, for
$y\leq a$,$\quad G\left(  t,y\right)  \leq0$, for $y\geq b$, and,
respectively, $F\left(  t,y,z\right)  \geq0$, for $y\leq c$,$\quad F\left(
t,y,z\right)  \leq0$, for $y\geq d.$
\end{enumerate}
\end{example}

\section{$L^{p}$--variational solutions}

\subsection{Intuitive introduction}

At the beginning of this section, until further notice, we will consider
$p\geq0$ and $n_{p}=\left(  p-1\right)  ^{+}\wedge1$ and let be fixed an
arbitrary $\lambda\in\left(  0,1\right)  .$ We recall definition
(\ref{defV_1}) of $V$ and we extend the definition of $V$ to the case
$p\in\left[  0,1\right]  \ $by considering%
\[
V_{t}\xlongequal{\hspace{-3pt}\textrm{def}\hspace{-3pt}}%
{\displaystyle\int_{0}^{t}}
\mu_{r}dr+%
{\displaystyle\int_{0}^{t}}
\nu_{r}dA_{r}\,,\quad\text{if }p\in\left[  0,1\right]
\]
(in the case $p\in\left[  0,1\right]  $ we will consider $\ell=0$ (i.e. $H$ is
independent of $z$) and $\ell_{r}^{2}/n_{p}=0).$

By the assumption (\ref{ip-mnl}) we clearly have%
\[
\mathbb{E}\left(  \sup\nolimits_{r\in\left[  0,T\right]  }{e^{pV_{r}}}\right)
<\infty.
\]
Let us define the space $S_{m}^{p}\left(  \gamma,N,R;V\right)  $, $p\geq0$, of
the continuous progressively measurable stochastic process (p.m.s.p.) $M$ such
that for all $p>0$%
\[
\mathbb{E}\left(  \sup\nolimits_{r\in\left[  0,T\right]  }{e^{pV_{r}}%
}\left\vert M_{r}\right\vert ^{p}\right)  <\infty.\medskip
\]
and having the form
\begin{align*}
M_{t}  &  =\gamma-%
{\displaystyle\int_{0}^{t}}
N_{r}dQ_{r}+%
{\displaystyle\int_{0}^{t}}
R_{r}dB_{r},\;\;\text{or equivalent}\\
M_{t}  &  =M_{T}+\int_{t}^{T}N_{r}dQ_{r}-\int_{t}^{T}R_{r}dB_{r}%
~,\;\;M_{0}=\gamma
\end{align*}
where $\gamma\in\mathbb{R}^{m}$and $N:\Omega\times\mathbb{R}_{+}%
\rightarrow\mathbb{R}^{m},$ $R:\Omega\times\mathbb{R}_{+}\rightarrow
\mathbb{R}^{m\times k}$ are p.m.s.p. such that%
\[
\mathbb{E}\left(
{\displaystyle\int_{0}^{T}}
{e^{V_{r}}}\left\vert N_{r}\right\vert dr\right)  ^{p}+\mathbb{E}\left(
{\displaystyle\int_{0}^{T}}
{e^{2V_{r}}}\left\vert R_{r}\right\vert ^{2}dr\right)  ^{p/2}<\infty
,\quad\text{if }p>0
\]
and
\[%
{\displaystyle\int_{0}^{T}}
{e^{V_{r}}}N_{r}dr+%
{\displaystyle\int_{0}^{T}}
{e^{2V_{r}}}\left\vert R_{r}\right\vert ^{2}dr<\infty,\;\mathbb{P}%
-a.s.,\quad\text{if }p=0.
\]
For a intuitive introduction let $p\geq1$ and $\left(  Y,Z,U\right)  $ be a
strong a solution of (\ref{GBSVI 1}) or (\ref{GBSVI 2}) that is $Y,Z,$ and $U$
are p.m.s.p., $Y$ has continuous trajectories,
\[%
{\displaystyle\int_{0}^{T}}
{e^{2V_{r}}}\left\vert Z_{r}\right\vert ^{2}dr+%
{\displaystyle\int_{0}^{T}}
{e^{2V_{r}}}\left\vert U_{r}\right\vert ^{2}dr<\infty,\quad\mathbb{P}-a.s.;
\]
the following equation is satisfied%
\[
\left\{
\begin{array}
[c]{r}%
\displaystyle Y_{t}+{\int_{t}^{T}}dK_{r}=Y_{T}+{\int_{t}^{T}}H\left(
r,Y_{r},Z_{r}\right)  dQ_{r}-{\int_{t}^{T}}Z_{r}dB_{r},\;\text{a.s., for all
}t\in\left[  0,T\right]  ,\medskip\\
\multicolumn{1}{l}{\displaystyle dK_{r}=U_{r}dQ_{r}\in\partial_{y}\Psi\left(
r,Y_{r}\right)  dQ_{r}\,.}%
\end{array}
\,\right.
\]
For $\delta\in(0,1]$ we define%
\begin{equation}
\delta_{q}:=\delta\,\mathbf{1}_{[1,2)}\left(  q\right)  =\left\{
\begin{array}
[c]{ll}%
\delta, & \text{if }1\leq q<2,\\
0, & \text{if }q\geq2.
\end{array}
\right.  \label{defDelta}%
\end{equation}
Let $q\in\left[  1,2\right]  $ and $M\in S_{m}^{0}\left(  \gamma,N,R;V\right)
.$ By It\^{o}'s formula for $\left(  \Gamma_{t}\right)  ^{q},$ where%
\[
\Gamma_{t}:=\left(  \left\vert M_{t}-Y_{t}\right\vert ^{2}+\delta_{q}\right)
^{1/2}%
\]
(see (\ref{ito3}) from Proposition \ref{p1-ito}) with%
\[
M_{t}=M_{T}+%
{\displaystyle\int_{t}^{T}}
N_{r}dQ_{r}-%
{\displaystyle\int_{t}^{T}}
R_{r}dB_{r},
\]
we deduce by inequality (\ref{ito4}) from Remark \ref{r1-ito} that for all
$0\leq t\leq s\leq T$ and for all $\delta\in(0,1],$%
\begin{equation}%
\begin{array}
[c]{l}%
\left(  \Gamma_{t}\right)  ^{q}+\dfrac{q}{2}\,n_{q}\,%
{\displaystyle\int_{t}^{s}}
{\left(  \Gamma_{r}\right)  ^{q-2}}{\Large \,}\left\vert R_{r}-Z_{r}%
\right\vert ^{2}dr+q%
{\displaystyle\int_{t}^{s}}
{\left(  \Gamma_{r}\right)  ^{q-2}}\left\langle M_{r}-Y_{r},U_{r}%
dQ_{r}\right\rangle \medskip\\
\leq\left(  \Gamma_{s}\right)  ^{q}+q%
{\displaystyle\int_{t}^{s}}
{\left(  \Gamma_{r}\right)  ^{q-2}}\langle M_{r}-Y_{r},N_{r}-H\left(
r,Y_{r},Z_{r}\right)  \rangle dQ_{r}\medskip\\
\quad-q%
{\displaystyle\int_{t}^{s}}
{\left(  \Gamma_{r}\right)  ^{q-2}}\,\langle M_{r}-Y_{r},\left(  R_{r}%
-Z_{r}\right)  dB_{r}\rangle,
\end{array}
\label{def1a}%
\end{equation}
where $U_{t}dQ_{t}\in\partial_{y}\Psi\left(  t,Y_{t}\right)  dQ_{t}$ and
$n_{q}:=\left(  q-1\right)  \wedge1=q-1.$

Using the subdifferential inequality%
\[
\left\langle M_{r}-Y_{r},U_{t}dQ_{t}\right\rangle +{\Psi}\left(
r,Y_{r}\right)  dQ_{r}\leq{\Psi}\left(  r,M_{r}\right)  dQ_{r}%
\]
we get, from (\ref{def1a}),%
\begin{equation}%
\begin{array}
[c]{l}%
\left(  \Gamma_{t}\right)  ^{q}+\dfrac{q}{2}n_{q}%
{\displaystyle\int_{t}^{s}}
{\left(  \Gamma_{r}\right)  ^{q-2}}{\Large \,}\left\vert R_{r}-Z_{r}%
\right\vert ^{2}dr+{q%
{\displaystyle\int_{t}^{s}}
\left(  \Gamma_{r}\right)  ^{q-2}\Psi}\left(  r,Y_{r}\right)  dQ_{r}\medskip\\
=\left(  \Gamma_{s}\right)  ^{q}+{q%
{\displaystyle\int_{t}^{s}}
\left(  \Gamma_{r}\right)  ^{q-2}\Psi}\left(  r,M_{r}\right)  dQ_{r}+q%
{\displaystyle\int_{t}^{s}}
{\left(  \Gamma_{r}\right)  ^{q-2}}\langle M_{r}-Y_{r},N_{r}-H\left(
r,Y_{r},Z_{r}\right)  \rangle dQ_{r}\medskip\\
\quad-q%
{\displaystyle\int_{t}^{s}}
{\left(  \Gamma_{r}\right)  ^{q-2}}\,\langle M_{r}-Y_{r},\left(  R_{r}%
-Z_{r}\right)  dB_{r}\rangle.
\end{array}
\label{def1b}%
\end{equation}

\subsection{Definition and preliminary estimates}

Following the approach for the forward stochastic variational inequalities
from article \cite{ra/81}, we propose, stating from (\ref{def1b}), the next
variational formulation for a solution of the multivalued BSDE (\ref{GBSVI 2}%
).\medskip

\begin{definition}
\label{definition_weak solution} Let $V$ be given by definition (\ref{defV_1}%
). We say that $\left(  Y_{t},Z_{t}\right)  _{t\in\left[  0,T\right]  }$ is a
$L^{p}-$variational solution of (\ref{GBSVI 2}) if:

\begin{itemize}
\item $Y:\Omega\times\left[  0,T\right]  \rightarrow\mathbb{R}^{m}$ and
$Z:\Omega\times\left[  0,T\right]  \rightarrow\mathbb{R}^{m\times k}$ are two
p.m.s.p., $Y$ has continuous trajectories%
\begin{equation}
\mathbb{E}\left(  \sup\nolimits_{r\in\left[  0,T\right]  }{e^{pV_{r}}%
}\left\vert Y_{r}\right\vert ^{p}\right)  <\infty. \label{def0-1}%
\end{equation}
and%
\begin{equation}
\mathbb{E}\left(
{\displaystyle\int_{0}^{T}}
{e^{2V_{r}}}\left\vert Z_{r}\right\vert ^{2}dr\right)  ^{p/2}+\mathbb{E}%
\left(
{\displaystyle\int_{0}^{T}}
e^{2V_{r}}{\Psi}\left(  r,Y_{r}\right)  dQ_{r}\right)  ^{p/2}<\infty;
\label{def0-2}%
\end{equation}

\item if $\Gamma_{t}:=\left(  \left\vert M_{t}-Y_{t}\right\vert ^{2}%
+\delta_{q}\right)  ^{1/2},$ where $\delta_{q}$ is defined by (\ref{defDelta}%
), it holds%
\begin{equation}%
\begin{array}
[c]{l}%
\displaystyle\left(  \Gamma_{t}\right)  ^{q}+\dfrac{q\left(  q-1\right)  }{2}%
{\displaystyle\int_{t}^{s}}
{\left(  \Gamma_{r}\right)  ^{q-2}}{\Large \,}\left\vert R_{r}-Z_{r}%
\right\vert ^{2}dr+{q%
{\displaystyle\int_{t}^{s}}
\left(  \Gamma_{r}\right)  ^{q-2}\Psi}\left(  r,Y_{r}\right)  dQ_{r}\medskip\\
\displaystyle\leq\left(  \Gamma_{s}\right)  ^{q}+{q%
{\displaystyle\int_{t}^{s}}
\left(  \Gamma_{r}\right)  ^{q-2}\Psi}\left(  r,M_{r}\right)  dQ_{r}+q%
{\displaystyle\int_{t}^{s}}
{\left(  \Gamma_{r}\right)  ^{q-2}}\langle M_{r}-Y_{r},N_{r}-H\left(
r,Y_{r},Z_{r}\right)  \rangle dQ_{r}\medskip\\
\displaystyle\quad-q%
{\displaystyle\int_{t}^{s}}
{\left(  \Gamma_{r}\right)  ^{q-2}}\,\langle M_{r}-Y_{r},\left(  R_{r}%
-Z_{r}\right)  dB_{r}\rangle,
\end{array}
\label{def1}%
\end{equation}
for any $q\in\{2,p\wedge2\},$ $\delta\in(0,1],$ $0\leq t\leq s\leq T,$ and
$M\in S_{m}^{0}\left(  \gamma,N,R;V\right)  .$
\end{itemize}
\end{definition}

\begin{remark}
For $q=2$ inequality (\ref{def1}) becomes%
\begin{equation}%
\begin{array}
[c]{l}%
\left\vert M_{t}-Y_{t}\right\vert ^{2}+{%
{\displaystyle\int_{t}^{s}}
}\left\vert R_{r}-Z_{r}\right\vert ^{2}dr+{2%
{\displaystyle\int_{t}^{s}}
\Psi}\left(  r,Y_{r}\right)  dQ_{r}\medskip\\
\leq\left\vert M_{s}-Y_{s}\right\vert ^{2}{+2%
{\displaystyle\int_{t}^{s}}
\Psi}\left(  r,M_{r}\right)  dQ_{r}\medskip\\
\quad+{2%
{\displaystyle\int_{t}^{s}}
}\langle M_{r}-Y_{r},N_{r}-H\left(  r,Y_{r},Z_{r}\right)  \rangle dQ_{r}-2%
{\displaystyle\int_{t}^{s}}
\,\langle M_{r}-Y_{r},\left(  R_{r}-Z_{r}\right)  dB_{r}\rangle,\;\mathbb{P}%
\text{-a.s..}%
\end{array}
\label{def1-a}%
\end{equation}
which was in \cite{ma-ra/15} the definition of the variational solution in the
case $p\geq2.$
\end{remark}

\begin{remark}
Let
\[%
\begin{array}
[c]{l}%
\displaystyle\Lambda_{t}=\dfrac{q}{2}\,\left(  q-1\right)  \int\nolimits_{0}%
^{t}{\left(  \Gamma_{r}\right)  ^{q-2}}{\Large \,}\left\vert R_{r}%
-Z_{r}\right\vert ^{2}dr+{q\int_{0}^{t}\left(  \Gamma_{r}\right)  ^{q-2}\Psi
}\left(  r,Y_{r}\right)  dQ_{r}\medskip\\
\displaystyle\quad\quad-{q\int_{0}^{t}\left(  \Gamma_{r}\right)  ^{q-2}\Psi
}\left(  r,M_{r}\right)  dQ_{r}-q\int_{0}^{t}{\left(  \Gamma_{r}\right)
^{q-2}}\langle M_{r}-Y_{r},N_{r}-H\left(  r,Y_{r},Z_{r}\right)  \rangle
dQ_{r}\medskip\\
\displaystyle\quad\quad+q\int_{0}^{t}{\left(  \Gamma_{r}\right)  ^{q-2}%
}\,\langle M_{r}-Y_{r},\left(  R_{r}-Z_{r}\right)  dB_{r}\rangle,
\end{array}
\]
Since from (\ref{def1}) it follows that%
\[
t\longmapsto\left(  \Gamma_{t}\right)  ^{q}-\Lambda_{t}%
\]
is a nondecreasing stochastic process then $t\longmapsto\Gamma_{t}^{q}=\left[
\left(  \Gamma_{t}\right)  ^{q}-\Lambda_{t}\right]  +\Lambda_{t}$ is a
semimartingale and consequently for all $0\leq t\leq s\leq T$%
\begin{align*}
e^{qV_{s}}\left(  \Gamma_{s}\right)  ^{q}-e^{qV_{t}}\left(  \Gamma_{t}\right)
^{q}  &  =%
{\displaystyle\int_{t}^{s}}
d\left[  e^{qV_{r}}\left(  \Gamma_{r}\right)  ^{q}\right] \\
&  =q%
{\displaystyle\int_{t}^{s}}
e^{qV_{r}}\left(  \Gamma_{r}\right)  ^{q}dV_{r}+%
{\displaystyle\int_{t}^{s}}
e^{qV_{r}}d\left[  \left(  \Gamma_{r}\right)  ^{q}-\Lambda_{r}\right]  +%
{\displaystyle\int_{t}^{s}}
e^{qV_{r}}d\Lambda_{r}\\
&  \geq q%
{\displaystyle\int_{t}^{s}}
e^{qV_{r}}\left(  \Gamma_{r}\right)  ^{q}dV_{r}+%
{\displaystyle\int_{t}^{s}}
e^{qV_{r}}d\Lambda_{r}%
\end{align*}
which yields%
\begin{equation}%
\begin{array}
[c]{l}%
e^{qV_{t}}\left(  \Gamma_{t}\right)  ^{q}+q%
{\displaystyle\int_{t}^{s}}
e^{qV_{r}}\left(  \Gamma_{r}\right)  ^{q}dV_{r}+\dfrac{q}{2}\left(
q-1\right)  {%
{\displaystyle\int_{t}^{s}}
}e^{qV_{r}}\left(  \Gamma_{r}\right)  ^{q-2}\left\vert R_{r}-Z_{r}\right\vert
^{2}dr\medskip\\
\quad+{q%
{\displaystyle\int_{t}^{s}}
e^{qV_{r}}\left(  \Gamma_{r}\right)  ^{q-2}\Psi}\left(  r,Y_{r}\right)
dQ_{r}\medskip\\
\leq e^{qV_{s}}\left(  \Gamma_{s}\right)  ^{q}+{q%
{\displaystyle\int_{t}^{s}}
e^{qV_{r}}\left(  \Gamma_{r}\right)  ^{q-2}\Psi}\left(  r,M_{r}\right)
dQ_{r}+{q%
{\displaystyle\int_{t}^{s}}
e^{qV_{r}}}\left(  \Gamma_{r}\right)  ^{q-2}\langle M_{r}-Y_{r},N_{r}-H\left(
r,Y_{r},Z_{r}\right)  \rangle dQ_{r}\medskip\\
\quad-q{%
{\displaystyle\int_{t}^{s}}
e^{qV_{r}}}\left(  \Gamma_{r}\right)  ^{q-2}\langle M_{r}-Y_{r},\left(
R_{r}-Z_{r}\right)  dB_{r}\rangle.
\end{array}
\label{def1-b}%
\end{equation}
for any $q\in\{2,p\wedge2\},$ $\delta\in(0,1],$ $0\leq t\leq s\leq T,$ and
$M\in S_{m}^{0}\left(  \gamma,N,R;V\right)  .$

Following the previous calculus, we see that, in fact, inequality
(\ref{def1-b}) holds true for any arbitrary continuous bounded variation
p.m.s.p. $\left\{  V_{t}:t\in\left[  0,T\right]  \right\}  .$
\end{remark}

\begin{remark}
\label{r-mart part}Let $\left\{  V_{t}:t\in\left[  0,T\right]  \right\}  $ be
defined by (\ref{defV_1}) and $M\in S_{m}^{q}\left(  \gamma,N,R;V\right)  ,$
$q\in\left\{  2,p\wedge2\right\}  .$ Since by assumption (\ref{ip-mnl}) we
have%
\[
\mathbb{E}\left(  \delta_{q}\sup\nolimits_{r\in\left[  0,T\right]  }%
{e^{qV_{r}}}\right)  <\infty,
\]
we deduce
\[%
\begin{array}
[c]{l}%
\displaystyle\mathbb{E}\left[
{\displaystyle\int_{0}^{T}}
{e^{2qV_{r}}}\left(  \Gamma_{r}\right)  ^{2q-4}\,\left\vert M_{r}%
-Y_{r}\right\vert ^{2}\left\vert R_{r}-Z_{r}\right\vert ^{2}dr\right]
^{1/2}\medskip\\
\displaystyle\leq\mathbb{E}\left[
{\displaystyle\int_{0}^{T}}
{e^{2qV_{r}}}\left(  \Gamma_{r}\right)  ^{2q-2}\,\left\vert R_{r}%
-Z_{r}\right\vert ^{2}dr\right]  ^{1/2}\medskip\\
\displaystyle\leq\mathbb{E}\left[
{\displaystyle\int_{0}^{T}}
{e^{2qV_{r}}}\left(  \Gamma_{r}\right)  ^{2q-2}\,\left\vert R_{r}%
-Z_{r}\right\vert ^{2}dr\right]  ^{1/2}\medskip\\
\displaystyle\leq\mathbb{E}\left[  \sup\limits_{r\in\left[  0,T\right]
}{e^{\left(  q-1\right)  V_{r}}}\left(  \left\vert M_{r}-Y_{r}\right\vert
^{2}+\delta_{q}\right)  ^{\left(  q-1\right)  /2}\left(
{\displaystyle\int_{0}^{T}}
{e^{2V_{r}}}\left\vert R_{r}-Z_{r}\right\vert ^{2}dr\right)  ^{1/2}\right]
\medskip\\
\displaystyle\leq\left[  \mathbb{E}\left(  \sup\limits_{r\in\left[
0,T\right]  }{e^{qV_{r}}}\left(  \left\vert M_{r}-Y_{r}\right\vert ^{2}%
+\delta_{q}\right)  ^{q/2}\right)  \right]  ^{\left(  q-1\right)  /q}\,\left[
\mathbb{E}\left(
{\displaystyle\int_{0}^{T}}
{e^{2V_{r}}}\left\vert R_{r}-Z_{r}\right\vert ^{2}dr\right)  ^{q/2}\right]
^{1/q}%
\end{array}
\]
In the case $q=p\wedge2$, we infer that
\[
\mathbb{E}\left[
{\displaystyle\int_{0}^{T}}
{e^{2qV_{r}}}\left(  \Gamma_{r}\right)  ^{2q-4}\,\left\vert M_{r}%
-Y_{r}\right\vert ^{2}\left\vert R_{r}-Z_{r}\right\vert ^{2}dr\right]
^{1/2}<\infty
\]
and the stochastic integral $J_{t}=%
{\displaystyle\int_{0}^{t}}
{e^{qV_{r}}}\left(  \Gamma_{r}\right)  ^{q-2}\langle M_{r}-Y_{r},\left(
R_{r}-Z_{r}\right)  dB_{r}\rangle$ is a continuous martingale; therefore for
all stopping times $0\leq\sigma\leq\theta\leq T:$%
\[
\mathbb{E}^{\mathcal{F}_{\sigma}}%
{\displaystyle\int_{\sigma}^{\theta}}
e^{qV_{r}}\left(  \Gamma_{r}\right)  ^{q-2}\langle M_{r}-Y_{r},\left(
R_{r}-Z_{r}\right)  dB_{r}\rangle=0.
\]
We also have
\begin{align*}
&  \mathbb{E}%
{\displaystyle\int_{0}^{T}}
e^{qV_{r}}\left(  \Gamma_{r}\right)  ^{q-2}\left\vert \langle M_{r}%
-Y_{r},N_{r}-H\left(  r,Y_{r},Z_{r}\right)  \rangle\right\vert dQ_{r}\\
&  \leq\mathbb{E}%
{\displaystyle\int_{0}^{T}}
e^{qV_{r}}\left(  \Gamma_{r}\right)  ^{q-1}\left[  \left\vert N_{r}\right\vert
+\left\vert H\left(  r,Y_{r},Z_{r}\right)  \right\vert \right]  dQ_{r}\\
&  \leq\mathbb{E}\left[  \sup\limits_{r\in\left[  0,T\right]  }{e^{\left(
q-1\right)  V_{r}}}\left(  \left\vert M_{r}-Y_{r}\right\vert ^{2}+\delta
_{q}\right)  ^{\left(  q-1\right)  /2}\left(
{\displaystyle\int_{0}^{T}}
{e^{V_{r}}}\left[  \left\vert N_{r}\right\vert +\left\vert H\left(
r,Y_{r},Z_{r}\right)  \right\vert \right]  dQ_{r}\right)  \right] \\
&  \leq\left[  \mathbb{E}\left(  \sup\limits_{r\in\left[  0,T\right]
}{e^{qV_{r}}}\left(  \left\vert M_{r}-Y_{r}\right\vert ^{2}+\delta_{q}\right)
^{q/2}\right)  \right]  ^{\left(  q-1\right)  /q}\,\left[  \mathbb{E}\left(
{\displaystyle\int_{0}^{T}}
{e^{V_{r}}}\left[  \left\vert N_{r}\right\vert +\left\vert H\left(
r,Y_{r},Z_{r}\right)  \right\vert \right]  dQ_{r}dr\right)  ^{q}\right]
^{1/q}.
\end{align*}
Hence if $\left(  Y_{t},Z_{t}\right)  _{t\in\left[  0,T\right]  }$ is an
$L^{p}-$variational solution of (\ref{GBSVI 2}) then, if%
\[
\mathbb{E}\left(
{\displaystyle\int_{0}^{T}}
{e^{V_{r}}}\left\vert H\left(  r,Y_{r},Z_{r}\right)  \right\vert
dQ_{r}\right)  ^{p\wedge2}<\infty,
\]
the following inequality is satisfied $\mathbb{P}-a.s.\;$%
\begin{equation}%
\begin{array}
[c]{l}%
\displaystyle e^{qV_{\sigma}}\Gamma_{\sigma}^{q}+q\mathbb{E}^{\mathcal{F}%
_{\sigma}}%
{\displaystyle\int_{\sigma}^{\theta}}
e^{qV_{r}}\Gamma_{r}^{q}dV_{r}+\dfrac{q}{2}n_{q}\mathbb{E}^{\mathcal{F}%
_{\sigma}}%
{\displaystyle\int_{\sigma}^{\theta}}
e^{qV_{r}}\Gamma_{r}^{q-2}\left\vert R_{r}-Z_{r}\right\vert ^{2}dr\medskip\\
\displaystyle\quad+{q\mathbb{E}^{\mathcal{F}_{\sigma}}%
{\displaystyle\int_{\sigma}^{\theta}}
e^{pV_{r}}\Gamma_{r}^{q-2}\Psi}\left(  r,Y_{r}\right)  dQ_{r}\medskip\\
\displaystyle\leq\mathbb{E}^{\mathcal{F}_{\sigma}}e^{qV_{\theta}}%
\Gamma_{\theta}^{q}+{q\mathbb{E}^{\mathcal{F}_{\sigma}}%
{\displaystyle\int_{\sigma}^{\theta}}
e^{qV_{r}}\Gamma_{r}^{q-2}\Psi}\left(  r,M_{r}\right)  dQ_{r}\\
\displaystyle\quad+{q\mathbb{E}^{\mathcal{F}_{\sigma}}%
{\displaystyle\int_{\sigma}^{\theta}}
e^{qV_{r}}\Gamma_{r}^{q-2}}\langle M_{r}-Y_{r},N_{r}-H\left(  r,Y_{r}%
,Z_{r}\right)  \rangle dQ_{r},
\end{array}
\label{def1d}%
\end{equation}
for $q=p\wedge2$ and all $M\in S_{m}^{q}\left(  \gamma,N,R;V\right)  $ and for
all stopping times $0\leq\sigma\leq\theta\leq T.$
\end{remark}

\begin{remark}
\label{s-w}It is obviously that a strong solution $\left(  Y,Z\right)  \in
S_{m}^{0}\times\Lambda_{m\times k}^{0}$ for (\ref{GBSVI 2}) such that
(\ref{def0-1}), (\ref{def0-2}) and (\ref{def2}) are satisfied is also an
$L^{p}-$variational solution (see the intuitive introduction for inequality
(\ref{def1b})).$\medskip$

Conversely, if $\left(  Y,Z\right)  $ is an $L^{p}-$variational solution of
the BSDE (\ref{GBSVI 2}) with $\varphi=\psi=0,$ $V$ is a nondecreasing
stochastic process and
\[
\mathbb{E}\left(
{\displaystyle\int_{0}^{T}}
{e^{V_{r}}}\left\vert H\left(  r,Y_{r},Z_{r}\right)  \right\vert
dQ_{r}\right)  ^{q}<\infty,
\]
then $\left(  Y,Z\right)  $ is a strong solution of BSDE (\ref{GBSVI 2}).

Indeed, by \cite[Corollary 2.45]{pa-ra/14} there exists a unique pair $\left(
M,R\right)  \in S_{m}^{q}\left[  0,T\right]  \times\Lambda_{m\times k}%
^{q}\left(  0,T\right)  $ such that
\[
M_{t}=Y_{T}+%
{\displaystyle\int_{t}^{T}}
H\left(  r,Y_{r},Z_{r}\right)  dQ_{r}-%
{\displaystyle\int_{t}^{T}}
R_{r}dB_{r}%
\]
and%
\[
\mathbb{E}\sup\limits_{t\in\left[  0,T\right]  }\left\vert e^{V_{t}}%
M_{t}\right\vert ^{q}+\mathbb{E}\left(
{\displaystyle\int_{0}^{T}}
e^{2V_{r}}\left\vert R_{r}\right\vert ^{2}dr\right)  ^{q/2}<\infty.
\]
With this $M$ the inequality (\ref{def1-b}) becomes, $\mathbb{P}-a.s.\;$%
\[%
\begin{array}
[c]{r}%
e^{qV_{t}}\left(  \Gamma_{t}\right)  ^{q}+q%
{\displaystyle\int_{t}^{s}}
e^{qV_{r}}\left(  \Gamma_{r}\right)  ^{q}dV_{r}+\dfrac{q}{2}\left(
q-1\right)  {%
{\displaystyle\int_{t}^{s}}
}e^{qV_{r}}\left(  \Gamma_{r}\right)  ^{q-2}\left\vert R_{r}-Z_{r}\right\vert
^{2}dr\medskip\\
\leq e^{qV_{s}}\left(  \Gamma_{s}\right)  ^{q}-q{%
{\displaystyle\int_{t}^{s}}
e^{qV_{r}}}\left(  \Gamma_{r}\right)  ^{q-2}\langle M_{r}-Y_{r},\left(
R_{r}-Z_{r}\right)  dB_{r}\rangle.
\end{array}
\]
for any $q\in\{2,p\wedge2\},$ $\delta\in(0,1],$ $0\leq t\leq s\leq T.$

By Remark \ref{r-mart part} for $q=p\wedge2$ the stochastic integral is a
martingale and therefore since $0<\delta\leq1,$ we obtain $\mathbb{P}-a.s.\;$
\begin{equation}
e^{qV_{t}}\left(  \Gamma_{t}\right)  ^{q}+\dfrac{q}{2}\left(  q-1\right)
\mathbb{E}^{\mathcal{F}_{t}}{%
{\displaystyle\int_{t}^{T}}
}e^{qV_{r}}\frac{\left\vert R_{r}-Z_{r}\right\vert ^{2}}{\left(  \left\vert
M_{r}-Y_{r}\right\vert ^{2}+1\right)  ^{\left(  2-q\right)  /2}}dr\leq\left(
\delta_{q}\right)  ^{q}\mathbb{E}^{\mathcal{F}_{t}}e^{qV_{T}},\quad\text{for
all }0\leq t\leq T. \label{w-to-s}%
\end{equation}
Passing to limit as $\delta\rightarrow0_{+},$ by Fatou's Lemma we obtain,
$\mathbb{P}-a.s.\;$%
\[
e^{qV_{t}}\left\vert M_{t}-Y_{t}\right\vert ^{q}+\dfrac{q}{2}\left(
q-1\right)  \mathbb{E}^{\mathcal{F}_{t}}{%
{\displaystyle\int_{t}^{T}}
}e^{qV_{r}}\frac{\left\vert R_{r}-Z_{r}\right\vert ^{2}}{\left(  \left\vert
M_{r}-Y_{r}\right\vert ^{2}+1\right)  ^{\left(  2-q\right)  /2}}%
dr=0,\quad\text{for all }t\in\left[  0,T\right]  .
\]
that clearly yields $\left(  M,R\right)  =\left(  Y,Z\right)  $ in $S_{m}%
^{0}\left[  0,T\right]  \times\Lambda_{m\times k}^{q}\left(  0,T\right)  .$
Consequently%
\[
Y_{t}=Y_{T}+%
{\displaystyle\int_{t}^{T}}
H\left(  r,Y_{r},Z_{r}\right)  dQ_{r}-%
{\displaystyle\int_{t}^{T}}
Z_{r}dB_{r}~.
\]

\end{remark}

\begin{proposition}
\label{p-estim}Let $M\in S_{m}^{0}\left(  \gamma,N,R;V\right)  $. Let
$Y:\Omega\times\left[  0,T\right]  \rightarrow\mathbb{R}^{m}$ and
$Z:\Omega\times\left[  0,T\right]  \rightarrow\mathbb{R}^{m\times k}$ be two
p.m.s.p. with $Y$ having continuous trajectories and%
\[%
\begin{array}
[c]{rl}%
\left(  i\right)  &
{\displaystyle\int_{0}^{T}}
{e^{2V_{r}}}\left\vert R_{r}-Z_{r}\right\vert ^{2}dr+%
{\displaystyle\int_{0}^{T}}
e^{2V_{r}}{\Psi}\left(  r,Y_{r}\right)  dQ_{r}<\infty,\;\mathbb{P}%
-a.s.,\medskip\\
\left(  ii\right)  & {\Psi}\left(  r,M_{r}\right)  \leq\mathbf{1}_{q\geq
2}{\Psi}\left(  r,M_{r}\right)  \medskip\\
\left(  iii\right)  & \left\langle M_{r}-Y_{r},N_{r}\right\rangle dQ_{r}%
\leq\left\vert M_{r}-Y_{r}\right\vert dL_{r}%
\end{array}
\]
with $L$ an increasing and continuous p.m.s.p. $L_{0}=0$. \newline%
\noindent\textbf{I.} If inequality (\ref{def1}) holds for $q=2,$ then for all
$k>0$ and for any stopping times $0\leq\sigma\leq\theta<T$%
\begin{equation}%
\begin{array}
[c]{l}%
\mathbb{E}^{\mathcal{F}_{\sigma}}\left(
{\displaystyle\int_{\sigma}^{\theta}}
{e^{2V_{r}}}\left\vert R_{r}-Z_{r}\right\vert ^{2}dr\right)  ^{k/2}%
+\mathbb{E}^{\mathcal{F}_{\sigma}}\left(
{\displaystyle\int_{\sigma}^{\theta}}
e^{2V_{r}}{\Psi}\left(  r,Y_{r}\right)  dQ_{r}\right)  ^{k/2}\medskip\\
\leq C_{k,\lambda}~\bigg[\mathbb{E}^{\mathcal{F}_{\sigma}}\sup\nolimits_{r\in
\left[  \sigma,\theta\right]  }e^{kV_{r}}\left\vert M_{r}-Y_{r}\right\vert
^{k}+\mathbb{E}^{\mathcal{F}_{\sigma}}\left(
{\displaystyle\int_{\sigma}^{\theta}}
e^{V_{r}}{\Psi}\left(  r,M_{r}\right)  dQ_{r}\right)  ^{k/2}\medskip\\
\quad+\mathbb{E}^{\mathcal{F}_{\sigma}}\left(
{\displaystyle\int_{\sigma}^{\theta}}
e^{V_{r}}\left\vert M_{r}-Y_{r}\right\vert \left[  dL_{r}+\left\vert H\left(
r,M_{r},R_{r}\right)  \right\vert dQ_{r}\right]  \right)  ^{k/2}%
\bigg]\medskip\\
\leq2C_{k,\lambda}\bigg[\mathbb{E}^{\mathcal{F}_{\sigma}}\sup\nolimits_{r\in
\left[  \sigma,\theta\right]  }e^{kV_{r}}\left\vert M_{r}-Y_{r}\right\vert
^{k}+\mathbb{E}^{\mathcal{F}_{\sigma}}\left(
{\displaystyle\int_{\sigma}^{\theta}}
e^{V_{r}}{\Psi}\left(  r,M_{r}\right)  dQ_{r}\right)  ^{k/2}\medskip\\
\quad+\mathbb{E}^{\mathcal{F}_{\sigma}}\left(
{\displaystyle\int_{\sigma}^{\theta}}
e^{V_{r}}\left[  dL_{r}+\left\vert H\left(  r,M_{r},R_{r}\right)  \right\vert
dQ_{r}\right]  \right)  ^{k}\bigg],\quad\mathbb{P}\text{--a.s..}%
\end{array}
\label{def-11}%
\end{equation}
In particular for $\gamma=0$, $N=0$, $R=0$, $L=0$, $M=0,$ ${\Psi}\left(
r,M\right)  ={\Psi}\left(  r,0\right)  =0$ it follows%
\begin{equation}%
\begin{array}
[c]{l}%
\mathbb{E}^{\mathcal{F}_{\sigma}}\left(
{\displaystyle\int_{\sigma}^{\theta}}
{e^{2V_{r}}}\left\vert Z_{r}\right\vert ^{2}dr\right)  ^{k/2}+\mathbb{E}%
^{\mathcal{F}_{\sigma}}\left(
{\displaystyle\int_{\sigma}^{\theta}}
e^{2V_{r}}{\Psi}\left(  r,Y_{r}\right)  dQ_{r}\right)  ^{k/2}\medskip\\
\leq C_{k,\lambda}\left[  \mathbb{E}^{\mathcal{F}_{\sigma}}\sup\nolimits_{r\in
\left[  \sigma,\theta\right]  }e^{kV_{r}}\left\vert Y_{r}\right\vert
^{k}+\mathbb{E}^{\mathcal{F}_{\sigma}}\left(
{\displaystyle\int_{\sigma}^{\theta}}
e^{V_{r}}\left\vert Y_{r}\right\vert \left\vert H\left(  r,0,0\right)
\right\vert dQ_{r}\right)  ^{k/2}\right]  \medskip\\
\leq2C_{k,\lambda}\left[  \mathbb{E}^{\mathcal{F}_{\sigma}}\sup\nolimits_{r\in
\left[  \sigma,\theta\right]  }e^{kV_{r}}\left\vert Y_{r}\right\vert
^{k}+\mathbb{E}^{\mathcal{F}_{\sigma}}\mathbb{~}\left(
{\displaystyle\int_{\sigma}^{\theta}}
e^{V_{r}}\left\vert H\left(  r,0,0\right)  \right\vert dQ_{r}\right)
^{k}\right]  ,\quad\mathbb{P}-a.s..
\end{array}
\label{def-11aa}%
\end{equation}
\noindent\textbf{II.} If inequality (\ref{def1}) holds and for some fixed
stopping times $0\leq\sigma\leq\theta<T,$ $1<q\leq k$
\begin{equation}
\mathbb{E}\left(  \sup\limits_{r\in\left[  \sigma,\theta\right]  }{e^{kV_{r}}%
}\left\vert M_{r}-Y_{r}\right\vert ^{k}\right)  <\infty, \label{def-11a}%
\end{equation}
then%
\begin{equation}%
\begin{array}
[c]{l}%
\mathbb{E}^{\mathcal{F}_{\sigma}}\sup\limits_{r\in\left[  \sigma
,\theta\right]  }e^{kV_{r}}\left\vert M_{r}-Y_{r}\right\vert ^{k}\\
\leq C_{\lambda,q,k}~~\bigg[\mathbb{E}^{\mathcal{F}_{\sigma}}e^{kV_{\theta}%
}\left\vert M_{\theta}-Y_{\theta}\right\vert ^{k}+\mathbb{E}^{\mathcal{F}%
_{\sigma}}\mathbb{~}\left(
{\displaystyle\int_{\sigma}^{\theta}}
e^{V_{r}}\left\vert M_{r}-Y_{r}\right\vert ^{q-2}\mathbf{1}_{q\geq2}{\Psi
}\left(  r,M_{r}\right)  dQ_{r}\right)  ^{k/q}\medskip\\
\quad+~\mathbb{E}^{\mathcal{F}_{\sigma}}\mathbb{~}\left(
{\displaystyle\int_{\sigma}^{\theta}}
{e^{qV_{r}}}\left\vert M_{r}-Y_{r}\right\vert ^{q-1}\left[  dL_{r}+\left\vert
H\left(  r,M_{r},R_{r}\right)  \right\vert dQ_{r}\right]  \right)
^{k/q}\bigg],\quad\mathbb{P}-a.s.\;
\end{array}
\label{def-11b}%
\end{equation}
and%
\begin{equation}%
\begin{array}
[c]{l}%
\mathbb{E}^{\mathcal{F}_{\sigma}}\Big(\sup\limits_{r\in\left[  \sigma
,\theta\right]  }e^{kV_{r}}\left\vert M_{r}-Y_{r}\right\vert ^{k}%
\Big)+\mathbb{E}^{\mathcal{F}_{\sigma}}~\left(
{\displaystyle\int_{\sigma}^{\theta}}
{e^{qV_{r}}\left\vert M_{r}-Y_{r}\right\vert ^{q-2}}\left\vert R_{r}%
-Z_{r}\right\vert ^{2}dr\right)  ^{k/q}\medskip\\
\quad+\mathbb{E}^{\mathcal{F}_{\sigma}}\left(
{\displaystyle\int_{\sigma}^{\theta}}
{e^{qV_{r}}\left\vert M_{r}-Y_{r}\right\vert ^{q-2}\Psi}\left(  r,Y_{r}%
\right)  dQ_{r}\right)  ^{k/q}\medskip\\
\leq C_{\lambda,q,k}~\bigg[\mathbb{E}^{\mathcal{F}_{\sigma}}~e^{kV_{\theta}%
}\left\vert M_{\theta}-Y_{\theta}\right\vert ^{k}+~\mathbb{E}^{\mathcal{F}%
_{\sigma}}~\left(
{\displaystyle\int_{\sigma}^{\theta}}
e^{V_{r}}\mathbf{1}_{q\geq2}{\Psi}\left(  r,M_{r}\right)  dQ_{r}\right)
^{k/2}\medskip\\
\quad+\mathbb{E}^{\mathcal{F}_{\sigma}}\Big(%
{\displaystyle\int_{\sigma}^{\theta}}
e^{V_{r}}\left[  dL_{r}+\left\vert H\left(  r,M_{r},R_{r}\right)  \right\vert
dQ_{r}\right]  \Big)^{q}\bigg],\;\quad\mathbb{P}-a.s.~.
\end{array}
\label{def-11c}%
\end{equation}
In particular for $\gamma=0$, $N=0$, $R=0$, $L=0$, $M=0,$ ${\Psi}\left(
r,M\right)  ={\Psi}\left(  r,0\right)  =0$ it follows $\mathbb{P}$--a.s.%
\begin{equation}
\mathbb{E}^{\mathcal{F}_{\sigma}}\sup\limits_{r\in\left[  \sigma
,\theta\right]  }e^{kV_{r}}\left\vert Y_{r}\right\vert ^{k}\leq C_{\lambda
,q,k}~~\mathbb{E}^{\mathcal{F}_{\sigma}}\left[  e^{kV_{\theta}}\left\vert
Y_{\theta}\right\vert ^{k}+\left(
{\displaystyle\int_{\sigma}^{\theta}}
{e^{qV_{r}}}\left\vert Y_{r}\right\vert ^{q-1}\left\vert H\left(
r,0,0\right)  \right\vert dQ_{r}\right)  ^{k/q}\right]  \label{def-11cc}%
\end{equation}
and%
\begin{equation}%
\begin{array}
[c]{l}%
\displaystyle\mathbb{E}^{\mathcal{F}_{\sigma}}\Big(\sup\limits_{r\in\left[
\sigma,\theta\right]  }e^{kV_{r}}\left\vert Y_{r}\right\vert ^{k}%
\Big)+\mathbb{E}^{\mathcal{F}_{\sigma}}\left(
{\displaystyle\int_{\sigma}^{\theta}}
{e^{qV_{r}}\left\vert Y_{r}\right\vert ^{q-2}}\left\vert Z_{r}\right\vert
^{2}dr\right)  ^{k/q}\medskip\\
\displaystyle\quad+\mathbb{E}^{\mathcal{F}_{\sigma}}\left(
{\displaystyle\int_{\sigma}^{\theta}}
{e^{qV_{r}}\left\vert Y_{r}\right\vert ^{q-2}\Psi}\left(  r,Y_{r}\right)
dQ_{r}\right)  ^{k/q}\medskip\\
\displaystyle\leq C_{\lambda,q}~\mathbb{E}^{\mathcal{F}_{\sigma}}\left[
e^{kV_{\theta}}\left\vert Y_{\theta}\right\vert ^{k}+\Big(%
{\displaystyle\int_{\sigma}^{\theta}}
e^{V_{r}}\left\vert H\left(  r,0,0\right)  \right\vert dQ_{r}\Big)^{k}\right]
.
\end{array}
\label{def-11ccc}%
\end{equation}

\end{proposition}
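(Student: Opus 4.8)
The plan is to derive everything from the weighted inequality (\ref{def1-b}), which already carries the bounded-variation weight $e^{qV_{r}}$ and, by the computation yielding (\ref{def1-b}), holds for the process $V$ of (\ref{defV_1}); the first task is to dispose of the generator by pushing all its contributions onto the drift $q\int_{t}^{s}e^{qV_{r}}(\Gamma_{r})^{q}dV_{r}$ sitting on the left. Concretely I would decompose $\langle M_{r}-Y_{r},N_{r}-H(r,Y_{r},Z_{r})\rangle$ into $\langle M_{r}-Y_{r},N_{r}\rangle$, the monotone increment $\langle M_{r}-Y_{r},H(r,M_{r},Z_{r})-H(r,Y_{r},Z_{r})\rangle$, the Lipschitz increment $\langle M_{r}-Y_{r},H(r,M_{r},R_{r})-H(r,M_{r},Z_{r})\rangle$, and the source $-\langle M_{r}-Y_{r},H(r,M_{r},R_{r})\rangle$, bound the first by hypothesis (iii), the second by the monotonicity line of (\ref{F, G assumpt 3}), the third by the Lipschitz line of (\ref{F, G assumpt 3}) followed by $ab\leq\tfrac{n_{q}\lambda}{2}a^{2}+\tfrac{1}{2n_{q}\lambda}b^{2}$, and keep the fourth as is. Using $dr=\alpha_{r}dQ_{r}$, $dA_{r}=(1-\alpha_{r})dQ_{r}$, $\alpha_{r}\ell_{r}^{2}dQ_{r}=\ell_{r}^{2}dr$, the bound $(\Gamma_{r})^{q-2}|M_{r}-Y_{r}|^{2}\leq(\Gamma_{r})^{q}$ and $n_{p}\leq n_{q}$, the $\mu$--, $\nu$-- and $\ell^{2}$--contributions are dominated by $q\int e^{qV_{r}}(\Gamma_{r})^{q}dV_{r}$ and cancel that drift, up to a remainder of order $\delta_{q}$ controlled by (\ref{ip-mnl}) (and equal to $0$ when $q=2$), while a strictly positive fraction $\tfrac{q\,n_{q}(1-\lambda)}{2}\int e^{qV_{r}}(\Gamma_{r})^{q-2}|R_{r}-Z_{r}|^{2}dr$ survives on the left. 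Combining this with hypotheses (i)--(ii) one arrives at a clean inequality of the schematic form
\begin{align*}
&e^{qV_{t}}(\Gamma_{t})^{q}+c\int_{t}^{s}e^{qV_{r}}(\Gamma_{r})^{q-2}|R_{r}-Z_{r}|^{2}dr+q\int_{t}^{s}e^{qV_{r}}(\Gamma_{r})^{q-2}\Psi(r,Y_{r})dQ_{r}\\
&\qquad\leq e^{qV_{s}}(\Gamma_{s})^{q}+q\int_{t}^{s}e^{qV_{r}}(\Gamma_{r})^{q-2}\mathbf{1}_{q\geq2}\Psi(r,M_{r})dQ_{r}+q\int_{t}^{s}e^{qV_{r}}(\Gamma_{r})^{q-1}\big[dL_{r}+|H(r,M_{r},R_{r})|dQ_{r}\big]\\
&\qquad\qquad-q\int_{t}^{s}e^{qV_{r}}(\Gamma_{r})^{q-2}\langle M_{r}-Y_{r},(R_{r}-Z_{r})dB_{r}\rangle.
\end{align*}

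For Part \textbf{I} one specializes to $q=2$ (so $\Gamma_{r}=|M_{r}-Y_{r}|$), takes $\mathbb{E}^{\mathcal{F}_{\sigma}}$ over $[\sigma,\theta]$ using that the stochastic integral is a (local) martingale (localizing and applying Fatou if needed), raises the resulting relation to the power $k/2$, and runs the classical circular absorption: the conditional Burkholder--Davis--Gundy inequality bounds $\mathbb{E}^{\mathcal{F}_{\sigma}}\sup|\mathrm{mart}|^{k/2}$ by $C\,\mathbb{E}^{\mathcal{F}_{\sigma}}\big(\sup e^{V_{r}}|M_{r}-Y_{r}|\cdot(\int e^{2V_{r}}|R_{r}-Z_{r}|^{2}dr)^{1/2}\big)^{k/2}$, and a Young splitting absorbs a small multiple of $\mathbb{E}^{\mathcal{F}_{\sigma}}\sup e^{kV_{r}}|M_{r}-Y_{r}|^{k}$ on the left, the source $\int e^{2V_{r}}|M_{r}-Y_{r}|[dL_{r}+|H(r,M_{r},R_{r})|dQ_{r}]$ being handled likewise as $\sup e^{V_{r}}|M_{r}-Y_{r}|$ times $\int e^{V_{r}}[dL_{r}+|H|dQ_{r}]$. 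This yields the first line of (\ref{def-11}), the second line being one further application of Young's inequality, and (\ref{def-11aa}) is the specialization $\gamma=N=R=L=M=0$, $\Psi(\cdot,0)=0$, for which $H(r,M_{r},R_{r})=H(r,0,0)$.

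For Part \textbf{II} the same clean inequality is used with general $q\in\{2,p\wedge2\}$, now under the extra integrability (\ref{def-11a}), which by Remark \ref{r-mart part} makes the stochastic integral a true martingale and legitimizes all conditional expectations below. To get (\ref{def-11b}) one discards the two nonnegative left--hand terms carrying $|R_{r}-Z_{r}|^{2}$ and $\Psi(r,Y_{r})$, passes to $\mathbb{E}^{\mathcal{F}_{\sigma}}\sup_{[\sigma,\theta]}$, raises to the power $k/q$, and again applies conditional BDG to the martingale $q\int e^{qV_{r}}(\Gamma_{r})^{q-2}\langle M_{r}-Y_{r},(R_{r}-Z_{r})dB_{r}\rangle$, whose bracket is $\leq q^{2}\sup_{r}\big(e^{qV_{r}}(\Gamma_{r})^{q}\big)\int e^{qV_{r}}(\Gamma_{r})^{q-2}|R_{r}-Z_{r}|^{2}dr$; absorbing $\tfrac12\mathbb{E}^{\mathcal{F}_{\sigma}}\sup(e^{qV_{r}}(\Gamma_{r})^{q})^{k/q}$ on the left, one is left to control $\mathbb{E}^{\mathcal{F}_{\sigma}}\big(\int e^{qV_{r}}(\Gamma_{r})^{q-2}|R_{r}-Z_{r}|^{2}dr\big)^{k/q}$, which is done by returning to the clean inequality (now keeping that term) and estimating its right side by the quantities already present, so the system closes. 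Estimate (\ref{def-11c}) then follows by adding (\ref{def-11b}) to the clean inequality kept in full, raised to the power $k/q$ and conditioned; finally, when $q<2$, letting $\delta\downarrow0$ (legitimate since (\ref{def1}) is assumed for every $\delta\in(0,1]$) replaces each $\Gamma_{r}$ by $|M_{r}-Y_{r}|$ and, by monotone convergence, turns $\int(\Gamma_{r})^{q-2}|R_{r}-Z_{r}|^{2}dr$ into $\int|M_{r}-Y_{r}|^{q-2}|R_{r}-Z_{r}|^{2}dr$, giving the stated forms; (\ref{def-11cc}) and (\ref{def-11ccc}) are once more the specialization $\gamma=N=R=L=M=0$, $\Psi(\cdot,0)=0$.

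The step I expect to be the main obstacle, as always for $L^{p}$--type estimates with $p<2$, is this interlocking absorption: one must bound $\mathbb{E}^{\mathcal{F}_{\sigma}}\sup e^{kV_{r}}|M_{r}-Y_{r}|^{k}$ and $\mathbb{E}^{\mathcal{F}_{\sigma}}\big(\int e^{qV_{r}}|M_{r}-Y_{r}|^{q-2}|R_{r}-Z_{r}|^{2}dr\big)^{k/q}$ simultaneously, while the weight $(\Gamma_{r})^{q-2}$ carries a negative exponent (which is precisely why $\delta>0$ must be retained until the very end), and every inequality has to be taken in conditional form so that the BDG and Young steps stay valid $\mathbb{P}$--a.s.; keeping exact track of which terms carry $\Gamma_{r}$ rather than $|M_{r}-Y_{r}|$, and of the exponents $q,\,q-1,\,q-2,\,k/q$, is where the bookkeeping is heaviest. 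All the elementary conditional--expectation, Burkholder--Davis--Gundy and Gronwall--type lemmas invoked above are those collected, following \cite{pa-ra/14}, in the Appendix.
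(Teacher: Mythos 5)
Your proposal is correct and follows essentially the same route as the paper: the same decomposition of the generator term, bounded by monotonicity, the $z$--Lipschitz property and Young's inequality so that the $\mu,\nu,\ell^{2}$ contributions are absorbed into the $e^{qV}$--weighted drift (the paper reaches the same ``clean'' inequality (\ref{def1-e}) via the stochastic Gronwall lemma rather than starting from (\ref{def1-b}), which is immaterial), followed by the localization/BDG/Young interlocking absorption and the passage $\delta\downarrow0$ for $q<2$. The only organizational difference is that you re-derive inline the conditional estimates that the paper delegates to Appendix Propositions \ref{an-prop-dz} and \ref{an-prop-ydz}, and you let $\delta\downarrow0$ at the end rather than before invoking them; neither affects the argument.
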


\begin{proof}
Using the monotonicity of $H:$%
\[%
\begin{array}
[c]{l}%
\displaystyle\left\langle M_{r}-Y_{r},-H\left(  r,Y_{r},Z_{r}\right)
dQ_{r}\right\rangle \medskip\\
\displaystyle=\left\langle M_{r}-Y_{r},-H\left(  r,M_{r},R_{r}\right)
dQ_{r}\right\rangle +\left\langle M_{r}-Y_{r},H\left(  r,M_{r},R_{r}\right)
-H\left(  r,Y_{r},Z_{r}\right)  dQ_{r}\right\rangle \medskip\\
\displaystyle\leq|M_{r}-Y_{r}|\left\vert H\left(  r,M_{r},R_{r}\right)
\right\vert dQ_{r}+|M_{r}-Y_{r}|^{2}dV_{r}+\dfrac{n_{p}\lambda}{2}\,\left\vert
R_{r}-Z_{r}\right\vert ^{2}ds
\end{array}
\]
we obtain from (\ref{def1}):
\[%
\begin{array}
[c]{l}%
\displaystyle\left(  \Gamma_{t}\right)  ^{q}+\dfrac{q}{2}\left(
q-1-n_{p}\lambda\right)
{\displaystyle\int_{t}^{s}}
\left(  \Gamma_{r}\right)  ^{q-2}{\Large \,}\left\vert R_{r}-Z_{r}\right\vert
^{2}dr+q\delta_{q}%
{\displaystyle\int_{t}^{s}}
\left(  \Gamma_{r}\right)  ^{q-2}dV_{r}\medskip\\
\displaystyle\quad+{q%
{\displaystyle\int_{t}^{s}}
\left(  \Gamma_{r}\right)  ^{q-2}\Psi}\left(  r,Y_{r}\right)  dQ_{r}\medskip\\
\displaystyle\leq\left(  \Gamma_{s}\right)  ^{q}+q%
{\displaystyle\int_{t}^{s}}
\left(  \Gamma_{r}\right)  ^{q}dV_{r}+{q%
{\displaystyle\int_{t}^{s}}
\left(  \Gamma_{r}\right)  ^{q-2}\mathbf{1}_{q\geq2}{\Psi}\left(
r,M_{r}\right)  }dQ_{r}\medskip\\
\displaystyle\quad+q%
{\displaystyle\int_{t}^{s}}
\left(  \Gamma_{r}\right)  ^{q-2}\left\vert M_{r}-Y_{r}\right\vert \left[
dL_{r}+\left\vert H\left(  r,M_{r},R_{r}\right)  \right\vert dQ_{r}\right]  -q%
{\displaystyle\int_{t}^{s}}
\left(  \Gamma_{r}\right)  ^{q-2}\,\langle M_{r}-Y_{r},\left(  R_{r}%
-Z_{r}\right)  dB_{r}\rangle.
\end{array}
\]
for all $0\leq t\leq s\leq T.$

Since $p>1$ and $q\in\left\{  2,p\wedge2\right\}  $, then $n_{p}=\left(
p-1\right)  \wedge1\leq q-1~$and%
\[
\left(  q-1\right)  \left(  1-\lambda\right)  \leq\left(  q-1-n_{p}%
\lambda\right)  .
\]
Applying a Gronwall's type stochastic inequality (see Lemma 12 from the
Appendix of \cite{ma-ra/07}) we conclude that%
\begin{equation}%
\begin{array}
[c]{l}%
\displaystyle e^{qV_{t}}\left(  \Gamma_{t}\right)  ^{q}+\dfrac{q}{2}\left(
q-1\right)  \left(  1-\lambda\right)
{\displaystyle\int_{t}^{s}}
e^{qV_{r}}\left(  \Gamma_{r}\right)  ^{q-2}{\Large \,}\left\vert R_{r}%
-Z_{r}\right\vert ^{2}dr+q~\delta_{q}%
{\displaystyle\int_{t}^{s}}
e^{qV_{r}}\left(  \Gamma_{r}\right)  ^{q-2}dV_{r}\medskip\\
\displaystyle\quad+{q%
{\displaystyle\int_{t}^{s}}
e^{qV_{r}}\left(  \Gamma_{r}\right)  ^{q-2}\Psi}\left(  r,Y_{r}\right)
dQ_{r}\medskip\\
\displaystyle\leq e^{qV_{s}}\left(  \Gamma_{s}\right)  ^{q}+{q%
{\displaystyle\int_{t}^{s}}
\left(  \Gamma_{r}\right)  ^{q-2}\mathbf{1}_{q\geq2}{\Psi}\left(
r,M_{r}\right)  }dQ_{r}\medskip\\
\displaystyle\quad+q%
{\displaystyle\int_{t}^{s}}
e^{qV_{r}}\left(  \Gamma_{r}\right)  ^{q-2}\left\vert M_{r}-Y_{r}\right\vert
\left[  dL_{r}+\left\vert H\left(  r,M_{r},R_{r}\right)  \right\vert
dQ_{r}\right]  \medskip\\
\displaystyle\quad-q%
{\displaystyle\int_{t}^{s}}
e^{qV_{r}}\left(  \Gamma_{r}\right)  ^{q-2}\,\langle M_{r}-Y_{r},\left(
R_{r}-Z_{r}\right)  dB_{r}\rangle.
\end{array}
\label{def1-e}%
\end{equation}
\noindent\textbf{I.} Writing (\ref{def1-e}) for $q=2$ we get%
\[%
\begin{array}
[c]{l}%
\displaystyle e^{2V_{t}}\left\vert M_{t}-Y_{t}\right\vert ^{2}+\left(
1-\lambda\right)  {%
{\displaystyle\int_{t}^{s}}
}e^{2V_{r}}\left\vert R_{r}-Z_{r}\right\vert ^{2}dr+{2%
{\displaystyle\int_{t}^{s}}
}e^{2V_{r}}{\Psi}\left(  r,Y_{r}\right)  dQ_{r}\medskip\\
\displaystyle\leq e^{2V_{s}}\left\vert M_{s}-Y_{s}\right\vert ^{2}+{2%
{\displaystyle\int_{t}^{s}}
{\Psi}\left(  r,M_{r}\right)  }dQ_{r}+{2%
{\displaystyle\int_{t}^{s}}
}e^{2V_{r}}\left\vert M_{r}-Y_{r}\right\vert \left[  dL_{r}+\left\vert
H\left(  r,M_{r},R_{r}\right)  \right\vert dQ_{r}\right]  \medskip\\
\displaystyle\quad-2%
{\displaystyle\int_{t}^{s}}
\,e^{2V_{r}}\langle M_{r}-Y_{r},\left(  R_{r}-Z_{r}\right)  dB_{r}%
\rangle,\;\mathbb{P}\text{--a.s..}%
\end{array}
\]
for all $0\leq t\leq s\leq T,$ that yields (\ref{def-11}) by Proposition
\ref{an-prop-dz} from Appendix.

\noindent\textbf{II.} Using Fatou's Lemma, Lebesgue dominated convergence
theorem and the continuity in probability of the stochastic integral we
clearly deduce from (\ref{def1-e}), as $\delta\rightarrow0_{+}$, that:%
\begin{equation}%
\begin{array}
[c]{l}%
\displaystyle e^{qV_{t}}\left\vert M_{t}-Y_{t}\right\vert ^{q}+\dfrac{q}%
{2}\left(  q-1\right)  \left(  1-\lambda\right)  {\int\nolimits_{t}^{s}%
}e^{qV_{r}}\left\vert M_{r}-Y_{r}\right\vert ^{q-2}{\Large \,}\left\vert
R_{r}-Z_{r}\right\vert ^{2}dr\medskip\\
\displaystyle\quad+{q\int\nolimits_{t}^{s}e^{qV_{r}}\left\vert M_{r}%
-Y_{r}\right\vert ^{q-2}\Psi}\left(  r,Y_{r}\right)  dQ_{r}\medskip\\
\displaystyle\leq e^{qV_{s}}\left\vert M_{s}-Y_{s}\right\vert ^{q}%
+{q\int\nolimits_{t}^{s}\left\vert M_{r}-Y_{r}\right\vert ^{q-2}%
{\Large \,}\mathbf{1}_{q\geq2}{\Psi}\left(  r,M_{r}\right)  }dQ_{r}\medskip\\
\displaystyle\quad+q{\int\nolimits_{t}^{s}}e^{qV_{r}}\left\vert M_{r}%
-Y_{r}\right\vert ^{q-1}\left[  dL_{r}+\left\vert H\left(  r,M_{r}%
,R_{r}\right)  \right\vert dQ_{r}\right]  \medskip\\
\displaystyle\quad-q{\int\nolimits_{t}^{s}}e^{qV_{r}}\,\langle\left\vert
M_{r}-Y_{r}\right\vert ^{q-2}\left(  M_{r}-Y_{r}\right)  ,\left(  R_{r}%
-Z_{r}\right)  dB_{r}\rangle.
\end{array}
\label{def1-f}%
\end{equation}

Using Proposition \ref{an-prop-ydz} from inequality (\ref{def1-f}) we get
(\ref{def-11b}) and (\ref{def-11c}).\hfill
\end{proof}

\subsection{Uniqueness and Continuity}

\begin{theorem}
\label{uniq}Let $p>1$, $q=p\wedge2$ and the assumptions $(\mathrm{A}%
_{1}-\mathrm{A}_{5})$ be satisfied. Then the backward stochastic variational
inequality (\ref{GBSVI 2}) has at most solution $\left(  Y,Z\right)  $ in the
sense of Definition \ref{definition_weak solution}.

Moreover, if $(\hat{Y},\hat{Z})$ and $(\tilde{Y},\tilde{Z})$ are two $L^{p}%
-$variational solutions of (\ref{GBSVI 2}) corresponding to $(\hat{\eta}%
,\hat{H})$ and $(\tilde{\eta},\tilde{H})$ respectively, where $\hat{H}$ and
$\tilde{H}$ have the same coefficients $\mu,\nu,\ell$ (there are constants
functions), then for any stopping times $\sigma,\theta,$ such that
$0\leq\sigma\leq\theta\leq T,$ it holds, $\mathbb{P}$--a.s.,%
\begin{equation}%
\begin{array}
[c]{l}%
^{qV_{\sigma}}|\hat{Y}_{\sigma}-\tilde{Y}_{\sigma}|^{q}+c_{q,\lambda}%
\times{\mathbb{E}}^{\mathcal{F}_{\sigma}}{%
{\displaystyle\int_{\sigma}^{\theta}}
}e^{qV_{r}}\dfrac{|\hat{Z}_{r}-\tilde{Z}_{r}|^{2}}{\big(|\hat{Y}_{r}-\tilde
{Y}_{r}|+1\big)^{2-q}}dr\medskip\\
\leq{\mathbb{E}}^{\mathcal{F}_{\sigma}}~e^{qV_{\theta}}|\hat{Y}_{\theta
}-\tilde{Y}_{\theta}|^{q}+q\,{\mathbb{E}}^{\mathcal{F}_{\sigma}}{%
{\displaystyle\int_{\sigma}^{\theta}}
}e^{qV_{r}}|\hat{Y}_{r}-\tilde{Y}_{r}|^{q-1}\big|\hat{H}(r,\hat{Y}_{r},\hat
{Z}_{r})-\tilde{H}(r,\hat{Y}_{r},\hat{Z}_{r})\big|dQ_{r}\medskip\\
\leq\mathbb{E}^{\mathcal{F}_{\sigma}}~e^{qV_{\theta}}|\hat{Y}_{\theta}%
-\tilde{Y}_{\theta}|^{q}+M_{\sigma}\,\left[  \mathbb{E}^{\mathcal{F}_{\sigma}%
}\left(  {%
{\displaystyle\int_{\sigma}^{\theta}}
}e^{V_{r}}\big|\hat{H}(r,\hat{Y}_{r},\hat{Z}_{r})-\tilde{H}(r,\hat{Y}_{r}%
,\hat{Z}_{r})\big|dQ_{r}\right)  ^{q}\right]  ^{1/q},
\end{array}
\label{cont-1}%
\end{equation}
where%
\begin{align*}
M_{\sigma}  &  =C_{q,\lambda}\,\bigg[{\mathbb{E}}^{\mathcal{F}_{\sigma}%
}\left(  e^{qV_{T}}|\hat{\eta}|^{q}+~\Big(%
{\displaystyle\int_{0}^{T}}
e^{V_{r}}\big|\hat{H}\left(  r,0,0\right)  \big|dQ_{r}\Big)^{q}\right) \\
&  \quad+{\mathbb{E}}^{\mathcal{F}_{\sigma}}\left(  e^{qV_{T}}|\tilde{\eta
}|^{q}+\Big(%
{\displaystyle\int_{0}^{T}}
e^{V_{r}}\big|\tilde{H}\left(  r,0,0\right)  \big|dQ_{r}\Big)^{q}\right)
\bigg]^{\left(  q-1\right)  /q}%
\end{align*}
and $c_{q,\lambda},$ $C_{q,\lambda}$ are positive constants depending only $q$
and $\lambda.\medskip$

Moreover, for all $0<\alpha<1$%
\begin{equation}%
\begin{array}
[c]{l}%
\mathbb{E}\sup\limits_{t\in\left[  0,T\right]  }e^{\alpha qV_{t}}|\hat{Y}%
_{t}-\tilde{Y}_{t}|^{\alpha q}+\left(  \mathbb{E}%
{\displaystyle\int_{0}^{T}}
\dfrac{1}{\left(  e^{V_{r}}|\hat{Y}_{r}-\tilde{Y}_{r}|+1\right)  ^{2-q}%
}\,e^{2V_{r}}|\hat{Z}_{r}-\tilde{Z}_{r}|^{2}dr\right)  ^{\alpha}\medskip\\
\quad\leq C_{\alpha,q,\lambda}\left[  \mathbb{E}e^{qV_{T}}|\hat{\eta}%
-\tilde{\eta}|^{q}+K\,\left(  {\mathbb{E}}\left(
{\displaystyle\int_{0}^{T}}
e^{V_{r}}\big|\hat{H}(r,\hat{Y}_{r},\hat{Z}_{r})-\tilde{H}(r,\hat{Y}_{r}%
,\hat{Z}_{r})\big|dQ_{r}\right)  ^{q}\right)  ^{1/q}\right]  ^{\alpha}%
\end{array}
\label{cont-2}%
\end{equation}
where%
\begin{align*}
K  &  =\bigg[{\mathbb{E}}\left(  e^{qV_{T}}|\hat{\eta}|^{q}+~\Big(%
{\displaystyle\int_{0}^{T}}
e^{V_{r}}\big|\hat{H}\left(  r,0,0\right)  \big|dQ_{r}\Big)^{q}\right) \\
&  \quad+{\mathbb{E}}\left(  e^{qV_{T}}|\tilde{\eta}|^{q}+\Big(%
{\displaystyle\int_{0}^{T}}
e^{V_{r}}\big|\tilde{H}\left(  r,0,0\right)  \big|dQ_{r}\Big)^{q}\right)
\bigg]^{\left(  q-1\right)  /q}.
\end{align*}
and $C_{\alpha,q,\lambda}$ is a positive constant depending only on $\left(
\alpha,q,\lambda\right)  .$
\end{theorem}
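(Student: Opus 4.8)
The plan is to prove the stability estimate (\ref{cont-1}) first, and then read off the other two conclusions from it: uniqueness is the case $(\hat\eta,\hat H)=(\tilde\eta,\tilde H)$, where the right-hand side of (\ref{cont-1}) vanishes and hence $\hat Y=\tilde Y$ and $\hat Z=\tilde Z$; and the supremum estimate (\ref{cont-2}) follows from (\ref{cont-1}) together with the maximal-inequality machinery of Proposition \ref{an-prop-ydz} of the Appendix, which turns a family of $\mathcal{F}_\sigma$-conditional bounds indexed by stopping times into an estimate for the $\alpha$-moment, $0<\alpha<1$, of the running supremum (a Lenglart/Garsia--Neveu type passage). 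So the whole weight of the theorem sits on (\ref{cont-1}).

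First I would fix the two $L^{p}$-variational solutions. Since $\hat H$ and $\tilde H$ carry the same coefficients $\mu,\nu,\ell$, both generate the one process $V$ of (\ref{defV_1}); both pairs satisfy (\ref{def0-1})--(\ref{def0-2}); and, applying Proposition \ref{p-estim} with $M=0$, both satisfy the a priori bounds (\ref{def-11cc})--(\ref{def-11ccc}) that make $M_\sigma$ and $K$ finite. The comparison itself is obtained by using, in the variational inequality (\ref{def1}) written for $\hat Y$, a test process $M\in S^0_m(\gamma,N,R;V)$ that approximates $\tilde Y$, and symmetrically a test process approximating $\hat Y$ in the inequality for $\tilde Y$. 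Concretely: via the martingale representation \cite[Corollary 2.45]{pa-ra/14} one builds, for each $\varepsilon>0$, Itô-form processes $M^\varepsilon$ whose terminal value is $J_\varepsilon(\tilde\eta)$ and whose drift is a smoothing of $\tilde H(\cdot,\tilde Y_\cdot,\tilde Z_\cdot)$, designed so that $M^\varepsilon\to\tilde Y$ in the $e^{qV}$-weighted norm of $S^q_m$, $R^\varepsilon\to\tilde Z$ in the corresponding $e^{2V}$-weighted norm, and, most importantly, $\Psi(\cdot,M^\varepsilon)\le\Psi_\varepsilon(\cdot,\tilde Y)\uparrow\Psi(\cdot,\tilde Y)$ in $L^1(dQ)$, where $\Psi_\varepsilon$ is the Moreau--Yosida regularization of $\Psi$; likewise for $\hat Y$.

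Now I would add the two inequalities (in their $e^{qV}$-multiplied, Gronwall-reduced form, as in the proof of Proposition \ref{p-estim}) and pass to the limit $\varepsilon\to0$. The two left-hand terms $q\int(\Gamma_r)^{q-2}\Psi(r,\hat Y_r)dQ_r$ and $q\int(\Gamma_r)^{q-2}\Psi(r,\tilde Y_r)dQ_r$ cancel exactly against the right-hand terms $q\int(\Gamma_r)^{q-2}\Psi(r,M^\varepsilon_r)dQ_r$ in the limit: this convexity/subdifferential cancellation is the very reason the variational formulation is self-contained, and it is also why the $\mathbf{1}_{q\ge2}$-restriction present in Proposition \ref{p-estim} can be bypassed here. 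In the cross terms $\langle M_r-Y_r,N_r-H(r,Y_r,Z_r)\rangle dQ_r$, the pieces coming from $\tilde H(r,\tilde Y_r,\tilde Z_r)-\tilde H(r,\hat Y_r,\hat Z_r)$ and $\hat H(r,\hat Y_r,\hat Z_r)-\hat H(r,\tilde Y_r,\tilde Z_r)$ are absorbed, using $dr=\alpha_r dQ_r$, $dA_r=(1-\alpha_r)dQ_r$ and Young's inequality calibrated to $n_p\lambda$, into $|\hat Y_r-\tilde Y_r|^2dV_r$ plus a positive fraction (depending on $q,\lambda$) of $\int(\Gamma_r)^{q-2}|\hat Z_r-\tilde Z_r|^2dr$; the $|\hat Y-\tilde Y|^2dV_r$ part is killed by the factor $e^{qV_t}$, and what survives of the cross term is exactly the genuine mismatch $\hat H(r,\hat Y_r,\hat Z_r)-\tilde H(r,\hat Y_r,\hat Z_r)$, which is the second term in (\ref{cont-1}). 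The stochastic-integral terms are true martingales by Remark \ref{r-mart part}, so a conditional expectation followed by the stochastic Gronwall Lemma 12 of \cite{ma-ra/07} gives the first inequality of (\ref{cont-1}) once $\delta$ is sent to $0$ in the places where this is legitimate (not in the $|\hat Z-\tilde Z|^2$ integrand, where keeping $\delta_q\le1$ produces the $+1$ in the denominator, exactly as in (\ref{w-to-s})). The second and third inequalities of (\ref{cont-1}) are then Young's inequality combined with the a priori bounds (\ref{def-11cc})--(\ref{def-11ccc}).

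I expect the genuine difficulty to be the construction in the second paragraph: manufacturing admissible Itô-form test processes $M^\varepsilon$ that approximate a variational solution $\tilde Y$ — which is not a priori a semimartingale — while simultaneously controlling the diffusion part ($R^\varepsilon\to\tilde Z$) and, above all, the energy $\Psi(\cdot,M^\varepsilon)\to\Psi(\cdot,\tilde Y)$ in $L^1(dQ)$. The last requirement is the delicate one, because $\Psi(t,\cdot)$ is only lower semicontinuous, so crude convergence $M^\varepsilon\to\tilde Y$ gives only the wrong-way inequality $\liminf\Psi(\cdot,M^\varepsilon)\ge\Psi(\cdot,\tilde Y)$; one has to route through $\Psi_\varepsilon$ and the resolvent $J_\varepsilon$, exploiting $\Psi(J_\varepsilon y)\le\Psi_\varepsilon(y)\le\Psi(y)$ and $\Psi_\varepsilon(y)\uparrow\Psi(y)$ (the one-sided bounds of the type recorded in (\ref{minimum point}) and (\ref{fi-lip})). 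Everything after that is the quadratic-variation bookkeeping already rehearsed in Proposition \ref{p-estim}.
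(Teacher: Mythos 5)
Your overall architecture (prove the stability estimate first, read off uniqueness, then pass to (\ref{cont-2}) by a conditional-bound-to-$\alpha$-moment argument — which in the paper is Proposition \ref{an-prop-yd}, not \ref{an-prop-ydz}) matches the paper, but the central construction you propose has a genuine gap. You want to test the variational inequality of $\hat{Y}$ with It\^{o}-form processes $M^{\varepsilon}$ whose terminal value is $J_{\varepsilon}(\tilde{\eta})$ and whose drift is a smoothing of $\tilde{H}(\cdot,\tilde{Y},\tilde{Z})$, "designed so that $M^{\varepsilon}\rightarrow\tilde{Y}$ and $R^{\varepsilon}\rightarrow\tilde{Z}$." Such processes cannot converge to $\tilde{Y}$: their limit is the solution of the \emph{unreflected} BSDE with data $\big(\tilde{\eta},\tilde{H}(\cdot,\tilde{Y},\tilde{Z})\big)$ (this is exactly the construction of Remark \ref{s-w}, which recovers $\tilde{Y}$ only when $\varphi=\psi=0$), and the missing reflection measure $d\tilde{K}$ is precisely the object that the variational formulation does not give you. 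If instead you smooth $\tilde{Y}$ itself (so the drift is $\frac{1}{Q_{\varepsilon}}(\tilde{Y}-M^{\varepsilon})$, as in Proposition \ref{p1-approx}), you lose any control of the martingale part — requiring $R^{\varepsilon}\rightarrow\tilde{Z}$ would amount to already knowing that $\tilde{Y}$ is a semimartingale with martingale part $\int\tilde{Z}dB$, i.e.\ a strong solution — and, worse, in your symmetric "cross-testing" scheme the drift terms do not cancel: writing $M^{\varepsilon}_{r}-\hat{Y}_{r}=(\tilde{Y}_{r}-\hat{Y}_{r})-(\tilde{Y}_{r}-M^{\varepsilon}_{r})$, the term $\frac{1}{Q_{\varepsilon}}\langle\tilde{Y}_{r}-\hat{Y}_{r},\tilde{Y}_{r}-M^{\varepsilon}_{r}\rangle$ has no sign and does not vanish as $\varepsilon\rightarrow0$ (in the limit it reintroduces the pairing of $\tilde{Y}-\hat{Y}$ with the reflection of $\tilde{Y}$, which you would then need the strong formulation to control). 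So the announced cancellation of the $\Psi$-terms and absorption of the cross terms does not go through with your test processes.

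The paper's resolution is different and is the key idea you are missing: one uses a \emph{single} test process in both inequalities, namely the exponential $dQ$-smoothing $M^{\varepsilon}$ of the \emph{midpoint} $Y=\frac{1}{2}(\hat{Y}+\tilde{Y})$ (Proposition \ref{p1-approx}), whose drift is $N^{\varepsilon}_{r}=\frac{1}{Q_{\varepsilon}}(Y_{r}-M^{\varepsilon}_{r})$. Adding the two inequalities (\ref{def1d}), the drift contributions $(\hat{\Gamma}^{\varepsilon}_{r})^{q-2}\langle M^{\varepsilon}_{r}-\hat{Y}_{r},N^{\varepsilon}_{r}\rangle+(\tilde{\Gamma}^{\varepsilon}_{r})^{q-2}\langle M^{\varepsilon}_{r}-\tilde{Y}_{r},N^{\varepsilon}_{r}\rangle$ are nonpositive by the elementary inequality $\big[(a^{2}+\delta)^{\beta}a-(b^{2}+\delta)^{\beta}b\big](a-b)\geq0$ (inequality (\ref{uniq1c})), and the $\Psi(r,M^{\varepsilon}_{r})$ terms are handled not by an "exact cancellation" against $\Psi(r,\tilde{Y}_{r})$ but by convexity, $2\Psi(r,Y_{r})\leq\Psi(r,\hat{Y}_{r})+\Psi(r,\tilde{Y}_{r})$, after passing to the limit with Proposition \ref{p1-approx}-(3); no approximation of $\tilde{Z}$ or of the reflection is ever needed, since the $Z$-difference is extracted from the lower bound (\ref{uniq1b}) with the auxiliary parameter $\beta$. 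The rest (localization by the stopping times $\alpha_{k}$, the successive limits $\varepsilon\rightarrow0$, $\beta\rightarrow0$, $\delta\rightarrow0$, $i,k\rightarrow\infty$, then the monotonicity/Lipschitz absorption of $\hat{H}-\tilde{H}$ along $\hat{Y},\hat{Z}$) is the bookkeeping you describe, but without the midpoint device your plan stalls at the very first substantive step.
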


\begin{proof}
Let $(\hat{Y},\hat{Z})$ and $(\tilde{Y},\tilde{Z})$ be two $L^{p}-$variational
solutions of (\ref{GBSVI 2}) corresponding to $(\hat{\eta},\hat{H})$ and
$(\tilde{\eta},\tilde{H})$ respectively, where $\hat{H}$ and $\tilde{H}$ have
the same constants functions $\mu,\nu,\ell$ as monotonicity and Lipschitz
coefficients. Let $M\in S_{m}^{q}\left(  \gamma,N,R;V\right)  $ and
\[
\hat{\Gamma}_{t}=\left(  |M_{t}-\hat{Y}_{t}|^{2}+\delta_{q}\right)
^{1/2}\quad\text{and}\quad\tilde{\Gamma}_{t}=\left(  |M_{t}-\tilde{Y}_{t}%
|^{2}+\delta_{q}\right)  ^{1/2}%
\]
Let the stopping times $0\leq\sigma\leq\theta\leq T$ such that
\[
\mathbb{E}\left(
{\displaystyle\int_{\sigma}^{\theta}}
{e^{V_{r}}}\left[  |\hat{H}(r,\hat{Y}_{r},\hat{Z}_{r})|+|\tilde{H}(r,\tilde
{Y}_{r},\tilde{Z}_{r})|\right]  dQ_{r}\right)  ^{q}<\infty
\]
From (\ref{def1d}) we deduce that $\mathbb{P}-a.s.$%
\begin{equation}%
\begin{array}
[c]{l}%
\left[  e^{qV_{\sigma}}(\hat{\Gamma}_{\sigma})^{q}+e^{qV_{\sigma}}%
(\tilde{\Gamma}_{\sigma})^{q}\right]  +q\,\mathbb{E}^{\mathcal{F}_{\sigma}}%
{\displaystyle\int_{\sigma}^{\theta}}
e^{qV_{r}}\left[  (\hat{\Gamma}_{r})^{q}+(\tilde{\Gamma}_{r})^{q}\right]
dV_{r}\\
\quad+\dfrac{q\left(  q-1\right)  }{2}\,\mathbb{E}^{\mathcal{F}_{\sigma}}{%
{\displaystyle\int_{\sigma}^{\theta}}
}e^{qV_{r}}\left[  (\hat{\Gamma}_{r})^{q-2}\left\vert R_{r}-\hat{Z}%
_{r}\right\vert ^{2}+(\tilde{\Gamma}_{r})^{q-2}\left\vert R_{r}-\tilde{Z}%
_{r}\right\vert ^{2}\right]  dr\medskip\\
\quad+{q\mathbb{E}^{\mathcal{F}_{\sigma}}%
{\displaystyle\int_{\sigma}^{\theta}}
}e^{qV_{r}}\left[  (\hat{\Gamma}_{r})^{q-2}{\Psi}\left(  r,\hat{Y}_{r}\right)
+(\tilde{\Gamma}_{r})^{q-2}{\Psi}\left(  r,\tilde{Y}_{r}\right)  \right]
dQ_{r}\medskip\\
\leq\mathbb{E}^{\mathcal{F}_{\sigma}}\left[  e^{qV_{\theta}}(\hat{\Gamma
}_{\theta})^{q}+e^{qV_{\theta}}(\tilde{\Gamma}_{\theta})^{q}\right]
+{q\mathbb{E}^{\mathcal{F}_{\sigma}}~%
{\displaystyle\int_{\sigma}^{\theta}}
}e^{qV_{r}}\left[  (\hat{\Gamma}_{r})^{q-2}+(\tilde{\Gamma}_{r})^{q-2}\right]
{\Psi}\left(  r,M_{r}\right)  dQ_{r}\medskip\\
\quad+{q~\mathbb{E}^{\mathcal{F}_{\sigma}}%
{\displaystyle\int_{\sigma}^{\theta}}
}e^{qV_{r}}(\hat{\Gamma}_{r})^{q-2}\langle M_{r}-\hat{Y}_{r},N_{r}-\hat
{H}(r,\hat{Y}_{r},\hat{Z}_{r})\rangle dQ_{r}\\
\quad+{q~\mathbb{E}^{\mathcal{F}_{\sigma}}%
{\displaystyle\int_{\sigma}^{\theta}}
}e^{qV_{r}}(\tilde{\Gamma}_{r})^{q-2}\langle M_{r}-\tilde{Y}_{r},N_{r}%
-\tilde{H}(r,\tilde{Y}_{r},\tilde{Z}_{r})\rangle dQ_{r}\,.
\end{array}
\label{uniq1}%
\end{equation}
Let $Y_{r}=\dfrac{1}{2}\left(  \hat{Y}_{r}+\tilde{Y}_{r}\right)  .$ We have
for all $\beta>0$%
\begin{align*}
|M-\hat{Y}|^{2}  &  \leq\frac{1+\beta}{\beta}\,|M-Y|^{2}+\frac{1+\beta}%
{4}\,|\hat{Y}-\tilde{Y}|^{2},\quad\text{and}\\
|M-\tilde{Y}|^{2}  &  \leq\frac{1+\beta}{\beta}\,|M-Y|^{2}+\frac{1+\beta}%
{4}\,|\hat{Y}-\tilde{Y}|^{2}%
\end{align*}
and taking in account that $1\leq q\leq2$ then%
\begin{align*}
&  (\hat{\Gamma})^{q-2}|R-\hat{Z}|^{2}+(\tilde{\Gamma})^{q-2}|R-\tilde{Z}%
|^{2}\\
&  =\left(  |M-\hat{Y}|^{2}+\delta_{q}\right)  ^{\left(  q-2\right)
/2}{\Large \,}|R-\hat{Z}|^{2}+\left(  |M-\tilde{Y}|^{2}+\delta_{q}\right)
^{\left(  q-2\right)  /2}{\Large \,}|R-\tilde{Z}|^{2}\\
&  \geq\left[  \frac{1+\beta}{\beta}\,|M-Y|^{2}+\frac{1+\beta}{4}\,|\hat
{Y}-\tilde{Y}|^{2}+\delta_{q}\right]  ^{\left(  q-2\right)  /2}\left[
|R-\hat{Z}|^{2}+|R-\tilde{Z}|^{2}\right] \\
&  \geq\frac{1}{2}\left[  \frac{1+\beta}{\beta}\,|M-Y|^{2}+\frac{1+\beta}%
{4}\,|\hat{Y}-\tilde{Y}|^{2}+\delta_{q}\right]  ^{\left(  q-2\right)  /2}%
|\hat{Z}-\tilde{Z}|^{2}.
\end{align*}
Hence%
\begin{equation}
(\hat{\Gamma})^{q-2}|R-\hat{Z}|^{2}+(\tilde{\Gamma})^{q-2}|R-\tilde{Z}%
|^{2}\geq\frac{1}{2}\left[  \frac{1+\beta}{\beta}\,|M-Y|^{2}+\frac{1+\beta}%
{4}\,|\hat{Y}-\tilde{Y}|^{2}+\delta_{q}\right]  ^{\left(  q-2\right)  /2}%
|\hat{Z}-\tilde{Z}|^{2}. \label{uniq1b}%
\end{equation}
Let $0<\varepsilon\leq1$ and%
\[
M_{t}^{\varepsilon}=\mathbb{E}^{\mathcal{F}_{t}}%
{\displaystyle\int_{t\vee\varepsilon}^{\infty}}
\dfrac{1}{Q_{\varepsilon}}\,e^{-\frac{Q_{r}-Q_{t\vee\varepsilon}%
}{Q_{\varepsilon}}}\,Y_{r}\,dQ_{r}\,,\quad t\in\left[  0,T\right]  .
\]
with $Y_{r}=\frac{\hat{Y}_{r}+\tilde{Y}_{r}}{2}\,.$

Then by Proposition \ref{p1-approx} $\left(  M^{\varepsilon},R^{\varepsilon
}\right)  \in S_{m}^{p}\times\Lambda_{m\times k}^{p}$ is the unique solution
of the BSDE:
\[
\left\{
\begin{array}
[c]{l}%
M_{t}^{\varepsilon}=M_{T}^{\varepsilon}+{%
{\displaystyle\int_{t}^{T}}
}1_{[\varepsilon,\infty)}\left(  r\right)  \dfrac{1}{Q_{\varepsilon}}\left(
Y_{r}-M_{r}^{\varepsilon}\right)  dQ_{r}-{%
{\displaystyle\int_{t}^{T}}
}R_{r}^{\varepsilon}dB_{r}\,,\quad t\in\left[  0,T\right]  ,\medskip\\
\lim\nolimits_{T\rightarrow\infty}\mathbb{E}~\left\vert M_{T}^{\varepsilon
}-\xi_{T}\right\vert ^{p}=0.
\end{array}
\right.
\]
and%
\[%
\begin{array}
[c]{ll}%
\left(  a\right)   & \left\vert M_{t}^{\varepsilon}\right\vert \leq
\mathbb{E}^{\mathcal{F}_{t}}~\sup\limits_{r\geq0}\left\vert Y_{r}\right\vert
,\quad\text{a.s., for all }t\in\left[  0,T\right]  ,\medskip\\
\left(  b\right)   & \lim\limits_{\varepsilon\rightarrow0}M_{t}^{\varepsilon
}=Y_{t}~,\;\;\mathbb{P}-a.s.,\;\text{for all }t\in\left[  0,T\right]
.\medskip\\
\left(  c\right)   & \lim\limits_{\varepsilon\rightarrow0}\mathbb{E}%
\sup\limits_{t\in\left[  0,T\right]  }\left\vert M_{t}^{\varepsilon}%
-Y_{t}\right\vert ^{p}=0.
\end{array}
\]
We replace $M$ by $M^{\varepsilon}$ in (\ref{uniq1}) and remark that%
\[%
\begin{array}
[c]{l}%
\displaystyle\langle M_{r}^{\varepsilon}-\hat{Y}_{r},N_{r}^{\varepsilon
}\rangle=\langle M_{r}^{\varepsilon}-\hat{Y}_{r},\frac{1}{Q_{\varepsilon}%
}\left(  Y_{r}-M_{r}^{\varepsilon}\right)  \rangle=\frac{1}{2Q_{\varepsilon}%
}\,\langle M_{r}^{\varepsilon}-\hat{Y}_{r},(\hat{Y}_{r}-M_{r}^{\varepsilon
})+(\tilde{Y}_{r}-M_{r}^{\varepsilon})\rangle\medskip\\
\displaystyle\leq\frac{1}{2Q_{\varepsilon}}\,\left[  -|M_{r}^{\varepsilon
}-\hat{Y}_{r}|^{2}+|M_{r}^{\varepsilon}-\hat{Y}_{r}||M_{r}^{\varepsilon
}-\tilde{Y}_{r}|\right]  =-\frac{1}{2Q_{\varepsilon}}\,\left[  |M_{r}%
^{\varepsilon}-\hat{Y}_{r}|-|M_{r}^{\varepsilon}-\tilde{Y}_{r}|\right]
|M_{r}^{\varepsilon}-\hat{Y}_{r}|
\end{array}
\]
and similar%
\[
\langle M_{r}^{\varepsilon}-\tilde{Y}_{r},N_{r}^{\varepsilon}\rangle\leq
\frac{1}{2Q_{\varepsilon}}\,\left[  |M_{r}^{\varepsilon}-\hat{Y}_{r}%
|-|M_{r}^{\varepsilon}-\tilde{Y}_{r}|\right]  |M_{r}^{\varepsilon}-\tilde
{Y}_{r}|.
\]
Hence%
\begin{equation}%
\begin{array}
[c]{l}%
\left[  (\hat{\Gamma}_{r}^{\varepsilon})^{q-2}\langle M_{r}^{\varepsilon}%
-\hat{Y}_{r},N_{r}^{\varepsilon}\rangle+(\tilde{\Gamma}_{r}^{\varepsilon
})^{q-2}\langle M_{r}^{\varepsilon}-\tilde{Y}_{r},N_{r}^{\varepsilon}%
\rangle\right]  \medskip\\
=\left(  |M_{r}^{\varepsilon}-\hat{Y}_{r}|^{2}+\delta_{q}\right)  ^{\left(
q-2\right)  /2}\langle M_{r}^{\varepsilon}-\hat{Y}_{r},N_{r}^{\varepsilon
}\rangle+\left(  |M_{r}^{\varepsilon}-\tilde{Y}_{r}|^{2}+\delta_{q}\right)
^{\left(  q-2\right)  /2}\langle M_{r}^{\varepsilon}-\tilde{Y}_{r}%
,N_{r}^{\varepsilon}\rangle\medskip\\
\leq\dfrac{-1}{2Q_{\varepsilon}}\,\left[  \left(  |M_{r}^{\varepsilon}-\hat
{Y}_{r}|^{2}+\delta_{q}\right)  ^{\left(  q-2\right)  /2}|M_{r}^{\varepsilon
}-\hat{Y}_{r}|-\left(  |M_{r}^{\varepsilon}-\tilde{Y}_{r}|^{2}+\delta
_{q}\right)  ^{\left(  q-2\right)  /2}|M_{r}^{\varepsilon}-\tilde{Y}%
_{r}|\right]  \medskip\\
\quad\cdot\left[  |M_{r}^{\varepsilon}-\hat{Y}_{r}|-|M_{r}^{\varepsilon
}-\tilde{Y}_{r}|\right]  \leq0
\end{array}
\label{uniq1c}%
\end{equation}
since for all $a,b\geq0$, $\delta\geq0$ and $\beta\geq-1/2$ we have%
\[
\left[  \left(  a^{2}+\delta\right)  ^{\beta}a-\left(  b^{2}+\delta\right)
^{\beta}b\right]  \left(  a-b\right)  \geq0.
\]
We use inequalities (\ref{uniq1b}) and (\ref{uniq1c}) in (\ref{uniq1}) for
$M=M^{\varepsilon}$ and it follows:%
\begin{equation}%
\begin{array}
[c]{l}%
\left[  e^{qV_{\sigma}}(\hat{\Gamma}_{\sigma}^{\varepsilon})^{q}%
+e^{qV_{\sigma}}(\tilde{\Gamma}_{\sigma}^{\varepsilon})^{q}\right]
+q~\mathbb{E}^{\mathcal{F}_{\sigma}}%
{\displaystyle\int_{\sigma}^{\theta}}
e^{qV_{r}}\left[  (\hat{\Gamma}_{r}^{\varepsilon})^{q}+(\tilde{\Gamma}%
_{r}^{\varepsilon})^{q}\right]  dV_{r}\medskip\\
+\dfrac{q\left(  q-1\right)  }{2}~\mathbb{E}^{\mathcal{F}_{\sigma}}{%
{\displaystyle\int_{\sigma}^{\theta}}
}e^{qV_{r}}\left[  \dfrac{1+\beta}{\beta}\,|M^{\varepsilon}-Y|^{2}%
+\dfrac{1+\beta}{4}\,|\hat{Y}_{r}-\tilde{Y}_{r}|^{2}+\delta_{q}\right]
^{\left(  q-2\right)  /2}|\hat{Z}_{r}-\tilde{Z}_{r}|^{2}dr\medskip\\
\quad+{q~\mathbb{E}^{\mathcal{F}_{\sigma}}%
{\displaystyle\int_{\sigma}^{\theta}}
}e^{qV_{r}}\left[  (\hat{\Gamma}_{r}^{\varepsilon})^{q-2}{\Psi(}r,\hat{Y}%
_{r})+(\tilde{\Gamma}_{r}^{\varepsilon})^{q-2}{\Psi(}r,\tilde{Y}_{r})\right]
dQ_{r}\medskip\\
\leq~\mathbb{E}^{\mathcal{F}_{\sigma}}\left[  e^{qV_{\theta}}(\hat{\Gamma
}_{\theta}^{\varepsilon})^{q}+e^{qV_{\theta}}(\tilde{\Gamma}_{\theta
}^{\varepsilon})^{q}\right]  +{q~\mathbb{E}^{\mathcal{F}_{\sigma}}%
{\displaystyle\int_{\sigma}^{\theta}}
}e^{qV_{r}}\left[  (\hat{\Gamma}_{r}^{\varepsilon})^{q-2}+(\tilde{\Gamma}%
_{r}^{\varepsilon})^{q-2}\right]  {\Psi}\left(  r,M_{r}^{\varepsilon}\right)
dQ_{r}\medskip\\
\quad+{q~\mathbb{E}^{\mathcal{F}_{\sigma}}%
{\displaystyle\int_{\sigma}^{\theta}}
}e^{qV_{r}}\left[  (\hat{\Gamma}_{r}^{\varepsilon})^{q-2}\langle
M_{r}^{\varepsilon}-\hat{Y}_{r},-\hat{H}(r,\hat{Y}_{r},\hat{Z}_{r}%
)\rangle+(\tilde{\Gamma}_{r}^{\varepsilon})^{q-2}\langle M_{r}^{\varepsilon
}-\tilde{Y}_{r},-\tilde{H}(r,\tilde{Y}_{r},\tilde{Z}_{r})\rangle\right]
dQ_{r}.
\end{array}
\label{uniq1d}%
\end{equation}
Let $0\leq t\leq s\leq T$, $0\leq u\leq r\leq v$ and the stopping times
$v^{\ast}=Q_{v}^{-1},$ $u^{\ast}=Q_{u}^{-1},$ $r^{\ast}=Q_{r}^{-1}$ , where
$Q_{\cdot}^{-1}\left(  \omega\right)  $ is the inverse mapping of the function
$r\longmapsto Q_{r}\left(  \omega\right)  :[0,\infty)\rightarrow
\lbrack0,\infty)$ and, for each $k,i\in\mathbb{N}^{\ast},$ the stopping times%
\[%
\begin{array}
[c]{l}%
\alpha_{k}=\inf\Big\{u\geq0:\left\updownarrow V\right\updownarrow _{u\wedge
T}+\sup\limits_{r\in\left[  0,u\wedge T\right]  }|e^{V_{r}}\hat{Y}_{r}-\hat
{Y}_{0}|+\sup\limits_{r\in\left[  0,u\wedge T\right]  }|e^{V_{r}}\tilde{Y}%
_{r}-\tilde{Y}_{0}|+%
{\displaystyle\int_{0}^{u\wedge T}}
e^{2V_{r}}|\hat{Z}_{r}|^{2}dr\medskip\\
\quad\quad\quad\quad+%
{\displaystyle\int_{0}^{u\wedge T}}
e^{2V_{r}}|\tilde{Z}_{r}|^{2}dr+%
{\displaystyle\int_{0}^{u\wedge T}}
e^{V_{r}}|\hat{H}(r,\hat{Y}_{r},\hat{Z}_{r})|dQ_{r}+%
{\displaystyle\int_{0}^{u\wedge T}}
e^{V_{r}}|\tilde{H}(r,\tilde{Y}_{r},\tilde{Z}_{r})|dQ_{r}\medskip\\
\quad\quad\quad\quad+%
{\displaystyle\int_{0}^{u\wedge T}}
e^{2V_{r}}{\Psi(}r,\hat{Y}_{r})dQ_{r}+%
{\displaystyle\int_{0}^{u\wedge T}}
e^{2V_{r}}{\Psi(}r,\tilde{Y}_{r})dQ_{r}\geq k\Big\}.
\end{array}
\]
and
\[
u_{k}^{\ast}:=t\wedge u^{\ast}\wedge\alpha_{k}\quad\quad\text{and}\quad\quad
v_{k+i}^{\ast}:=s\wedge v^{\ast}\wedge\alpha_{k+i}~.
\]
We put in (\ref{uniq1d})
\[
\sigma=u_{k}^{\ast}~\quad\quad\text{and}\quad\quad\theta=v_{k+i}^{\ast}%
\]
Now passing to $\liminf_{\varepsilon\searrow0}$ in (\ref{uniq1d}) we obtain
(using Proposition \ref{p1-approx}-$\left(  3.\right)  ,$ Fatou's Lemma and
Lebesgue dominated convergence theorem):\medskip\newline$%
\begin{array}
[c]{c}%
\quad
\end{array}
2~e^{qV_{u_{k}^{\ast}}}\left(  \dfrac{1}{4}\,\big|\hat{Y}_{u_{k}^{\ast}%
}-\tilde{Y}_{u_{k}^{\ast}}\big|^{2}+\delta_{q}\right)  ^{q/2}+2q~{\mathbb{E}%
}^{\mathcal{F}_{u_{k}^{\ast}}}{%
{\displaystyle\int_{u_{k}^{\ast}}^{v_{k+i}^{\ast}}}
e^{qV_{r}}}\left(  \dfrac{1}{4}\,\big|\hat{Y}_{r}-\tilde{Y}_{r}\big|^{2}%
+\delta_{q}\right)  ^{q/2}dV_{r}$\medskip\newline$%
\begin{array}
[c]{c}%
\quad\quad
\end{array}
+\dfrac{q\left(  q-1\right)  }{2}~{\mathbb{E}}^{\mathcal{F}_{u_{k}^{\ast}}}~{%
{\displaystyle\int_{u_{k}^{\ast}}^{v_{k+i}^{\ast}}}
}e^{qV_{r}}\left(  \dfrac{1+\beta}{4}\,|\hat{Y}_{r}-\tilde{Y}_{r}|^{2}%
+\delta_{q}\right)  ^{\left(  q-2\right)  /2}|\hat{Z}_{r}-\tilde{Z}_{r}%
|^{2}dr$\medskip\newline$%
\begin{array}
[c]{c}%
\quad\quad
\end{array}
+q~{\mathbb{E}}^{\mathcal{F}_{u_{k}^{\ast}}}~{%
{\displaystyle\int_{u_{k}^{\ast}}^{v_{k+i}^{\ast}}}
}e^{qV_{r}}\left(  \dfrac{1}{4}\,|\hat{Y}_{r}-\tilde{Y}_{r}|^{2}+\delta
_{q}\right)  ^{\left(  q-2\right)  /2}\left[  \Psi(r,\hat{Y}_{r}%
)+\Psi(r,\tilde{Y}_{r})\right]  {dQ_{r}}$\medskip\newline$%
\begin{array}
[c]{c}%
\quad
\end{array}
\leq2~{\mathbb{E}}^{\mathcal{F}_{u_{k}^{\ast}}}~e^{qV_{v_{k+i}^{\ast}}}\left(
\dfrac{1}{4}\,\big|\hat{Y}_{v_{k+i}^{\ast}}-\tilde{Y}_{v_{k+i}^{\ast}%
}\big|^{2}+\delta_{q}\right)  ^{q/2}$\medskip\newline$%
\begin{array}
[c]{c}%
\quad\quad
\end{array}
+q~{\mathbb{E}}^{\mathcal{F}_{u_{k}^{\ast}}}~{%
{\displaystyle\int_{u_{k}^{\ast}}^{v_{k+i}^{\ast}}}
}e^{qV_{r}}\left(  \dfrac{1}{4}\,|\hat{Y}_{r}-\tilde{Y}_{r}|^{2}+\delta
_{q}\right)  ^{\left(  q-2\right)  /2}2{\Psi}\left(  r,Y_{r}\right)  {dQ_{r}}%
$\medskip\newline$%
\begin{array}
[c]{c}%
\quad\quad
\end{array}
+\dfrac{{q}}{2}~{\mathbb{E}}^{\mathcal{F}_{u_{k}^{\ast}}}~{%
{\displaystyle\int_{u_{k}^{\ast}}^{v_{k+i}^{\ast}}}
}e^{qV_{r}}\left(  \dfrac{1}{4}\,|\hat{Y}_{r}-\tilde{Y}_{r}|^{2}+\delta
_{q}\right)  ^{\left(  q-2\right)  /2}\langle\hat{Y}_{r}-\tilde{Y}_{r},\hat
{H}(r,\hat{Y}_{r},\hat{Z}_{r})-\tilde{H}(r,\tilde{Y}_{r},\tilde{Z}_{r})\rangle
dQ_{r}~.$\medskip\newline By by Fatou's Lemma passing to limit as
$\beta\rightarrow0$ this last inequality is true for $\beta=0.$

We remark now that%
\[
2{\Psi}\left(  r,Y_{r}\right)  =2{\Psi}\Big(r,\frac{1}{2}\hat{Y}_{r}+\frac
{1}{2}\tilde{Y}_{r}\Big)\leq\Psi(r,\hat{Y}_{r})+\Psi(r,\tilde{Y}_{r}).
\]
and%
\begin{align*}
&  \langle\hat{Y}_{r}-\tilde{Y}_{r},\hat{H}(r,\hat{Y}_{r},\hat{Z}_{r}%
)-\tilde{H}(r,\hat{Y}_{r},\hat{Z}_{r})+\tilde{H}(r,\hat{Y}_{r},\hat{Z}%
_{r})-\tilde{H}(r,\tilde{Y}_{r},\tilde{Z}_{r})\rangle dQ_{r}\\
&  \leq\langle\hat{Y}_{r}-\tilde{Y}_{r},\hat{H}(r,\hat{Y}_{r},\hat{Z}%
_{r})-\tilde{H}(r,\hat{Y}_{r},\hat{Z}_{r})\rangle dQ_{r}+|\hat{Y}_{r}%
-\tilde{Y}_{r}|^{2}dV_{r}+\dfrac{n_{p}\lambda}{2}\,|\hat{Z}_{r}-\tilde{Z}%
_{r}|^{2}dr.
\end{align*}
Hence%
\begin{equation}%
\begin{array}
[c]{l}%
2\mathbb{E}~e^{qV_{u_{k}^{\ast}}}\left(  \dfrac{1}{4}\,\big|\hat{Y}%
_{u_{k}^{\ast}}-\tilde{Y}_{u_{k}^{\ast}}\big|^{2}+\delta_{q}\right)
^{q/2}+2q\delta_{q}~{\mathbb{E}}^{\mathcal{F}_{u_{k}^{\ast}}}~{%
{\displaystyle\int_{u_{k}^{\ast}}^{v_{k+i}^{\ast}}}
e^{qV_{r}}}\left(  \dfrac{1}{4}\,|\hat{Y}_{r}-\tilde{Y}_{r}|^{2}+\delta
_{q}\right)  ^{\left(  q-2\right)  /2}dV_{r}\medskip\\
\quad+\dfrac{q}{2}\,\left(  q-1-\dfrac{n_{p}\lambda}{2}\right)  ~{\mathbb{E}%
}^{\mathcal{F}_{u_{k}^{\ast}}}~{%
{\displaystyle\int_{u_{k}^{\ast}}^{v_{k+i}^{\ast}}}
}e^{qV_{r}}\left(  \dfrac{1}{4}\,|\hat{Y}_{r}-\tilde{Y}_{r}|^{2}+\delta
_{q}\right)  ^{\left(  q-2\right)  /2}|\hat{Z}_{r}-\tilde{Z}_{r}%
|^{2}dr\medskip\\
\leq2~{\mathbb{E}}^{\mathcal{F}_{u_{k}^{\ast}}}~e^{qV_{v_{k+i}^{\ast}}}\left(
\dfrac{1}{4}\,\big|\hat{Y}_{v_{k+i}^{\ast}}-\tilde{Y}_{v_{k+i}^{\ast}%
}\big|^{2}+\delta_{q}\right)  ^{q/2}\medskip\\
\quad+\dfrac{{q}}{2}~{\mathbb{E}}^{\mathcal{F}_{u_{k}^{\ast}}}~{%
{\displaystyle\int_{u_{k}^{\ast}}^{v_{k+i}^{\ast}}}
}e^{qV_{r}}\left(  \dfrac{1}{4}\,|\hat{Y}_{r}-\tilde{Y}_{r}|^{2}+\delta
_{q}\right)  ^{\left(  q-2\right)  /2}\langle\hat{Y}_{r}-\tilde{Y}_{r},\hat
{H}(r,\hat{Y}_{r},\hat{Z}_{r})-\tilde{H}(r,\hat{Y}_{r},\hat{Z}_{r})\rangle
dQ_{r},
\end{array}
\label{uniq1e}%
\end{equation}
for all $\delta>0$ with $\delta_{q}=\delta\mathbf{1}_{[1,2)}\left(  q\right)
.$

Passing to limit as $\delta\rightarrow0_{+}$ and taking in account that
$q-1=n_{p}$ for $q=p\wedge2$ we get%
\begin{equation}%
\begin{array}
[c]{l}%
~e^{qV_{u_{k}^{\ast}}}\big|\hat{Y}_{u_{k}^{\ast}}-\tilde{Y}_{u_{k}^{\ast}%
}\big|^{q}+\dfrac{1}{2}\,q\left(  q-1\right)  \left(  2-\lambda\right)
~{\mathbb{E}}^{\mathcal{F}_{u_{k}^{\ast}}}~{%
{\displaystyle\int_{u_{k}^{\ast}}^{v_{k+i}^{\ast}}}
}e^{qV_{r}}|\hat{Y}_{r}-\tilde{Y}_{r}|^{q-2}|\hat{Z}_{r}-\tilde{Z}_{r}%
|^{2}dr\medskip\\
\leq{\mathbb{E}}^{\mathcal{F}_{u_{k}^{\ast}}}~e^{qV_{v_{k+i}^{\ast}}}%
\big|\hat{Y}_{v_{k+i}^{\ast}}-\tilde{Y}_{v_{k+i}^{\ast}}\big|^{q}%
+q~{\mathbb{E}}^{\mathcal{F}_{u_{k}^{\ast}}}{%
{\displaystyle\int_{u_{k}^{\ast}}^{v_{k+i}^{\ast}}}
}e^{qV_{r}}|\hat{Y}_{r}-\tilde{Y}_{r}|^{q-1}\big|\hat{H}(r,\hat{Y}_{r},\hat
{Z}_{r})-\tilde{H}(r,\hat{Y}_{r},\hat{Z}_{r})\big|dQ_{r}~.
\end{array}
\label{uniq-33}%
\end{equation}

We remark that for any nonnegative measurable function $K$ we have
\[
{%
{\displaystyle\int_{u_{k}^{\ast}}^{v_{k+i}^{\ast}}}
}e^{V_{r}}K_{r}dQ_{r}=\int_{t\wedge u^{\ast}\wedge\alpha_{k}}^{s\wedge
v^{\ast}\wedge\alpha_{k+i}}K_{r}dQ_{r}\leq\int_{t\wedge u^{\ast}\wedge
\alpha_{k}}^{s}K_{r}dQ_{r}.
\]
Passing first to $\lim_{i\rightarrow\infty}$ and then $\lim_{k\rightarrow
\infty}$ in (\ref{uniq-33}) we infer (using Fatou's Lemma and Lebesgue
dominated convergence theorem via the condition (\ref{def0-1})):%
\begin{equation}%
\begin{array}
[c]{l}%
e^{qV_{t\wedge u^{\ast}}}\big|\hat{Y}_{t\wedge u^{\ast}}-\tilde{Y}_{t\wedge
u^{\ast}}\big|^{q}+\dfrac{1}{2}\,q\left(  q-1\right)  \left(  2-\lambda
\right)  \,{\mathbb{E}}^{\mathcal{F}_{t\wedge u^{\ast}}}~{%
{\displaystyle\int_{t\wedge u^{\ast}}^{s}}
}\dfrac{1}{\left(  e^{V_{r}}|\hat{Y}_{r}-\tilde{Y}_{r}|+1\right)  ^{2-q}%
}\,e^{2V_{r}}|\hat{Z}_{r}-\tilde{Z}_{r}|^{2}dr\medskip\\
\leq{\mathbb{E}}^{\mathcal{F}_{t\wedge u^{\ast}}}~e^{qV_{s\wedge v^{\ast}}%
}\big|\hat{Y}_{s\wedge v^{\ast}}-\tilde{Y}_{s\wedge v^{\ast}}\big|^{q}%
\medskip\\
\quad+q\,{\mathbb{E}}^{\mathcal{F}_{t\wedge u^{\ast}}}~{%
{\displaystyle\int_{t\wedge u^{\ast}}^{s}}
}e^{qV_{r}}|\hat{Y}_{r}-\tilde{Y}_{r}|^{q-1}\big|\hat{H}(r,\hat{Y}_{r},\hat
{Z}_{r})-\tilde{H}(r,\hat{Y}_{r},\hat{Z}_{r})\big|dQ_{r}~\text{,}%
\quad\mathbb{P}\text{-a.s.}%
\end{array}
\label{uniq-44}%
\end{equation}
(we also used the continuity of the natural filtration $\left\{
\mathcal{F}_{u}:u\geq0\right\}  $).

Moreover, by the Lebesgue dominated convergence theorem (based on
(\ref{def0-1}) ) passing to $\lim_{u,v\rightarrow\infty}$ it follows that for
all $0\leq t\leq s\leq T,$ $\mathbb{P}$--a.s.,%
\begin{equation}%
\begin{array}
[c]{l}%
e^{qV_{t}}|\hat{Y}_{t}-\tilde{Y}_{t}|^{q}+\dfrac{1}{2}\,q\left(  q-1\right)
\left(  2-\lambda\right)  \,{\mathbb{E}}^{\mathcal{F}_{t}}{%
{\displaystyle\int_{t}^{s}}
}\dfrac{1}{\left(  e^{V_{r}}|\hat{Y}_{r}-\tilde{Y}_{r}|+1\right)  ^{2-q}%
}e^{2V_{r}}|\hat{Z}_{r}-\tilde{Z}_{r}|^{2}dr\medskip\\
\leq{\mathbb{E}}^{\mathcal{F}_{t}}~e^{qV_{s}}|\hat{Y}_{s}-\tilde{Y}_{s}%
|^{q}+q\,{\mathbb{E}}^{\mathcal{F}_{t}}~{%
{\displaystyle\int_{t}^{s}}
}e^{qV_{r}}|\hat{Y}_{r}-\tilde{Y}_{r}|^{q-1}\big|\hat{H}(r,\hat{Y}_{r},\hat
{Z}_{r})-\tilde{H}(r,\hat{Y}_{r},\hat{Z}_{r})\big|dQ_{r}\,.
\end{array}
\label{uniq-66}%
\end{equation}
Using here Proposition \ref{an-prop-yd} and inequality (\ref{def-11ccc}) for
$|\hat{Y}-\tilde{Y}|^{q}\leq2^{\left(  q-1\right)  }\left(  |\hat{Y}%
|^{q}+|\tilde{Y}|^{q}\right)  $ we easily obtain (\ref{cont-2}).$\medskip$

The uniqueness property follows, since, if $\hat{\eta}=\tilde{\eta}$ and
$\hat{H}=\tilde{H}$ we have from (\ref{cont-2}), $\hat{Y}=\tilde{Y}$ in
$S_{m}^{0}$ and $\hat{Z}=\tilde{Z}$ in $\Lambda_{m\times k}^{0}\,.$\hfill
\end{proof}

\subsection{Existence of the solution}

\begin{lemma}
[Strong solution]\label{l1-strong sol}We suppose that assumptions $\left(
\mathrm{A}_{1}-\mathrm{A}_{6}\right)  $ are satisfied. Let $0<\lambda<1<p$,
$n_{p}=\left(  p-1\right)  \wedge1$ and%
\begin{equation}
V_{t}^{\left(  +\right)  }%
\xlongequal{\hspace{-3pt}\textrm{def}\hspace{-3pt}}\int_{0}^{t}\left[  \left(
\mu_{r}+\frac{1}{2n_{p}\lambda}\,\ell_{r}^{2}\right)  ^{+}dr+\nu_{r}^{+}%
dA_{r}\right]  . \label{defV_2}%
\end{equation}
Moreover, we assume:

\begin{enumerate}
\item[$\left(  i\right)  $] there exists $L>0$ such that%
\begin{equation}
\left\vert \eta\right\vert +\ell_{t}+F_{1}^{\#}\left(  t\right)  +G_{1}%
^{\#}\left(  t\right)  \leq L,\quad\text{a.e. }t\in\left[  0,T\right]
,\;\text{a.s.;} \label{ap-eq-1b}%
\end{equation}

\item[$\left(  ii\right)  $] there exists $\delta>0$ such that%
\begin{equation}
\mathbb{E}\exp\left[  \left(  2+\delta\right)  V_{T}^{\left(  +\right)
}\right]  <\infty; \label{ap-eq-1bb}%
\end{equation}

\item[$\left(  iii\right)  $] there exists $\tilde{L}>0$ such that%
\begin{equation}
\big|e^{V_{T}^{\left(  +\right)  }}\eta\big|^{2}+\Big(%
{\displaystyle\int_{0}^{T}}
e^{V_{r}^{\left(  +\right)  }}\left(  F_{1}^{\#}\left(  r\right)
dr+G_{1}^{\#}\left(  r\right)  dA_{r}\right)  \Big)^{2}\leq\tilde{L}%
,\quad\mathbb{P}-\text{a.s.;} \label{ap-eq-1c}%
\end{equation}

\item[$\left(  iv\right)  $] for $\rho_{0}=(C_{\lambda}\tilde{L})^{1/2}%
>0$\ \footnote{The constant $\tilde{L}$ is given by (\ref{ap-eq-1c}) and the
constant $C_{\lambda}=C_{p,\lambda}$ is given by (\ref{an3a}) with $p=2.$} it
holds
\begin{equation}
\mathbb{E}%
{\displaystyle\int_{0}^{T}}
e^{2V_{r}^{\left(  +\right)  }}\left[  \left(  F_{1+\rho_{0}}^{\#}\left(
r\right)  \right)  ^{2}dr+\left(  G_{1+\rho_{0}}^{\#}\left(  r\right)
\right)  ^{2}dA_{r}\right]  <\infty. \label{ap-eq-6}%
\end{equation}

\end{enumerate}

Then the multivalued BSDE%
\[
\left\{
\begin{array}
[c]{l}%
\displaystyle Y_{t}+{\int_{t}^{T}}dK_{r}=Y_{T}+{\int_{t}^{T}}H\left(
r,Y_{r},Z_{r}\right)  dQ_{r}-{\int_{t}^{T}}Z_{r}dB_{r},\;\text{a.s., for all
}t\in\left[  0,T\right]  ,\medskip\\
\displaystyle dK_{r}=U_{r}^{\left(  1\right)  }dr+U_{r}^{\left(  2\right)
}dA_{r}\,,\medskip\\
\displaystyle U^{\left(  1\right)  }dr\in\partial\varphi\left(  Y_{r}\right)
dr\quad\text{and}\quad U^{\left(  2\right)  }dA_{r}\in\partial\psi\left(
Y_{r}\right)  dA_{r}%
\end{array}
\,\right.
\]
has a strong a solution $\left(  Y,Z,U^{\left(  1\right)  },U^{\left(
2\right)  }\right)  \in S_{m}^{0}\times\Lambda_{m\times k}^{0}\times
\Lambda_{m}^{0}$ $\times\Lambda_{m}^{0}$ such that
\[
\mathbb{E}\sup_{t\in\left[  0,T\right]  }{e^{2V_{r}^{\left(  +\right)  }}%
}\left\vert Y_{r}\right\vert ^{2}+\mathbb{E}%
{\displaystyle\int_{0}^{T}}
{e^{2V_{r}^{\left(  +\right)  }}}\left\vert Z_{r}\right\vert ^{2}dr+\mathbb{E}%
{\displaystyle\int_{0}^{T}}
{e^{2V_{r}^{\left(  +\right)  }}}\left\vert U_{r}^{\left(  1\right)
}\right\vert ^{2}dr+\mathbb{E}%
{\displaystyle\int_{0}^{T}}
{e^{2V_{r}^{\left(  +\right)  }}}\left\vert U_{r}^{\left(  2\right)
}\right\vert ^{2}dA_{r}<\infty.
\]

\end{lemma}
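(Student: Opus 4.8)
The plan is to construct the solution by approximation: I regularize $\varphi,\psi$ by their Moreau--Yosida approximations and (if needed) mollify $F,G$, so that the approximating equation has a generator which is monotone in $y$, Lipschitz in $z$ and globally Lipschitz in $y$, hence solvable by the existence theory of \cite{pa-ra/14}. Concretely, for $\varepsilon\in(0,1]$ set $\Psi_\varepsilon(r,y):=\alpha_r\varphi_\varepsilon(y)+(1-\alpha_r)\psi_\varepsilon(y)$, with gradient $\nabla_y\Psi_\varepsilon(r,y)=\alpha_r\nabla\varphi_\varepsilon(y)+(1-\alpha_r)\nabla\psi_\varepsilon(y)$ (monotone and $\tfrac1\varepsilon$-Lipschitz in $y$), and consider
\[
Y_t^\varepsilon+\int_t^T\nabla_y\Psi_\varepsilon(r,Y_r^\varepsilon)\,dQ_r=\eta+\int_t^T H(r,Y_r^\varepsilon,Z_r^\varepsilon)\,dQ_r-\int_t^T Z_r^\varepsilon\,dB_r,\quad t\in[0,T].
\]
By hypotheses $(i)$--$(ii)$ the data are bounded and $e^{V_T^{(+)}}$ has a moment of order $2+\delta$, so this classical BSDE has a unique solution $(Y^\varepsilon,Z^\varepsilon)$ in the $e^{V^{(+)}}$-weighted spaces $S_m^2\times\Lambda_{m\times k}^2$; put $U^{(1),\varepsilon}:=\nabla\varphi_\varepsilon(Y^\varepsilon)$, $U^{(2),\varepsilon}:=\nabla\psi_\varepsilon(Y^\varepsilon)$, so that $dK^\varepsilon_r=U^{(1),\varepsilon}_r\,dr+U^{(2),\varepsilon}_r\,dA_r=\nabla_y\Psi_\varepsilon(r,Y^\varepsilon_r)\,dQ_r$.

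The first step is a priori estimates uniform in $\varepsilon$. Applying It\^{o}'s formula to $e^{2V_t^{(+)}}|Y_t^\varepsilon|^2$, using the monotonicity of $H$ in $y$, the Lipschitz bound in $z$ (the term $\alpha_r\ell_r|Z^\varepsilon_r|$ being absorbed via the $\tfrac{1}{2n_p\lambda}\ell_r^2$ in $V^{(+)}$) and $\langle y,\nabla\varphi_\varepsilon(y)\rangle\ge\varphi_\varepsilon(y)\ge0$, $\langle y,\nabla\psi_\varepsilon(y)\rangle\ge\psi_\varepsilon(y)\ge0$ (consequences of $0\in\partial\varphi(0)\cap\partial\psi(0)$ and $(\mathrm{A}_6)$-$i$), together with the $q=2$ estimates of the Appendix (in the spirit of (\ref{def-11aa})), I obtain the pathwise inequality $e^{2V_t^{(+)}}|Y_t^\varepsilon|^2\le C_\lambda\tilde L$ from $(iii)$, hence $\|Y^\varepsilon\|_\infty\le\rho_0$ a.s., along with a uniform bound on $\mathbb{E}\int_0^T e^{2V_r^{(+)}}|Z_r^\varepsilon|^2dr+\mathbb{E}\int_0^T e^{2V_r^{(+)}}\Psi_\varepsilon(r,Y_r^\varepsilon)dQ_r$. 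This is exactly why $\rho_0=(C_\lambda\tilde L)^{1/2}$ is the relevant radius and why only $F^{\#}_{1+\rho_0},G^{\#}_{1+\rho_0}$ matter in the sequel (one has $|H(r,Y^\varepsilon,0)|dQ_r\le F^{\#}_{\rho_0}(r)dr+G^{\#}_{\rho_0}(r)dA_r$).

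The main obstacle is the uniform bound on $U^{(1),\varepsilon},U^{(2),\varepsilon}$ (equivalently, the absolute continuity of the limiting $K$). I would get it by applying It\^{o}'s formula to $e^{2V_t^{(+)}}\varphi_\varepsilon(Y_t^\varepsilon)$ and to $e^{2V_t^{(+)}}\psi_\varepsilon(Y_t^\varepsilon)$ and adding. Convexity of $\varphi_\varepsilon,\psi_\varepsilon$ makes the second-order terms $\mathrm{Tr}[\nabla^2\varphi_\varepsilon(Y^\varepsilon)Z^\varepsilon(Z^\varepsilon)^{\top}]$, $\mathrm{Tr}[\nabla^2\psi_\varepsilon(Y^\varepsilon)Z^\varepsilon(Z^\varepsilon)^{\top}]$ appear on the favourable side and they are simply dropped (this avoids the $\varepsilon^{-1}$ blow-up), while $\langle\nabla\varphi_\varepsilon(Y^\varepsilon),dK^\varepsilon\rangle=|U^{(1),\varepsilon}|^2dr+\langle U^{(1),\varepsilon},U^{(2),\varepsilon}\rangle dA_r$ and symmetrically for $\psi_\varepsilon$, the cross terms $\langle U^{(1),\varepsilon},U^{(2),\varepsilon}\rangle(dr+dA_r)$ being $\ge0$ by $(\mathrm{A}_6)$-$i$. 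The compatibility conditions are essential precisely in the generator term $\langle U^{(1),\varepsilon}+U^{(2),\varepsilon},H(r,Y^\varepsilon,Z^\varepsilon)\rangle dQ_r$: splitting $dQ_r=dr+dA_r$ and invoking $(\mathrm{A}_6)$-$ii,iii$, namely $\langle\nabla\psi_\varepsilon(Y^\varepsilon),F(r,Y^\varepsilon,Z^\varepsilon)\rangle\le|U^{(1),\varepsilon}|\,|F(r,Y^\varepsilon,Z^\varepsilon)|$ and $\langle\nabla\varphi_\varepsilon(Y^\varepsilon),G(r,Y^\varepsilon)\rangle\le|U^{(2),\varepsilon}|\,|G(r,Y^\varepsilon)|$, one arranges that every $dr$-integrand carries a factor $|U^{(1),\varepsilon}|$ and every $dA_r$-integrand a factor $|U^{(2),\varepsilon}|$; since $|F(r,Y^\varepsilon,Z^\varepsilon)|\le F^{\#}_{\rho_0}(r)+\ell_r|Z^\varepsilon_r|$ and $|G(r,Y^\varepsilon)|\le G^{\#}_{\rho_0}(r)$, Young's inequality absorbs $\tfrac12\int e^{2V^{(+)}}|U^{(1),\varepsilon}|^2dr+\tfrac12\int e^{2V^{(+)}}|U^{(2),\varepsilon}|^2dA_r$ into the left side. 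The remaining right side is controlled by $(iv)$ (the terms $\int e^{2V^{(+)}}(F^{\#}_{1+\rho_0})^2dr$, $\int e^{2V^{(+)}}(G^{\#}_{1+\rho_0})^2dA_r$), by the already obtained bound on $\int e^{2V^{(+)}}|Z^\varepsilon|^2dr$, and by the boundary contributions, which $(i)$--$(iii)$ keep uniformly bounded; the stochastic integral is a true martingale thanks to the $L^\infty$-bound on $Y^\varepsilon$ and the $L^2$-bounds. This yields $\mathbb{E}\int_0^T e^{2V_r^{(+)}}\big(|U_r^{(1),\varepsilon}|^2dr+|U_r^{(2),\varepsilon}|^2dA_r\big)\le C$ uniformly in $\varepsilon$.

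Finally I pass to the limit $\varepsilon\to0$. Along a subsequence $Z^\varepsilon\rightharpoonup Z$, $U^{(1),\varepsilon}\rightharpoonup U^{(1)}$, $U^{(2),\varepsilon}\rightharpoonup U^{(2)}$ weakly in the corresponding weighted $L^2$ spaces. Strong convergence of $Y^\varepsilon$ is obtained by It\^{o}'s formula for $e^{2V_t^{(+)}}|Y_t^\varepsilon-Y_t^\delta|^2$: the monotonicity of $H$ handles the generator difference, while the subdifferential cross term is estimated by (\ref{fi-Cauchy}), producing an error $O(\varepsilon+\delta)$ times the uniform $L^2$-bounds on $U^{(\cdot),\varepsilon},U^{(\cdot),\delta}$; hence $(Y^\varepsilon)$ is Cauchy in $S_m^2$ (weighted), then $(Z^\varepsilon)$ is Cauchy in $\Lambda_{m\times k}^2$ (weighted), and $J_\varepsilon(Y^\varepsilon)\to Y$ because $|Y^\varepsilon-J_\varepsilon(Y^\varepsilon)|=\varepsilon|U^{(1),\varepsilon}|\to0$ in $L^2$. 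Passing to the limit in the approximating equation, using the continuity of $H$ and $(Y^\varepsilon,Z^\varepsilon)\to(Y,Z)$ strongly, gives $Y_t+\int_t^T dK_r=\eta+\int_t^T H(r,Y_r,Z_r)dQ_r-\int_t^T Z_rdB_r$ with $dK_r=U_r^{(1)}dr+U_r^{(2)}dA_r$. The inclusions $U^{(1)}dr\in\partial\varphi(Y)dr$ and $U^{(2)}dA_r\in\partial\psi(Y)dA_r$ follow from $\nabla\varphi_\varepsilon(Y^\varepsilon)\in\partial\varphi(J_\varepsilon Y^\varepsilon)$, the strong convergence $J_\varepsilon Y^\varepsilon\to Y$, the weak convergence $\nabla\varphi_\varepsilon(Y^\varepsilon)\rightharpoonup U^{(1)}$, and the strong--weak closedness of the graph of $\partial\varphi$ (equivalently, by passing to the limit in the subdifferential inequality and using the lower semicontinuity of $\varphi$); in particular $\int_0^T\Psi(r,Y_r)dQ_r<\infty$ a.s.\ via $\varphi(J_\varepsilon Y^\varepsilon)\le\varphi_\varepsilon(Y^\varepsilon)$, lower semicontinuity and Fatou. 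The claimed integrability of $(Y,Z,U^{(1)},U^{(2)})$ is inherited from the uniform bounds by Fatou's lemma. (By Theorem \ref{uniq} this strong solution is, a posteriori, the unique $L^p$-variational solution, but only existence is asserted here.)
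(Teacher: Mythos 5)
Your overall strategy coincides with the paper's: Moreau--Yosida penalization, an a priori $L^\infty$ bound $|Y^\varepsilon|\le\rho_0=(C_\lambda\tilde L)^{1/2}$, a uniform bound on $\nabla\varphi_\varepsilon(Y^\varepsilon),\nabla\psi_\varepsilon(Y^\varepsilon)$ obtained from the compatibility conditions plus Young's inequality, a Cauchy estimate for $Y^\varepsilon-Y^\delta$ via (\ref{fi-Cauchy}), and identification of the weak limits $U^{(1)},U^{(2)}$ through the closedness of the graph of the subdifferential. However, there is a genuine gap at the very first step: you keep the original $H$ in the approximating equation and assert that it is solvable ``by the existence theory of \cite{pa-ra/14}'' because the generator is monotone in $y$ and Lipschitz in $z$. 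With $F,G$ unmollified the generator $H-\nabla_y\Psi_\varepsilon$ is \emph{not} Lipschitz in $y$, and the existence theorems for merely continuous, monotone generators require integrability in expectation of $\sup_{|y|\le\rho}|F(\cdot,y,0)|$ (and of the $G$-analogue) for \emph{every} radius $\rho$; under $(\mathrm{A}_4)$ and the hypotheses $(i)$--$(iv)$ of the lemma you only have a.s.\ finiteness for general $\rho$, boundedness for $\rho=1$, and weighted square integrability for the single radius $1+\rho_0$ — and the bound $|Y^\varepsilon|\le\rho_0$ is only available \emph{after} a solution exists. This is exactly why the paper replaces $F,G$ by the mollified/truncated $F_\varepsilon,G_\varepsilon$ (with the cut-offs $\mathbf{1}_{[0,1]}(\varepsilon|F(t,y-\varepsilon u,0)|)$ and $\mathbf{1}_{[0,1/\varepsilon]}(A_r)$), making the full generator globally Lipschitz and bounded so that \cite[Lemma 5.20]{pa-ra/14} applies. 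If you adopt that fix, then your clean use of $(\mathrm{A}_6)$-$(ii),(iii)$ in the gradient estimate no longer applies verbatim, because the compatibility inequalities are stated for $F,G$ and must be transferred to $F_\varepsilon,G_\varepsilon$; the paper does this explicitly and picks up extra terms of the type $\left(2+|\nabla\varphi_\varepsilon(y)|\right)\left(\ell_t|z|+F^{\#}_{1+\rho_0}(t)\right)$, and similarly the Cauchy estimate acquires mollification error terms (the processes $R^{\varepsilon,\delta},N^{\varepsilon,\delta}$) controlled through (\ref{ap-eq-6}) and (\ref{ap-eq-7})-type bounds.

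A second, more minor point: you invoke It\^o's formula for $e^{2V^{(+)}_t}\varphi_\varepsilon(Y^\varepsilon_t)$ and ``drop the second-order terms by convexity,'' but $\varphi_\varepsilon$ is only $C^1$ with Lipschitz gradient, so the classical It\^o formula with $\nabla^2\varphi_\varepsilon$ is not available; the correct tool is the stochastic subdifferential inequality (\cite[Lemma 2.38 and Remark 2.39]{pa-ra/14}), which is what the paper uses and which yields precisely the one-sided inequality you want. With the mollification/truncation inserted at the start, the compatibility computation redone for $F_\varepsilon,G_\varepsilon$, and the subdifferential inequality replacing the formal It\^o step, your argument becomes the paper's proof.
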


\begin{proof}
Let $0<\varepsilon\leq1.$ We consider the approximating backward stochastic
equation%
\begin{equation}
\displaystyle Y_{t}^{\varepsilon}+{\int_{t}^{T}}\nabla_{y}\Psi^{\varepsilon
}(r,Y_{r}^{\varepsilon})dQ_{r}=\eta+{\int_{t}^{T}}H_{\varepsilon}%
(r,Y_{r}^{\varepsilon},Z_{r}^{\varepsilon})dQ_{r}-{\int_{t}^{T}}%
Z_{r}^{\varepsilon}\,dB_{r}\,,\quad\mathbb{P}\text{--a.s., }t\in\left[
0,T\right]  , \label{ap-eq -1}%
\end{equation}
where%
\begin{equation}%
\begin{array}
[c]{l}%
\displaystyle\Psi^{\varepsilon}\left(  \omega,r,y\right)  :=\alpha_{r}\left(
\omega\right)  \varphi_{\varepsilon}\left(  y\right)  +\left(  1-\alpha
_{r}\left(  \omega\right)  \right)  \psi_{\varepsilon}\left(  y\right)
\medskip\\
\displaystyle\nabla_{y}\Psi^{\varepsilon}\left(  \omega,r,y\right)  =\left[
\alpha_{r}\left(  \omega\right)  \nabla_{y}\varphi_{\varepsilon}\left(
y\right)  +\left(  1-\alpha_{r}\left(  \omega\right)  \right)  \nabla_{y}%
\psi_{\varepsilon}\left(  y\right)  \right]  \,\mathbf{1}_{[0,\frac
{1}{\varepsilon}]}\left(  A_{r}\right)  ,\medskip\\
\displaystyle H_{\varepsilon}\left(  \omega,r,y,z\right)  :=\left[  \alpha
_{r}\left(  \omega\right)  F_{\varepsilon}\left(  \omega,r,y,z\right)
+\left(  1-\alpha_{r}\left(  \omega\right)  \right)  G_{\varepsilon}\left(
\omega,r,y\right)  \right]  \,\mathbf{1}_{[0,\frac{1}{\varepsilon}]}\left(
A_{r}\right)  ,
\end{array}
\label{ap-eq-1a}%
\end{equation}
with $\varphi_{\varepsilon}$ and $\psi_{\varepsilon}$ being the
Maureau-Yosida's regularization given by (\ref{fi-MY}) and $F_{\varepsilon
},G_{\varepsilon}$ being the mollifier approximations introduced in the
Appendix Section \ref{Annex-MA}.

By (\ref{ma-1}) and (\ref{fi-lip}), the function%
\[
\Phi_{\varepsilon}\left(  \omega,r,y,z\right)  :=H_{\varepsilon}\left(
\omega,r,y,z\right)  -\nabla_{y}\Psi^{\varepsilon}\left(  \omega,r,y\right)
\]
is a Lipschitz function:$\medskip$\newline$%
\begin{array}
[c]{c}%
\text{\quad}%
\end{array}
\left\vert \Phi_{\varepsilon}\left(  \omega,r,y,z\right)  -\Phi_{\varepsilon
}\left(  \omega,r,\hat{y},\hat{z}\right)  \right\vert \medskip$\newline$%
\begin{array}
[c]{c}%
\text{\quad}%
\end{array}
\leq\bigg[\alpha_{r}\left(  \omega\right)  \left(  \mathcal{\ell}%
_{t}\left\vert z-\hat{z}\right\vert +\dfrac{\kappa\left(  1+\ell_{t}\right)
}{\varepsilon^{2}}\left\vert y-\hat{y}\right\vert \right)  +\left(
1-\alpha_{r}\left(  \omega\right)  \right)  \dfrac{\kappa}{\varepsilon^{2}%
}\left\vert y-\hat{y}\right\vert \medskip$\newline$%
\begin{array}
[c]{c}%
\text{\quad\quad}%
\end{array}
+\dfrac{1}{\varepsilon}\alpha_{r}\left(  \omega\right)  \left\vert y-\hat
{y}\right\vert +\dfrac{1}{\varepsilon}\left(  1-\alpha_{r}\left(
\omega\right)  \right)  \left\vert y-\hat{y}\right\vert \bigg]\,\mathbf{1}%
_{[0,\frac{1}{\varepsilon}]}\left(  A_{r}\right)  \medskip$\newline$%
\begin{array}
[c]{c}%
\text{\quad}%
\end{array}
\leq\left[  \alpha_{r}\left(  \omega\right)  \dfrac{\kappa L+\kappa
+1}{\varepsilon^{2}}\left\vert y-\hat{y}\right\vert +\left(  1-\alpha
_{r}\left(  \omega\right)  \right)  \dfrac{\kappa+1}{\varepsilon^{2}%
}\left\vert y-\hat{y}\right\vert +\alpha_{r}\left(  \omega\right)  L\left\vert
z-\hat{z}\right\vert \right]  \,\mathbf{1}_{[0,\frac{1}{\varepsilon}]}\left(
A_{r}\right)  \medskip$\newline$%
\begin{array}
[c]{c}%
\text{\quad\quad}%
\end{array}
\leq\left[  \dfrac{\kappa L+\kappa+1}{\varepsilon^{2}}\left\vert y-\hat
{y}\right\vert +\alpha_{r}\left(  \omega\right)  L\left\vert z-\hat
{z}\right\vert \right]  \,\mathbf{1}_{[0,\frac{1}{\varepsilon}]}\left(
A_{r}\right)  .\medskip$\newline The assumptions of \cite[Lemma 5.20]%
{pa-ra/14} are satisfied for all $p^{\prime}\geq2.$ Therefore equation
(\ref{ap-eq -1}) has a unique solution $\left(  Y^{\varepsilon},Z^{\varepsilon
}\right)  \in S_{m}^{p^{\prime}}\left[  0,T\right]  \times\Lambda_{m\times
k}^{p^{\prime}}\left(  0,T\right)  $ and consequently, for all $p^{\prime}%
\geq2,$%
\[
\mathbb{E}\sup_{t\in\left[  0,T\right]  }\left\vert Y_{t}^{\varepsilon
}\right\vert ^{p^{\prime}}<\infty.
\]
Remark that, by (\ref{ma-2}),%
\begin{align*}
&  \left\langle Y_{t}^{\varepsilon},\Phi_{\varepsilon}\left(  t,Y_{t}%
^{\varepsilon},Z_{t}^{\varepsilon}\right)  \right\rangle dQ_{t}\\
&  =\left\langle Y_{t}^{\varepsilon},F_{\varepsilon}\left(  t,Y_{t}%
^{\varepsilon},Z_{t}^{\varepsilon}\right)  \,\mathbf{1}_{[0,\frac
{1}{\varepsilon}]}\left(  A_{r}\right)  \right\rangle dt+\left\langle
Y_{t}^{\varepsilon},G_{\varepsilon}\left(  t,Y_{t}^{\varepsilon}\right)
\,\mathbf{1}_{[0,\frac{1}{\varepsilon}]}\left(  A_{r}\right)  \right\rangle
dA_{t}\\
&  \quad-\left\langle Y_{t}^{\varepsilon},\nabla\varphi_{\varepsilon}\left(
t,Y_{t}^{\varepsilon}\right)  \right\rangle \,\mathbf{1}_{[0,\frac
{1}{\varepsilon}]}\left(  A_{r}\right)  \,dt-\left\langle Y_{t}^{\varepsilon
},\nabla\psi_{\varepsilon}\left(  t,Y_{t}^{\varepsilon}\right)  \right\rangle
\,\mathbf{1}_{[0,\frac{1}{\varepsilon}]}\left(  A_{r}\right)  \,dA_{t}\\
&  \leq\left[  \left\vert Y_{t}^{\varepsilon}\right\vert F_{1}^{\#}\left(
t\right)  +\left(  \mu_{t}+\frac{1}{2n_{p}\lambda}\,\ell_{t}^{2}\right)
^{+}\left\vert Y_{t}^{\varepsilon}\right\vert ^{2}+\dfrac{n_{p}\lambda}%
{2}\left\vert Z_{t}^{\varepsilon}\right\vert ^{2}\right]  dt+\left[
\left\vert Y_{t}^{\varepsilon}\right\vert G_{1}^{\#}\left(  t\right)  +\nu
_{t}^{+}\left\vert Y_{t}^{\varepsilon}\right\vert ^{2}\right]  dA_{t}\\
&  \leq\left\vert Y_{t}^{\varepsilon}\right\vert \bar{H}_{1}^{\#}\left(
t\right)  dQ_{t}+\left\vert Y_{t}^{\varepsilon}\right\vert ^{2}dV_{t}^{\left(
+\right)  }+\dfrac{\lambda}{2}\left\vert Z_{t}^{\varepsilon}\right\vert
^{2}dt,
\end{align*}
where%
\[
\bar{H}_{1}^{\#}\left(  t\right)
\xlongequal{\hspace{-3pt}\textrm{def}\hspace{-3pt}}\alpha_{t}F_{1}^{\#}\left(
t\right)  +\left(  1-\alpha_{t}\right)  G_{1}^{\#}\left(  t\right)  .
\]
Since by (\ref{ap-eq-1bb})%
\begin{align*}
\mathbb{E}\sup_{t\in\left[  0,T\right]  }e^{2V_{t}^{\left(  +\right)  }%
}\left\vert Y_{t}^{\varepsilon}\right\vert ^{2}  &  \leq\mathbb{E}\left[
\left(  \exp2V_{T}^{\left(  +\right)  }\right)  \sup_{t\in\left[  0,T\right]
}\left\vert Y_{t}^{\varepsilon}\right\vert ^{2}\right] \\
&  \leq\left[  \frac{2}{2+\delta}\,\mathbb{E}\exp\left(  2+\delta\right)
V_{T}^{\left(  +\right)  }+\frac{\delta}{2+\delta}\,\mathbb{E}\sup
_{t\in\left[  0,T\right]  }\left\vert Y_{t}^{\varepsilon}\right\vert ^{\left(
4+2\delta\right)  /\delta}\right]  <\infty,
\end{align*}
by Proposition \ref{Appendix_result 1} we have%
\[%
\begin{array}
[c]{l}%
\mathbb{E}^{\mathcal{F}_{t}}\Big(\sup\limits_{r\in\left[  t,T\right]
}\big|e^{V_{r}^{\left(  +\right)  }}Y_{r}^{\varepsilon}\big|^{2}%
\Big)+\mathbb{E}^{\mathcal{F}_{T}}\Big(%
{\displaystyle\int_{t}^{T}}
e^{2V_{r}^{\left(  +\right)  }}\left\vert Z_{r}^{\varepsilon}\right\vert
^{2}dr\Big)\medskip\\
\leq C_{\lambda}~\mathbb{E}^{\mathcal{F}_{t}}\left[  \big|e^{V_{T}^{\left(
+\right)  }}\eta\big|^{2}+\Big(%
{\displaystyle\int_{t}^{T}}
e^{V_{r}^{\left(  +\right)  }}\bar{H}_{1}^{\#}\left(  r\right)  dQ_{r}%
\Big)^{2}\right]
\end{array}
\]
($C_{\lambda}=C_{p,\lambda}$ is the constant given by (\ref{an3a}) with $p=2$).

By assumption (\ref{ap-eq-1c}) we get%
\begin{equation}%
\begin{array}
[c]{ll}%
\left(  a\right)  &
\begin{array}
[c]{l}%
\left\vert Y_{t}^{\varepsilon}\right\vert \leq\big|e^{V_{r}^{\left(  +\right)
}}Y_{r}^{\varepsilon}\big|\\
\text{\quad\quad\quad}\leq\left[  \mathbb{E}^{\mathcal{F}_{t}}\Big(\sup
\limits_{r\in\left[  t,T\right]  }\big|e^{V_{r}^{\left(  +\right)  }}%
Y_{r}^{\varepsilon}\big|^{2}\Big)\right]  ^{1/2}\leq(C_{\lambda}\tilde
{L})^{1/2}=\rho_{0}\text{,}\quad\text{a.s., for all }t\in\left[  0,T\right]
,\medskip
\end{array}
\\
\left(  b\right)  & \mathbb{E}\Big(%
{\displaystyle\int_{0}^{T}}
e^{2V_{r}^{\left(  +\right)  }}\left\vert Z_{r}^{\varepsilon}\right\vert
^{2}dr\Big)\leq\rho_{0}^{2},\medskip\\
\left(  c\right)  & \left\vert F_{\varepsilon}\left(  t,Y_{t}^{\varepsilon
},Z_{t}^{\varepsilon}\right)  \right\vert \leq\ell_{r}\left\vert
Z_{r}^{\varepsilon}\right\vert +F_{1+\rho_{0}}^{\#}\left(  r\right)
,\quad\left\vert G_{\varepsilon}\left(  t,Y_{t}^{\varepsilon}\right)
\right\vert \leq G_{1+\rho_{0}}^{\#}\left(  r\right)  \medskip\\
\left(  d\right)  & \left\vert H_{\varepsilon}(r,Y_{r}^{\varepsilon}%
,Z_{r}^{\varepsilon})\right\vert \leq\left[  \alpha_{r}\left(  \ell
_{r}\left\vert Z_{r}^{\varepsilon}\right\vert +F_{1+\rho_{0}}^{\#}\left(
r\right)  \right)  +\left(  1-\alpha_{r}\right)  G_{1+\rho_{0}}^{\#}\left(
r\right)  \right]  \,\mathbf{1}_{[0,\frac{1}{\varepsilon}]}\left(
A_{r}\right)  .
\end{array}
\label{ap-eq-3}%
\end{equation}
Following the ideas from \cite{ma-ra/10}, \cite{ma-ra/15} and Section 5.6.2
from \cite{pa-ra/14} and using the stochastic subdifferential inequality from
Lemma 2.38 and Remark 2.39 from \cite{pa-ra/14}, for all $0\leq t\leq s\leq T$%
\[%
\begin{array}
[c]{r}%
e^{2V_{t}^{\left(  +\right)  }}\varphi_{\varepsilon}(Y_{t}^{\varepsilon})\leq
e^{2V_{s}^{\left(  +\right)  }}\varphi_{\varepsilon}(Y_{s}^{\varepsilon})+%
{\displaystyle\int_{t}^{s}}
e^{2V_{r}^{\left(  +\right)  }}\left\langle \nabla\varphi_{\varepsilon}%
(Y_{r}^{\varepsilon}),\Phi_{\varepsilon}(r,Y_{r}^{\varepsilon},Z_{r}%
^{\varepsilon})\right\rangle dQ_{r}\medskip\\
-%
{\displaystyle\int_{t}^{s}}
e^{2V_{r}^{\left(  +\right)  }}\left\langle \nabla\varphi_{\varepsilon}%
(Y_{r}^{\varepsilon}),Z_{r}^{\varepsilon}dB_{r}\right\rangle
\end{array}
\]
(and similar inequality for $\psi_{\varepsilon}$) we deduce that:%
\begin{equation}%
\begin{array}
[c]{l}%
e^{2V_{t}^{\left(  +\right)  }}\left[  \varphi_{\varepsilon}(Y_{t}%
^{\varepsilon})+\psi_{\varepsilon}(Y_{t}^{\varepsilon})\right]  \medskip\\
\quad+%
{\displaystyle\int_{t}^{s}}
e^{2V_{r}^{\left(  +\right)  }}\Big[\alpha_{r}|\nabla\varphi_{\varepsilon
}(Y_{r}^{\varepsilon})|^{2}+\left\langle \nabla\varphi_{\varepsilon}%
(Y_{r}^{\varepsilon}),\nabla\psi_{\varepsilon}(Y_{r}^{\varepsilon
})\right\rangle +\left(  1-\alpha_{r}\right)  |\nabla\psi_{\varepsilon}%
(Y_{r}^{\varepsilon})|^{2}\Big]dQ_{r}\medskip\\
\leq e^{2V_{s}^{\left(  +\right)  }}\left[  \varphi_{\varepsilon}%
(Y_{s}^{\varepsilon})+\psi_{\varepsilon}(Y_{s}^{\varepsilon})\right]  +%
{\displaystyle\int_{t}^{s}}
e^{2V_{r}^{\left(  +\right)  }}\left\langle \nabla\varphi_{\varepsilon}%
(Y_{r}^{\varepsilon})+\nabla\psi_{\varepsilon}(Y_{r}^{\varepsilon
}),H_{\varepsilon}(r,Y_{r}^{\varepsilon},Z_{r}^{\varepsilon})\right\rangle
dQ_{r}\medskip\\
\quad-%
{\displaystyle\int_{t}^{s}}
e^{2V_{r}^{\left(  +\right)  }}\left\langle \nabla\varphi_{\varepsilon}%
(Y_{r}^{\varepsilon})+\nabla\psi_{\varepsilon}(Y_{r}^{\varepsilon}%
),Z_{r}^{\varepsilon}dB_{r}\right\rangle .
\end{array}
\label{ap-eq-4}%
\end{equation}
Remark that compatibility assumptions (\ref{compAssumpt}) yield for
$\left\vert y\right\vert \leq\rho_{0}$:\medskip\newline$%
\begin{array}
[c]{c}%
\quad
\end{array}
\left\langle \nabla\psi_{\varepsilon}(y),F_{\varepsilon}(t,y,z)\right\rangle
$\medskip\newline$%
\begin{array}
[c]{c}%
\quad
\end{array}
=%
{\displaystyle\int_{\overline{B\left(  0,1\right)  }}}
\left\langle \nabla\psi_{\varepsilon}(y)-\nabla\psi_{\varepsilon
}(y-\varepsilon u),F\left(  t,y-\varepsilon u,\beta_{\varepsilon}\left(
z\right)  \right)  \right\rangle \,\mathbf{1}_{\left[  0,1\right]  }\left(
\varepsilon\left\vert F\left(  t,y-\varepsilon u\right)  ,0\right\vert
\right)  \rho\left(  u\right)  du$\medskip\newline$%
\begin{array}
[c]{c}%
\quad\quad
\end{array}
+%
{\displaystyle\int_{\overline{B\left(  0,1\right)  }}}
\left\langle \nabla\psi_{\varepsilon}(y-\varepsilon u),F\left(
t,y-\varepsilon u,\beta_{\varepsilon}\left(  z\right)  \right)  \right\rangle
\,\mathbf{1}_{\left[  0,1\right]  }\left(  \varepsilon\left\vert F\left(
t,y-\varepsilon u\right)  ,0\right\vert \right)  \rho\left(  u\right)
du$\medskip\newline$%
\begin{array}
[c]{c}%
\quad
\end{array}
\leq%
{\displaystyle\int_{\overline{B\left(  0,1\right)  }}}
\frac{1}{\varepsilon}\left\vert \varepsilon u\right\vert \left\vert F\left(
t,y-\varepsilon u,\beta_{\varepsilon}\left(  z\right)  \right)  \right\vert
\,\mathbf{1}_{\left[  0,1\right]  }\left(  \varepsilon\left\vert F\left(
t,y-\varepsilon u\right)  ,0\right\vert \right)  \rho\left(  u\right)
du$\medskip\newline$%
\begin{array}
[c]{c}%
\quad\quad
\end{array}
+%
{\displaystyle\int_{\overline{B\left(  0,1\right)  }}}
\left\vert \nabla\varphi_{\varepsilon}(y-\varepsilon u)\right\vert \left\vert
F\left(  t,y-\varepsilon u,\beta_{\varepsilon}\left(  z\right)  \right)
\right\vert \,\mathbf{1}_{\left[  0,1\right]  }\left(  \varepsilon\left\vert
F\left(  t,y-\varepsilon u\right)  ,0\right\vert \right)  \rho\left(
u\right)  du$\medskip\newline$%
\begin{array}
[c]{c}%
\quad
\end{array}
\leq\left\vert F\right\vert _{\varepsilon}\left(  t,y,z\right)  +%
{\displaystyle\int_{\overline{B\left(  0,1\right)  }}}
\left[  \left\vert \nabla\varphi_{\varepsilon}(y-\varepsilon u)-\nabla
\varphi_{\varepsilon}(y)\right\vert +\left\vert \nabla\varphi_{\varepsilon
}(y)\right\vert \right]  $\medskip\newline$%
\begin{array}
[c]{c}%
\quad\quad\quad\quad\quad\quad\quad\quad\quad
\end{array}
\times\left\vert F\left(  t,y-\varepsilon u,\beta_{\varepsilon}\left(
z\right)  \right)  \right\vert \,\mathbf{1}_{\left[  0,1\right]  }\left(
\varepsilon\left\vert F\left(  t,y-\varepsilon u\right)  ,0\right\vert
\right)  \rho\left(  u\right)  du$\medskip\newline$%
\begin{array}
[c]{c}%
\quad
\end{array}
\leq\left\vert F\right\vert _{\varepsilon}\left(  t,y,z\right)  +\left(
1+\left\vert \nabla\varphi_{\varepsilon}(y)\right\vert \right)  \left\vert
F\right\vert _{\varepsilon}\left(  t,y,z\right)  $\medskip\newline$%
\begin{array}
[c]{c}%
\quad
\end{array}
=\left(  2+\left\vert \nabla\varphi_{\varepsilon}(y)\right\vert \right)
\left\vert F\right\vert _{\varepsilon}\left(  t,y,z\right)  $\medskip\newline$%
\begin{array}
[c]{c}%
\quad
\end{array}
\leq\ell_{t}\left\vert z\right\vert +\left\vert F\right\vert _{\varepsilon
}\left(  t,y,0\right)  +\left(  1+\left\vert \nabla\varphi_{\varepsilon
}(y)\right\vert \right)  \left[  \ell_{t}\left\vert z\right\vert +\left\vert
F\right\vert _{\varepsilon}\left(  t,y,0\right)  \right]  $\medskip\newline$%
\begin{array}
[c]{c}%
\quad
\end{array}
=\left(  2+\left\vert \nabla\varphi_{\varepsilon}(y)\right\vert \right)
\left(  \ell_{t}\left\vert z\right\vert +\left\vert F\right\vert
_{\varepsilon}\left(  t,y,0\right)  \right)  $\medskip\newline$%
\begin{array}
[c]{c}%
\quad
\end{array}
\leq2L\left\vert z\right\vert +2F_{1+\rho_{0}}^{\#}\left(  t\right)
+\left\vert \nabla\varphi_{\varepsilon}(y)\right\vert \left(  L\left\vert
z\right\vert +F_{1+\rho_{0}}^{\#}\left(  t\right)  \right)  $\medskip\newline
and similarly%
\begin{align*}
\left\langle \nabla\varphi_{\varepsilon}(y),G_{\varepsilon}(t,y)\right\rangle
&  \leq\left(  2+\left\vert \nabla\psi_{\varepsilon}(y)\right\vert \right)
\left\vert G\right\vert _{\varepsilon}\left(  t,y\right) \\
&  \leq2G_{1+\rho_{0}}^{\#}\left(  t\right)  +\left\vert \nabla\psi
_{\varepsilon}(y)\right\vert G_{1+\rho_{0}}^{\#}\left(  t\right)  .
\end{align*}
Hence using definition of the function $H_{\varepsilon}(t,y,z)$ we have for
$\left\vert y\right\vert \leq\rho_{0}:$%
\[%
\begin{array}
[c]{l}%
\left\langle \nabla\varphi_{\varepsilon}(y)+\nabla\psi_{\varepsilon
}(y),H_{\varepsilon}(s,y,z)\right\rangle \medskip\\
=\left\langle \nabla\varphi_{\varepsilon}(y)+\nabla\psi_{\varepsilon
}(y),\alpha_{s}F_{\varepsilon}\left(  s,y,z\right)  +\left(  1-\alpha
_{s}\right)  G_{\varepsilon}\left(  s,y\right)  \right\rangle \mathbf{1}%
_{[0,\frac{1}{\varepsilon}]}\left(  A_{r}\right)  \medskip\\
\leq\alpha_{t}\Big[\left(  2+2\left\vert \nabla\varphi_{\varepsilon
}(y)\right\vert \right)  \big(\left\vert F\right\vert _{\varepsilon}\left(
t,y,z\right)  \big)\Big]+\left(  1-\alpha_{t}\right)  \Big[\left(
2+2\left\vert \nabla\psi_{\varepsilon}(y)\right\vert \right)  \left\vert
G\right\vert _{\varepsilon}\left(  t,y\right)  \Big]\medskip\\
\leq\alpha_{t}\Big[\dfrac{1}{2}\left\vert \nabla\varphi_{\varepsilon
}(y)\right\vert ^{2}+1+3\left[  \left\vert F\right\vert _{\varepsilon}\left(
t,y,z\right)  \right]  ^{2}\Big]+\left(  1-\alpha_{t}\right)  \Big[\dfrac
{1}{2}\left\vert \nabla\psi_{\varepsilon}(y)\right\vert +1+3\left[  \left\vert
G\right\vert _{\varepsilon}\left(  t,y\right)  \right]  ^{2}\Big].
\end{array}
\]
Consequently, using inequality%
\begin{equation}
\mathbb{E}\left[  e^{2V_{T}^{\left(  +\right)  }}\left(  \varphi_{\varepsilon
}(Y_{T}^{\varepsilon})+\psi_{\varepsilon}(Y_{T}^{\varepsilon})\right)
\right]  \leq\mathbb{E}\left[  e^{2V_{T}^{\left(  +\right)  }}\left(
\varphi(\eta)+\psi(\eta)\right)  \right]  \label{ap-eq-8}%
\end{equation}
and the fact that
\[
M_{t}^{\varepsilon}=\int_{0}^{t}e^{2V_{r}^{\left(  +\right)  }}\left\langle
\nabla\varphi_{\varepsilon}(Y_{r}^{\varepsilon})+\nabla\psi_{\varepsilon
}(Y_{r}^{\varepsilon}),Z_{r}^{\varepsilon}dB_{r}\right\rangle
\]
is a martingale we obtain that for all $0\leq t\leq s\leq T:$%
\begin{equation}%
\begin{array}
[c]{l}%
\mathbb{E}e^{2V_{t}^{\left(  +\right)  }}\varphi_{\varepsilon}(Y_{t}%
^{\varepsilon})+\mathbb{E}e^{2V_{t}^{\left(  +\right)  }}\psi_{\varepsilon
}(Y_{t}^{\varepsilon})+\dfrac{1}{2}\,\mathbb{E}%
{\displaystyle\int_{t}^{T}}
e^{2V_{r}^{\left(  +\right)  }}\Big[|\nabla\varphi_{\varepsilon}%
(Y_{r}^{\varepsilon})|^{2}dr+|\nabla\psi_{\varepsilon}(Y_{r}^{\varepsilon
})|^{2}dA_{r}\Big]\medskip\\
\leq\mathbb{E}\left[  e^{2V_{T}^{\left(  +\right)  }}\left(  \varphi
(\eta)+\psi(\eta)\right)  \right]  \medskip\\
+\mathbb{E}%
{\displaystyle\int_{t}^{T}}
e^{2V_{r}^{\left(  +\right)  }}\left(  1+3\left[  \left\vert F\right\vert
_{\varepsilon}\left(  r,Y_{r}^{\varepsilon},Z_{r}^{\varepsilon}\right)
\right]  ^{2}\right)  dr+\mathbb{E}%
{\displaystyle\int_{t}^{T}}
e^{2V_{r}^{\left(  +\right)  }}\left(  1+3\left[  \left\vert G\right\vert
_{\varepsilon}\left(  r,Y_{r}^{\varepsilon}\right)  \right]  ^{2}\right)
dA_{r}\medskip\\
\leq\mathbb{E}\left[  e^{2V_{T}^{\left(  +\right)  }}\left(  \varphi
(\eta)+\psi(\eta)\right)  \right]  \medskip\\
\quad+\mathbb{E}%
{\displaystyle\int_{t}^{T}}
e^{2V_{r}^{\left(  +\right)  }}\left[  1+6L^{2}\left\vert Z_{r}^{\varepsilon
}\right\vert ^{2}+6\left(  F_{1+\rho_{0}}^{\#}\left(  r\right)  \right)
^{2}\right]  dr+\mathbb{E}%
{\displaystyle\int_{t}^{T}}
e^{2V_{r}^{\left(  +\right)  }}\left[  1+3\left(  G_{1+\rho_{0}}^{\#}\left(
r\right)  \right)  ^{2}\right]  dA_{r}%
\end{array}
\label{ap-eq-5a}%
\end{equation}
which clearly yields%
\begin{equation}%
\begin{array}
[c]{ll}%
\left(  a\right)  & \sup_{t\in\left[  0,T\right]  }\left[  \mathbb{E}%
e^{2V_{t}^{\left(  +\right)  }}\varphi_{\varepsilon}(Y_{t}^{\varepsilon
})+\mathbb{E}e^{2V_{t}^{\left(  +\right)  }}\psi_{\varepsilon}(Y_{t}%
^{\varepsilon})\right]  \leq C=C_{\rho_{0},L,T,\lambda}\medskip\\
\left(  b\right)  & \mathbb{E}%
{\displaystyle\int_{0}^{T}}
e^{2V_{r}^{\left(  +\right)  }}\Big[|\nabla\varphi_{\varepsilon}%
(Y_{r}^{\varepsilon})|^{2}dr+|\nabla\psi_{\varepsilon}(Y_{r}^{\varepsilon
})|^{2}dA_{r}\Big]\leq C
\end{array}
\label{ap-eq-7}%
\end{equation}
(with $C$ a constant independent of $\varepsilon$).

Let $\varepsilon,\delta\in(0,1].$ We have%
\[
Y_{t}^{\varepsilon}-Y_{t}^{\delta}=\int_{t}^{T}dK_{r}^{\varepsilon,\delta
}-\int_{t}^{T}(Z_{r}^{\varepsilon}-Z_{r}^{\delta})dB_{r}\,,\quad\text{a.s.,}%
\]
with\medskip\newline$%
\begin{array}
[c]{c}%
\quad
\end{array}
dK_{r}^{\varepsilon,\delta}=\Big[H_{\varepsilon}(r,Y_{r}^{\varepsilon}%
,Z_{r}^{\varepsilon})-H_{\delta}(r,Y_{r}^{\delta},Z_{r}^{\delta})-\left[
\nabla_{y}\Psi^{\varepsilon}(r,Y_{r}^{\varepsilon})-\nabla_{y}\Psi^{\delta
}(r,Y_{r}^{\delta})\right]  \Big]dQ_{r}$\medskip\newline$%
\begin{array}
[c]{c}%
\quad
\end{array}
=\alpha_{r}\left[  F_{\varepsilon}(r,Y_{r}^{\varepsilon},Z_{r}^{\varepsilon
})-F_{\delta}(r,Y_{r}^{\delta},Z_{r}^{\delta})\right]  \mathbf{1}_{\left[
0,\frac{1}{\varepsilon}\right]  }\left(  A_{r}\right)  dr$\medskip\newline$%
\begin{array}
[c]{c}%
\quad\quad
\end{array}
+\left(  1-\alpha_{r}\right)  \left[  G_{\varepsilon}(r,Y_{r}^{\varepsilon
})-G_{\delta}(r,Y_{r}^{\delta})\right]  \mathbf{1}_{\left[  0,\frac
{1}{\varepsilon}\right]  }\left(  A_{r}\right)  dA_{r}$\medskip\newline$%
\begin{array}
[c]{c}%
\quad\quad
\end{array}
+\alpha_{r}F_{\delta}(r,Y_{r}^{\delta},Z_{r}^{\delta})\left(  \mathbf{1}%
_{\left[  0,\frac{1}{\varepsilon}\right]  }\left(  A_{r}\right)
-\mathbf{1}_{\left[  0,\frac{1}{\delta}\right]  }\left(  A_{r}\right)
\right)  dr$\medskip\newline$%
\begin{array}
[c]{c}%
\quad\quad
\end{array}
+\left(  1-\alpha_{r}\right)  G_{\delta}(r,Y_{r}^{\delta})\left(
\mathbf{1}_{\left[  0,\frac{1}{\varepsilon}\right]  }\left(  A_{r}\right)
-\mathbf{1}_{\left[  0,\frac{1}{\delta}\right]  }\left(  A_{r}\right)
\right)  dA_{r}$\medskip\newline$%
\begin{array}
[c]{c}%
\quad\quad
\end{array}
-\alpha_{r}\left[  \nabla\varphi_{\varepsilon}(Y_{r}^{\varepsilon}%
)-\nabla\varphi_{\delta}(Y_{r}^{\delta})\right]  dr-\left(  1-\alpha
_{r}\right)  \left[  \nabla\psi_{\varepsilon}(Y_{r}^{\varepsilon})-\nabla
\psi_{\delta}(Y_{r}^{\delta})\right]  dA_{r}\,.$\medskip\newline By
(\ref{fi-Cauchy}) we have%
\[
\big\langle Y_{r}^{\varepsilon}-Y_{r}^{\delta},dK_{r}^{\varepsilon,\delta
}\big\rangle\leq dR_{r}^{\varepsilon,\delta}+|Y_{r}^{\varepsilon}%
-Y_{r}^{\delta}|dN_{r}^{\varepsilon,\delta}+|Y_{r}^{\varepsilon}-Y_{r}%
^{\delta}|^{2}dV_{r}^{\left(  +\right)  }+\dfrac{\lambda}{2}|Z_{r}%
^{\varepsilon}-Z_{r}^{\delta}|^{2}dr
\]
where%
\[%
\begin{array}
[c]{r}%
\displaystyle dR_{r}^{\varepsilon,\delta}=\left\vert \varepsilon
-\delta\right\vert \left[  \mu_{r}^{+}\left\vert \varepsilon-\delta\right\vert
+2F_{1+\rho_{0}}^{\#}\left(  r\right)  +2\ell_{r}\left\vert Z_{r}%
^{\varepsilon}\right\vert \right]  dr+\left\vert \varepsilon-\delta\right\vert
\left[  \nu_{r}^{+}\left\vert \varepsilon-\delta\right\vert +2G_{1+\rho_{0}%
}^{\#}\left(  r\right)  \right]  dA_{r}\medskip\\
\displaystyle+\dfrac{\varepsilon+\delta}{2}\big(|\nabla\varphi_{\varepsilon
}(Y_{r}^{\delta})|^{2}+|\nabla\varphi_{\delta}(Y_{r}^{\varepsilon}%
)|^{2}\big)dr+\dfrac{\varepsilon+\delta}{2}\big(|\nabla\psi_{\varepsilon
}(Y_{r}^{\delta})|^{2}+|\nabla\psi_{\delta}(Y_{r}^{\varepsilon})|^{2}%
\big)dA_{r}%
\end{array}
\]
and%
\[%
\begin{array}
[c]{l}%
\displaystyle dN_{r}^{\varepsilon,\delta}=\Big[2\left\vert \mu_{r}\right\vert
\left\vert \varepsilon-\delta\right\vert +\ell_{r}\left\vert Z_{r}^{\delta
}\right\vert \,\mathbf{1}_{[\frac{1}{\varepsilon}\wedge\frac{1}{\delta}%
,\infty)}\left(  \left\vert Z_{r}^{\delta}\right\vert +A_{r}\right)
\,\mathbf{1}_{\varepsilon\neq\delta}\medskip\\
\displaystyle\quad+\left[  F_{1+\rho_{0}}^{\#}\left(  r\right)  +\ell
_{r}\left\vert Z_{r}^{\delta}\right\vert \right]  \,\mathbf{1}_{[\frac
{1}{\varepsilon}\wedge\frac{1}{\delta},\infty)}\left(  F_{1+\rho_{0}}%
^{\#}\left(  r\right)  +A_{r}\right)  \Big]dr\medskip\\
\displaystyle\quad+\left[  2\left\vert \nu_{r}\right\vert \left\vert
\varepsilon-\delta\right\vert +\left[  G_{1+\rho_{0}}^{\#}\left(  r\right)
+\ell_{r}\left\vert Z_{r}^{\delta}\right\vert \right]  \,\mathbf{1}_{[\frac
{1}{\varepsilon}\wedge\frac{1}{\delta},\infty)}\left(  G_{1+\rho_{0}}%
^{\#}\left(  r\right)  +A_{r}\right)  \right]  dA_{r}\,.
\end{array}
\]
By Proposition \ref{Appendix_result 1} we get%
\[%
\begin{array}
[c]{l}%
\mathbb{E}\sup\nolimits_{r\in\left[  0,T\right]  }e^{2V_{r}^{\left(  +\right)
}}|Y_{r}^{\varepsilon}-Y_{r}^{\delta}|^{2}+\mathbb{E}{%
{\displaystyle\int_{0}^{T}}
}e^{2V_{r}^{\left(  +\right)  }}|Z_{r}^{\varepsilon}-Z_{r}^{\delta}%
|^{2}dr\medskip\\
\leq C_{\lambda}\,\left[  \mathbb{E}%
{\displaystyle\int_{0}^{T}}
e^{2V_{r}^{\left(  +\right)  }}dR_{r}^{\varepsilon,\delta}+\mathbb{E~}\left(
{\displaystyle\int_{0}^{T}}
e^{V_{r}^{\left(  +\right)  }}dN_{r}^{\varepsilon,\delta}\right)  ^{2}\right]
.
\end{array}
\]
Boundedness assumptions (\ref{ap-eq-1b}), (\ref{ap-eq-1bb}), (\ref{ap-eq-1c}),
(\ref{ap-eq-6}) and boundedness result (\ref{ap-eq-7}) yield that
\[
\lim_{\varepsilon,\delta\rightarrow0}\mathbb{E}%
{\displaystyle\int_{0}^{T}}
e^{2V_{r}^{\left(  +\right)  }}dR_{r}^{\varepsilon,\delta}+\mathbb{E}\left(
{\displaystyle\int_{0}^{T}}
e^{V_{r}^{\left(  +\right)  }}dN_{r}^{\varepsilon,\delta}\right)  ^{2}=0.
\]
Consequently there exists $(Y,Z)\in S_{m}^{0}\times\Lambda_{m\times k}^{0}$
such that%
\[
\mathbb{E}\sup\limits_{r\in\left[  0,T\right]  }e^{2V_{r}^{\left(  +\right)
}}|Y_{r}^{\varepsilon}-Y_{r}|^{2}+\mathbb{E}\int_{0}^{T}e^{2V_{r}^{\left(
+\right)  }}|Z_{r}^{\varepsilon}-Z_{r}|^{2}%
dr\xrightarrow[]{\;\;\;\;}0,\;\text{as }\varepsilon\rightarrow0.
\]
From (\ref{ap-eq-7}) there exist two p.m.s.p. $U^{\left(  1\right)  }$ and
$U^{\left(  2\right)  }$, such that along a sequence $\varepsilon
_{n}\rightarrow0$, we have%
\begin{align*}
e^{V^{\left(  +\right)  }}\nabla\varphi_{\varepsilon}(Y^{\varepsilon_{n}})  &
\rightharpoonup e^{V^{\left(  +\right)  }}U^{\left(  1\right)  }%
,\quad\text{weakly in }L^{2}\left(  \Omega\times\left[  0,T\right]
,d\mathbb{P}\otimes dt;\mathbb{R}^{m}\right)  ,\medskip\\
e^{V^{\left(  +\right)  }}\nabla\psi_{\varepsilon}(Y^{\varepsilon_{n}})  &
\rightharpoonup e^{V^{\left(  +\right)  }}U^{\left(  2\right)  }%
,\quad\text{weakly in }L^{2}\left(  \Omega\times\left[  0,T\right]
,d\mathbb{P}\otimes dA_{t};\mathbb{R}^{m}\right)  .
\end{align*}
Passing to limit in the approximating equation for $\varepsilon=\varepsilon
_{n}\rightarrow0$ we infer%
\[
Y_{t}+\int_{t}^{T}U_{r}dQ_{r}=\eta+\int_{t}^{T}H(r,Y_{r},Z_{r})dQ_{r}-\int
_{t}^{T}Z_{r}dB_{r}\,,\;\text{a.s.}%
\]
where%
\[
U_{r}=\left[  \alpha_{r}U_{r}^{1}+\left(  1-\alpha_{r}\right)  U_{r}%
^{2}\right]  ,\;\text{for }r\in\left[  0,T\right]  .
\]
Since $\nabla\varphi_{\varepsilon}(y)\in\partial\varphi\left(  y-\varepsilon
\nabla\varphi_{\varepsilon}(y)\right)  $ then for all $E\in\mathcal{F}$,
$0\leq t\leq s\leq T$ and $X\in S_{m}^{2}\left[  0,T\right]  $%
\[%
\begin{array}
[c]{r}%
\mathbb{E}%
{\displaystyle\int_{t}^{s}}
\big\langle e^{2V_{r}^{\left(  +\right)  }}\nabla\varphi_{\varepsilon_{n}%
}(Y_{r}^{\varepsilon_{n}}),X_{r}-Y_{r}^{\varepsilon_{n}}%
\big\rangle\,\mathbf{1}_{E}\,dr+\mathbb{E}%
{\displaystyle\int_{t}^{s}}
e^{2V_{r}^{\left(  +\right)  }}\varphi(Y_{r}^{\varepsilon_{n}}-\varepsilon
\nabla\varphi_{\varepsilon}(Y_{r}^{\varepsilon_{n}}))\,\mathbf{1}%
_{E}\,dr\medskip\\
\leq\mathbb{E}%
{\displaystyle\int_{t}^{s}}
e^{2V_{r}^{\left(  +\right)  }}\varphi(X_{r})\,\mathbf{1}_{E}\,dr.
\end{array}
\]
Passing to $\liminf_{n\rightarrow+\infty}$ in the above inequality we obtain
$U_{s}^{\left(  1\right)  }\in\partial\varphi(Y_{s}),d\mathbb{P}\otimes
ds$-a.e. and, with similar arguments, $U_{s}^{\left(  2\right)  }\in
\partial\psi(Y_{s}),d\mathbb{P}\otimes dA_{s}$-a.e. Summarizing the above
conclusions we conclude that $(Y,Z,U)\in S_{m}^{0}\left[  0,T\right]
\times\Lambda_{m\times k}^{0}\left[  0,T\right]  \times\Lambda_{m\times k}%
^{0}\left[  0,T\right]  $ is strong solution of
\begin{equation}
\left\{
\begin{array}
[c]{l}%
Y_{t}+%
{\displaystyle\int_{t}^{T}}
\left(  U_{s}^{\left(  1\right)  }ds+U_{s}^{\left(  2\right)  }dA_{s}\right)
=\eta+%
{\displaystyle\int_{t}^{T}}
\left[  F\left(  s,Y_{s},Z_{s}\right)  ds+G\left(  s,Y_{s}\right)
dA_{s}\right]  \medskip\\
\quad\quad\quad\quad\quad\quad\quad\quad\quad\quad\quad\quad\quad\quad
\quad\quad\quad\quad\quad\quad\quad\;-%
{\displaystyle\int_{t}^{T}}
Z_{s}dB_{s}\,,\;t\in\left[  0,T\right]  ,\medskip\\
U_{s}^{\left(  1\right)  }\in\partial\varphi(Y_{s})~,d\mathbb{P}\otimes
ds-\text{a.e.}\quad\text{and }\quad U_{s}^{\left(  2\right)  }\in\partial
\psi(Y_{s}),\quad d\mathbb{P}\otimes dA_{s}-\text{a.e.}\quad\text{on }\left[
0,T\right]  .
\end{array}
\right.  \label{BSVI-0T}%
\end{equation}
Moreover, from (\ref{ap-eq-3}),%
\begin{equation}%
\begin{array}
[c]{ll}%
\left(  a\right)  & \left\vert Y_{t}\right\vert \leq e^{V_{t}^{\left(
+\right)  }}\left\vert Y_{t}\right\vert \leq\left[  \mathbb{E}^{\mathcal{F}%
_{t}}\Big(\sup\limits_{r\in\left[  t,T\right]  }\left\vert e^{V_{r}^{\left(
+\right)  }}Y_{r}\right\vert ^{2}\Big)\right]  ^{1/2}\leq(C_{\lambda}\tilde
{L})^{1/2}=\rho_{0}\quad\text{and}\medskip\\
\left(  b\right)  & \mathbb{E}\Big(%
{\displaystyle\int_{0}^{T}}
e^{2V_{r}^{\left(  +\right)  }}\left\vert Z_{r}\right\vert ^{2}dr\Big)\leq
\rho_{0}^{2}\,.
\end{array}
\label{b-1}%
\end{equation}
Since%
\begin{align*}
0  &  \leq\left[  \left\vert F\right\vert _{\varepsilon}\left(  r,Y_{r}%
^{\varepsilon},Z_{r}^{\varepsilon}\right)  \right]  ^{2}\leq6L^{2}\left\vert
Z_{r}^{\varepsilon}\right\vert ^{2}+6\left(  F_{1+\rho_{0}}^{\#}\left(
r\right)  \right)  ^{2}\\
0  &  \leq\left[  \left\vert G\right\vert _{\varepsilon}\left(  r,Y_{r}%
^{\varepsilon}\right)  \right]  ^{2}\leq\left(  G_{1+\rho_{0}}^{\#}\left(
r\right)  \right)  ^{2},
\end{align*}
passing to $\liminf_{\varepsilon\rightarrow0}$ in (\ref{ap-eq-5a}), we have by
Fatou's Lemma and by the Lebesgue dominated convergence theorem that%
\begin{equation}%
\begin{array}
[c]{l}%
\dfrac{1}{2}\,\mathbb{E}%
{\displaystyle\int_{0}^{T}}
e^{2V_{r}^{\left(  +\right)  }}\Big[\big|U_{r}^{\left(  1\right)  }%
\big|^{2}dr+\big|U_{r}^{\left(  2\right)  }\big|^{2}dA_{r}\Big]\medskip\\
\leq\mathbb{E}\left[  e^{2V_{T}^{\left(  +\right)  }}\left(  \varphi
(\eta)+\psi(\eta)\right)  \right]  +\mathbb{E}%
{\displaystyle\int_{0}^{T}}
e^{2V_{r}^{\left(  +\right)  }}\left(  1+6\left\vert Z_{r}\right\vert
^{2}+6\left\vert F\left(  r,Y_{r},0\right)  \right\vert ^{2}\right)
dr\medskip\\
\quad+\mathbb{E}%
{\displaystyle\int_{0}^{T}}
e^{2V_{r}^{\left(  +\right)  }}\left(  1+3\left\vert G\left(  r,Y_{r}\right)
\right\vert ^{2}\right)  dA_{r}%
\end{array}
\label{b-1a}%
\end{equation}

\hfill
\end{proof}

\begin{proposition}
[$L^{p}$-- variational solution]\label{p1-wvs}We suppose that assumptions
$\left(  \mathrm{A}_{1}-\mathrm{A}_{6}\right)  $ are satisfied. Let
$0<\lambda<1<p,$ $n_{p}=\left(  p-1\right)  \wedge1$ and $V^{\left(  +\right)
}$ be given by (\ref{defV_2}).

Moreover, we assume:

\begin{itemize}
\item[$\left(  i\right)  $] there exists $\hat{L}>0$ such that%
\begin{equation}
\big|e^{V_{T}^{\left(  +\right)  }}\eta\big|^{2}+\Big(%
{\displaystyle\int_{0}^{T}}
e^{V_{r}^{\left(  +\right)  }}\left(  \left\vert F\left(  r,0,0\right)
\right\vert dr+\left\vert G\left(  r,0\right)  \right\vert dA_{r}\right)
\Big)^{2}\leq\hat{L}; \label{assump-H(t,0,0)}%
\end{equation}

\item[$\left(  ii\right)  $] there exists $a\in(1+n_{p}\lambda,p\wedge2)$ such
that%
\begin{equation}%
\begin{array}
[c]{rl}%
\left(  a\right)  & \mathbb{E}\big(%
{\displaystyle\int_{0}^{T}}
\ell_{s}^{2}ds\big)^{\frac{a}{2-a}}<\infty,\medskip\\
\left(  b\right)  & \mathbb{E}\left[
{\displaystyle\int_{0}^{T}}
e^{V_{s}^{\left(  +\right)  }}\left(  F_{1+\hat{\rho}}^{\#}\left(  s\right)
ds+G_{1+\hat{\rho}}^{\#}\left(  s\right)  dA_{s}\right)  \right]  ^{a}<\infty,
\end{array}
\label{ip-1a}%
\end{equation}
where\footnote{The constant $\hat{L}$ is given by (\ref{assump-H(t,0,0)}) and
the constant $C_{\lambda}=C_{p,\lambda}$ is given by (\ref{an3a}) with $p=2.$}
$\hat{\rho}=(C_{\lambda}\hat{L})^{1/2};$

\item[$\left(  iii\right)  $] there exists a positive p.m.s.p. $\left(
\Theta_{t}\right)  _{t\in\left[  0,T\right]  }$ and, for each $\rho\geq0$,
there exist an non-decreasing function $K_{\rho}:\mathbb{R}_{+}\rightarrow
\mathbb{R}_{+}$ such that%
\begin{equation}
F_{\rho}^{\#}\left(  t\right)  +G_{\rho}^{\#}\left(  t\right)  \leq K_{\rho
}\left(  \Theta_{t}\right)  \text{,}\quad\text{a.e. }t\in\left[  0,T\right]
,\;\mathbb{P}-a.s.. \label{ip-1b}%
\end{equation}

\end{itemize}

Then the multivalued BSDE
\[
\left\{
\begin{array}
[c]{r}%
\displaystyle Y_{t}+{\int_{t}^{T}}dK_{r}=\eta+{\int_{t}^{T}}H\left(
r,Y_{r},Z_{r}\right)  dQ_{r}-{\int_{t}^{T}}Z_{r}dB_{r},\;\text{a.s., for all
}t\in\left[  0,T\right]  ,\medskip\\
\multicolumn{1}{l}{\displaystyle dK_{r}=U_{r}dQ_{r}\in\partial_{y}\Psi\left(
r,Y_{r}\right)  dQ_{r}}%
\end{array}
\,\right.
\]
has a unique $L^{p}$--variational solution, in the sense of Definition
\ref{definition_weak solution}.

Moreover, for all $t\in\left[  0,T\right]  ,$ $\mathbb{P}$--a.s.,%
\begin{equation}%
\begin{array}
[c]{l}%
\displaystyle\mathbb{E}^{\mathcal{F}_{t}}\Big(\sup\limits_{s\in\left[
t,T\right]  }\big|e^{V_{s}^{\left(  +\right)  }}Y_{s}\big|^{p}\Big)+\mathbb{E}%
^{\mathcal{F}_{t}}\Big(%
{\displaystyle\int_{t}^{T}}
e^{2V_{s}^{\left(  +\right)  }}\left(  \varphi\left(  Y_{s}\right)
ds+\psi\left(  Y_{s}\right)  dA_{s}\right)  \Big)^{p/2}\medskip\\
\displaystyle\quad\quad\quad\quad\quad\quad\quad\quad\quad\quad\quad\quad
\quad\quad\quad\quad\quad\quad+\mathbb{E}^{\mathcal{F}_{t}}\Big(%
{\displaystyle\int_{0}^{T}}
e^{2V_{s}^{\left(  +\right)  }}\left\vert Z_{s}\right\vert ^{2}ds\Big)^{p/2}%
\medskip\\
\displaystyle\leq C_{p,\lambda}~\mathbb{E}^{\mathcal{F}_{t}}\left[
e^{pV_{T}^{\left(  +\right)  }}\left\vert \eta\right\vert ^{p}+\Big(%
{\displaystyle\int_{t}^{T}}
e^{V_{s}^{\left(  +\right)  }}\left(  \left\vert F\left(  r,0,0\right)
\right\vert dr+\left\vert G\left(  t,0\right)  \right\vert dA_{r}\right)
\Big)^{p}\right]  .
\end{array}
\label{es-p}%
\end{equation}

\end{proposition}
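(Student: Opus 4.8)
The plan is to obtain the solution by approximation, reducing to Lemma~\ref{l1-strong sol} and then removing the truncation with the help of the a priori estimates of Proposition~\ref{p-estim} and the continuity estimate of Theorem~\ref{uniq}. For $n\in\mathbb{N}^{\ast}$ I would introduce the stopping time
\[
\tau_{n}:=\inf\big\{t\geq0:A_{t}+\Theta_{t}+\ell_{t}+|\mu_{t}|+|\nu_{t}|\geq n\big\}\wedge n
\]
and the truncated data $\eta^{n}:=\eta\,\mathbf{1}_{\{|\eta|\leq n\}}$, $F^{n}:=F\,\mathbf{1}_{[0,\tau_{n}]}$, $G^{n}:=G\,\mathbf{1}_{[0,\tau_{n}]}$, $\mu^{n}:=\mu\,\mathbf{1}_{[0,\tau_{n}]}$, $\nu^{n}:=\nu\,\mathbf{1}_{[0,\tau_{n}]}$, $\ell^{n}:=\ell\,\mathbf{1}_{[0,\tau_{n}]}$. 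Since the cut-off factor is nonnegative and independent of $(y,z)$, the compatibility conditions (A$_{6}$) are preserved; on $[0,\tau_{n}]$ the coefficients $\ell^{n},|\mu^{n}|,|\nu^{n}|$ and $A$ are bounded by $n$, the weight $V^{(+),n}:=\int_{0}^{\cdot}\mathbf{1}_{[0,\tau_{n}]}\,dV^{(+)}=V^{(+)}_{\cdot\wedge\tau_{n}}$ is bounded by a deterministic constant, and $F^{\#,n}_{\rho}+G^{\#,n}_{\rho}\leq K_{\rho}(\Theta_{\cdot})\leq K_{\rho}(n)$ by hypothesis (iii). Using hypotheses (i)--(iii) together with $V^{(+),n}\leq V^{(+)}$, one checks that assumptions (i)--(iv) of Lemma~\ref{l1-strong sol} hold for the truncated data, so there exists a strong solution $(Y^{n},Z^{n},U^{n,1},U^{n,2})$, which by Remark~\ref{s-w} is an $L^{p}$--variational solution of the truncated BSVI.

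The crucial point is that all a priori and continuity estimates can be run with the single common weight $e^{V^{(+)}}$ (for which $\mathbb{E}e^{pV^{(+)}_{T}}<\infty$ by \eqref{ip-mnl}) rather than with the data-dependent $V^{(+),n}$: Proposition~\ref{p-estim} and Theorem~\ref{uniq} remain valid with $V$ replaced by $V^{(+)}$ and with $\mu,\nu$ replaced by $\mu^{+},\nu^{+}$, because the monotonicity bound in their proofs only uses $dV_{r}\leq dV^{(+)}_{r}$ and because each $H^{n}$ satisfies the monotonicity/Lipschitz conditions with the coefficients $\mu^{+},\nu^{+},\ell$. Applying to $Y^{n}$ the $q=2$ form of \eqref{def-11cc} (with the $|Y|^{q-1}$ factor absorbed by Young's inequality) yields $\mathbb{E}^{\mathcal{F}_{t}}\sup_{[t,T]}e^{2V^{(+),n}_{r}}|Y^{n}_{r}|^{2}\leq 2C_{\lambda}\hat L$ by hypothesis (i), hence, since the left member is $\mathcal{F}_{t}$--measurable at $r=t$, the \emph{uniform} a priori bound $|Y^{n}_{t}|\leq\hat\rho$ a.s.; this gives at once $\sup_{n}\mathbb{E}\sup_{[0,T]}e^{pV^{(+)}_{r}}|Y^{n}_{r}|^{p}<\infty$, and then \eqref{def-11aa} (Part I of Proposition~\ref{p-estim}) bounds $\mathbb{E}\big(\int_{0}^{T}e^{2V^{(+)}_{r}}|Z^{n}_{r}|^{2}dr\big)^{p/2}+\mathbb{E}\big(\int_{0}^{T}e^{2V^{(+)}_{r}}\Psi(r,Y^{n}_{r})dQ_{r}\big)^{p/2}$ uniformly. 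Combining these with the absorbed form of \eqref{def-11cc} shows that each $Y^{n}$ already satisfies the target inequality \eqref{es-p} with $\eta$ (not $\eta^{n}$) on the right, since $|\eta^{n}|\leq|\eta|$ and $|H^{n}(r,0,0)|dQ_{r}\leq|F(r,0,0)|dr+|G(r,0)|dA_{r}$.

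For convergence I would use that, for $m>n$, the truncated generators coincide on $[0,\tau_{n}]$ (as $\tau_{n}\leq\tau_{m}$), so applying \eqref{cont-1} with the weight $V^{(+)}$ on the stochastic interval $[\,t\wedge\tau_{n}\wedge T,\ \tau_{n}\wedge T\,]$ kills the driver--difference term and gives $e^{qV^{(+)}_{t\wedge\tau_{n}}}|Y^{n}_{t\wedge\tau_{n}}-Y^{m}_{t\wedge\tau_{n}}|^{q}\leq\mathbb{E}^{\mathcal{F}_{t\wedge\tau_{n}}}e^{qV^{(+)}_{\tau_{n}\wedge T}}|Y^{n}_{\tau_{n}\wedge T}-Y^{m}_{\tau_{n}\wedge T}|^{q}$; on $\{\tau_{n}\geq T\}$ the right member equals $e^{qV^{(+)}_{T}}|\eta^{n}-\eta^{m}|^{q}$ and on $\{\tau_{n}<T\}$ it is dominated by $(2\hat\rho)^{q}e^{qV^{(+)}_{T}}$, so it tends to $0$ in $L^{1}$ by dominated convergence (using $e^{qV^{(+)}_{T}}|\eta|^{q}\leq\hat L^{q/2}$, $\eta^{n}\to\eta$, $\mathbf{1}_{\{\tau_{n}<T\}}\to0$, $\mathbb{E}e^{pV^{(+)}_{T}}<\infty$); a maximal inequality (as in the passage from \eqref{uniq-66} to \eqref{cont-2}) then upgrades this to $\mathbb{E}\sup_{[0,T]}e^{\alpha qV^{(+)}_{t}}|Y^{n}_{t}-Y^{m}_{t}|^{\alpha q}\to0$ together with convergence of the weighted $Z$--term, and since $|Y^{n}-Y^{m}|\leq2\hat\rho$ the corresponding denominator is a fixed finite random variable, so $Z^{n}$ is Cauchy in $\Lambda_{m\times k}^{0}(0,T)$. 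Along a subsequence with $Y^{n}\to Y$ uniformly and $\int_{0}^{T}e^{2V^{(+)}_{r}}|Z^{n}_{r}-Z_{r}|^{2}dr\to0$ a.s., I would pass to the limit in \eqref{def1} for each fixed $M$: the $(\Gamma^{n})^{q}$ terms are continuous, the terms containing $\Psi(r,Y^{n}_{r})$ and $|R-Z^{n}|^{2}$ are handled by Fatou and the lower semicontinuity of $\Psi(r,\cdot)$, the driver term converges by dominated convergence after splitting $H^{n}(r,Y^{n}_{r},Z^{n}_{r})-H(r,Y_{r},Z_{r})$ into a $(\mathbf{1}_{[0,\tau_{n}]}-1)$-piece, a $z$--Lipschitz piece (controlled by $\ell_{r}|Z^{n}_{r}-Z_{r}|$), and a $y$--continuity piece (dominated, via $|Y^{n}|\leq\hat\rho$, by a $dQ_{r}$--integrable function built from $\ell_{r}|Z_{r}|$, $F^{\#}_{\hat\rho}$, $G^{\#}_{\hat\rho}$ and $|N_{r}|$, whose integrability comes from assumption (A$_{4}$) and hypotheses (ii)--(iii)), and the stochastic integral converges in probability by the $L^{0}$--continuity of the It\^{o} integral. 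Thus $(Y,Z)$ is an $L^{p}$--variational solution; \eqref{es-p} follows by Fatou in the inequality already established for $Y^{n}$, and uniqueness is exactly Theorem~\ref{uniq}.

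The step I expect to be the main obstacle is organizing the two superimposed limits — removing the truncation and, inside the proof of Lemma~\ref{l1-strong sol}, the Moreau--Yosida/mollifier parameter — so that the exponential weights stay under control uniformly: this forces one to work throughout with the common weight $e^{V^{(+)}}$ and to check carefully that Proposition~\ref{p-estim} and Theorem~\ref{uniq} survive the replacements $V\rightsquigarrow V^{(+)}$, $\mu\rightsquigarrow\mu^{+}$, $\nu\rightsquigarrow\nu^{+}$, and that on $[0,\tau_{n}]$ the weight $e^{V^{(+)}}$ coincides with the bounded $e^{V^{(+),n}}$ (which is what makes the various integrability hypotheses of those results available for the approximating equations). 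The passage to the limit in the driver term of \eqref{def1} is the second delicate point, and it is there that the growth control (iii) by $K_{\rho}(\Theta_{\cdot})$ and the integrability (ii) of $\int\ell^{2}$ and of $\int e^{V^{(+)}}(F^{\#}_{1+\hat\rho}\,ds+G^{\#}_{1+\hat\rho}\,dA_{s})$ are essential.
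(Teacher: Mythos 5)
Your overall architecture is the paper's: truncate the data so that Lemma~\ref{l1-strong sol} applies, get the uniform bound $|Y^{n}|\leq\hat{\rho}$ and the uniform $p$--th moment estimates from Proposition~\ref{Appendix_result 1}/Proposition~\ref{p-estim}, prove a Cauchy property via the continuity estimate of Theorem~\ref{uniq}, pass to the limit in \eqref{def1}, and get uniqueness from Theorem~\ref{uniq}. But your truncation device creates a genuine gap. You localize with the stopping time $\tau_{n}=\inf\{t:A_{t}+\Theta_{t}+\ell_{t}+|\mu_{t}|+|\nu_{t}|\geq n\}\wedge n$, and the whole convergence step rests on the claims that $\mathbf{1}_{\{\tau_{n}<T\}}\to0$ and that the boundary term $\mathbb{E}^{\mathcal{F}}e^{qV^{(+)}_{\tau_{n}\wedge T}}|Y^{n}_{\tau_{n}\wedge T}-Y^{m}_{\tau_{n}\wedge T}|^{q}$ vanishes. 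Under the standing hypotheses, $\ell,|\mu|,|\nu|,\Theta$ (and $F^{\#}_{\rho},G^{\#}_{\rho}$) are only $dt$-- or $dA$--integrable in time, not pathwise locally bounded: assumption \eqref{ip-mnl} controls $\int_{0}^{T}|\mu_{s}|ds$, $\int_{0}^{T}\ell_{s}^{2}ds$, etc., and (iii) only gives an a.e.\ bound through $\Theta$. Take e.g.\ $\mu_{t}=(T-t)^{-1/2}$ or $\ell_{t}=(T-t)^{-1/4}$ (deterministic, all hypotheses satisfied): then $\tau_{n}<T$ for every $n$, so $\mathbf{1}_{\{\tau_{n}<T\}}\equiv1$, and on $\{\tau_{n}<T\}$ the boundary term is exactly the unknown quantity $|Y^{n}_{\tau_{n}}-Y^{m}_{\tau_{n}}|$ (bounded by $2\hat{\rho}$ but with no reason to tend to $0$). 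So the Cauchy estimate you extract on $[0,\tau_{n}]$ does not close, and neither does the control of $\sup_{t\in(\tau_{n},T]}|Y^{n}_{t}-Y^{m}_{t}|$.

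The paper avoids precisely this by not stopping the equation: it truncates the generator pointwise in time, $F^{(n)}=F\,\mathbf{1}_{[0,n]}(\beta_{t})$, $G^{(n)}=G\,\mathbf{1}_{[0,n]}(\beta_{t})$ with $\beta_{t}=t+A_{t}+|\mu_{t}|+|\nu_{t}|+\ell_{t}+V^{(+)}_{t}+F^{\#}_{1+\hat{\rho}}(t)+G^{\#}_{1+\hat{\rho}}(t)+\Theta_{t}$, and only uses the monotone part of $\beta$ (via $\theta_{n}=\inf\{r:r+A_{r}+V^{(+)}_{r}>n\}$, for which $\mathbb{P}(\theta_{n}<T)\to0$ does hold) to verify the boundedness hypotheses of Lemma~\ref{l1-strong sol}. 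The approximations for different $n$ then do \emph{not} coincide on a stochastic interval; instead the driver difference is kept and bounded by $[\alpha_{t}|F(t,0,0)|+(1-\alpha_{t})|G(t,0)|]\,\mathbf{1}_{(n,\infty)}(\beta_{t})$ plus tail terms $\mathbf{1}_{(n,\infty)}(\beta_{s})\big[2\ell_{s}|Z^{(n)}_{s}|+F^{\#}_{\hat{\rho}}(s)ds+G^{\#}_{\hat{\rho}}(s)dA_{s}\big]$, which are sent to zero by H\"older (with the exponent $a$ of hypothesis (ii), and the choice $\lambda'=\frac{n_{p}\lambda+a-1}{2(a-1)}$ to absorb the quadratic term) and dominated convergence in the time integrals; this only needs $\beta_{t}<\infty$ for a.e.\ $t$, which is what the assumptions actually give. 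If you replace your $\tau_{n}$--localization by this pointwise cut-off and run your Cauchy step with the nonzero driver-difference term estimated as above, the rest of your argument (uniform bound $\hat{\rho}$, passage to the limit in \eqref{def1}, \eqref{es-p} by Fatou, uniqueness from Theorem~\ref{uniq}) matches the paper. A minor additional point: the a priori bound should be obtained directly from Proposition~\ref{Appendix_result 1} keeping the term $|Y|\,dN$ (giving exactly $|Y^{n}|\leq(C_{\lambda}\hat{L})^{1/2}=\hat{\rho}$); absorbing it by Young's inequality as you propose degrades the constant and then the radius $1+\hat{\rho}$ in hypothesis (ii)(b) no longer matches the bound you actually proved.
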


\begin{proof}
Let $t\in\left[  0,T\right]  ,$ $n\in\mathbb{N}^{\ast}$ and%
\[
\displaystyle\beta_{t}=t+A_{t}+\left\vert \mu_{t}\right\vert +\left\vert
\nu_{t}\right\vert +\ell_{t}+V_{t}^{\left(  +\right)  }+F_{1+\hat{\rho}}%
^{\#}\left(  t\right)  +G_{1+\hat{\rho}}^{\#}\left(  t\right)  +\Theta_{t}.
\]
Consider the BSDE%
\begin{equation}
\left\{
\begin{array}
[c]{l}%
Y_{t}^{\left(  n\right)  }+%
{\displaystyle\int_{t}^{T}}
U_{s}^{\left(  n\right)  }dQ_{s}=\eta^{\left(  n\right)  }+%
{\displaystyle\int_{t}^{T}}
H^{\left(  n\right)  }\left(  s,Y_{s}^{\left(  n\right)  },Z_{s}^{\left(
n\right)  }\right)  dQ_{s}-%
{\displaystyle\int_{t}^{T}}
Z_{s}^{\left(  n\right)  }dB_{s}\,,\;t\in\left[  0,T\right]  ,\medskip\\
U_{s}^{\left(  n\right)  }=\alpha_{r}U_{r}^{\left(  1,n\right)  }+\left(
1-\alpha_{r}\right)  U_{r}^{\left(  2,n\right)  }\medskip\\
U_{s}^{\left(  1,n\right)  }\in\partial\varphi(Y_{s}^{\left(  n\right)
})~,d\mathbb{P}\otimes ds-a.e.\quad\text{and }\quad U_{s}^{\left(  2,n\right)
}\in\partial\psi(Y_{s}^{\left(  n\right)  }),\quad d\mathbb{P}\otimes
dA_{s}-a.e.\quad\text{on }\left[  0,T\right]  ,
\end{array}
\right.  \label{st2-1}%
\end{equation}
where%
\[%
\begin{array}
[c]{l}%
\displaystyle\eta^{\left(  n\right)  }:=\eta\mathbf{1}_{\left[  0,n\right]
}\left(  \left\vert \eta\right\vert +V_{T}^{\left(  +\right)  }\right)
,\medskip\\
\displaystyle F^{\left(  n\right)  }\left(  t,y,z\right)  :=F\left(
t,y,z\right)  \mathbf{1}_{\left[  0,n\right]  }\left(  \beta_{t}\right)
,\quad\quad G^{\left(  n\right)  }\left(  t,y,z\right)  :=G\left(  t,y\right)
\mathbf{1}_{\left[  0,n\right]  }\left(  \beta_{t}\right)  ,\medskip\\
\displaystyle H^{\left(  n\right)  }\left(  s,y,z\right)  :=\alpha
_{s}F^{\left(  n\right)  }\left(  s,y,z\right)  +\left(  1-\alpha_{s}\right)
G^{\left(  n\right)  }\left(  s,y\right)  .
\end{array}
\]
We have%
\[
\left\langle y-\hat{y},H^{\left(  n\right)  }(t,y,z)-H^{\left(  n\right)
}(t,\hat{y},z)\right\rangle \leq\left(  \alpha_{t}\mu_{t}+\left(  1-\alpha
_{t}\right)  \nu_{t}\right)  \mathbf{1}_{\left[  0,n\right]  }\left(
\beta_{t}\right)  \left\vert y-\hat{y}\right\vert ^{2}%
\]
and%
\[
\left\vert H^{\left(  n\right)  }(t,y,z)-H^{\left(  n\right)  }(t,y,\hat
{z})\right\vert \leq\mathbf{1}_{\left[  0,n\right]  }\left(  \beta_{t}\right)
\alpha_{t}\ell_{t}\left\vert z-\hat{z}\right\vert .
\]
Remark that%
\[
\mu_{t}^{\left(  n\right)  }=\mu_{t}\,\mathbf{1}_{\left[  0,n\right]  }\left(
\beta_{t}\right)  ,\quad\nu_{t}^{\left(  n\right)  }=\nu_{t}\,\mathbf{1}%
_{\left[  0,n\right]  }\left(  \beta_{t}\right)  ,\quad\ell_{t}^{\left(
n\right)  }=\ell_{t}\,\mathbf{1}_{\left[  0,n\right]  }\left(  \beta
_{t}\right)
\]
and%
\[
F_{1}^{\left(  n\right)  \#}\left(  t\right)  =\sup_{\left\vert u\right\vert
\leq1}\left\vert F^{\left(  n\right)  }\left(  t,u,0\right)  \right\vert \leq
n\mathbf{1}_{\left[  0,n\right]  }\left(  \beta_{t}\right)  ,\quad\quad
G_{1}^{\left(  n\right)  \#}\left(  t\right)  =\sup_{\left\vert u\right\vert
\leq1}\left\vert G^{\left(  n\right)  }\left(  t,u\right)  \right\vert \leq
n\mathbf{1}_{\left[  0,n\right]  }\left(  \beta_{t}\right)  .
\]
Let $\theta_{n}=\inf\left\{  r\geq0:r+A_{r}+V_{r}^{\left(  +\right)
}>n\right\}  .$ Then $\mathbf{1}_{\left[  0,n\right]  }\left(  \beta
_{r}\right)  \leq\mathbf{1}_{\left[  0,\theta_{n}\right]  }\left(  r\right)  $
and
\begin{align*}
V_{t}^{\left(  n,+\right)  }  &  =\int_{0}^{t}\left[  \left(  \mu_{r}^{\left(
n\right)  }+\frac{1}{2n_{p}\lambda}\left(  \ell_{r}^{\left(  n\right)
}\right)  ^{2}\right)  ^{+}dr+\nu_{r}^{\left(  n\right)  +}dA_{r}\right] \\
&  \leq\int_{0}^{t}\mathbf{1}_{\left[  0,n\right]  }\left(  \beta_{r}\right)
\left[  \left(  \mu_{r}+\frac{1}{2n_{p}\lambda}\left(  \ell_{r}\right)
^{2}\right)  ^{+}dr+\nu_{r}^{+}dA_{r}\right] \\
&  =\int_{0}^{t\wedge\theta_{n}}\left[  \left(  \mu_{r}+\frac{1}{2n_{p}%
\lambda}\left(  \ell_{r}\right)  ^{2}\right)  ^{+}dr+\nu_{r}^{+}dA_{r}\right]
\\
&  =V_{t\wedge\theta_{n}}^{\left(  +\right)  }\leq V_{\theta_{n}}^{\left(
+\right)  }\leq n
\end{align*}
and%
\begin{align*}
&  \big|e^{V_{T}^{\left(  n,+\right)  }}\eta^{\left(  n\right)  }%
\big|^{2}+\Big(%
{\displaystyle\int_{0}^{T}}
e^{V_{r}^{\left(  n,+\right)  }}\left(  F_{1}^{\left(  n\right)  \#}\left(
r\right)  dr+G_{1}^{\left(  n\right)  \#}\left(  r\right)  dA_{r}\right)
\Big)^{2}\\
&  \leq n^{2}e^{2n}+e^{2n}n^{2}\left(
{\displaystyle\int_{0}^{T\wedge\theta_{n}}}
\left(  dr+dA_{r}\right)  \right)  ^{2}\leq e^{2n}n^{2}\left(  1+n^{2}\right)
=\tilde{L}^{\left(  n\right)  }%
\end{align*}
for every $\rho\geq0$
\begin{align*}
F_{\rho}^{\left(  n\right)  \#}\left(  t\right)  +G_{\rho}^{\left(  n\right)
\#}\left(  t\right)   &  \leq\left[  F_{\rho}^{\#}\left(  t\right)  +G_{\rho
}^{\#}\left(  t\right)  \right]  \mathbf{1}_{\left[  0,n\right]  }\left(
\beta_{t}\right) \\
&  \leq K_{\rho}\left(  \Theta_{t}\right)  \mathbf{1}_{\left[  0,n\right]
}\left(  \beta_{t}\right) \\
&  \leq K_{\rho}\left(  n\right)  .
\end{align*}
Therefore assumptions (\ref{ap-eq-1b}), (\ref{ap-eq-1bb}), (\ref{ap-eq-1c})
and (\ref{ap-eq-6}) are satisfied.

Hence by Lemma \ref{l1-strong sol} there exists a unique (strong) solution
$\left(  Y^{\left(  n\right)  },Z^{\left(  n\right)  },U^{\left(  n\right)
}\right)  \in S_{m}^{0}\left[  0,T\right]  \times\Lambda_{m\times k}%
^{0}\left(  0,T\right)  \times\Lambda_{m}^{0}\left(  0,T\right)  $ of BSDE
(\ref{st2-1}).

We have%
\begin{equation}%
\begin{array}
[c]{l}%
\left\langle Y_{t}^{\left(  n\right)  },H^{\left(  n\right)  }(t,Y_{t}%
^{\left(  n\right)  },Z_{t}^{\left(  n\right)  })-U_{s}^{\left(  n\right)
}\right\rangle dQ_{t}\medskip\\
\quad\leq\Big[\left(  \alpha_{t}\mu_{t}+\left(  1-\alpha_{t}\right)  \nu
_{t}+\alpha_{t}\dfrac{1}{2n_{p}\lambda}\ell_{t}^{2}\right)  \mathbf{1}%
_{\left[  0,n\right]  }\left(  \beta_{t}\right)  \big|Y_{t}^{\left(  n\right)
}\big|^{2}\medskip\\
\quad\quad+\alpha_{t}\mathbf{1}_{\left[  0,n\right]  }\left(  \beta
_{t}\right)  \dfrac{n_{p}\lambda}{2}\,\big|Z_{t}^{\left(  n\right)  }%
\big|^{2}+|H^{\left(  n\right)  }\left(  t,0,0\right)  |\big|Y_{t}^{\left(
n\right)  }\big|\Big]dQ_{t}\medskip\\
\quad\leq\big|Y_{t}^{\left(  n\right)  }\big|dN_{t}+\big|Y_{t}^{\left(
n\right)  }\big|^{2}dV_{t}^{\left(  +\right)  }+\dfrac{\lambda}{2}%
\,\big|Z_{t}^{\left(  n\right)  }\big|^{2}dr,
\end{array}
\label{yn-ineq}%
\end{equation}
where%
\[
N_{t}=\int_{0}^{t}\left[  \left\vert F\left(  r,0,0\right)  \right\vert
dr+\left\vert G\left(  t,0\right)  \right\vert dA_{r}\right]  \quad
\text{and}\quad V_{t}^{\left(  +\right)  }=\int_{0}^{t}\left[  \left(  \mu
_{r}+\frac{1}{2n_{p}\lambda}\ell_{r}^{2}\right)  ^{+}dr+\nu_{r}^{+}%
dA_{r}\right]  .
\]
Since by (\ref{b-1})%
\begin{align*}
\big|Y_{t}^{\left(  n\right)  }\big|^{2}  &  \leq\big|e^{V_{t}^{\left(
n,+\right)  }}Y_{t}^{\left(  n\right)  }\big|^{2}\leq\mathbb{E}^{\mathcal{F}%
_{t}}\Big(\sup\limits_{r\in\left[  t,T\right]  }\big|e^{V_{r}^{\left(
n,+\right)  }}Y_{r}^{\left(  n\right)  }\big|^{2}\Big)\\
&  \leq C_{\lambda}\tilde{L}^{\left(  n\right)  }=:\left(  \rho_{n}\right)
^{2},\quad\text{for all }t\in\left[  0,T\right]  \text{, }\mathbb{P}-a.s..
\end{align*}
and $\left\vert \eta^{\left(  n\right)  }\right\vert \leq\left\vert
\eta\right\vert $, we deduce from (\ref{yn-ineq}), by Proposition
\ref{Appendix_result 1}, that for all $t\in\left[  0,T\right]  :$%
\[%
\begin{array}
[c]{l}%
\mathbb{E}^{\mathcal{F}_{t}}\Big(\sup\limits_{r\in\left[  t,T\right]
}\big|e^{V_{r}^{\left(  +\right)  }}Y_{r}^{\left(  n\right)  }\big|^{2}%
\Big)+\mathbb{E}^{\mathcal{F}_{t}}\Big(%
{\displaystyle\int_{t}^{T}}
e^{2V_{r}^{\left(  +\right)  }}\big|Z_{r}^{\left(  n\right)  }\big|^{2}%
dr\Big)\medskip\\
\leq C_{\lambda}~\mathbb{E}^{\mathcal{F}_{t}}\left[  \big|e^{V_{T}^{\left(
+\right)  }}\eta\big|^{2}+\Big(%
{\displaystyle\int_{t}^{T}}
e^{V_{r}^{\left(  +\right)  }}dN_{r}\Big)^{2}\right]
\end{array}
\]
($C_{\lambda}=C_{p,\lambda}$ is the constant defined by (\ref{an3a}) for $p=2$ ).

By assumption (\ref{assump-H(t,0,0)}) we have%
\begin{align*}
\big|Y_{t}^{\left(  n\right)  }\big|  &  \leq\big|e^{V_{t}^{\left(  +\right)
}}Y_{t}^{\left(  n\right)  }\big|\leq\left[  \mathbb{E}^{\mathcal{F}_{t}%
}\Big(\sup\limits_{r\in\left[  t,T\right]  }\big|e^{V_{r}^{\left(  +\right)
}}Y_{r}^{\left(  n\right)  }\big|^{2}\Big)\right]  ^{1/2}\\
&  \leq(C_{\lambda}\hat{L})^{1/2}=\hat{\rho},\quad\text{for all }t\in\left[
0,T\right]  \text{, }\mathbb{P}-a.s.,
\end{align*}
and%
\[
\mathbb{E}\Big(%
{\displaystyle\int_{0}^{T}}
e^{2V_{r}^{\left(  +\right)  }}\big|Z_{r}^{\left(  n\right)  }\big|^{2}%
dr\Big)\leq\hat{\rho}^{2}.
\]
Let $n,i\in\mathbb{N}^{\ast}$. Then $Y^{\left(  n+i\right)  }-Y^{\left(
n\right)  }$ satisfies the following BSDE%
\begin{align*}
&  Y_{t}^{\left(  n+i\right)  }-Y_{t}^{\left(  n\right)  }+\int_{t}^{T}\left[
U_{s}^{\left(  n+i\right)  }-U_{s}^{\left(  n\right)  }\right]  dQ_{s}\\
&  =\eta^{\left(  n+i\right)  }-\eta^{\left(  n\right)  }+\int_{t}^{T}\left[
H^{\left(  n+i\right)  }\big(s,Y_{s}^{\left(  n+i\right)  },Z_{s}^{\left(
n+i\right)  }\big)-H^{\left(  n\right)  }\big(s,Y_{s}^{\left(  n\right)
},Z_{s}^{\left(  n\right)  }\big)\right]  dQ_{s}\\
&  \quad-\int_{t}^{T}\big(Z_{s}^{\left(  n+i\right)  }-Z_{s}^{\left(
n\right)  }\big)dB_{s}~.
\end{align*}
Since%
\[
\left\langle Y_{s}^{\left(  n+i\right)  }-Y_{s}^{\left(  n\right)
},\big(U_{s}^{\left(  n+i\right)  }-U_{s}^{\left(  n\right)  }\big)dQ_{s}%
\right\rangle \geq0
\]
and\medskip\newline$%
\begin{array}
[c]{c}%
\;
\end{array}
\left\langle Y_{s}^{\left(  n+i\right)  }-Y_{s}^{\left(  n\right)  },\left[
H^{\left(  n+i\right)  }\big(s,Y_{s}^{\left(  n+i\right)  },Z_{s}^{\left(
n+i\right)  }\big)-H^{\left(  n\right)  }\big(s,Y_{s}^{\left(  n\right)
},Z_{s}^{\left(  n\right)  }\big)\right]  dQ_{s}\right\rangle $\medskip
\newline$%
\begin{array}
[c]{c}%
\quad
\end{array}
=\left\langle Y_{s}^{\left(  n+i\right)  }-Y_{s}^{\left(  n\right)  },\left[
H^{\left(  n+i\right)  }\big(s,Y_{s}^{\left(  n+i\right)  },Z_{s}^{\left(
n+i\right)  }\big)-H^{\left(  n+i\right)  }\big(s,Y_{s}^{\left(  n\right)
},Z_{s}^{\left(  n\right)  }\big)\right]  dQ_{s}\right\rangle $\medskip
\newline$%
\begin{array}
[c]{c}%
\quad\quad
\end{array}
+\left\langle Y_{s}^{\left(  n+i\right)  }-Y_{s}^{\left(  n\right)  },\left[
H^{\left(  n+i\right)  }\big(s,Y_{s}^{\left(  n\right)  },Z_{s}^{\left(
n\right)  }\big)-H^{\left(  n\right)  }\big(s,Y_{s}^{\left(  n\right)  }%
,Z_{s}^{\left(  n\right)  }\big)\right]  dQ_{s}\right\rangle $\medskip
\newline$%
\begin{array}
[c]{c}%
\quad
\end{array}
\leq\mathbf{1}_{\left[  0,n+i\right]  }\left(  \beta_{s}\right)  \left(
\mu_{s}ds+\nu_{s}dA_{s}+\dfrac{1}{2n_{p}\lambda}\ell_{s}^{2}ds\right)
\big|Y_{s}^{\left(  n+i\right)  }-Y_{s}^{\left(  n\right)  }\big|^{2}%
+\dfrac{n_{p}\lambda}{2}\big|Z_{s}^{\left(  n+i\right)  }-Z_{s}^{\left(
n\right)  }\big|^{2}ds$\medskip\newline$%
\begin{array}
[c]{c}%
\quad\quad
\end{array}
+\big|Y_{s}^{\left(  n+i\right)  }-Y_{s}^{\left(  n\right)  }\big|\,\left\vert
\mathbf{1}_{\left[  0,n+i\right]  }\left(  \beta_{s}\right)  -\mathbf{1}%
_{\left[  0,n\right]  }\left(  \beta_{s}\right)  \right\vert \left[  2\ell
_{s}\big|Z_{s}^{\left(  n\right)  }\big|+F_{\hat{\rho}}^{\#}\left(  s\right)
ds+G_{\hat{\rho}}^{\#}\left(  s\right)  dA_{s}\right]  $\medskip\newline$%
\begin{array}
[c]{c}%
\quad
\end{array}
\leq\big|Y_{s}^{\left(  n+i\right)  }-Y_{s}^{\left(  n\right)  }%
\big|\,\mathbf{1}_{\left(  n,\infty\right)  }\left(  \beta_{s}\right)  \left[
2\ell_{s}\big|Z_{s}^{\left(  n\right)  }\big|+F_{\hat{\rho}}^{\#}\left(
s\right)  ds+G_{\hat{\rho}}^{\#}\left(  s\right)  dA_{s}\right]  $%
\medskip\newline$%
\begin{array}
[c]{c}%
\quad\quad
\end{array}
+\big|Y_{s}^{\left(  n+i\right)  }-Y_{s}^{\left(  n\right)  }\big|^{2}%
dV_{s}^{\left(  +\right)  }+\dfrac{n_{a}\lambda^{\prime}}{2}\big|Z_{s}%
^{\left(  n+i\right)  }-Z_{s}^{\left(  n\right)  }\big|^{2}ds$\medskip\newline
where using the assumption $\left(  ii\right)  $ of our Proposition we have
$1+n_{p}\lambda<a<p\wedge2$, $n_{a}=\left(  a-1\right)  \wedge1=a-1$ and
\[
n_{p}\lambda\leq\left(  a-1\right)  \frac{n_{p}\lambda+a-1}{2\left(
a-1\right)  }=n_{a}\lambda^{\prime}\quad\text{with }\lambda^{\prime}%
=\frac{n_{p}\lambda+a-1}{2\left(  a-1\right)  }\in\left(  0,1\right)
\]
Now by Proposition \ref{Appendix_result 1}, we obtain :$\medskip$\newline$%
\begin{array}
[c]{l}%
\text{\quad}%
\end{array}
\mathbb{E}\sup\limits_{s\in\left[  0,T\right]  }e^{aV_{s}^{\left(  +\right)
}}\big|Y_{s}^{\left(  n+i\right)  }-Y_{s}^{\left(  n\right)  }\big|^{a}%
+\mathbb{E}\left(
{\displaystyle\int_{0}^{T}}
e^{aV_{s}^{\left(  +\right)  }}\big|Z_{s}^{\left(  n+i\right)  }%
-Z_{s}^{\left(  n\right)  }\big|^{2}ds\right)  ^{a/2}\medskip$\newline$%
\begin{array}
[c]{l}%
\text{\quad}%
\end{array}
\leq C_{a,\lambda}\mathbb{E}\left[  e^{aV_{T}^{\left(  +\right)  }}\left\vert
\eta\right\vert ^{a}\mathbf{1}_{\left(  n,\infty\right)  }\left(  \left\vert
\eta\right\vert +V_{T}^{\left(  +\right)  }\right)  \right]  \medskip$%
\newline$%
\begin{array}
[c]{l}%
\text{\quad\quad}%
\end{array}
\mathbb{+}C_{a,\lambda}~\mathbb{E~}\left(
{\displaystyle\int_{0}^{T}}
e^{V_{s}^{\left(  +\right)  }}\mathbf{1}_{\left(  n,\infty\right)  }\left(
\beta_{s}\right)  \left[  2\ell_{s}\left\vert Z_{s}^{\left(  n\right)
}\right\vert +F_{\hat{\rho}}^{\#}\left(  s\right)  ds+G_{\hat{\rho}}%
^{\#}\left(  s\right)  dA_{s}\right]  \right)  ^{a}\medskip\newline%
\begin{array}
[c]{l}%
\text{\quad}%
\end{array}
\leq C_{a,\lambda}\mathbb{E}\left[  e^{aV_{T}^{\left(  +\right)  }}\left\vert
\eta\right\vert ^{a}\mathbf{1}_{\left(  n,\infty\right)  }\left(  \left\vert
\eta\right\vert +V_{T}^{\left(  +\right)  }\right)  \right]  \mathbb{+}%
2C_{a,\lambda}~\mathbb{E~}\left(
{\displaystyle\int_{0}^{T}}
e^{V_{s}^{\left(  +\right)  }}\mathbf{1}_{\left(  n,\infty\right)  }\left(
\beta_{s}\right)  2\ell_{s}\left\vert Z_{s}^{\left(  n\right)  }\right\vert
ds\right)  ^{a}\medskip$\newline$%
\begin{array}
[c]{l}%
\text{\quad\quad}%
\end{array}
+2C_{a,\lambda}~\mathbb{E~}\left(
{\displaystyle\int_{0}^{T}}
e^{V_{s}^{\left(  +\right)  }}\mathbf{1}_{\left(  n,\infty\right)  }\left(
\beta_{s}\right)  \left(  F_{\hat{\rho}}^{\#}\left(  s\right)  ds+G_{\hat
{\rho}}^{\#}\left(  s\right)  dA_{s}\right)  \right)  ^{a}\medskip$\newline$%
\begin{array}
[c]{l}%
\text{\quad}%
\end{array}
\leq C_{a,\lambda}\mathbb{E}\left[  e^{aV_{T}^{\left(  +\right)  }}\left\vert
\eta\right\vert ^{a}\mathbf{1}_{\left(  n,\infty\right)  }\left(  \left\vert
\eta\right\vert +V_{T}^{\left(  +\right)  }\right)  \right]  +C_{a,\lambda
}^{\prime}\mathbb{E}\left[  \big(%
{\displaystyle\int_{0}^{T}}
\ell_{s}^{2}\mathbf{1}_{(n,\infty)}\left(  \beta_{s}\right)  ds\big)^{a/2}%
\big(%
{\displaystyle\int_{0}^{T}}
e^{2V_{s}^{\left(  +\right)  }}\left\vert Z_{s}^{n}\right\vert ^{2}%
ds\big)^{a/2}\right]  \medskip$\newline$%
\begin{array}
[c]{l}%
\text{\quad\quad}%
\end{array}
+2C_{a,\lambda}~\mathbb{E~}\left(
{\displaystyle\int_{0}^{T}}
e^{V_{s}^{\left(  +\right)  }}\mathbf{1}_{\left(  n,\infty\right)  }\left(
\beta_{s}\right)  \left(  F_{\hat{\rho}}^{\#}\left(  s\right)  ds+G_{\hat
{\rho}}^{\#}\left(  s\right)  dA_{s}\right)  \right)  ^{a}\medskip$\newline$%
\begin{array}
[c]{l}%
\text{\quad}%
\end{array}
\leq C_{a,\lambda}\hat{L}^{a/2}\mathbb{E}\left[  \mathbf{1}_{\left(
n,\infty\right)  }\left(  \left\vert \eta\right\vert +V_{T}^{\left(  +\right)
}\right)  \right]  +C_{a,\lambda}^{\prime}\left[  \mathbb{E}\big(%
{\displaystyle\int_{0}^{T}}
\ell_{s}^{2}\mathbf{1}_{(n,\infty)}\left(  \beta_{s}\right)  ds\big)^{\frac
{a}{2-a}}\right]  ^{\frac{2-a}{2}}\left(  \mathbb{E}\big(%
{\displaystyle\int_{0}^{T}}
e^{2V_{s}^{\left(  +\right)  }}\left\vert Z_{s}^{n}\right\vert ^{2}%
ds\big)\right)  ^{\frac{a}{2}}\medskip$\newline$%
\begin{array}
[c]{l}%
\text{\quad\quad}%
\end{array}
+2C_{a,\lambda}~\mathbb{E}\left(
{\displaystyle\int_{0}^{T}}
e^{V_{s}^{\left(  +\right)  }}\mathbf{1}_{\left(  n,\infty\right)  }\left(
\beta_{s}\right)  \left(  F_{\hat{\rho}}^{\#}\left(  s\right)  ds+G_{\hat
{\rho}}^{\#}\left(  s\right)  dA_{s}\right)  \right)  ^{a}.\medskip$

Hence there exists $\left(  Y,Z\right)  \in S_{m}^{0}\left[  0,T\right]
\times\Lambda_{m\times k}^{0}\left(  0,T\right)  $ such that%
\begin{equation}%
\begin{array}
[c]{rl}%
\left(  j\right)  \quad & \left\vert Y_{t}\right\vert \leq e^{V_{t}^{\left(
+\right)  }}\left\vert Y_{t}\right\vert \leq(C_{\lambda}\hat{L})^{1/2}%
=\hat{\rho},\quad\text{for all }t\in\left[  0,T\right]  ,\;\mathbb{P}%
\text{-a.s.},\medskip\\
\left(  jj\right)  \quad & \mathbb{E}%
{\displaystyle\int_{0}^{T}}
e^{2V_{r}^{\left(  +\right)  }}\left\vert R_{r}\right\vert ^{2}dr\leq\hat
{\rho}^{2},\medskip\\
\left(  jjj\right)  \quad & \lim\limits_{n\rightarrow\infty}\mathbb{E}%
\sup\limits_{s\in\left[  0,T\right]  }e^{aV_{s}^{\left(  +\right)  }%
}\left\vert Y_{s}^{n}-Y_{s}\right\vert ^{a}+\mathbb{E}\left(
{\displaystyle\int_{0}^{T}}
e^{aV_{s}^{\left(  +\right)  }}\left\vert Z_{s}^{n}-Z_{s}\right\vert
^{2}ds\right)  ^{a/2}=0,\medskip\\
\left(  jv\right)  \quad & \left(  Y_{t},Z_{t}\right)  =\left(  \eta,0\right)
,\text{ for all }t>T.
\end{array}
\label{c-1}%
\end{equation}
We remark that%
\[
\varphi\big(Y_{t}^{\left(  n\right)  }\big)dt+\psi\big(Y_{t}^{\left(
n\right)  }\big)dA_{t}\leq\left\langle Y_{t}^{\left(  n\right)  }%
,U_{t}^{\left(  1,n\right)  }\right\rangle dt+\left\langle Y_{t}^{\left(
n\right)  },U_{t}^{\left(  2,n\right)  }\right\rangle dA_{t}%
\]
and\medskip\newline$%
\begin{array}
[c]{c}%
\quad
\end{array}
\varphi\big(Y_{t}^{\left(  n\right)  }\big)dt+\psi\big(Y_{t}^{\left(
n\right)  }\big)dA_{t}+\left\langle Y_{t}^{\left(  n\right)  },H(t,Y_{t}%
^{\left(  n\right)  },Z_{t}^{\left(  n\right)  })-U_{t}^{\left(  n\right)
}\right\rangle dQ_{t}$\medskip\newline$%
\begin{array}
[c]{c}%
\quad
\end{array}
\leq\left\langle Y_{t}^{\left(  n\right)  },H(t,Y_{t}^{\left(  n\right)
},Z_{t}^{\left(  n\right)  })\right\rangle dQ_{t}$\medskip\newline$%
\begin{array}
[c]{c}%
\quad
\end{array}
\leq\Big[\left(  \alpha_{t}\mu_{t}+\left(  1-\alpha_{t}\right)  \nu_{t}%
+\alpha_{t}\dfrac{1}{2n_{p}\lambda}\ell_{t}^{2}\right)  \,\mathbf{1}_{\left[
0,n\right]  }\left(  \beta_{t}\right)  \big|Y_{t}^{\left(  n\right)
}\big|^{2}$\medskip\newline$%
\begin{array}
[c]{c}%
\quad
\end{array}
\quad+\alpha_{t}\mathbf{1}_{\left[  0,n\right]  }\left(  \beta_{t}\right)
\,\dfrac{n_{p}\lambda}{2}\,\big|Z_{t}^{\left(  n\right)  }\big|^{2}+\left\vert
H^{\left(  n\right)  }\left(  t,0,0\right)  \right\vert \big|Y_{t}^{\left(
n\right)  }\big|\Big]dQ_{t}$\medskip\newline$%
\begin{array}
[c]{c}%
\quad
\end{array}
\leq\big|Y_{t}^{\left(  n\right)  }\big|dN_{t}+\big|Y_{t}^{\left(  n\right)
}\big|^{2}dV_{t}^{\left(  +\right)  }+\dfrac{n_{p}\lambda}{2}\,\big|Z_{t}%
^{\left(  n\right)  }\big|^{2}dr,$\medskip\newline where%
\[
N_{t}=\int_{0}^{t}\left[  \left\vert F\left(  r,0,0\right)  \right\vert
dr+\left\vert G\left(  t,0\right)  \right\vert dA_{r}\right]  .
\]
Also by (\ref{c-1}-j) and the assumption (\ref{ip-mnl}) we have%
\[
\mathbb{E}\sup_{t\in\left[  0,T\right]  }e^{pV_{t}^{\left(  +\right)  }%
}\big|Y_{t}^{\left(  n\right)  }\big|^{p}\leq\hat{\rho}^{p}~\mathbb{E}%
\exp\left(  p\int_{0}^{T}\left(  \left\vert \mu_{s}\right\vert +\frac
{1}{2n_{p}\lambda}\,\ell_{s}^{2}\right)  ds+p\int_{0}^{T}\left\vert \nu
_{s}\right\vert dA_{s}\right)  <\infty.
\]
Hence by Proposition \ref{Appendix_result 1} we deduce for all $t\in\left[
0,T\right]  $%
\begin{equation}%
\begin{array}
[c]{l}%
\displaystyle\mathbb{E}^{\mathcal{F}_{t}}\Big(\sup\limits_{s\in\left[
t,T\right]  }\big|e^{V_{s}^{\left(  +\right)  }}Y_{s}^{\left(  n\right)
}\big|^{p}\Big)+\mathbb{E}^{\mathcal{F}_{t}}\Big(%
{\displaystyle\int_{t}^{T}}
e^{2V_{s}^{\left(  +\right)  }}\left(  \varphi\left(  Y_{s}^{\left(  n\right)
}\right)  ds+\psi\left(  Y_{s}^{\left(  n\right)  }\right)  dA_{s}\right)
\Big)^{p/2}\medskip\\
\quad+\mathbb{E}^{\mathcal{F}_{t}}\Big(%
{\displaystyle\int_{t}^{T}}
e^{2V_{s}^{\left(  +\right)  }}\big|Z_{s}^{\left(  n\right)  }\big|^{2}%
ds\Big)^{p/2}\medskip\\
\displaystyle\leq C_{p,\lambda}~\mathbb{E}^{\mathcal{F}_{t}}\left[
e^{pV_{T}^{\left(  +\right)  }}\left\vert \eta\right\vert ^{p}+\Big(%
{\displaystyle\int_{t}^{T}}
e^{V_{s}^{\left(  +\right)  }}dN_{s}\Big)^{p}\right]  ,\;\;a.s..
\end{array}
\label{es-np}%
\end{equation}
By Remark \ref{s-w} and%
\begin{equation}
V_{t}=\int_{0}^{t}\left[  \left(  \mu_{r}+\dfrac{1}{2n_{p}\lambda}\ell_{r}%
^{2}\right)  dr+\nu_{r}dA_{r}\right]  \leq V_{t}^{\left(  +\right)  },
\label{es-p-a}%
\end{equation}
$\left(  Y^{\left(  n\right)  },Z^{\left(  n\right)  }\right)  $ as strong
solution of (\ref{st2-1}) is also an $L^{p}-$variational solution on $\left[
0,T\right]  $ for (\ref{st2-1}). \medskip

Hence for $q\in\{2,p\wedge2\},$ $\delta_{q}=\delta\mathbf{1}_{[1,2)}\left(
q\right)  $ and $\Gamma_{t}^{\left(  n\right)  }=\big(\big|M_{t}%
-Y_{t}^{\left(  n\right)  }\big|^{2}+\delta_{q}\big)^{1/2}$ it holds%
\begin{equation}%
\begin{array}
[c]{l}%
\big(\Gamma_{t}^{\left(  n\right)  }\big)^{q}+\dfrac{q\left(  q-1\right)  }%
{2}\,%
{\displaystyle\int_{t}^{s}}
{\big(\Gamma_{r}^{\left(  n\right)  }\big)^{q-2}}{\Large \,}\big|R_{r}%
-Z_{r}^{\left(  n\right)  }\big|^{2}dr+{q%
{\displaystyle\int_{t}^{s}}
}\big({\Gamma_{r}^{\left(  n\right)  }\big)^{q-2}\Psi}\big(r,Y_{r}^{\left(
n\right)  }\big)dQ_{r}\medskip\\
\leq\big(\Gamma_{s}^{\left(  n\right)  }\big)^{q}+{q%
{\displaystyle\int_{t}^{s}}
}\big({\Gamma_{r}^{\left(  n\right)  }\big)^{q-2}\Psi}\left(  r,M_{r}\right)
dQ_{r}\medskip\\
\quad+q%
{\displaystyle\int_{t}^{s}}
{\big(\Gamma_{r}^{\left(  n\right)  }\big)^{q-2}}\langle M_{r}-Y_{r}^{\left(
n\right)  },N_{r}-H\big(r,Y_{r}^{\left(  n\right)  },Z_{r}^{\left(  n\right)
}\big)\rangle dQ_{r}\medskip\\
\quad-q%
{\displaystyle\int_{t}^{s}}
\big({\Gamma_{r}^{\left(  n\right)  }\big)^{q-2}}\,\langle M_{r}%
-Y_{r}^{\left(  n\right)  },{\big(}R_{r}-Z_{r}^{\left(  n\right)  }%
{\big)}dB_{r}\rangle;
\end{array}
\label{an-vws}%
\end{equation}
for all $\delta\in(0,1]$, for all $0\leq t\leq s\leq T,$ for all $M\in
S_{m}^{0}\left(  \gamma,N,R;V\right)  .$

By convergence result (\ref{c-1}-$\left(  jjj\right)  $) and the assumptions
$(\mathrm{A}_{3}-\mathrm{A}_{5})$ we can pass to $\liminf_{n\rightarrow
+\infty}$ (on a subsequence) in (\ref{es-np}) and (\ref{an-vws}) to conclude
that $\left(  Y,Z\right)  $ is also an $L^{p}-$variational solution on
$\left[  0,T\right]  $ and the inequality (\ref{es-p}) holds.\hfill
\end{proof}

\begin{corollary}
Let the assumptions of Proposition \ref{p1-wvs} be satisfied. If, moreover,
$\varphi=\psi=0,$ then BSDE%
\begin{equation}
Y_{t}=\eta+{\int_{t}^{T}}H\left(  r,Y_{r},Z_{r}\right)  dQ_{r}-{\int_{t}^{T}%
}Z_{r}dB_{r}\,,\quad\text{a.s., for all }t\in\left[  0,T\right]  ,
\label{bsde-cl}%
\end{equation}
has a unique solution $\left(  Y,Z\right)  \in S_{m}^{p}\left[  0,T\right]
\times\Lambda_{m\times k}^{p}\left(  0,T\right)  .$
\end{corollary}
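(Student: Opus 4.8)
The corollary is essentially the ``$\varphi=\psi=0$ corner'' of Proposition~\ref{p1-wvs} combined with Remark~\ref{s-w}. The plan is as follows. Applying Proposition~\ref{p1-wvs} with $\varphi=\psi=0$ (so that $\Psi\equiv 0$, $\mathrm{Dom}(\partial\varphi)=\mathrm{Dom}(\partial\psi)=\mathbb{R}^m$, and the constraint process satisfies $dK_r=U_r\,dQ_r$ with $U_r\in\partial_y\Psi(r,Y_r)=\{0\}$, i.e. $K\equiv 0$), one obtains an $L^p$--variational solution $(Y,Z)$ of (\ref{bsde-cl}) together with the a priori estimate (\ref{es-p}). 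Taking $t=0$ in (\ref{es-p}), using $V^{(+)}\ge 0$ and that assumption (\ref{assump-H(t,0,0)}) bounds the data terms on its right-hand side, I get $\mathbb{E}\sup_{t\in[0,T]}|Y_t|^p\le\mathbb{E}\sup_t e^{pV_t^{(+)}}|Y_t|^p<\infty$ and $\mathbb{E}\big(\int_0^T|Z_r|^2dr\big)^{p/2}\le\mathbb{E}\big(\int_0^T e^{2V_r^{(+)}}|Z_r|^2dr\big)^{p/2}<\infty$; that is, $(Y,Z)\in S_m^p[0,T]\times\Lambda_{m\times k}^p(0,T)$.

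It remains to see that this $(Y,Z)$ is a genuine solution of (\ref{bsde-cl}). Since the defining inequality (\ref{def1}) of an $L^p$--variational solution does not involve $V$, it yields the weighted inequality (\ref{def1-b}) for \emph{any} continuous bounded-variation p.m.s.p.\ in the weight, in particular for the nondecreasing process $V^{(+)}$ of (\ref{defV_2}). With $\Psi\equiv 0$ and this nondecreasing weight I run the converse argument of Remark~\ref{s-w}: by \cite[Corollary 2.45]{pa-ra/14} introduce the unique $(M,R)\in S_m^{q}\times\Lambda_{m\times k}^{q}$, $q=p\wedge 2$, with $M_t=Y_T+\int_t^T H(r,Y_r,Z_r)\,dQ_r-\int_t^T R_r\,dB_r$, plug it into (\ref{def1-b}) (with weight $V^{(+)}$), use Remark~\ref{r-mart part} to see the resulting stochastic integral is a martingale, and let $\delta\to 0_+$ to conclude $(M,R)=(Y,Z)$ and hence $Y_t=\eta+\int_t^T H(r,Y_r,Z_r)\,dQ_r-\int_t^T Z_r\,dB_r$. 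The one estimate to supply here is $\mathbb{E}\big(\int_0^T e^{V_r^{(+)}}|H(r,Y_r,Z_r)|\,dQ_r\big)^{q}<\infty$, which follows from the uniform bound $|Y_t|\le\hat\rho$ of Proposition~\ref{p1-wvs}, the bound $|H(r,Y_r,Z_r)|\le\alpha_r\ell_r|Z_r|+\alpha_r F_{1+\hat\rho}^{\#}(r)+(1-\alpha_r)G_{1+\hat\rho}^{\#}(r)$, a Cauchy--Schwarz split of the $\ell_r|Z_r|$ term, and the integrability hypotheses (\ref{ip-1a})(a)--(b) together with $\mathbb{E}\int_0^T e^{2V_r^{(+)}}|Z_r|^2dr\le\hat\rho^2$. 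Equivalently and more transparently, one may note that the approximating pairs $(Y^{(n)},Z^{(n)})$ built in the proof of Proposition~\ref{p1-wvs} already solve the truncated BSDE $Y_t^{(n)}=\eta^{(n)}+\int_t^T H^{(n)}(r,Y_r^{(n)},Z_r^{(n)})\,dQ_r-\int_t^T Z_r^{(n)}\,dB_r$ \emph{exactly} (since $U^{(n)}\equiv 0$ when $\varphi=\psi=0$), and then pass to the limit in this equation using the convergence (\ref{c-1})-$(jjj)$ and the uniform bounds.

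For uniqueness, suppose $(Y,Z)$ and $(\bar Y,\bar Z)$ in $S_m^p\times\Lambda_{m\times k}^p$ both solve (\ref{bsde-cl}). Each is a strong solution in the sense of Remark~\ref{s-w}; using that the data satisfy (\ref{assump-H(t,0,0)}) and the a priori estimate of Proposition~\ref{Appendix_result 1} (applied exactly as in the proof of Proposition~\ref{p1-wvs}, via $\langle Y_t,H(t,Y_t,Z_t)\rangle\,dQ_t\le|Y_t|\,dN_t+|Y_t|^2\,dV_t^{(+)}+\tfrac{\lambda}{2}|Z_t|^2\,dt$), one checks the weighted integrability conditions (\ref{def0-1}) and (\ref{def0-2}), so each is an $L^p$--variational solution of (\ref{bsde-cl}). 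Theorem~\ref{uniq} then gives $(Y,Z)=(\bar Y,\bar Z)$, and uniqueness in $S_m^p\times\Lambda_{m\times k}^p$ follows.

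The argument is not conceptually hard, and the main obstacle is purely a matter of bookkeeping: verifying that the weighted integrability conditions needed to legitimately view a strong $S^p\times\Lambda^p$--solution as an $L^p$--variational solution — and to justify the martingale argument of Remark~\ref{s-w} with the nondecreasing weight $V^{(+)}$ — genuinely hold under the hypotheses (\ref{assump-H(t,0,0)})--(\ref{ip-1b}); this is precisely where the exponent condition (\ref{ip-1a})(a), the growth bound (\ref{ip-1a})(b), and the uniform bound $|Y|\le\hat\rho$ enter.
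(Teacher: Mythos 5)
Your existence argument and the passage to $S_{m}^{p}\times\Lambda_{m\times k}^{p}$ coincide with the paper's proof: the route you call ``more transparent'' (observe that with $\varphi=\psi=0$ the approximants of (\ref{st2-1}) have $U^{(n)}\equiv0$, so they solve the truncated BSDE exactly, and pass to the limit using (\ref{c-1})-$(jjj)$, then read off the norms from (\ref{es-p}), (\ref{es-p-a}) and (\ref{assump-H(t,0,0)})) is literally what the paper does. Your alternative first route (the converse argument of Remark \ref{s-w}) is in spirit fine, but the integrability claim you need there, $\mathbb{E}\big(\int_{0}^{T}e^{V_{r}^{(+)}}|H(r,Y_{r},Z_{r})|\,dQ_{r}\big)^{q}<\infty$ with $q=p\wedge2$, does not follow from (\ref{ip-1a})(a) by the Cauchy--Schwarz split you indicate: Hölder requires control of $\mathbb{E}\big(\int_{0}^{T}\ell_{s}^{2}ds\big)^{q/(2-q)}$, while hypothesis (\ref{ip-1a})(a) only gives the exponent $a/(2-a)$ with $a<q$; so that route needs extra care, whereas the limit-passage route does not.

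The genuine gap is in your uniqueness step. You want to show that \emph{any} strong solution $(Y,Z)\in S_{m}^{p}\times\Lambda_{m\times k}^{p}$ of (\ref{bsde-cl}) satisfies the weighted conditions (\ref{def0-1})--(\ref{def0-2}) so that Remark \ref{s-w} and Theorem \ref{uniq} apply, and you propose to get this from Proposition \ref{Appendix_result 1}. But the relevant estimate (\ref{an3a}) of that proposition has $\mathbb{E}\sup_{s}e^{pV_{s}}|Y_{s}|^{p}<\infty$ among its hypotheses, which is exactly condition (\ref{def0-1}); for a solution known only in the \emph{unweighted} spaces this is circular. Nor can one split $e^{pV_{t}^{(+)}}|Y_{t}|^{p}$ by Young/Hölder, since (\ref{ip-mnl}) provides $\mathbb{E}\,e^{pV_{T}^{(+)}}<\infty$ only at the exact order $p$ and $Y$ has no higher moments available (this is precisely why, in the proof of Proposition \ref{p1-wvs}, the weighted bound is first verified for the approximants, which have all moments and truncated, bounded $V^{(n,+)}$). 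So as written your argument establishes uniqueness only within the class of solutions satisfying (\ref{def0-1})--(\ref{def0-2}), i.e.\ among $L^{p}$--variational solutions --- which is in fact all the paper itself justifies: its proof of the corollary addresses only existence and the estimates, and uniqueness is inherited from Theorem \ref{uniq}/Proposition \ref{p1-wvs} for the variational solution it constructs, not proved for an arbitrary element of the unweighted $S_{m}^{p}\times\Lambda_{m\times k}^{p}$.
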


\begin{proof}
Based on the results from (\ref{c-1}) and the assumptions $(\mathrm{A}%
_{3}-\mathrm{A}_{5})$ we can pass to limit $\lim_{n\rightarrow\infty}$ in the
approximating equation (\ref{st2-1}) with $\varphi=\psi=0$ and $U=U^{\left(
1\right)  }=U^{\left(  2\right)  }=0$ to infer that $\left(  Y,Z\right)  $
satisfies (\ref{bsde-cl}). From (\ref{es-p}), (\ref{es-p-a}) and the
assumption (\ref{assump-H(t,0,0)}) we get $\left(  Y,Z\right)  \in S_{m}%
^{p}\left[  0,T\right]  \times\Lambda_{m\times k}^{p}\left(  0,T\right)  .$
Moreover by (\ref{c-1}-j)
\[
\left\vert Y_{t}\right\vert \leq\hat{\rho},\quad\text{for all }t\in\left[
0,T\right]  ,\;\mathbb{P}\text{-a.s..}%
\]

\hfill
\end{proof}

\begin{corollary}
Let the assumptions of Proposition \ref{p1-wvs} be satisfied. If, moreover,%
\begin{equation}%
\begin{array}
[c]{rl}%
\left(  i\right)  & \mathbb{E}\left[  e^{2V_{T}^{\left(  +\right)  }}\left(
\varphi(\eta)+\psi(\eta)\right)  \right]  <\infty,\medskip\\
\left(  ii\right)  & \mathbb{E}%
{\displaystyle\int_{0}^{T}}
e^{2V_{r}^{\left(  +\right)  }}dQ_{r}<\infty,\medskip\\
\left(  iii\right)  & \mathbb{E}%
{\displaystyle\int_{0}^{T}}
e^{2V_{r}^{\left(  +\right)  }}\left(  \left\vert F_{\hat{\rho}}^{\#}\left(
r\right)  \right\vert ^{2}dr+\left\vert G_{\hat{\rho}}^{\#}\left(  r\right)
\right\vert ^{2}dA_{r}\right)  <\infty
\end{array}
\label{ip-diez}%
\end{equation}
where\footnote{The constant $\hat{L}$ is given by (\ref{assump-H(t,0,0)}) and
the constant $C_{\lambda}=C_{p,\lambda}$ is given by (\ref{an3a}) with $p=2.$}
$\hat{\rho}=(C_{\lambda}\hat{L})^{1/2},$ then the BSDE%
\[
\left\{
\begin{array}
[c]{l}%
\displaystyle Y_{t}+{\int_{t}^{T}}dK_{r}=Y_{T}+{\int_{t}^{T}}H\left(
r,Y_{r},Z_{r}\right)  dQ_{r}-{\int_{t}^{T}}Z_{r}dB_{r},\;\text{a.s., for all
}t\in\left[  0,T\right]  ,\medskip\\
\displaystyle dK_{r}=U_{r}^{\left(  1\right)  }dr+U_{r}^{\left(  2\right)
}dA_{r}\,,\medskip\\
\displaystyle U^{\left(  1\right)  }dr\in\partial\varphi\left(  Y_{r}\right)
dr\quad\text{and}\quad U^{\left(  2\right)  }dA_{r}\in\partial\psi\left(
Y_{r}\right)  dA_{r}%
\end{array}
\,\right.
\]
has a unique strong a solution $\left(  Y,Z,U^{\left(  1\right)  },U^{\left(
2\right)  }\right)  \in S_{m}^{0}\times\Lambda_{m\times k}^{0}\times
\Lambda_{m}^{0}$ $\times\Lambda_{m}^{0}$ such that
\begin{equation}
\mathbb{E}\sup_{t\in\left[  0,T\right]  }e^{2V_{t}}\left\vert Y_{t}\right\vert
^{2}+\mathbb{E}\left(
{\displaystyle\int_{0}^{T}}
{e^{2V_{r}}}\left\vert Z_{r}\right\vert ^{2}dr\right)  +\mathbb{E}\left(
{\displaystyle\int_{0}^{T}}
{e^{2V_{r}}}\left\vert U_{r}^{\left(  1\right)  }\right\vert ^{2}dr\right)
+\mathbb{E}\left(
{\displaystyle\int_{0}^{T}}
{e^{2V_{r}}}\left\vert U_{r}^{\left(  2\right)  }\right\vert ^{2}%
dA_{r}\right)  <\infty. \label{0}%
\end{equation}
Moreover
\[
\left\vert Y_{t}\right\vert \leq e^{V_{t}^{\left(  +\right)  }}\left\vert
Y_{t}\right\vert \leq(C_{\lambda}\hat{L})^{1/2}=\hat{\rho},\quad\text{for all
}t\in\left[  0,T\right]  ,\;\mathbb{P}\text{-a.s.}.
\]

\end{corollary}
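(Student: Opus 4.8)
The plan is to deduce the strong solution from the $L^{p}$--variational one already produced in Proposition \ref{p1-wvs}, by returning to its construction and upgrading the a priori bounds on the reflection terms with the help of the extra integrability hypotheses (\ref{ip-diez}). By Proposition \ref{p1-wvs} the equation admits a unique $L^{p}$--variational solution $(Y,Z)$, with $\left\vert Y_{t}\right\vert \le\hat{\rho}$ for all $t$, $\mathbb{P}$--a.s., $\mathbb{E}\int_{0}^{T}e^{2V_{r}^{(+)}}\left\vert Z_{r}\right\vert ^{2}dr\le\hat{\rho}^{2}$, and (\ref{es-p}) in force; moreover the approximating strong solutions $\left(Y^{(n)},Z^{(n)},U^{(1,n)},U^{(2,n)}\right)$ of (\ref{st2-1}) are at our disposal, they satisfy $\left\vert Y_{t}^{(n)}\right\vert \le\hat{\rho}$, $\mathbb{E}\int_{0}^{T}e^{2V_{r}^{(+)}}\left\vert Z_{r}^{(n)}\right\vert ^{2}dr\le\hat{\rho}^{2}$ and converge to $(Y,Z)$ as in (\ref{c-1})--$(jjj)$. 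So it remains only to produce $U^{(1)},U^{(2)}$ in the correct subdifferentials with the $L^{2}$--bounds (\ref{0}), and to record uniqueness.

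The first and main step is to bound the reflection terms uniformly in $n$. Inserting into the subgradient energy estimate (\ref{b-1a}) (and the computations preceding it) the data of the $n$--th approximating problem and using $\left\vert Y_{t}^{(n)}\right\vert \le\hat{\rho}$ — so that $\big\vert F^{(n)}(r,Y_{r}^{(n)},0)\big\vert \le F_{\hat{\rho}}^{\#}(r)$, $\big\vert G^{(n)}(r,Y_{r}^{(n)})\big\vert \le G_{\hat{\rho}}^{\#}(r)$, $\varphi(\eta^{(n)})\le\varphi(\eta)$, $\psi(\eta^{(n)})\le\psi(\eta)$ — together with $\mathbb{E}\int_{0}^{T}e^{2V_{r}^{(+)}}\left\vert Z_{r}^{(n)}\right\vert ^{2}dr\le\hat{\rho}^{2}$, the hypotheses (\ref{ip-diez})--$(i),(ii),(iii)$ (respectively controlling $\mathbb{E}\big[e^{2V_{T}^{(+)}}(\varphi(\eta)+\psi(\eta))\big]$, $\mathbb{E}\int_{0}^{T}e^{2V_{r}^{(+)}}dr$ and $\mathbb{E}\int_{0}^{T}e^{2V_{r}^{(+)}}dA_{r}$ via $dr+dA_{r}=dQ_{r}$, and the $F_{\hat{\rho}}^{\#},G_{\hat{\rho}}^{\#}$ terms), and handling the unbounded coefficient $\ell$ as in Proposition \ref{p1-wvs}--$(ii)(a)$ by Hölder's inequality against the $Z$--bound, I expect to reach
\[
\sup_{n}\;\mathbb{E}\int_{0}^{T}e^{2V_{r}^{(+)}}\Big[\big\vert U_{r}^{(1,n)}\big\vert ^{2}dr+\big\vert U_{r}^{(2,n)}\big\vert ^{2}dA_{r}\Big]<\infty ,
\]
and likewise a uniform bound on $\mathbb{E}\int_{0}^{T}e^{2V_{r}^{(+)}}\big(\varphi(Y_{r}^{(n)})dr+\psi(Y_{r}^{(n)})dA_{r}\big)$ (from $\varphi(Y_{r}^{(n)})\le\big\langle Y_{r}^{(n)},U_{r}^{(1,n)}\big\rangle \le\hat{\rho}\,\big\vert U_{r}^{(1,n)}\big\vert$ and Cauchy--Schwarz). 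The delicate point is the bookkeeping of weights and truncations — matching the $V^{(n,+)}$ carried by (\ref{b-1a}) for the $n$--th problem with $V^{(+)}$, which forces one to localise on $[0,\theta_{n}]$ and to treat the driver--free part $(\theta_{n},T]$ separately; this is where I expect most of the effort to go.

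Granted these uniform bounds, I would extract a subsequence along which $e^{V^{(+)}}U^{(1,n)}\rightharpoonup e^{V^{(+)}}U^{(1)}$ weakly in $L^{2}\big(\Omega\times[0,T],d\mathbb{P}\otimes dr;\mathbb{R}^{m}\big)$ and $e^{V^{(+)}}U^{(2,n)}\rightharpoonup e^{V^{(+)}}U^{(2)}$ weakly in $L^{2}\big(\Omega\times[0,T],d\mathbb{P}\otimes dA_{r};\mathbb{R}^{m}\big)$, and pass to the limit in (\ref{st2-1}) written as $Y_{t}^{(n)}+\int_{t}^{T}U_{r}^{(1,n)}dr+\int_{t}^{T}U_{r}^{(2,n)}dA_{r}=\eta^{(n)}+\int_{t}^{T}H^{(n)}(r,Y_{r}^{(n)},Z_{r}^{(n)})dQ_{r}-\int_{t}^{T}Z_{r}^{(n)}dB_{r}$: here $\eta^{(n)}\to\eta$ in $L^{p}$; $H^{(n)}(r,Y_{r}^{(n)},Z_{r}^{(n)})\to H(r,Y_{r},Z_{r})$ with $\big\vert H^{(n)}(r,Y_{r}^{(n)},Z_{r}^{(n)})\big\vert \le\ell_{r}\vert Z_{r}^{(n)}\vert +F_{\hat{\rho}}^{\#}(r)+G_{\hat{\rho}}^{\#}(r)$, uniformly integrable against $e^{V^{(+)}}dQ_{r}$ by (\ref{ip-1a}) and the $Z$--bound; $\int_{t}^{T}Z_{r}^{(n)}dB_{r}\to\int_{t}^{T}Z_{r}dB_{r}$ by (\ref{c-1})--$(jjj)$; and the two reflection integrals converge by testing the weak convergences against $\mathbf{1}_{E}\,\mathbf{1}_{[t,s]}(r)\,e^{-V_{r}^{(+)}}$, which lie in the relevant $L^{2}$ spaces since $\mathbb{E}\int_{0}^{T}e^{2V_{r}^{(+)}}dQ_{r}<\infty$. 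This yields $Y_{t}+\int_{t}^{T}U_{r}^{(1)}dr+\int_{t}^{T}U_{r}^{(2)}dA_{r}=\eta+\int_{t}^{T}H(r,Y_{r},Z_{r})dQ_{r}-\int_{t}^{T}Z_{r}dB_{r}$ for all $t$, $\mathbb{P}$--a.s., and a $\liminf$ (Fatou) in the uniform bounds gives (\ref{0}).

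Finally, to identify $U_{s}^{(1)}\in\partial\varphi(Y_{s})$, $d\mathbb{P}\otimes ds$--a.e., and $U_{s}^{(2)}\in\partial\psi(Y_{s})$, $d\mathbb{P}\otimes dA_{s}$--a.e., I would reproduce the closing argument of Lemma \ref{l1-strong sol}: for every bounded $X\in S_{m}^{2}[0,T]$, $E\in\mathcal{F}$ and $0\le t\le s\le T$, the subdifferential inequality for $U_{r}^{(1,n)}\in\partial\varphi(Y_{r}^{(n)})$ reads $\mathbb{E}\int_{t}^{s}e^{2V_{r}^{(+)}}\big\langle U_{r}^{(1,n)},X_{r}-Y_{r}^{(n)}\big\rangle \mathbf{1}_{E}\,dr+\mathbb{E}\int_{t}^{s}e^{2V_{r}^{(+)}}\varphi(Y_{r}^{(n)})\mathbf{1}_{E}\,dr\le\mathbb{E}\int_{t}^{s}e^{2V_{r}^{(+)}}\varphi(X_{r})\mathbf{1}_{E}\,dr$; since $Y^{(n)}\to Y$ strongly in $L^{2}\big(e^{2V^{(+)}}d\mathbb{P}\otimes dr\big)$ — by dominated convergence from (\ref{c-1})--$(jjj)$, $\vert Y^{(n)}\vert \le\hat{\rho}$ and $\mathbb{E}\int_{0}^{T}e^{2V_{r}^{(+)}}dr<\infty$ — while $U^{(1,n)}\rightharpoonup U^{(1)}$ weakly, the weak$\times$strong pairing $\langle U^{(1,n)},Y^{(n)}\rangle$ passes to the limit, and Fatou with the lower semicontinuity of $\varphi$ give the subdifferential inequality for $(Y,U^{(1)})$; the same computation with $dA_{r}$ in place of $dr$ treats $U^{(2)}$. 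Uniqueness then follows because any strong solution is in particular an $L^{p}$--variational solution (Remark \ref{s-w}), so $(Y,Z)$ is the unique one by Theorem \ref{uniq}, whence $K$ — and with it $U^{(1)}$, $U^{(2)}$ — is determined; the pointwise bound $\left\vert Y_{t}\right\vert \le\hat{\rho}$ is the one already supplied by Proposition \ref{p1-wvs}.
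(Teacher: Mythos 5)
Your proposal follows essentially the same route as the paper: the paper likewise expands the proof of Proposition \ref{p1-wvs}, applies the energy estimate (\ref{b-1a}) to the $n$-th approximating problem (using $|Y^{(n)}_t|\le\hat{\rho}$, $\mathbb{E}\int_0^T e^{2V^{(+)}_r}|Z^{(n)}_r|^2dr\le\hat{\rho}^2$ and (\ref{ip-diez})) to obtain uniform $L^2$ bounds on $U^{(1,n)},U^{(2,n)}$, extracts weak limits, passes to the limit in (\ref{st2-1}), and identifies the limits in the subdifferentials and concludes uniqueness exactly as you outline. The only notable difference is that the paper bounds the $Z$-contribution directly by $6\hat{\rho}^2$ via the uniform bound on $\mathbb{E}\int_0^T e^{2V^{(+)}_r}|Z^{(n)}_r|^2dr$, so the H\"older manipulation against $\ell$ that you anticipate does not appear in its write-up.
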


\begin{proof}
We expand the proof of Proposition \ref{p1-wvs}. By (\ref{b-1a})\medskip
\newline$%
\begin{array}
[c]{c}%
\quad
\end{array}
\dfrac{1}{2}\,\mathbb{E}%
{\displaystyle\int_{0}^{T}}
e^{2V_{r}^{\left(  n,+\right)  }}\Big[\big|U_{r}^{\left(  1,n\right)
}\big|^{2}dr+\big|U_{r}^{\left(  2,n\right)  }\big|^{2}dA_{r}\Big]$%
\medskip\newline$%
\begin{array}
[c]{c}%
\quad
\end{array}
\leq\mathbb{E}\left[  e^{2V_{T}^{\left(  n,+\right)  }}\left(  \varphi
(\eta^{\left(  n\right)  })+\psi(\eta^{\left(  n\right)  })\right)  \right]
+\mathbb{E}%
{\displaystyle\int_{0}^{T}}
e^{2V_{r}^{\left(  n,+\right)  }}\left(  1+6\big|Z_{r}^{\left(  n\right)
}\big|^{2}+6\big|F^{\left(  n\right)  }(r,Y_{r}^{\left(  n\right)
},0)\big|^{2}\right)  dr$\medskip\newline$%
\begin{array}
[c]{c}%
\quad\quad
\end{array}
+\mathbb{E}%
{\displaystyle\int_{0}^{T}}
e^{2V_{r}^{\left(  n,+\right)  }}\left(  1+3\big|G^{\left(  n\right)
}(r,Y_{r}^{\left(  n\right)  })\big|^{2}\right)  dA_{r}$\medskip\newline$%
\begin{array}
[c]{c}%
\quad
\end{array}
\leq\mathbb{E}\left[  e^{2V_{T}^{\left(  +\right)  }}\left(  \varphi
(\eta)+\psi(\eta)\right)  \right]  +\mathbb{E}%
{\displaystyle\int_{0}^{T}}
e^{2V_{r}^{\left(  +\right)  }}\left(  dr+dA_{r}\right)  +6\hat{\rho}^{2}%
$\medskip\newline$%
\begin{array}
[c]{c}%
\quad\quad
\end{array}
+6\mathbb{E}%
{\displaystyle\int_{0}^{T}}
e^{2V_{r}^{\left(  +\right)  }}\big|F_{\hat{\rho}}^{\#}\left(  r\right)
\big|^{2}dr+3\mathbb{E}%
{\displaystyle\int_{0}^{T}}
e^{2V_{r}^{\left(  +\right)  }}\big|G_{\hat{\rho}}^{\#}\left(  r\right)
\big|^{2}dA_{r}.$\medskip\newline Hence there exists $\left(  U^{\left(
1\right)  },U^{\left(  2\right)  }\right)  =\left(  e^{-V^{\left(  +\right)
}}\hat{U}^{\left(  1\right)  },e^{-V^{\left(  +\right)  }}\hat{U}^{\left(
2\right)  }\right)  \in\Lambda_{m}^{0}\left(  0,T\right)  \times\Lambda
_{m}^{0}\left(  0,T\right)  $ such that on a subsequence also denoted
$\left\{  U^{\left(  1,n\right)  },U^{\left(  2,n\right)  };\;n\in
\mathbb{N}^{\ast}\right\}  $
\[%
\begin{array}
[c]{l}%
e^{V_{r}^{\left(  +\right)  }}U^{\left(  1,n\right)  }\rightharpoonup
e^{V^{\left(  +\right)  }}U^{\left(  1\right)  },\quad\text{weakly in }%
L^{2}\left(  \Omega\times\left[  0,T\right]  ,d\mathbb{P}\otimes
dt;\mathbb{R}^{m}\right)  ,\medskip\\
e^{V^{\left(  +\right)  }}U^{\left(  2,n\right)  }\rightharpoonup
e^{V^{\left(  +\right)  }}U^{\left(  2\right)  },\quad\text{weakly in }%
L^{2}\left(  \Omega\times\left[  0,T\right]  ,d\mathbb{P}\otimes
dA_{t};\mathbb{R}^{m}\right)  .
\end{array}
\]
and
\[%
\begin{array}
[c]{l}%
\dfrac{1}{2}\,\mathbb{E}%
{\displaystyle\int_{0}^{T}}
e^{2V_{r}^{\left(  +\right)  }}\Big[\big|U_{r}^{\left(  1\right)  }%
\big|^{2}dr+\big|U_{r}^{\left(  2\right)  }\big|^{2}dA_{r}\Big]\medskip\\
\leq\mathbb{E}\left[  e^{2V_{T}^{\left(  +\right)  }}\left(  \varphi
(\eta)+\psi(\eta)\right)  \right]  +\mathbb{E}%
{\displaystyle\int_{0}^{T}}
e^{2V_{r}^{\left(  +\right)  }}\left(  1+6\left\vert Z_{r}\right\vert
^{2}+6\left\vert F\left(  r,Y_{r},0\right)  \right\vert ^{2}\right)
dr\medskip\\
\quad+\mathbb{E}%
{\displaystyle\int_{0}^{T}}
e^{2V_{r}^{\left(  +\right)  }}\left(  1+3\left\vert G\left(  r,Y_{r}\right)
\right\vert ^{2}\right)  dA_{r}\,.
\end{array}
\]
Passing to $\lim_{n\rightarrow\infty}$ in (\ref{st2-1}) using the results from
the proof of Proposition \ref{p1-wvs} we infer%
\begin{equation}
\left\{
\begin{array}
[c]{l}%
Y_{t}+%
{\displaystyle\int_{t}^{T}}
U_{s}dQ_{s}=\eta+%
{\displaystyle\int_{t}^{T}}
H\left(  s,Y_{s},Z_{s}\right)  dQ_{s}-%
{\displaystyle\int_{t}^{T}}
Z_{s}dB_{s}\,,\;t\in\left[  0,T\right]  ,\medskip\\
U_{s}=\alpha_{r}U_{r}^{\left(  1\right)  }+\left(  1-\alpha_{r}\right)
U_{r}^{\left(  2\right)  }\medskip\\
U_{s}^{\left(  1\right)  }\in\partial\varphi(Y_{s})~,\quad d\mathbb{P}\otimes
ds-a.e.\quad\text{and }\quad U_{s}^{\left(  2\right)  }\in\partial\psi
(Y_{s}),\quad d\mathbb{P}\otimes dA_{s}-a.e.\quad\text{on }\left[  0,T\right]
,
\end{array}
\right.  \label{st2-11}%
\end{equation}
and the conclusion follows.\hfill
\end{proof}

\begin{theorem}
[$L^{p}$-- variational solution]\label{t2exist}Let $0<\lambda<1<p,$
$n_{p}=\left(  p-1\right)  \wedge1$ and $q\in\left\{  2,p\wedge2\right\}  .$
We suppose that assumptions $\left(  \mathrm{A}_{1}-\mathrm{A}_{6}\right)  $
are satisfied and
\begin{equation}
\mathbb{E}\left[  e^{pV_{T}}\left\vert \eta\right\vert ^{p}+\Big(%
{\displaystyle\int_{0}^{T}}
e^{V_{s}}\left(  \left\vert F\left(  r,0,0\right)  \right\vert dr+\left\vert
G\left(  t,0\right)  \right\vert dA_{r}\right)  \Big)^{p}\right]  <\infty,
\label{ip-t2-2}%
\end{equation}
where $V$ is defined by (\ref{defV_1}). We also assume \newline$\left(
i\right)  \quad$ there exists $a\in\left(  1+n_{p}\lambda,p\wedge2\right)  $
such that%
\begin{equation}%
\begin{array}
[c]{rl}%
\left(  a\right)  \quad & \mathbb{E}\Big(\displaystyle\int_{0}^{T}\ell_{s}%
^{2}ds\Big)^{\frac{a}{2-a}}<\infty,\medskip\\
\left(  b\right)  \quad & \mathbb{E}\left[
{\displaystyle\int_{0}^{T}}
e^{V_{s}^{\left(  +\right)  }}\left(  F_{\rho}^{\#}\left(  s\right)
ds+G_{\rho}^{\#}\left(  s\right)  dA_{s}\right)  \right]  ^{a}<\infty
,\quad\text{for all }\rho>0,
\end{array}
\label{ip-t2-1}%
\end{equation}
where $V^{\left(  +\right)  }$ be given by (\ref{defV_2}) and $F_{\rho}^{\#}$,
$G_{\rho}^{\#}$ are defined by (\ref{def F sharp}), \newline$\left(
ii\right)  \quad$there exists a p.m.s.p. $\left(  \Theta_{t}\right)
_{t\in\left[  0,T\right]  }$ and for each $\rho\geq0$ there exist an
non-decreasing function $K_{\rho}:\mathbb{R}_{+}\rightarrow\mathbb{R}_{+}$
such that%
\begin{equation}
F_{\rho}^{\#}\left(  t\right)  +G_{\rho}^{\#}\left(  t\right)  \leq K_{\rho
}\left(  \Theta_{t}\right)  \text{,}\quad\text{a.e. }t\in\left[  0,T\right]  .
\label{ip-t2-1a}%
\end{equation}
Then the multivalued BSDE
\[
\left\{
\begin{array}
[c]{r}%
\displaystyle Y_{t}+{\int_{t}^{T}}dK_{r}=\eta+{\int_{t}^{T}}H\left(
r,Y_{r},Z_{r}\right)  dQ_{r}-{\int_{t}^{T}}Z_{r}dB_{r}\,,\quad\text{a.s., for
all }t\in\left[  0,T\right]  ,\medskip\\
\multicolumn{1}{l}{\displaystyle dK_{r}=U_{r}dQ_{r}\in\partial_{y}\Psi\left(
r,Y_{r}\right)  dQ_{r}}%
\end{array}
\,\right.
\]
has a unique $L^{p}$--variational solution, in the sense of Definition
\ref{definition_weak solution}.

Moreover this solution satisfies%
\[%
\begin{array}
[c]{l}%
\mathbb{E}\Big(\sup\limits_{t\in\left[  0,T\right]  }e^{pV_{t}}\left\vert
Y_{t}\right\vert ^{p}\Big)+\mathbb{E}\left(
{\displaystyle\int_{0}^{T}}
{e^{2V_{r}}}\left\vert Z_{r}\right\vert ^{2}dr\right)  ^{p/2}+\mathbb{E}%
\left(
{\displaystyle\int_{0}^{T}}
e^{2V_{r}}{\Psi}\left(  r,Y_{r}\right)  dQ_{r}\right)  ^{p/2}\medskip\\
\quad+\mathbb{E}\left(
{\displaystyle\int_{0}^{T}}
{e^{qV_{r}}\left\vert Y_{r}\right\vert ^{q-2}}\left\vert Z_{r}\right\vert
^{2}dr\right)  ^{p/q}+\mathbb{E}\left(
{\displaystyle\int_{0}^{T}}
{e^{qV_{r}}\left\vert Y_{r}\right\vert ^{q-2}\Psi}\left(  r,Y_{r}\right)
dQ_{r}\right)  ^{p/q}\medskip\\
\leq C_{p,\lambda,q}~\mathbb{E}\left[  e^{pV_{T}}\left\vert \eta\right\vert
^{p}+\Big(%
{\displaystyle\int_{0}^{T}}
e^{V_{r}}\left\vert H\left(  r,0,0\right)  \right\vert dQ_{r}\Big)^{p}\right]
.
\end{array}
\]

\end{theorem}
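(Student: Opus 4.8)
The plan is to realize the solution as a limit of localized problems covered by Proposition~\ref{p1-wvs}, to read off uniqueness from Theorem~\ref{uniq}, and to recover the a priori estimate from Proposition~\ref{p-estim}. First I would localize: choose an increasing sequence of stopping times $\theta_{n}\uparrow\infty$ a.s.\ (say built as the first time $t+A_{t}+V_{t}^{(+)}+\Theta_{t}+\int_{0}^{t}e^{V_{r}^{(+)}}(|F(r,0,0)|\,dr+|G(r,0)|\,dA_{r})$ exceeds $n$, the last summand being a.s.\ finite on $[0,T]$ by \eqref{ip-t2-1}(b) with $\rho=0$), and set $\eta^{(n)}=\eta\,\mathbf{1}_{\{|\eta|+V_{T}^{(+)}\le n\}}$, $F^{(n)}=F\,\mathbf{1}_{[0,\theta_{n}]}$, $G^{(n)}=G\,\mathbf{1}_{[0,\theta_{n}]}$, with corresponding coefficients $\mu^{(n)}=\mu\,\mathbf{1}_{[0,\theta_{n}]}$, $\nu^{(n)}=\nu\,\mathbf{1}_{[0,\theta_{n}]}$, $\ell^{(n)}=\ell\,\mathbf{1}_{[0,\theta_{n}]}$, so that $V^{(n)}_{\cdot}=V_{\cdot\wedge\theta_{n}}\le V^{(+)}$. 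Multiplying $F,G$ by the nonnegative scalar $\mathbf{1}_{[0,\theta_{n}]}$ preserves $(\mathrm{A}_{4})$--$(\mathrm{A}_{6})$, and on $[0,\theta_{n}\wedge T]$ the truncated data are bounded pathwise (by a finite, $n$-dependent, constant), so hypotheses $(i)$--$(iii)$ of Proposition~\ref{p1-wvs} hold for $(\eta^{(n)},F^{(n)},G^{(n)})$; it yields, for each $n$, a unique $L^{p}$--variational solution $(Y^{(n)},Z^{(n)})$ of the corresponding equation.

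Next I would establish uniform estimates and the Cauchy property. Since $H^{(n)}(\cdot,0,0)=H(\cdot,0,0)\,\mathbf{1}_{[0,\theta_{n}]}$ and $V^{(n)}\le V^{(+)}$, Proposition~\ref{p-estim} applied to each $(Y^{(n)},Z^{(n)})$ (its intrinsic weight $V^{(n)}$ only strengthening the monotonicity bounds used in that proof) controls $\mathbb{E}\sup_{t}e^{pV_{t}}|Y^{(n)}_{t}|^{p}$, $\mathbb{E}(\int_{0}^{T}e^{2V_{r}}|Z^{(n)}_{r}|^{2}dr)^{p/2}$, $\mathbb{E}(\int_{0}^{T}e^{2V_{r}}\Psi(r,Y^{(n)}_{r})dQ_{r})^{p/2}$ and their $|Y|^{q-2}$-weighted analogues, uniformly in $n$ in terms of \eqref{ip-t2-2}. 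For $n<m$ I would then compare $(Y^{(n)},Z^{(n)})$ and $(Y^{(m)},Z^{(m)})$ by the argument underlying the proof of Theorem~\ref{uniq} (write \eqref{def1d} for each with the exponentially averaged test process $M^{\varepsilon}$ of $\tfrac12(Y^{(n)}+Y^{(m)})$, add, let $\varepsilon\downarrow0$, then $\delta\downarrow0$, and localize along $(\theta_{k})$). The two data sets differ only through $\eta^{(m)}-\eta^{(n)}=\eta\,\mathbf{1}_{\{n<|\eta|+V_{T}^{(+)}\le m\}}$ and through $H^{(m)}-H^{(n)}$, which is supported on $(\theta_{n},\theta_{m}]$, so the error in the estimate is dominated by $\mathbb{E}\,e^{qV_{T}}|\eta|^{q}\mathbf{1}_{\{|\eta|+V_{T}^{(+)}>n\}}$ plus a term controlled by $(\mathbb{E}(\int_{0}^{T}e^{V_{r}}|H(r,Y^{(m)}_{r},Z^{(m)}_{r})|\,\mathbf{1}_{(\theta_{n},\infty)}(r)\,dQ_{r})^{q})^{1/q}$; both tend to $0$ as $n\to\infty$ by dominated convergence, the uniform bounds above absorbing the dependence on $(Y^{(m)},Z^{(m)})$. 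Hence $(Y^{(n)},Z^{(n)})$ converges in the relevant weighted $S^{\cdot}_{m}\times\Lambda^{\cdot}_{m\times k}$ spaces to some $(Y,Z)$.

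Finally I would pass to the limit in \eqref{def1} written for $(Y^{(n)},Z^{(n)})$: using $Y^{(n)}\to Y$ and $Z^{(n)}\to Z$ ($d\mathbb{P}\otimes dr$-a.e.\ along a subsequence), the continuity of $H$ and $H^{(n)}\to H$, Fatou's lemma for the lower-semicontinuous term $\int(\Gamma_{r})^{q-2}\Psi(r,Y_{r})\,dQ_{r}$, dominated convergence for the remaining terms (the cushion $\delta_{q}>0$ keeping $(\Gamma^{(n)}_{r})^{q-2}$ bounded when $q\le2$) and the uniform bounds for \eqref{def0-1}--\eqref{def0-2}, one checks that $(Y,Z)$ is an $L^{p}$--variational solution. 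Uniqueness is exactly Theorem~\ref{uniq}. The displayed a priori estimate then follows by applying Proposition~\ref{p-estim} (parts I and II) to $(Y,Z)$ with $\sigma=0$, $\theta\uparrow T$, $M=N=R=L=0$, and absorbing the $|Y|^{q-1}$-term on the right-hand side by Young's inequality.

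I expect the main obstacle to be the localization together with its uniform estimates: the cutoff must simultaneously keep $(\mathrm{A}_{4})$--$(\mathrm{A}_{6})$ intact, fall inside the pathwise-boundedness hypothesis of Proposition~\ref{p1-wvs}, and yield approximants whose estimates --- and whose mutual differences --- are controlled \emph{uniformly} in $n$ by the integrability data \eqref{ip-t2-2}--\eqref{ip-t2-1a}. The recurring technical nuisance is the mismatch between the weight $V^{(+)}$ of \eqref{defV_2} that governs Proposition~\ref{p1-wvs} and the weight $V$ of \eqref{defV_1} appearing in the conclusion and in Proposition~\ref{p-estim}; the two are reconciled through $V\le V^{(+)}$ and the exponential integrability \eqref{ip-mnl}, invoked precisely on the sets where the truncation makes the data bounded.
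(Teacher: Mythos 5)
Your overall architecture (truncate, solve via Proposition~\ref{p1-wvs}, get uniform bounds from Proposition~\ref{p-estim}, pass to the limit in \eqref{def1}, quote Theorem~\ref{uniq} for uniqueness) matches the paper, but the specific truncation you chose creates a genuine gap at the Cauchy step. You cut off the whole generator, $F^{(n)}=F\,\mathbf{1}_{[0,\theta_{n}]}$, $G^{(n)}=G\,\mathbf{1}_{[0,\theta_{n}]}$. Then the error term in the continuity estimate \eqref{cont-2} is
$\big(\mathbb{E}\big(\int_{0}^{T}e^{V_{r}}\,\big|H\big(r,Y_{r}^{(n)},Z_{r}^{(n)}\big)\big|\,\mathbf{1}_{(\theta_{n},\theta_{m}]}(r)\,dQ_{r}\big)^{q}\big)^{1/q}$,
i.e.\ the \emph{full} generator evaluated along an approximate solution. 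To send this to $0$ uniformly you need either a uniform-in-$n$ almost sure bound $|Y^{(n)}|\le\rho$ (so that the $F,G$ parts are dominated by $F_{\rho}^{\#},G_{\rho}^{\#}$ and \eqref{ip-t2-1}(b) applies) or a uniform \emph{first}-moment bound on $\int e^{2V}|Z^{(n)}|^{2}dr$ (to pair with $\mathbb{E}(\int\ell^{2})^{a/(2-a)}$ via H\"older, as in the proof of Proposition~\ref{p1-wvs}). Neither is available here: the a.s.\ bound $|Y^{(n)}|\le\hat{\rho}$ in Proposition~\ref{p1-wvs} comes from the pathwise hypothesis \eqref{assump-H(t,0,0)}, which for your truncated data holds only with an $n$-dependent constant, and for $p<2$ Proposition~\ref{p-estim} gives only $p/2$-moments of $\int e^{2V}|Z^{(n)}|^{2}dr$, not first moments. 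So "dominated convergence, the uniform bounds above absorbing the dependence on $(Y^{(m)},Z^{(m)})$" does not go through as stated. (A secondary problem: your truncation also changes $\mu,\nu,\ell$ with $n$, whereas \eqref{cont-2} is stated for two generators sharing the same coefficients, so even the form of the comparison estimate would need reproving.)

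The paper avoids all of this by a different cut-off: it sets $F^{(n)}(t,y,z)=F(t,y,z)-F(t,0,0)\,\mathbf{1}_{(n,\infty)}(\beta_{t})$ and similarly for $G$, where $\beta$ aggregates $t+A_{t},|\mu|,|\nu|,\ell,V^{(+)},|F(\cdot,0,0)|,|G(\cdot,0)|,\Theta$. This leaves the increments of the generator, hence $\mu,\nu,\ell$ and assumptions $(\mathrm{A}_{5})$--$(\mathrm{A}_{6})$, completely unchanged, while making $H^{(n)}(\cdot,0,0)=H(\cdot,0,0)\mathbf{1}_{[0,n]}(\beta)$ pathwise bounded, so Proposition~\ref{p1-wvs} applies; and, decisively, $H^{(n+i)}-H^{(n)}$ is then \emph{independent of} $(y,z)$, bounded by $\big[\alpha_{t}|F(t,0,0)|+(1-\alpha_{t})|G(t,0)|\big]\mathbf{1}_{(n,\infty)}(\beta_{t})$, so the error term in \eqref{cont-2} tends to $0$ by dominated convergence using only \eqref{ip-t2-2}, with no a priori control of $Z^{(n)}$ required. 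That choice of truncation is the missing idea; without it (or an equivalent device supplying uniform control of $H(r,Y^{(n)},Z^{(n)})$ on $(\theta_{n},\infty)$), your Cauchy argument does not close.
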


\begin{proof}
Let $t\in\left[  0,T\right]  $ and%
\[
\beta_{t}=t+A_{t}+\left\vert \mu_{t}\right\vert +\left\vert \nu_{t}\right\vert
+\ell_{t}+V_{t}^{\left(  +\right)  }+\left\vert F\left(  t,0,0\right)
\right\vert +\left\vert G\left(  t,0\right)  \right\vert +\Theta_{t},
\]
Define, for $n\in\mathbb{N}^{\ast}$,%
\begin{align*}
\eta^{\left(  n\right)  }  &  =\eta\,\mathbf{1}_{\left[  0,n\right]  }\left(
\left\vert \eta\right\vert +V_{T}^{\left(  +\right)  }\right)  ,\\
F^{\left(  n\right)  }\left(  t,y,z\right)   &  =F\left(  t,y,z\right)
-F\left(  t,0,0\right)  ~\mathbf{1}_{\left(  n,\infty\right)  }\left(
\beta_{t}\right)  ,\\
G^{\left(  n\right)  }\left(  t,y\right)   &  =G\left(  t,y\right)  -G\left(
t,0\right)  ~\mathbf{1}_{\left(  n,\infty\right)  }\left(  \beta_{t}\right)
,\\
H^{\left(  n\right)  }\left(  t,y,z\right)   &  =\alpha_{t}F^{\left(
n\right)  }\left(  t,y,z\right)  +\left(  1-\alpha_{t}\right)  G^{\left(
n\right)  }\left(  t,y\right)  ~.
\end{align*}
We highlight the following the following properties of the function
$H^{\left(  n\right)  }$
\begin{equation}%
\begin{array}
[c]{rl}%
\left(  j\right)  \; & \left\langle y^{\prime}-y,H^{\left(  n\right)
}(t,y^{\prime},z)-H^{\left(  n\right)  }(t,y,z)\right\rangle \leq\left[
\mu_{t}\alpha_{t}+\nu_{t}\left(  1-\alpha_{t}\right)  \right]  \left\vert
y^{\prime}-y\right\vert ^{2},\medskip\\
\left(  jj\right)  \; & \left\vert H^{\left(  n\right)  }(t,y,z^{\prime
})-H^{\left(  n\right)  }(t,y,z)\right\vert \leq\alpha_{t}\ell_{t}\left\vert
z^{\prime}-z\right\vert ,\medskip\\
\left(  jjj\right)  \; & \left\vert H^{\left(  n+i\right)  }(t,y,z)-H^{\left(
n\right)  }(t,y,z)\right\vert \leq\left[  \alpha_{t}\left\vert F\left(
t,0,0\right)  \right\vert +\left(  1-\alpha_{t}\right)  \left\vert G\left(
t,0\right)  \right\vert \right]  \mathbf{1}_{\left(  n,\infty\right)  }\left(
\beta_{t}\right)  .
\end{array}
\label{t2-hn}%
\end{equation}
and the monotonicity properties%
\begin{align*}
\left\langle y,H^{\left(  n\right)  }\left(  t,y,z\right)  \right\rangle  &
\leq|y|\left[  \alpha_{t}\left\vert F\left(  t,0,0\right)  \right\vert
+\left(  1-\alpha_{t}\right)  \left\vert G\left(  t,0\right)  \right\vert
\right]  \mathbf{1}_{\left[  0,n\right]  }\left(  \beta_{t}\right)
+|y|^{2}dV_{s}+\alpha_{t}\dfrac{n_{p}\lambda}{2}\,\left\vert z\right\vert
^{2}\\
&  \leq|y|\left[  \alpha_{t}\left\vert F\left(  t,0,0\right)  \right\vert
+\left(  1-\alpha_{t}\right)  \left\vert G\left(  t,0\right)  \right\vert
\right]  \mathbf{1}_{\left[  0,n\right]  }\left(  \beta_{t}\right)
+|y|^{2}dV_{s}^{\left(  +\right)  }+\alpha_{t}\dfrac{n_{p}\lambda}%
{2}\,\left\vert z\right\vert ^{2}%
\end{align*}
and%
\begin{align*}
\left\langle Y_{t}^{\prime}-Y_{t},H^{\left(  n\right)  }(t,Y_{t}^{\prime
},Z_{t}^{\prime})-H^{\left(  n\right)  }(t,Y_{t},Z_{t})\right\rangle dQ_{t}
&  \leq\left\vert Y_{t}^{\prime}-Y_{t}\right\vert ^{2}dV_{t}+\dfrac
{n_{p}\lambda}{2}\,\left\vert Z_{t}^{\prime}-Z_{t}\right\vert ^{2}dt\\
&  \leq\left\vert Y_{t}^{\prime}-Y_{t}\right\vert ^{2}dV_{t}^{\left(
+\right)  }+\dfrac{n_{p}\lambda}{2}\,\left\vert Z_{t}^{\prime}-Z_{t}%
\right\vert ^{2}dt.
\end{align*}
Clearly, the assumptions of Proposition \ref{p1-wvs} are satisfied for the
approximating BSDE%
\begin{equation}
\left\{
\begin{array}
[c]{l}%
Y_{t}^{\left(  n\right)  }+%
{\displaystyle\int_{t}^{T}}
dK_{s}=\eta^{\left(  n\right)  }+%
{\displaystyle\int_{t}^{T}}
H^{\left(  n\right)  }\big(s,Y_{s}^{\left(  n\right)  },Z_{s}^{\left(
n\right)  }\big)dQ_{s}-%
{\displaystyle\int_{t}^{T}}
Z_{s}^{\left(  n\right)  }dB_{s}\,,\;t\in\left[  0,T\right]  ,\medskip\\
dK_{s}^{\left(  n\right)  }\in\partial_{y}\Psi\big(r,Y_{r}^{\left(  n\right)
}\big)dQ_{r}=\alpha_{r}\partial\varphi\big(Y_{r}^{\left(  n\right)
}\big)dr+\left(  1-\alpha_{r}\right)  \partial\psi\big(Y_{r}^{\left(
n\right)  }\big)dA_{r}%
\end{array}
\right.  \label{t2-ae}%
\end{equation}
(with $\eta:=\eta^{\left(  n\right)  }$, $F:=F^{\left(  n\right)  }$,
$G:=G^{\left(  n\right)  }$, $H:=H^{\left(  n\right)  }\,$).

Hence by Proposition \ref{p1-wvs} the approximating BSDE (\ref{t2-ae}) has a
unique $L^{p}-$variational solution $\left(  Y^{\left(  n\right)  },Z^{\left(
n\right)  }\right)  .$ Therefore
\[
\mathbb{E}\left(  \sup\limits_{r\in\left[  0,T\right]  }{e^{pV_{r}}}%
\big|Y_{r}^{\left(  n\right)  }\big|^{p}\right)  +\mathbb{E~}\left(
{\displaystyle\int_{0}^{T}}
{e^{2V_{r}}}\big|Z_{r}^{\left(  n\right)  }\big|^{2}dr\right)  ^{p/2}%
+\mathbb{E~}\left(
{\displaystyle\int_{0}^{T}}
e^{2V_{r}}{\Psi}\left(  r,Y_{r}^{\left(  n\right)  }\right)  dQ_{r}\right)
^{p/2}<\infty
\]
and for $q\in\{2,p\wedge2\},$ $\delta_{q}=\delta\mathbf{1}_{[1,2)}\left(
q\right)  $ and $\Gamma_{t}^{\left(  n\right)  }=\left(  \big|M_{t}%
-Y_{t}^{\left(  n\right)  }\big|^{2}+\delta_{q}\right)  ^{1/2}$ it holds%
\begin{equation}%
\begin{array}
[c]{l}%
\big(\Gamma_{t}^{\left(  n\right)  }\big)^{q}+\dfrac{q\left(  q-1\right)  }{2}%
{\displaystyle\int_{t}^{s}}
\big({\Gamma_{r}^{\left(  n\right)  }\big)^{q-2}}{\Large \,}\big|R_{r}%
-Z_{r}^{\left(  n\right)  }\big|^{2}dr+{q%
{\displaystyle\int_{t}^{s}}
}\big({\Gamma_{r}^{\left(  n\right)  }\big)^{q-2}\Psi\big(}r,Y_{r}^{\left(
n\right)  }\big)dQ_{r}\medskip\\
\leq\big(\Gamma_{s}^{\left(  n\right)  }\big)^{q}+{q%
{\displaystyle\int_{t}^{s}}
}\big({\Gamma_{r}^{\left(  n\right)  }\big)^{q-2}\Psi}\left(  r,M_{r}\right)
dQ_{r}\medskip\\
\quad+q%
{\displaystyle\int_{t}^{s}}
{\big(\Gamma_{r}^{\left(  n\right)  }\big)^{q-2}}\langle M_{r}-Y_{r}^{\left(
n\right)  },N_{r}-H^{\left(  n\right)  }{\big(}r,Y_{r}^{\left(  n\right)
},Z_{r}^{\left(  n\right)  }\big)\rangle dQ_{r}\medskip\\
\quad-q%
{\displaystyle\int_{t}^{s}}
\big({\Gamma_{r}^{\left(  n\right)  }\big)^{q-2}}\,\langle M_{r}%
-Y_{r}^{\left(  n\right)  },{\big(}R_{r}-Z_{r}^{\left(  n\right)  }%
\big)dB_{r}\rangle
\end{array}
\label{vw-ap2}%
\end{equation}
for all $\delta\in(0,1]$, for all $0\leq t\leq s\leq T,$ for all $M\in
S_{m}^{0}\left(  \gamma,N,R;V\right)  .$

Since $\mathbb{E}\left(  \sup\nolimits_{r\in\left[  0,T\right]  }{e^{pV_{r}}%
}\big|Y_{r}^{\left(  n\right)  }\big|^{p}\right)  <\infty$ and inequality
(\ref{vw-ap2}) holds for $1<q=p\wedge2\leq p,$ inequalities (\ref{def-11ccc})
and (\ref{def-11aa}) yield%
\begin{equation}%
\begin{array}
[c]{l}%
\mathbb{E}\Big(\sup\limits_{t\in\left[  0,T\right]  }e^{pV_{t}}\big|Y_{t}%
^{\left(  n\right)  }\big|^{p}\Big)+\mathbb{E~}\left(
{\displaystyle\int_{0}^{T}}
{e^{2V_{r}}}\big|Z_{r}^{\left(  n\right)  }\big|^{2}dr\right)  ^{p/2}%
++\mathbb{E~}\left(
{\displaystyle\int_{0}^{T}}
e^{2V_{r}}{\Psi}\left(  r,Y_{r}^{\left(  n\right)  }\right)  dQ_{r}\right)
^{p/2}\medskip\\
\quad+\mathbb{E}\left(
{\displaystyle\int_{0}^{T}}
{e^{qV_{r}}\big|Y_{r}^{\left(  n\right)  }\big|^{q-2}}\big|Z_{r}^{\left(
n\right)  }\big|^{2}dr\right)  ^{p/q}+\mathbb{E}~\left(
{\displaystyle\int_{0}^{T}}
{e^{qV_{r}}\big|Y_{r}^{\left(  n\right)  }\big|^{q-2}\Psi}\left(
r,Y_{r}^{\left(  n\right)  }\right)  dQ_{r}\right)  ^{p/q}\medskip\\
\leq C_{p,\lambda,q}~\mathbb{E}\left[  e^{pV_{T}}\left\vert \eta\right\vert
^{p}+\Big(%
{\displaystyle\int_{0}^{T}}
e^{V_{r}}\left\vert H\left(  r,0,0\right)  \right\vert dQ_{r}\Big)^{p}\right]
.
\end{array}
\label{vw-ap3}%
\end{equation}
From (\ref{cont-2}) for $q=p\wedge2$ we have for all $0<\alpha<1$%
\begin{equation}%
\begin{array}
[c]{l}%
\mathbb{E}\sup\limits_{t\in\left[  0,T\right]  }e^{\alpha qV_{t}}%
\big|Y_{t}^{\left(  n+i\right)  }-Y_{t}^{\left(  n\right)  }\big|^{\alpha
q}+\left(  \mathbb{E}%
{\displaystyle\int_{0}^{T}}
e^{2V_{r}}\dfrac{\big|Z_{r}^{\left(  n+i\right)  }-Z_{r}^{\left(  n\right)
}\big|^{2}}{\left(  e^{V_{r}}\big|Y_{t}^{\left(  n+i\right)  }-Y_{t}^{\left(
n\right)  }\big|+1\right)  ^{2-q}}dr\right)  ^{\alpha}\medskip\\
\leq C_{\alpha,q,\lambda}\bigg[\,\mathbb{E~}e^{qV_{T}}\left\vert \eta^{\left(
n+i\right)  }-\eta^{\left(  n\right)  }\right\vert ^{q}\medskip\\
\quad+K\,\left(  \mathbb{E}\left(
{\displaystyle\int_{0}^{T}}
e^{V_{r}}\big|H^{\left(  n+i\right)  }(t,Y_{t}^{\left(  n\right)  }%
,Z_{t}^{\left(  n\right)  })-H^{\left(  n\right)  }(t,Y_{t}^{\left(  n\right)
},Z_{t}^{\left(  n\right)  })\big|dQ_{r}\right)  ^{q}\right)  ^{1/q}%
\bigg]^{\alpha},
\end{array}
\label{Cauchy-3}%
\end{equation}

where%
\[%
\begin{array}
[c]{l}%
\displaystyle K=\bigg[{\mathbb{E~}}\left(  e^{qV_{T}}\big|\eta^{\left(
n+i\right)  }\big|^{q}+\Big(%
{\displaystyle\int_{0}^{T}}
e^{V_{r}}\big|H^{\left(  n+i\right)  }\left(  r,0,0\right)  \big|dQ_{r}%
\Big)^{q}\right)  \medskip\\
\displaystyle\quad+{\mathbb{E~}}\left(  e^{qV_{T}}\big|\eta^{\left(  n\right)
}\big|^{q}+\Big(%
{\displaystyle\int_{0}^{T}}
e^{V_{r}}\big|H^{\left(  n\right)  }\left(  r,0,0\right)  \big|dQ_{r}%
\Big)^{q}\right)  \bigg]^{\left(  q-1\right)  /q}\medskip\\
\displaystyle\leq2^{\left(  q-1\right)  /q}\bigg[{\mathbb{E}}~\left(
e^{qV_{T}}\left\vert \eta\right\vert ^{q}+~\Big(%
{\displaystyle\int_{0}^{T}}
e^{V_{r}}\left\vert F\left(  r,0,0\right)  \right\vert dr+\left\vert G\left(
r,0\right)  \right\vert dA_{r}\Big)^{q}\right)  \bigg]^{\left(  q-1\right)
/q}%
\end{array}
\]
and $C_{\alpha,q,\lambda}$ is a positive constant depending only $\alpha,q$
and $\lambda.$

First we remark%
\[
\mathbb{E}~e^{qV_{T}}\big|\eta^{\left(  n+i\right)  }-\eta^{\left(  n\right)
}\big|^{q}\leq\mathbb{E}~e^{qV_{T}}\left\vert \eta\right\vert ^{q}%
\mathbf{1}_{\left(  n,\infty\right)  }\left(  \left\vert \eta\right\vert
+V_{T}^{\left(  +\right)  }\right)  \longrightarrow0,\quad\mathbb{P}%
-\text{a.s.,}\;\text{for }n\rightarrow\infty,
\]
since by $1<q\leq p$ and assumption (\ref{ip-t2-2}) we have%
\[
\mathbb{E}~e^{qV_{T}}\left\vert \eta\right\vert ^{q}\leq\left(  \mathbb{E}%
~e^{pV_{T}}\left\vert \eta\right\vert ^{p}\right)  ^{q/p}<\infty.
\]
Secondly, we remark that under assumption (\ref{ip-t2-2})%
\[%
\begin{array}
[c]{l}%
\displaystyle\mathbb{E}\left(
{\displaystyle\int_{0}^{T}}
e^{V_{r}}\big|H^{\left(  n+i\right)  }(t,Y_{t}^{\left(  n\right)  }%
,Z_{t}^{\left(  n\right)  })-H^{\left(  n\right)  }(t,Y_{t}^{\left(  n\right)
},Z_{t}^{\left(  n\right)  })\big|dQ_{r}\right)  ^{q}\medskip\\
\displaystyle\leq\mathbb{E}\left(
{\displaystyle\int_{0}^{T}}
e^{V_{r}}\left[  \left\vert F\left(  r,0,0\right)  \right\vert \mathbf{1}%
_{\left(  n,\infty\right)  }\left(  \beta_{r}\right)  dr+\left\vert G\left(
r,0\right)  \right\vert \mathbf{1}_{\left(  n,\infty\right)  }\left(
\beta_{r}\right)  dA_{r}\right]  \right)  ^{q}\medskip\\
\displaystyle\leq2^{q-1}\,\Big[\mathbb{E}\Big(%
{\displaystyle\int_{0}^{T}}
e^{V_{r}}\left\vert F\left(  r,0,0\right)  \right\vert \mathbf{1}_{\left(
n,\infty\right)  }\left(  \beta_{r}\right)  dr\Big)^{q}+\mathbb{E}\Big(%
{\displaystyle\int_{0}^{T}}
e^{V_{r}}\left\vert G\left(  r,0\right)  \right\vert \mathbf{1}_{\left(
n,\infty\right)  }\left(  \beta_{r}\right)  dA_{r}\Big)^{q}\Big]\medskip\\
\longrightarrow0,\quad\text{a.s.}\quad\text{for }n\rightarrow\infty.
\end{array}
\]
By (\ref{Cauchy-3}) we conclude that there exists $\left(  Y,Z\right)  \in
S_{m}^{0}\times\Lambda_{m\times k}^{0}$ such that (on a subsequence)%
\[
\sup_{t\in\left[  0,T\right]  }\big|Y_{t}^{\left(  n\right)  }-Y_{t}\big|+%
{\displaystyle\int_{0}^{T}}
\big|Z_{r}^{\left(  n\right)  }-Z_{r}\big|^{2}dr\longrightarrow0,\quad
\mathbb{P}-\text{a.s.,}\;\text{for }n\rightarrow\infty.
\]
Passing to $\liminf_{n\rightarrow+\infty}$ in (\ref{vw-ap3}) and
(\ref{vw-ap2}) we infer that $\left(  Y,Z\right)  $ is an $L^{p}$--variational solution.

\hfill
\end{proof}

\section{Appendix}

In this section we recall from \cite{pa-ra/14} some results frequently used in
our paper. These results concern inequalities for backward stochastic
differential equations and are interesting by themselves. For more details the
interested readers are referred to the monograph of Pardoux and
R\u{a}\c{s}canu \cite{pa-ra/14}.

Let $\left\{  B_{t}:t\geq0\right\}  $ be a $k$--dimensional Brownian motion
with respect to a given stochastic basis $\left(  \Omega,\mathcal{F}%
,\mathbb{P},\{\mathcal{F}_{t}\}_{t\geq0}\right)  $, where $\left(
\mathcal{F}_{t}\right)  _{t\geq0}$ is the natural filtration associated to
$\left\{  B_{t}:t\geq0\right\}  .$

\begin{notation}
If $p\geq1$ we denote $n_{p}:=1\wedge\left(  p-1\right)  $.
\end{notation}

\subsection{Backward stochastic inequalities}

Based on \cite[Proposition 6.80]{pa-ra/14} and its proof we adapt here the
Pardoux--R\u{a}\c{s}canu's inequalities (6.92) and (6.94) from \cite{pa-ra/14}
to the case of BSVI.

\begin{proposition}
\label{an-prop-dz}Let $\left(  Y,Z\right)  \in S_{m}^{0}\times\Lambda_{m\times
k}^{0}$ and $a\geq0,$ $\gamma\in\mathbb{R}$ such that, for all $0\leq t\leq
s<\infty,$%
\[
{%
{\displaystyle\int_{t}^{s}}
}\left\vert Z_{r}\right\vert ^{2}dr+{%
{\displaystyle\int_{t}^{s}}
}dD_{r}\leq a\left\vert Y_{s}\right\vert ^{2}+a{%
{\displaystyle\int_{t}^{s}}
}\left(  dR_{r}+\left\vert Y_{r}\right\vert dN_{r}\right)  +\gamma%
{\displaystyle\int_{t}^{s}}
\,\langle Y_{r},Z_{r}dB_{r}\rangle,\;\mathbb{P}\text{-a.s.,}%
\]
where $R,N$ and $D$ are increasing and continuous p.m.s.p. $R_{0}=N_{0}%
=D_{0}=0.$ Then for all $q>0$ and for all stopping times $0\leq\sigma
\leq\theta<\infty,$ the following inequality hold:%
\begin{equation}%
\begin{array}
[c]{l}%
\mathbb{E}^{\mathcal{F}_{\sigma}}\Big(%
{\displaystyle\int_{\sigma}^{\theta}}
\left\vert Z_{r}\right\vert ^{2}dr\Big)^{q/2}+\mathbb{E}^{\mathcal{F}_{\sigma
}}\Big(%
{\displaystyle\int_{\sigma}^{\theta}}
dD_{r}\Big)^{q/2}\medskip\\
\leq C_{a,\gamma,q}~\left[  \mathbb{E}^{\mathcal{F}_{\sigma}}\sup
\nolimits_{r\in\left[  \sigma,\theta\right]  }\left\vert Y_{r}\right\vert
^{q}+\Big(%
{\displaystyle\int_{\sigma}^{\theta}}
dR_{r}\Big)^{q/2}+\Big(%
{\displaystyle\int_{\sigma}^{\theta}}
\left\vert Y_{r}\right\vert dN_{r}\Big)^{q/2}\right]  \medskip\\
\leq2C_{a,\gamma,q}\mathbb{E}^{\mathcal{F}_{\sigma}}\left[  \sup
\nolimits_{r\in\left[  \sigma,\theta\right]  }\left\vert Y_{r}\right\vert
^{q}+\Big(%
{\displaystyle\int_{\sigma}^{\theta}}
dR_{r}\Big)^{q/2}+\Big(%
{\displaystyle\int_{\sigma}^{\theta}}
dN_{r}\Big)^{q}\right]  ,\quad\mathbb{P}\text{-a.s.,}%
\end{array}
\label{an4}%
\end{equation}
where $C_{a,\gamma,q}$ is a positive constant depending only on $a,\gamma$ and
$q.$
\end{proposition}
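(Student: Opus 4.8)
The plan is to adapt the scheme of \cite[Proposition 6.80]{pa-ra/14}: localize by stopping times to reduce everything to the bounded case, then combine the conditional Burkholder--Davis--Gundy inequality with Young's inequality to absorb the $\int|Z|^2$ term. The second inequality in (\ref{an4}) is elementary: since $N$ is increasing, $\int_\sigma^\theta|Y_r|\,dN_r\le\big(\sup_{r\in[\sigma,\theta]}|Y_r|\big)\,(N_\theta-N_\sigma)$, whence $\big(\int_\sigma^\theta|Y_r|\,dN_r\big)^{q/2}\le\frac12\sup_{r\in[\sigma,\theta]}|Y_r|^q+\frac12\big(\int_\sigma^\theta dN_r\big)^q$ by $xy\le\frac12(x^2+y^2)$; keeping the $R$-term untouched produces the factor $2$. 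It thus remains to prove the first inequality, and we may assume its right-hand side is finite, otherwise there is nothing to prove.

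First I would fix $\sigma\le\theta$ and introduce the localizing stopping times
\[
\tau_n:=\inf\Big\{r\ge\sigma:\ \int_\sigma^r|Z_u|^2\,du+\sup_{u\in[\sigma,r]}|Y_u|+R_r+N_r+D_r\ge n\Big\},
\]
which increase to $\infty$ $\mathbb{P}$-a.s. since $(Y,Z)\in S_m^0\times\Lambda_{m\times k}^0$ and $R,N,D$ are finite continuous increasing processes. After noting that the hypothesis extends from deterministic times to the stopping times $\sigma$ and $\theta\wedge\tau_n$ by the usual dyadic approximation and path-continuity, I apply it with $t=\sigma$, $s=\theta\wedge\tau_n$; writing $\Xi_n:=\int_\sigma^{\theta\wedge\tau_n}|Z_r|^2\,dr+\int_\sigma^{\theta\wedge\tau_n}dD_r$ this gives
\[
\Xi_n\le a\sup_{r\in[\sigma,\theta]}|Y_r|^2+a\int_\sigma^\theta dR_r+a\int_\sigma^\theta|Y_r|\,dN_r+|\gamma|\,\Big|\int_\sigma^{\theta\wedge\tau_n}\langle Y_r,Z_r\,dB_r\rangle\Big|.
\]
On $[\sigma,\theta\wedge\tau_n]$ every process involved is bounded, so $t\mapsto\int_\sigma^t\langle Y_u,Z_u\,dB_u\rangle$ is a square-integrable martingale and $\mathbb{E}^{\mathcal{F}_\sigma}\Xi_n^{q/2}<\infty$.

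Then I raise the last display to the power $q/2$ using $(x+y)^{q/2}\le c_q(x^{q/2}+y^{q/2})$ (with $c_q=1$ if $q\le2$), take $\mathbb{E}^{\mathcal{F}_\sigma}$, and bound the stochastic-integral term by the conditional BDG inequality together with $\int_\sigma^{\theta\wedge\tau_n}|Y_r|^2|Z_r|^2\,dr\le\big(\sup_{[\sigma,\theta]}|Y_r|\big)^2\Xi_n$:
\[
\mathbb{E}^{\mathcal{F}_\sigma}\Big|\int_\sigma^{\theta\wedge\tau_n}\langle Y_r,Z_r\,dB_r\rangle\Big|^{q/2}\le C_q\,\mathbb{E}^{\mathcal{F}_\sigma}\Big[\big(\sup_{[\sigma,\theta]}|Y_r|\big)^{q/2}\Xi_n^{q/4}\Big]\le C_q\,\mathbb{E}^{\mathcal{F}_\sigma}\Big[\varepsilon\,\Xi_n^{q/2}+C_\varepsilon\big(\sup_{[\sigma,\theta]}|Y_r|\big)^q\Big],
\]
the last step by Young's inequality. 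Choosing $\varepsilon$ small (depending only on $a,\gamma,q$) absorbs $\varepsilon\,\mathbb{E}^{\mathcal{F}_\sigma}\Xi_n^{q/2}$ --- which is finite by the localization --- into the left-hand side, yielding $\mathbb{E}^{\mathcal{F}_\sigma}\Xi_n^{q/2}\le C_{a,\gamma,q}\,\mathbb{E}^{\mathcal{F}_\sigma}\big[\sup_{[\sigma,\theta]}|Y_r|^q+(\int_\sigma^\theta dR_r)^{q/2}+(\int_\sigma^\theta|Y_r|\,dN_r)^{q/2}\big]$ uniformly in $n$. Letting $n\to\infty$, $\Xi_n\uparrow\int_\sigma^\theta|Z_r|^2\,dr+\int_\sigma^\theta dD_r$, so monotone convergence gives the bound for this limit; finally $(x+y)^{q/2}\ge\frac12(x^{q/2}+y^{q/2})$ splits the left side into the two declared terms. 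The one genuinely delicate point is this absorption step: it is legitimate only because $\mathbb{E}^{\mathcal{F}_\sigma}\Xi_n^{q/2}<\infty$ on each stopped interval, which is exactly why the $\tau_n$-localization cannot be dispensed with; the passage from deterministic to stopping times is routine but should be recorded.
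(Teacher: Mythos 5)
Your proposal is correct and follows essentially the same route as the paper's proof: localize by stopping times so that the relevant quantities are bounded, apply the hypothesis on $[\sigma,\theta\wedge\tau_n]$, raise to the power $q/2$, use the conditional Burkholder--Davis--Gundy inequality together with Young's inequality to absorb the $\big(\int|Z_r|^2dr\big)^{q/2}$ term (valid precisely because of the localization), and pass to the limit by Fatou/monotone convergence. The elementary splitting of $\int_\sigma^\theta|Y_r|\,dN_r$ for the second inequality is also the paper's argument, so no further comment is needed.
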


\begin{proof}
We follow the first part of the proof of \cite[Proposition 6.80]{pa-ra/14}.
Let the sequence of stopping times%
\begin{equation}
\theta_{n}=\theta\wedge\inf\left\{  s\geq\sigma:\sup\nolimits_{r\in\left[
\sigma,\sigma\vee s\right]  }\left\vert Y_{r}-Y_{\sigma}\right\vert +%
{\displaystyle\int_{\sigma}^{\sigma\vee s}}
\left\vert Z_{r}\right\vert ^{2}dr+%
{\displaystyle\int_{\sigma}^{\sigma\vee s}}
d\left(  D_{r}+R_{r}+N_{r}\right)  \geq n\right\}  . \label{an4-a}%
\end{equation}
We have for $q>0$%
\begin{equation}%
\begin{array}
[c]{l}%
\mathbb{E}^{\mathcal{F}_{\sigma}}\Big(%
{\displaystyle\int_{\sigma}^{\theta_{n}}}
\left\vert Z_{r}\right\vert ^{2}dr\Big)^{q/2}+\mathbb{E}^{\mathcal{F}_{\sigma
}}\Big(%
{\displaystyle\int_{\sigma}^{\theta_{n}}}
dD_{r}\Big)^{q/2}\medskip\\
\leq2\mathbb{E}^{\mathcal{F}_{\sigma}}\Big(%
{\displaystyle\int_{\sigma}^{\theta_{n}}}
\left\vert Z_{r}\right\vert ^{2}dr+%
{\displaystyle\int_{\sigma}^{\theta_{n}}}
dD_{r}\Big)^{q/2}\medskip\\
\leq C_{a,\gamma,q}^{\prime}~\mathbb{E}^{\mathcal{F}_{\sigma}}\left[
\left\vert Y_{\theta_{n}}\right\vert ^{q}+\Big(%
{\displaystyle\int_{\sigma}^{\theta_{n}}}
dR_{r}\Big)^{q/2}+\Big(%
{\displaystyle\int_{\sigma}^{\theta_{n}}}
\left\vert Y_{r}\right\vert dN_{r}\Big)^{q/2}+\Big|%
{\displaystyle\int_{\sigma}^{\theta_{n}}}
\,\langle Y_{r},Z_{r}dB_{r}\rangle\Big|^{q/2}\right]  .
\end{array}
\label{an5}%
\end{equation}
By Burkholder--Davis--Gundy inequality we get%
\begin{align*}
C_{a,\gamma,q}^{\prime}~\mathbb{E}^{\mathcal{F}_{\sigma}}\Big|%
{\displaystyle\int_{\sigma}^{\theta_{n}}}
\,\langle Y_{r},Z_{r}dB_{r}\rangle\Big|^{q/2}  &  \leq C_{a,\gamma,q}%
^{\prime\prime}~\mathbb{E}^{\mathcal{F}_{\sigma}}\Big(%
{\displaystyle\int_{\sigma}^{\theta_{n}}}
\,\left\vert Y_{r}\right\vert ^{2}\left\vert Z_{r}\right\vert ^{2}%
dr\Big)^{q/4}\\
&  \leq C_{a,\gamma,q}^{\prime\prime}~\mathbb{E}^{\mathcal{F}_{\sigma}}%
~\sup_{r\in\left[  \sigma,\theta_{n}\right]  }\left\vert Y_{r}\right\vert
^{q/2}\Big(%
{\displaystyle\int_{\sigma}^{\theta_{n}}}
\,\left\vert Z_{r}\right\vert ^{2}dr\Big)^{q/4}\\
&  \leq\frac{1}{2}\left(  C_{a,\gamma,q}^{\prime\prime}\right)  ^{2}%
~\mathbb{E}^{\mathcal{F}_{\sigma}}~\sup_{r\in\left[  \sigma,\theta_{n}\right]
}\left\vert Y_{r}\right\vert ^{q}+\frac{1}{2}\mathbb{E}^{\mathcal{F}_{\sigma}%
}\Big(%
{\displaystyle\int_{\sigma}^{\theta_{n}}}
\left\vert Z_{r}\right\vert ^{2}dr\Big)^{q/2}%
\end{align*}
and consequently from (\ref{an5}) the following inequality holds%
\begin{equation}%
\begin{array}
[c]{l}%
\mathbb{E}^{\mathcal{F}_{\sigma}}\Big(%
{\displaystyle\int_{\sigma}^{\theta_{n}}}
\left\vert Z_{r}\right\vert ^{2}dr\Big)^{q/2}+\mathbb{E}^{\mathcal{F}_{\sigma
}}\Big(%
{\displaystyle\int_{\sigma}^{\theta_{n}}}
dD_{r}\Big)^{q/2}\medskip\\
\leq C_{a,\gamma,q}~\left[  \mathbb{E}^{\mathcal{F}_{\sigma}}~\sup
\limits_{r\in\left[  \sigma,\theta\right]  }\left\vert Y_{r}\right\vert
^{q}+\Big(%
{\displaystyle\int_{\sigma}^{\theta}}
dR_{r}\Big)^{q/2}+\Big(%
{\displaystyle\int_{\sigma}^{\theta}}
\left\vert Y_{r}\right\vert dN_{r}\Big)^{q/2}\right]
\end{array}
\label{an5-aa}%
\end{equation}
Since%
\[
\Big(%
{\displaystyle\int_{\sigma}^{\theta}}
\left\vert Y_{r}\right\vert dN_{r}\Big)^{q/2}\leq\sup_{r\in\left[
\sigma,\theta\right]  }\left\vert Y_{r}\right\vert ^{q}+\Big(%
{\displaystyle\int_{\sigma}^{\theta}}
dN_{r}\Big)^{q}%
\]
then from (\ref{an5-aa}) we infer%
\begin{equation}%
\begin{array}
[c]{l}%
\mathbb{E}^{\mathcal{F}_{\sigma}}\Big(%
{\displaystyle\int_{\sigma}^{\theta_{n}}}
\left\vert Z_{r}\right\vert ^{2}dr\Big)^{q/2}+\mathbb{E}^{\mathcal{F}_{\sigma
}}\Big(%
{\displaystyle\int_{\sigma}^{\theta_{n}}}
dD_{r}\Big)^{q/2}\medskip\\
\leq C_{a,\gamma,q}~\left[  \mathbb{E}^{\mathcal{F}_{\sigma}}~\sup
\limits_{r\in\left[  \sigma,\theta\right]  }\left\vert Y_{r}\right\vert
^{q}+\Big(%
{\displaystyle\int_{\sigma}^{\theta}}
dR_{r}\Big)^{q/2}+\Big(%
{\displaystyle\int_{\sigma}^{\theta}}
dN_{r}\Big)^{q}\right]
\end{array}
\label{an5-ab}%
\end{equation}
Consequently by Fatou's Lemma, as $n\rightarrow\infty,$ inequality (\ref{an4})
follows.\hfill
\end{proof}

\begin{proposition}
\label{an-prop-ydz}Let $\left(  Y,Z\right)  \in S_{m}^{0}\times\Lambda
_{m\times k}^{0}$ , $a\geq0,$ $\gamma\in\mathbb{R}$ and $1<q\leq p$ satisfying
for all $0\leq t\leq s<\infty:$%
\[%
\begin{array}
[c]{l}%
\displaystyle\left\vert Y_{t}\right\vert ^{q}+%
{\displaystyle\int_{t}^{s}}
\left\vert Y_{r}\right\vert ^{q-2}\mathbf{1}_{Y_{r}\neq0}{\Large \,}\left\vert
Z_{r}\right\vert ^{2}dr+{%
{\displaystyle\int_{t}^{s}}
\left\vert Y_{r}\right\vert ^{q-2}\mathbf{1}_{Y_{r}\neq0}dD_{r}}\medskip\\
\displaystyle\leq a\left\vert Y_{s}\right\vert ^{q}+a%
{\displaystyle\int_{t}^{s}}
\left[  \left\vert Y_{r}\right\vert ^{q-2}\mathbf{1}_{Y_{r}\neq0}%
\mathbf{1}_{q\geq2}dR_{r}+\left\vert Y_{r}\right\vert ^{q-1}dN_{r}\right]
+\gamma%
{\displaystyle\int_{t}^{s}}
\,\langle\left\vert Y_{r}\right\vert ^{q-2}Y_{r},Z_{r}dB_{r}\rangle
,\quad\mathbb{P}-a.s.,
\end{array}
\]
where $R,N$ and $D$ are increasing and continuous p.m.s.p. $R_{0}=N_{0}%
=D_{0}=0.$ If $\sigma$ and $\theta$ are two stopping times such that
$0\leq\sigma\leq\theta<\infty$ and%
\[
\mathbb{E}\sup\limits_{r\in\left[  \sigma,\theta\right]  }\left\vert
Y_{r}\right\vert ^{p}<\infty
\]
then, $\mathbb{P}$--a.s.%
\begin{equation}
\mathbb{E}^{\mathcal{F}_{\sigma}}\sup\limits_{r\in\left[  \sigma
,\theta\right]  }\left\vert Y_{r}\right\vert ^{p}\leq C_{p,q,a,\gamma
}~\mathbb{E}^{\mathcal{F}_{\sigma}}\left[  \left\vert Y_{\theta}\right\vert
^{p}+\Big(%
{\displaystyle\int_{\sigma}^{\theta}}
\left\vert Y_{r}\right\vert ^{q-2}\mathbf{1}_{Y_{r}\neq0}\mathbf{1}_{q\geq
2}dR_{r}\Big)^{p/q}+\Big(%
{\displaystyle\int_{\sigma}^{\theta}}
\left\vert Y_{r}\right\vert ^{q-1}dN_{r}\Big)^{p/q}\right]  \label{an5-a}%
\end{equation}
and%
\begin{equation}%
\begin{array}
[c]{l}%
\mathbb{E}^{\mathcal{F}_{\sigma}}\Big(\sup\limits_{r\in\left[  \sigma
,\theta\right]  }\left\vert Y_{r}\right\vert ^{p}\Big)+\mathbb{E}%
^{\mathcal{F}_{\sigma}}~\Big(%
{\displaystyle\int_{\sigma}^{\theta}}
\left\vert Y_{r}\right\vert ^{q-2}{\Large \,}\mathbf{1}_{Y_{r}\neq0}\left\vert
Z_{r}\right\vert ^{2}dr\Big)^{p/q}+\mathbb{E}^{\mathcal{F}_{\sigma}}~\Big(%
{\displaystyle\int_{\sigma}^{\theta}}
{\left\vert Y_{r}\right\vert ^{q-2}\mathbf{1}_{Y_{r}\neq0}dD}_{r}%
\Big)^{p/q}\medskip\\
\leq C_{p,q,a,\gamma}~\mathbb{E}^{\mathcal{F}_{\sigma}}\left[  \left\vert
Y_{\theta}\right\vert ^{p}+\Big(%
{\displaystyle\int_{\sigma}^{\theta}}
\mathbf{1}_{q\geq2}dR_{r}\Big)^{p/2}+\Big(%
{\displaystyle\int_{\sigma}^{\theta}}
dN_{r}\Big)^{p}\right]  .
\end{array}
\label{an5-b}%
\end{equation}
with $C_{p,q,a,\gamma}$ a positive constant depending only $\left(
p,q,a,\gamma\right)  .$
\end{proposition}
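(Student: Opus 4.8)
The plan is to adapt the argument of \cite[Proposition 6.80]{pa-ra/14}, reducing the weighted inequality in the statement to the corresponding unweighted backward stochastic inequality through the substitution $X_t:=\left\vert Y_t\right\vert^{q}$. First I would localize exactly as in the proof of Proposition \ref{an-prop-dz}: introduce stopping times $\theta_n$ analogous to (\ref{an4-a}), adding to the defining expression the quantities $\int|Y_r|^{q-2}\mathbf{1}_{Y_r\neq0}|Z_r|^2dr$ and $\int|Y_r|^{q-2}\mathbf{1}_{Y_r\neq0}dD_r$ that occur here, so that on $[\sigma,\theta_n]$ all integrals below are bounded and $M_t:=\int_0^t\langle|Y_r|^{q-2}Y_r,Z_rdB_r\rangle$ is a true martingale. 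It then suffices to prove (\ref{an5-a}) and (\ref{an5-b}) with $\theta_n$ in place of $\theta$ and pass to the limit $n\to\infty$ by Fatou's Lemma, the hypothesis $\mathbb{E}\sup_{[\sigma,\theta]}|Y_r|^p<\infty$ guaranteeing the integrability of the right-hand sides.

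Next, on $[\sigma,\theta_n]$ I would recognise the hypothesis as putting the nonnegative process $X_t=|Y_t|^q$ in the form $X_t+\int_t^s dD'_r\le aX_s+a\int_t^s(d\tilde R_r+d\tilde N_r)+\gamma(M_s-M_t)$, where $D'_t=\int_0^t|Y_r|^{q-2}\mathbf{1}_{Y_r\neq0}(|Z_r|^2dr+dD_r)$, $\tilde R_t=\int_0^t|Y_r|^{q-2}\mathbf{1}_{Y_r\neq0}\mathbf{1}_{q\geq2}dR_r$ and $\tilde N_t=\int_0^t|Y_r|^{q-1}dN_r$, together with the quadratic-variation control $d\langle M\rangle_r=|Y_r|^{2q-2}|Z_r|^2dr=X_r\,\big(|Y_r|^{q-2}\mathbf{1}_{Y_r\neq0}|Z_r|^2dr\big)\le X_r\,dD'_r$ (both sides vanishing on $\{Y_r=0\}$ since $q>1$). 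This is precisely the situation covered by the inequalities of \cite[Proposition 6.80]{pa-ra/14} applied with outer exponent $p/q\geq1$; invoking them — which internally carries out the Burkholder--Davis--Gundy plus Young absorption that a naive estimate fails to close — gives $\mathbb{E}^{\mathcal{F}_\sigma}\sup_{[\sigma,\theta_n]}X_r^{p/q}$ and $\mathbb{E}^{\mathcal{F}_\sigma}(\int_\sigma^{\theta_n}dD'_r)^{p/q}$ bounded by $C\,\mathbb{E}^{\mathcal{F}_\sigma}\big[X_{\theta_n}^{p/q}+(\int_\sigma^{\theta_n}d\tilde R_r)^{p/q}+(\int_\sigma^{\theta_n}d\tilde N_r)^{p/q}\big]$. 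Since $X^{p/q}=|Y|^p$ and $\int|Y_r|^{q-2}\mathbf{1}_{Y_r\neq0}|Z_r|^2dr+\int|Y_r|^{q-2}\mathbf{1}_{Y_r\neq0}dD_r\le\int dD'_r$, this already yields (\ref{an5-a}) and the entire left-hand side of (\ref{an5-b}), the right-hand side still being in the weighted form above.

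Finally, to obtain (\ref{an5-b}) in the stated form I would replace $\int|Y_r|^{q-2}\mathbf{1}_{Y_r\neq0}\mathbf{1}_{q\geq2}dR_r\le\sup_{[\sigma,\theta_n]}|Y_r|^{q-2}\int\mathbf{1}_{q\geq2}dR_r$ and $\int|Y_r|^{q-1}dN_r\le\sup_{[\sigma,\theta_n]}|Y_r|^{q-1}\int dN_r$, raise to the power $p/q$, and apply Young's inequality with the conjugate exponents $\big(\tfrac{q}{q-2},\tfrac{q}{2}\big)$ and $\big(\tfrac{q}{q-1},q\big)$ respectively; this turns $\sup|Y|^{(q-2)p/q}$ and $\sup|Y|^{(q-1)p/q}$ into a small multiple of $\sup|Y|^p$, which is absorbed into the left-hand side of the maximal inequality just obtained, and turns $(\int\mathbf{1}_{q\geq2}dR_r)^{p/q}$, $(\int dN_r)^{p/q}$ into $(\int\mathbf{1}_{q\geq2}dR_r)^{p/2}$, $(\int dN_r)^{p}$. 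Letting $n\to\infty$ concludes the proof. The only genuinely delicate points are the verification of the quadratic-variation hypothesis of \cite[Proposition 6.80]{pa-ra/14} under the $\mathbf{1}_{Y_r\neq0}$ truncation and the careful bookkeeping of exponents in the Young splittings; everything else parallels Proposition \ref{an-prop-dz} and \cite{pa-ra/14}.
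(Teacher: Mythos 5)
Your plan follows the paper's own route almost step for step: the same localization by stopping times $\theta_n$, the verification that $\int\langle|Y_r|^{q-2}Y_r,Z_rdB_r\rangle$ is a true martingale on $[\sigma,\theta_n]$, a maximal-plus-integral estimate obtained through Burkholder--Davis--Gundy and Young absorption, then exactly the Young splittings with conjugate exponents $\big(\tfrac{q}{q-2},\tfrac{q}{2}\big)$ and $\big(\tfrac{q}{q-1},q\big)$ to pass from the weighted $dR,dN$ integrals to $(\int\mathbf{1}_{q\geq2}dR)^{p/2}$ and $(\int dN)^{p}$, and finally $n\to\infty$ by monotone/dominated convergence. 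So the approach is the right one and the exponent bookkeeping in your last step is correct.

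The one substantive weak point is the middle step, where you invoke \cite[Proposition 6.80]{pa-ra/14} as a black box after the substitution $X=|Y|^{q}$. That proposition (and its adaptation, Proposition \ref{an-prop-dz}) is stated for the quadratic structure: the right-hand side involves $a|Y_s|^{2}$, the left-hand side contains $\int|Z_r|^{2}dr$, and the martingale term is specifically $\gamma\int\langle Y_r,Z_rdB_r\rangle$. Your reduced inequality is \emph{linear} in $X$ and its martingale $M=\int\langle|Y|^{q-2}Y,ZdB\rangle$ is not of that bilinear form (writing it as $\int\langle X_r,\hat Z_rdB_r\rangle$ forces $\hat Z_r=|Y_r|^{-2}Y_r^{*}Z_r$, and then neither $|X_s|^{2}$ nor $\int|\hat Z_r|^{2}dr$ matches the quantities in your hypothesis), so there is no ``quadratic-variation hypothesis'' of 6.80 to verify — the cited statement simply does not apply verbatim, and the indicator $\mathbf{1}_{Y_r\neq0}$ is not the issue. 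What you must do instead is rerun the proof of 6.80 at outer exponent $p/q$, which is precisely what the paper does: BDG gives $\mathbb{E}^{\mathcal{F}_\sigma}\sup|M|^{p/q}\le C\,\mathbb{E}^{\mathcal{F}_\sigma}\big(\int|Y|^{2q-2}|Z|^{2}dr\big)^{p/(2q)}$, and the Young step $\le \tfrac12\mathbb{E}^{\mathcal{F}_\sigma}\sup|Y|^{p}+C\,\mathbb{E}^{\mathcal{F}_\sigma}\big(\int|Y|^{q-2}|Z|^{2}dr\big)^{p/q}$ shows that the conditional integral estimate you want does not come for free but carries a $\sup|Y|^{p}$ term; the sup estimate and the integral estimate therefore have to be derived together and coupled with sufficiently small Young constants before absorption, the absorption being legitimate only because the localized quantities are finite (and, in the limit, because $\mathbb{E}\sup_{[\sigma,\theta]}|Y|^{p}<\infty$). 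Once you replace the citation by this explicit argument, your proof coincides with the paper's.
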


\begin{proof}
We follow the proof of \cite[Proposition 6.80]{pa-ra/14}. Let the stopping
time $\theta_{n}$ be defined by
\[
\theta_{n}=\theta\wedge\inf\left\{  s\geq\sigma:\sup_{r\in\left[
\sigma,\sigma\vee s\right]  }\left\vert Y_{r}-Y_{\sigma}\right\vert +%
{\displaystyle\int_{\sigma}^{\sigma\vee s}}
\left\vert Z_{r}\right\vert ^{2}dr+%
{\displaystyle\int_{\sigma}^{\sigma\vee s}}
d\left(  D_{r}+R_{r}+N_{r}\right)  \geq n\right\}
\]
For any stopping time $\tau\in\left[  \sigma,\theta_{n}\right]  $ we have%
\begin{equation}%
\begin{array}
[c]{l}%
\left\vert Y_{\tau}\right\vert ^{q}+%
{\displaystyle\int_{\tau}^{\theta_{n}}}
\left\vert Y_{r}\right\vert ^{q-2}{\Large \,}\mathbf{1}_{Y_{r}\neq0}\left\vert
Z_{r}\right\vert ^{2}dr+%
{\displaystyle\int_{\tau}^{\theta_{n}}}
{\left\vert Y_{r}\right\vert ^{q-2}\mathbf{1}_{Y_{r}\neq0}dD}_{r}\medskip\\
\leq a\left\vert Y_{\theta_{n}}\right\vert ^{q}+a%
{\displaystyle\int_{\tau}^{\theta_{n}}}
\left(  \left\vert Y_{r}\right\vert ^{q-2}\mathbf{1}_{Y_{r}\neq0}%
\mathbf{1}_{q\geq2}dR_{r}+\left\vert Y_{r}\right\vert ^{q-1}dN_{r}\right)
+\gamma%
{\displaystyle\int_{\tau}^{\theta_{n}}}
\,\langle\left\vert Y_{r}\right\vert ^{q-2}Y_{r},Z_{r}dB_{r}\rangle.
\end{array}
\label{an6}%
\end{equation}
Remark that
\[
M_{s}=\int_{0}^{s}\mathbf{1}_{\left[  \sigma,\theta_{n}\right]  }\left(
r\right)  \langle\left\vert Y_{r}\right\vert ^{q-2}Y_{r},Z_{r}dB_{r}%
\rangle,\quad s\geq0
\]
is a martingale, since%
\[%
\begin{array}
[c]{l}%
\displaystyle\mathbb{E}\Big(%
{\displaystyle\int_{0}^{T}}
\mathbf{1}_{\left[  \sigma,\theta_{n}\right]  }\left(  r\right)  \left\vert
Y_{r}\right\vert ^{2q-2}\left\vert Z_{r}\right\vert ^{2}dr\Big)^{1/2}%
\leq\mathbb{E}\sup_{r\in\left[  \sigma,\theta_{n}\right]  }\left\vert
Y_{r}\right\vert ^{q-1}\Big(\int_{\sigma}^{\theta_{n}}\left\vert
Z_{r}\right\vert ^{2}dr\Big)^{1/2}\medskip\\
\displaystyle\leq\left[  \frac{q-1}{q}\mathbb{E}\sup_{r\in\left[
\sigma,\theta_{n}\right]  }\left\vert Y_{r}\right\vert ^{q}+\frac{1}%
{q}\mathbb{E}\Big(\int_{\sigma}^{\theta_{n}}\left\vert Z_{r}\right\vert
^{2}dr\Big)^{q/2}\right]  \medskip\\
\displaystyle\leq\frac{q-1}{q}~\mathbb{E}\left(  \left\vert Y_{\sigma
}\right\vert +n\right)  ^{q}+\frac{1}{q}n^{q/2}<\infty.
\end{array}
\]
Therefore from (\ref{an6})%
\begin{equation}%
\begin{array}
[c]{l}%
\mathbb{E}^{\mathcal{F}_{\tau}}\left[  \Big(%
{\displaystyle\int_{\tau}^{\theta_{n}}}
\left\vert Y_{r}\right\vert ^{q-2}{\Large \,}\left\vert Z_{r}\right\vert
^{2}dr\Big)^{p/q}+\Big(%
{\displaystyle\int_{\tau}^{\theta_{n}}}
{\left\vert Y_{r}\right\vert ^{q-2}dD}_{r}\Big)^{p/q}\right]  \medskip\\
\leq C_{p,q,a}~\mathbb{E}^{\mathcal{F}_{\tau}}\left[  \left\vert Y_{\theta
_{n}}\right\vert ^{p}+\Big(%
{\displaystyle\int_{\sigma}^{\theta_{n}}}
\left(  \left\vert Y_{r}\right\vert ^{q-2}\mathbf{1}_{Y_{r}\neq0}%
\mathbf{1}_{q\geq2}dR_{r}\right)  \Big)^{p/q}+\Big(%
{\displaystyle\int_{\sigma}^{\theta_{n}}}
\left\vert Y_{r}\right\vert ^{q-1}dN_{r}\Big)^{p/q}\right]
\end{array}
\label{an6-a}%
\end{equation}
and%
\begin{equation}%
\begin{array}
[c]{l}%
\mathbb{E}^{\mathcal{F}_{\sigma}}\sup\limits_{\tau\in\left[  \sigma,\theta
_{n}\right]  }\left\vert Y_{\tau}\right\vert ^{p}\leq C_{p,q,a,\gamma}%
^{\prime}~\bigg[\mathbb{E}^{\mathcal{F}_{\sigma}}\left\vert Y_{\theta_{n}%
}\right\vert ^{p}+~\mathbb{E}^{\mathcal{F}_{\sigma}}\Big(%
{\displaystyle\int_{\sigma}^{\theta_{n}}}
\left\vert Y_{r}\right\vert ^{q-2}\mathbf{1}_{Y_{r}\neq0}\mathbf{1}_{q\geq
2}dR_{r}\Big)^{p/q}\medskip\\
\quad+\mathbb{E}^{\mathcal{F}_{\sigma}}\Big(%
{\displaystyle\int_{\sigma}^{\theta_{n}}}
\left\vert Y_{r}\right\vert ^{q-1}dN_{r}\Big)^{p/q}+\mathbb{E}^{\mathcal{F}%
_{\sigma}}\sup\limits_{\tau\in\left[  \sigma,\theta_{n}\right]  }\left\vert
M_{\theta_{n}}-M_{\tau}\right\vert ^{p/q}\bigg]\medskip\\
\leq C_{p,q,a,\gamma}^{\prime\prime}~\bigg[\mathbb{E}^{\mathcal{F}_{\sigma}%
}\left\vert Y_{\theta_{n}}\right\vert ^{p}+~\mathbb{E}^{\mathcal{F}_{\sigma}%
}\Big(%
{\displaystyle\int_{\sigma}^{\theta_{n}}}
\left\vert Y_{r}\right\vert ^{q-2}\mathbf{1}_{Y_{r}\neq0}\mathbf{1}_{q\geq
2}dR_{r}\Big)^{p/q}\medskip\\
\quad\quad+~\mathbb{E}^{\mathcal{F}_{\sigma}}\Big(%
{\displaystyle\int_{\sigma}^{\theta_{n}}}
\left\vert Y_{r}\right\vert ^{q-1}dN_{r}\Big)^{p/q}+\mathbb{\mathbb{E}%
^{\mathcal{F}_{\sigma}}}\Big(%
{\displaystyle\int_{\sigma}^{\theta_{n}}}
\left\vert Y_{r}\right\vert ^{2q-2}\left\vert Z_{r}\right\vert ^{2}%
dr\Big)^{p/\left(  2q\right)  }\bigg].
\end{array}
\label{an7}%
\end{equation}

But%
\begin{align*}
&  C_{p,q,a,\gamma}^{\prime\prime}\mathbb{\mathbb{E}^{\mathcal{F}_{\sigma}}%
}\Big(%
{\displaystyle\int_{\sigma}^{\theta_{n}}}
\left\vert Y_{r}\right\vert ^{2q-2}\left\vert Z_{r}\right\vert ^{2}%
dr\Big)^{p/\left(  2q\right)  }\\
&  \leq C_{p,q,a,\gamma}^{\prime\prime}\mathbb{\mathbb{E}^{\mathcal{F}%
_{\sigma}}}\sup_{r\in\left[  \sigma,\theta_{n}\right]  }\left\vert
Y_{r}\right\vert ^{p/2}\Big(\int_{\sigma}^{\theta_{n}}\left\vert
Y_{r}\right\vert ^{q-2}\left\vert Z_{r}\right\vert ^{2}dr\Big)^{p/\left(
2q\right)  }\\
&  \leq\frac{1}{2}~\mathbb{\mathbb{E}^{\mathcal{F}_{\sigma}}}\sup_{r\in\left[
\sigma,\theta_{n}\right]  }\left\vert Y_{r}\right\vert ^{p}+\frac{\left(
C_{p,q,a,\gamma}^{\prime\prime}\right)  ^{2}}{2}\mathbb{\mathbb{E}%
^{\mathcal{F}_{\sigma}}}\Big(\int_{\sigma}^{\theta_{n}}\left\vert
Y_{r}\right\vert ^{q-2}\left\vert Z_{r}\right\vert ^{2}dr\Big)^{p/q}\\
&  \leq\frac{1}{2}~\mathbb{\mathbb{E}^{\mathcal{F}_{\sigma}}}\sup_{r\in\left[
\sigma,\theta_{n}\right]  }\left\vert Y_{r}\right\vert ^{p}\\
&  \quad+C_{p,q,a,\gamma}^{\prime\prime\prime}~\mathbb{E}^{\mathcal{F}_{\tau}%
}\left[  \left\vert Y_{\theta_{n}}\right\vert ^{p}+\Big(%
{\displaystyle\int_{\sigma}^{\theta_{n}}}
\left(  \left\vert Y_{r}\right\vert ^{q-2}\mathbf{1}_{Y_{r}\neq0}%
\mathbf{1}_{q\geq2}dR_{r}\right)  \Big)^{p/q}+\Big(%
{\displaystyle\int_{\sigma}^{\theta_{n}}}
\left\vert Y_{r}\right\vert ^{q-1}dN_{r}\Big)^{p/q}\right]
\end{align*}
Using this last inequality in (\ref{an7}) we obtain%
\begin{equation}
\mathbb{E}^{\mathcal{F}_{\sigma}}\sup\limits_{\tau\in\left[  \sigma,\theta
_{n}\right]  }\left\vert Y_{\tau}\right\vert ^{p}\leq C_{p,q,a,\gamma
}\mathbb{E}^{\mathcal{F}_{\sigma}}\left[  \left\vert Y_{\theta_{n}}\right\vert
^{p}+\Big(%
{\displaystyle\int_{\sigma}^{\theta_{n}}}
\left\vert Y_{r}\right\vert ^{q-2}\mathbf{1}_{Y_{r}\neq0}\mathbf{1}_{q\geq
2}dR_{r}\Big)^{p/q}+\Big(%
{\displaystyle\int_{\sigma}^{\theta_{n}}}
\left\vert Y_{r}\right\vert ^{q-1}dN_{r}\Big)^{p/q}\right]  \label{an8}%
\end{equation}
Now by H\"{o}lder's inequality%
\begin{align*}
&  C_{p,q,a,\gamma}\mathbb{E}^{\mathcal{F}_{\sigma}}\left[  \Big(%
{\displaystyle\int_{\sigma}^{\theta_{n}}}
\left\vert Y_{r}\right\vert ^{q-2}\mathbf{1}_{Y_{r}\neq0}\mathbf{1}_{q\geq
2}dR_{r}\Big)^{p/q}+\Big(%
{\displaystyle\int_{\sigma}^{\theta_{n}}}
\left\vert Y_{r}\right\vert ^{q-1}dN_{r}\Big)^{p/q}\right] \\
&  \leq C_{a,\gamma}~\mathbb{E}^{\mathcal{F}_{\sigma}}\left[  \Big(\sup
\nolimits_{r\in\left[  \sigma,\theta_{n}\right]  }\left(  \left\vert
Y_{r}\mathbf{1}_{Y_{r}\neq0}\right\vert ^{q-2}\mathbf{1}_{q\geq2}\right)
{\displaystyle\int_{\sigma}^{\theta_{n}}}
\mathbf{1}_{q\geq2}dR_{r}\Big)^{p/q}+\Big(\sup_{r\in\left[  \sigma,\theta
_{n}\right]  }\left\vert Y_{r}\right\vert ^{q-1}%
{\displaystyle\int_{\sigma}^{\theta_{n}}}
dN_{r}\Big)^{p/q}\right] \\
&  \leq\frac{1}{2}\mathbb{E}^{\mathcal{F}_{\sigma}}\sup\limits_{\tau\in\left[
\sigma,\theta_{n}\right]  }\left\vert Y_{\tau}\right\vert ^{p}+\hat
{C}_{p,q,a,\gamma}~\mathbb{E}^{\mathcal{F}_{\sigma}}\Big(%
{\displaystyle\int_{\sigma}^{\theta_{n}}}
\mathbf{1}_{q\geq2}dR_{r}\Big)^{p/2}+\hat{C}_{p,q,a,\gamma}~\mathbb{E}%
^{\mathcal{F}_{\sigma}}\Big(%
{\displaystyle\int_{\sigma}^{\theta_{n}}}
dN_{r}\Big)^{p}%
\end{align*}
that yields via (\ref{an8}):%
\begin{equation}
\mathbb{E}^{\mathcal{F}_{\sigma}}\sup\limits_{\tau\in\left[  \sigma,\theta
_{n}\right]  }\left\vert Y_{\tau}\right\vert ^{p}\leq C_{p,q,a,\gamma
}~\mathbb{E}^{\mathcal{F}_{\sigma}}\left[  \left\vert Y_{\theta_{n}%
}\right\vert ^{p}+\Big(%
{\displaystyle\int_{\sigma}^{\theta_{n}}}
\mathbf{1}_{q\geq2}dR_{r}\Big)^{p/2}+\Big(%
{\displaystyle\int_{\sigma}^{\theta_{n}}}
dN_{r}\Big)^{p}\right]  \label{an9}%
\end{equation}
Hence form this last two inequalities we have%
\begin{equation}%
\begin{array}
[c]{l}%
\mathbb{E}^{\mathcal{F}_{\sigma}}\left[  \Big(%
{\displaystyle\int_{\sigma}^{\theta_{n}}}
\left\vert Y_{r}\right\vert ^{q-2}\mathbf{1}_{Y_{r}\neq0}\mathbf{1}_{q\geq
2}dR_{r}\Big)^{p/q}+\Big(%
{\displaystyle\int_{\sigma}^{\theta_{n}}}
\left\vert Y_{r}\right\vert ^{q-1}dN_{r}\Big)^{p/q}\right]  \medskip\\
\leq\tilde{C}_{p,q,a,\gamma}\mathbb{E}^{\mathcal{F}_{\sigma}}\left[
\left\vert Y_{\theta_{n}}\right\vert ^{p}+\Big(%
{\displaystyle\int_{\sigma}^{\theta_{n}}}
\mathbf{1}_{q\geq2}dR_{r}\Big)^{p/2}+\Big(%
{\displaystyle\int_{\sigma}^{\theta_{n}}}
dN_{r}\Big)^{p}\right]
\end{array}
\label{an9-a}%
\end{equation}
By Beppo Levi monotone convergence theorem and by Lebesgue dominated
convergence theorem, as $n\rightarrow\infty,$ we deduce, from (\ref{an9-a}),
(\ref{an9}), (\ref{an8}) and (\ref{an6-a}), inequalities (\ref{an5-a}) and
(\ref{an5-b}).\hfill
\end{proof}

\begin{proposition}
\label{an-prop-yd}Let $\left(  Y,Z\right)  \in S_{m}^{0}\times\Lambda_{m\times
k}^{0}$~, $q>1$ and $b,L>0$ satisfying%
\[
\mathbb{E}\sup\limits_{r\in\left[  0,T\right]  }\left\vert Y_{r}\right\vert
^{q}\leq L
\]
and for all $0\leq t\leq T<\infty:$%
\[
\left\vert Y_{t}\right\vert ^{q}+\mathbb{E}^{\mathcal{F}_{t}}%
{\displaystyle\int_{t}^{T}}
{dD}_{r}\leq b\,\mathbb{E}^{\mathcal{F}_{t}}\left[  \left\vert Y_{T}%
\right\vert ^{q}+%
{\displaystyle\int_{0}^{T}}
\left(  \left\vert Y_{r}\right\vert ^{q-2}\mathbf{1}_{Y_{r}\neq0}%
\mathbf{1}_{q\geq2}dR_{r}+\left\vert Y_{r}\right\vert ^{q-1}dN_{r}\right)
\right]  ,\quad\mathbb{P}-a.s.\;
\]
where $R,N$ and $D$ are increasing and continuous p.m.s.p. $R_{0}=N_{0}%
=D_{0}=0.$\newline Then for all $0<\alpha<1$%
\begin{equation}%
\begin{array}
[c]{l}%
\mathbb{E}\sup\nolimits_{t\in\left[  0,T\right]  }\left\vert Y_{t}\right\vert
^{\alpha q}+\Big(\mathbb{E}%
{\displaystyle\int_{0}^{T}}
{dD}_{r}\Big)^{\alpha}\medskip\\
\leq\dfrac{2b^{\alpha}}{1-\alpha}~\left[  \mathbb{E}\left\vert Y_{T}%
\right\vert ^{q}+L^{\frac{q-2}{q}}\bigg(\mathbb{E}\Big(%
{\displaystyle\int_{0}^{T}}
\mathbf{1}_{q\geq2}dR_{r}\Big)^{q/2}\bigg)^{2/q}+L^{\frac{q-1}{q}%
}\bigg(\mathbb{E}\Big(%
{\displaystyle\int_{\sigma}^{\theta}}
dN_{r}\Big)^{q}\bigg)^{1/q}\right]  ^{\alpha}\;
\end{array}
\label{an9-b}%
\end{equation}

\end{proposition}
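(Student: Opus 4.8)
The plan is to reduce (\ref{an9-b}) to a weak-$L^{1}$ (Doob) maximal estimate for a nonnegative martingale, after absorbing the $Y$-dependent terms on the right-hand side by H\"{o}lder's inequality. (In the last term of (\ref{an9-b}) the integration limits are understood to be $0$ and $T$.) If the bracketed quantity on the right of (\ref{an9-b}) is $+\infty$ there is nothing to prove, so I assume it finite; under this assumption every expectation below will turn out to be finite, so no localisation by stopping times is needed.

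First I would introduce the $\mathcal{F}_{T}$--measurable nonnegative random variable
\[
X:=|Y_{T}|^{q}+\int_{0}^{T}\left(|Y_{r}|^{q-2}\mathbf{1}_{Y_{r}\neq0}\mathbf{1}_{q\geq2}\,dR_{r}+|Y_{r}|^{q-1}\,dN_{r}\right),
\]
so that the hypothesis of the proposition reads $|Y_{t}|^{q}+\mathbb{E}^{\mathcal{F}_{t}}(D_{T}-D_{t})\leq b\,\mathbb{E}^{\mathcal{F}_{t}}X$ for all $t\in[0,T]$. Taking $t=0$ yields $\mathbb{E}D_{T}\leq b\,\mathbb{E}X$, hence $\left(\mathbb{E}\int_{0}^{T}dD_{r}\right)^{\alpha}\leq b^{\alpha}(\mathbb{E}X)^{\alpha}$. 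For the supremum term, put $M_{t}:=\mathbb{E}^{\mathcal{F}_{t}}X$, a nonnegative (right-continuous) martingale on $[0,T]$ with $M_{T}=X$, so that $\sup_{t\in[0,T]}|Y_{t}|^{q}\leq b\,M_{T}^{\ast}$ with $M_{T}^{\ast}:=\sup_{t\in[0,T]}M_{t}$, using the continuity of $Y$. Doob's maximal inequality gives $\lambda\,\mathbb{P}(M_{T}^{\ast}>\lambda)\leq\mathbb{E}X$ for every $\lambda>0$; inserting this into the layer-cake identity $\mathbb{E}[(M_{T}^{\ast})^{\alpha}]=\alpha\int_{0}^{\infty}\lambda^{\alpha-1}\mathbb{P}(M_{T}^{\ast}>\lambda)\,d\lambda$, splitting the integral at $\lambda=\mathbb{E}X$ and using $0<\alpha<1$, produces $\mathbb{E}[(M_{T}^{\ast})^{\alpha}]\leq\frac{1}{1-\alpha}(\mathbb{E}X)^{\alpha}$. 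Since $x\mapsto x^{\alpha}$ is nondecreasing, $\mathbb{E}\sup_{t}|Y_{t}|^{\alpha q}\leq b^{\alpha}\mathbb{E}[(M_{T}^{\ast})^{\alpha}]\leq\frac{b^{\alpha}}{1-\alpha}(\mathbb{E}X)^{\alpha}$; adding the two bounds and using $\frac{1}{1-\alpha}+1\leq\frac{2}{1-\alpha}$ gives $\mathbb{E}\sup_{t}|Y_{t}|^{\alpha q}+\left(\mathbb{E}\int_{0}^{T}dD_{r}\right)^{\alpha}\leq\frac{2b^{\alpha}}{1-\alpha}(\mathbb{E}X)^{\alpha}$.

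It then remains to dominate $\mathbb{E}X$ by the bracket in (\ref{an9-b}). For the $dR$--term I would pull the supremum out, $\int_{0}^{T}|Y_{r}|^{q-2}\mathbf{1}_{q\geq2}\,dR_{r}\leq\left(\sup_{r}|Y_{r}|^{q-2}\mathbf{1}_{q\geq2}\right)\int_{0}^{T}\mathbf{1}_{q\geq2}\,dR_{r}$, and then apply H\"{o}lder with exponents $\frac{q}{q-2}$ and $\frac{q}{2}$ --- harmless, since the whole term vanishes when $1<q<2$ --- which together with $\mathbb{E}\sup_{r}|Y_{r}|^{q}\leq L$ gives $\mathbb{E}\int_{0}^{T}|Y_{r}|^{q-2}\mathbf{1}_{q\geq2}dR_{r}\leq L^{(q-2)/q}\big(\mathbb{E}(\int_{0}^{T}\mathbf{1}_{q\geq2}dR_{r})^{q/2}\big)^{2/q}$; similarly, H\"{o}lder with exponents $\frac{q}{q-1}$ and $q$ gives $\mathbb{E}\int_{0}^{T}|Y_{r}|^{q-1}dN_{r}\leq L^{(q-1)/q}\big(\mathbb{E}(\int_{0}^{T}dN_{r})^{q}\big)^{1/q}$. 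Hence $\mathbb{E}X\leq\mathbb{E}|Y_{T}|^{q}+L^{(q-2)/q}\big(\mathbb{E}(\int_{0}^{T}\mathbf{1}_{q\geq2}dR_{r})^{q/2}\big)^{2/q}+L^{(q-1)/q}\big(\mathbb{E}(\int_{0}^{T}dN_{r})^{q}\big)^{1/q}$, and since $\alpha>0$ this inequality is preserved under raising to the power $\alpha$; substituting it into the previous display yields (\ref{an9-b}).

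The argument is essentially routine; the one step calling for some care is the passage from the Doob weak-type bound to the $L^{\alpha}$ estimate for $\alpha<1$ --- the layer-cake splitting and optimisation at $\lambda=\mathbb{E}X$, which is precisely what produces the factor $\frac{1}{1-\alpha}$ --- together with keeping track of the exponent $\frac{q-2}{q}$, which is negative for $1<q<2$ and is exactly why the indicator $\mathbf{1}_{q\geq2}$ must be carried through all the estimates.
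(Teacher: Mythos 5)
Your proof is correct and follows essentially the same route as the paper: the paper applies \cite[Proposition 1.56]{pa-ra/14} to the hypothesis (which is exactly the $L^{\alpha}$, $0<\alpha<1$, maximal estimate you re-derive via Doob's weak-type inequality and the layer-cake splitting at $\lambda=\mathbb{E}X$) and then uses the same sup-out-and-H\"older bounds with $\mathbb{E}\sup_{r}|Y_{r}|^{q}\leq L$ for the $dR$ and $dN$ terms. Your direct handling of the $\big(\mathbb{E}\int_{0}^{T}dD_{r}\big)^{\alpha}$ term by taking $t=0$, and your reading of $\int_{\sigma}^{\theta}dN_{r}$ as $\int_{0}^{T}dN_{r}$, are both consistent with the paper's argument.
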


\begin{proof}
By \cite[Proposition 1.56]{pa-ra/14} we obtain for all $0<\alpha<1:$%
\begin{align*}
&  \mathbb{E~}\sup_{t\in\left[  0,T\right]  }\Big(\left\vert Y_{t}\right\vert
^{q}+~\mathbb{E}^{\mathcal{F}_{t}}%
{\displaystyle\int_{t}^{T}}
{dD}_{r}\Big)^{\alpha}\\
&  \leq\frac{b^{\alpha}}{1-\alpha}\left[  \mathbb{E}\left\vert Y_{T}%
\right\vert ^{q}+\mathbb{E}%
{\displaystyle\int_{0}^{T}}
\left\vert Y_{r}\right\vert ^{q-2}\mathbf{1}_{Y_{r}\neq0}\mathbf{1}_{q\geq
2}dR_{r}+\mathbb{E}%
{\displaystyle\int_{0}^{T}}
\left\vert Y_{r}\right\vert ^{q-1}dN_{r}\right]  ^{\alpha}\\
&  \leq\frac{b^{\alpha}}{1-\alpha}\bigg[\mathbb{E}\left\vert Y_{T}\right\vert
^{q}+\mathbb{E}\Big(\sup_{r\in\left[  0,T\right]  }\left(  \left\vert
Y_{r}\mathbf{1}_{Y_{r}\neq0}\right\vert ^{q-2}\mathbf{1}_{q\geq2}\right)
{\displaystyle\int_{0}^{T}}
\mathbf{1}_{q\geq2}dR_{r}\Big)\\
&  \quad+\mathbb{E}\Big(\sup_{r\in\left[  0,T\right]  }\left\vert
Y_{r}\right\vert ^{q-1}%
{\displaystyle\int_{0}^{T}}
dN_{r}\Big)\bigg]^{\alpha}\\
&  \leq\frac{b^{\alpha}}{1-\alpha}\bigg[\mathbb{E}\left\vert Y_{T}\right\vert
^{q}+\Big(\mathbb{E}\sup_{r\in\left[  0,T\right]  }\left(  \left\vert
Y_{r}\mathbf{1}_{Y_{r}\neq0}\right\vert ^{q}\mathbf{1}_{q\geq2}\right)
\Big)^{\left(  q-2\right)  /q}\bigg(\mathbb{E}\Big(%
{\displaystyle\int_{0}^{T}}
\mathbf{1}_{q\geq2}dR_{r}\Big)^{q/2}\bigg)^{2/q}\\
&  \quad+\Big(\mathbb{E}\sup_{r\in\left[  0,T\right]  }\left(  \left\vert
Y_{r}\right\vert ^{q}\right)  \Big)^{\left(  q-1\right)  /q}\bigg(\mathbb{E}%
\Big(%
{\displaystyle\int_{0}^{T}}
dN_{r}\Big)^{q}\bigg)^{1/q}\bigg]^{\alpha}\\
&  \leq\frac{b^{\alpha}}{1-\alpha}\bigg[\mathbb{E}\left\vert Y_{T}\right\vert
^{q}+L^{\left(  q-2\right)  /q}\bigg(\mathbb{E}\Big(%
{\displaystyle\int_{0}^{T}}
\mathbf{1}_{q\geq2}dR_{r}\Big)^{q/2}\bigg)^{2/q}+L^{\left(  q-1\right)
/q}\bigg(\mathbb{E}\Big(%
{\displaystyle\int_{0}^{T}}
dN_{r}\Big)^{q}\bigg)^{1/q}\bigg]^{\alpha}%
\end{align*}
and inequality (\ref{an9-b}) follows since%
\[
\sup_{t\in\left[  0,T\right]  }\left\vert Y_{t}\right\vert ^{\alpha q}\leq
\sup_{t\in\left[  0,T\right]  }\Big(\left\vert Y_{t}\right\vert ^{q}%
+~\mathbb{E}^{\mathcal{F}_{t}}%
{\displaystyle\int_{t}^{T}}
{dD}_{r}\Big)^{\alpha}%
\]
and%
\[
\Big(\mathbb{E}%
{\displaystyle\int_{0}^{T}}
{dD}_{r}\Big)^{\alpha}\leq\sup_{t\in\left[  0,T\right]  }\Big(\left\vert
Y_{t}\right\vert ^{q}+~\mathbb{E}^{\mathcal{F}_{t}}%
{\displaystyle\int_{t}^{T}}
{\left\vert Y_{r}\right\vert ^{q-2}\mathbf{1}_{Y_{r}\neq0}dD}_{r}%
\Big)^{\alpha}%
\]

\hfill
\end{proof}

\begin{proposition}
\label{an-prop-dk} Let:

\begin{itemize}
\item $\left(  Y,Z\right)  \in S_{m}^{0}\times\Lambda_{m\times k}^{0}$ ;

\item $K\in S_{m}^{0}$ and $K_{\cdot}\left(  \omega\right)  \in BV_{loc}%
\left(  \mathbb{R}_{+};\mathbb{R}^{m}\right)  $, $\mathbb{P}$--a.s.;

\item $D,R,N,\tilde{R}$ be some increasing continuous p.m.s.p. with
$D_{0}=R_{0}=N_{0}=0$;

\item $V$ be a bounded variation p.m.s.p. with $V_{0}=0;$

\item $\sigma$ and $\theta$ be two stopping times such that $0\leq\sigma
\leq\theta<\infty.\medskip$
\end{itemize}

\noindent\textbf{I.} If for all $0\leq t\leq s<\infty,\;\mathbb{P}$-a.s.%
\begin{equation}
\left\vert Y_{t}\right\vert ^{2}+{%
{\displaystyle\int_{t}^{s}}
}\left\vert Z_{r}\right\vert ^{2}dr+{%
{\displaystyle\int_{t}^{s}}
}dD_{r}\leq\left\vert Y_{s}\right\vert ^{2}+2{%
{\displaystyle\int_{t}^{s}}
}\left\langle Y_{r},dK_{r}\right\rangle -2%
{\displaystyle\int_{t}^{s}}
\,\langle Y_{r},Z_{r}dB_{r}\rangle, \label{an-10-a}%
\end{equation}
and for some $\lambda<1$%
\begin{equation}
{%
{\displaystyle\int_{t}^{s}}
}\left\langle Y_{r},dK_{r}\right\rangle \leq%
{\displaystyle\int_{t}^{s}}
\left(  dR_{r}+|Y_{r}|dN_{r}+|Y_{r}|^{2}dV_{r}\right)  +\dfrac{\lambda}{2}%
{\displaystyle\int_{t}^{s}}
\left\vert Z_{r}\right\vert ^{2}dr, \label{an-10-b}%
\end{equation}
then, for any $q>0$, there exists a positive constant $C_{q,\lambda}$ such
that $\mathbb{P}$--a.s.%
\begin{equation}%
\begin{array}
[c]{l}%
\mathbb{E}^{\mathcal{F}_{\sigma}}\Big(%
{\displaystyle\int_{\sigma}^{\theta}}
e^{2V_{r}}\left\vert Z_{r}\right\vert ^{2}ds\Big)^{q/2}+\mathbb{E}%
^{\mathcal{F}_{\sigma}}\Big(%
{\displaystyle\int_{\sigma}^{\theta}}
e^{2V_{r}}dD_{r}\Big)^{q/2}\medskip\\
\leq C_{q,\lambda}\mathbb{E}^{\mathcal{F}_{\sigma}}\left[  \sup\limits_{r\in
\left[  \sigma,\theta\right]  }\left\vert e^{V_{r}}Y_{r}\right\vert ^{q}+\Big(%
{\displaystyle\int_{\sigma}^{\theta}}
e^{2V_{r}}dR_{r}\Big)^{q/2}+\Big(%
{\displaystyle\int_{\sigma}^{\theta}}
e^{2V_{r}}\left\vert Y_{r}\right\vert dN_{r}\Big)^{q/2}\right]  \medskip\\
\leq2C_{q,\lambda}\mathbb{E}^{\mathcal{F}_{\sigma}}\left[  \sup\limits_{r\in
\left[  \sigma,\theta\right]  }\left\vert e^{V_{r}}Y_{r}\right\vert ^{q}+\Big(%
{\displaystyle\int_{\sigma}^{\theta}}
e^{2V_{r}}dR_{r}\Big)^{q/2}+\Big(%
{\displaystyle\int_{\sigma}^{\theta}}
e^{V_{r}}dN_{r}\Big)^{q}\right]  .
\end{array}
\label{an-11}%
\end{equation}
\noindent\textbf{II.} If $q>1$, $n_{q}=\left(  q-1\right)  \wedge1,$%
\begin{equation}%
\begin{array}
[c]{rl}%
\left(  i\right)  & \left\vert Y_{t}\right\vert ^{q}+\dfrac{q}{2}n_{q}%
{\displaystyle\int_{t}^{s}}
\left\vert Y_{r}\right\vert ^{q-2}\mathbf{1}_{Y_{r}\neq0}\left\vert
Z_{r}\right\vert ^{2}dr+%
{\displaystyle\int_{t}^{s}}
\left\vert Y_{r}\right\vert ^{q-2}\mathbf{1}_{Y_{r}\neq0}dD_{r}\medskip\\
& \leq\left\vert Y_{s}\right\vert ^{q}+q%
{\displaystyle\int_{t}^{s}}
\left\vert Y_{r}\right\vert ^{q-2}\mathbf{1}_{Y_{r}\neq0}\left[  d\tilde
{R}_{r}+\left\langle Y_{r},dK_{r}\right\rangle \right]  -q%
{\displaystyle\int_{t}^{s}}
\left\vert Y_{r}\right\vert ^{q-2}\mathbf{1}_{Y_{r}\neq0}\left\langle
Y_{r},Z_{r}dB_{r}\right\rangle ,\medskip\\
\left(  ii\right)  & \mathbb{E}\sup\nolimits_{r\in\left[  \sigma
,\theta\right]  }e^{qV_{r}}\left\vert Y_{r}\right\vert ^{q}<\infty
\end{array}
\label{an-12}%
\end{equation}
and for some $\lambda<1$%
\begin{equation}
d\tilde{R}_{r}+\left\langle Y_{r},dK_{r}\right\rangle \leq\left(
\mathbf{1}_{q\geq2}dR_{r}+|Y_{r}|dN_{r}+|Y_{r}|^{2}dV_{r}\right)
+\dfrac{n_{q}}{2}\lambda\left\vert Z_{r}\right\vert ^{2}dt, \label{an-13}%
\end{equation}
then there exists some positive constant $C_{q,\lambda},$ $C_{q,\lambda
}^{\prime}$ such that $\mathbb{P}$--a.s.,%
\begin{equation}%
\begin{array}
[c]{l}%
\mathbb{E}^{\mathcal{F}_{\sigma}}\Big(\sup\limits_{\tau\in\left[
\sigma,\theta\right]  }\left\vert e^{V_{r}}Y_{r}\right\vert ^{q}%
\Big)+\mathbb{E}^{\mathcal{F}_{\sigma}}\Big(%
{\displaystyle\int_{\sigma}^{\theta}}
e^{qV_{r}}\left\vert Y_{r}\right\vert ^{q-2}\mathbf{1}_{Y_{r}\neq0}\left\vert
Z_{r}\right\vert ^{2}ds\Big)+\mathbb{E}^{\mathcal{F}_{\sigma}}\Big(%
{\displaystyle\int_{\sigma}^{\theta}}
e^{qV_{r}}\left\vert Y_{r}\right\vert ^{q-2}\mathbf{1}_{Y_{r}\neq0}%
dD_{r}\Big)\medskip\\
\leq C_{q,\lambda}~\mathbb{E}^{\mathcal{F}_{\sigma}}\left[  \left\vert
e^{V_{\theta}}Y_{\theta}\right\vert ^{q}+\Big(%
{\displaystyle\int_{\sigma}^{\theta}}
e^{qV_{r}}\left\vert Y_{r}\right\vert ^{q-2}\mathbf{1}_{Y_{r}\neq0}%
\mathbf{1}_{q\geq2}dR_{r}\Big)+\Big(%
{\displaystyle\int_{\sigma}^{\theta}}
e^{qV_{r}}\left\vert Y_{r}\right\vert ^{q-1}dN_{r}\Big)\right]  \medskip\\
\leq C_{q,\lambda}^{\prime}~\mathbb{E}^{\mathcal{F}_{\sigma}}\left[
\left\vert e^{V_{\theta}}Y_{\theta}\right\vert ^{q}+\Big(%
{\displaystyle\int_{\sigma}^{\theta}}
e^{2V_{r}}\mathbf{1}_{q\geq2}dR_{r}\Big)^{q/2}+\Big(%
{\displaystyle\int_{\sigma}^{\theta}}
e^{V_{r}}dN_{r}\Big)^{q}\right]  .
\end{array}
\label{an-14}%
\end{equation}

\end{proposition}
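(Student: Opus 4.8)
The plan is to reduce Part~I to Proposition~\ref{an-prop-dz} and Part~II to Proposition~\ref{an-prop-ydz}, in each case by first absorbing the increment $\langle Y_r,dK_r\rangle$ into the other terms and then removing the factor $e^{V}$ through the change of unknown $\hat Y_t:=e^{V_t}Y_t$, $\hat Z_t:=e^{V_t}Z_t$. The only non-routine ingredient is that the correct exponent in the exponential weight makes the $dV$--terms cancel.

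\textbf{Part~I.} First I would feed the bound (\ref{an-10-b}) into (\ref{an-10-a}): since $2\int_t^s\langle Y_r,dK_r\rangle\le 2\int_t^s(dR_r+|Y_r|dN_r+|Y_r|^2dV_r)+\lambda\int_t^s|Z_r|^2dr$, inequality (\ref{an-10-a}) becomes, for all $0\le t\le s<\infty$, $\mathbb{P}$--a.s.,
\[
|Y_t|^2+(1-\lambda)\int_t^s|Z_r|^2dr+\int_t^sdD_r\le|Y_s|^2+2\int_t^s\big(dR_r+|Y_r|dN_r+|Y_r|^2dV_r\big)-2\int_t^s\langle Y_r,Z_rdB_r\rangle .
\]
Holding over every subinterval, this forces $r\mapsto|Y_r|^2$ to be a continuous semimartingale --- a nondecreasing process plus the continuous semimartingale assembled from the right-hand side, exactly the monotonicity argument of Remark~\ref{r-mart part} --- whose bounded-variation part is dominated, as a signed measure, by the above integrand. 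Then I would apply the product rule to $e^{2V_t}|Y_t|^2=|\hat Y_t|^2$: the $dV$--contribution $2e^{2V_t}|Y_t|^2\,dV_t$ from differentiating the weight is cancelled exactly by the $-2e^{2V_t}|Y_t|^2\,dV_t$ coming from the $2\int|Y_r|^2dV_r$ term, and with $\hat D_t:=\int_0^te^{2V_r}dD_r$, $\hat R_t:=\int_0^te^{2V_r}dR_r$, $\hat N_t:=\int_0^te^{V_r}dN_r$ (increasing, continuous) one is left with
\[
|\hat Y_t|^2+(1-\lambda)\int_t^s|\hat Z_r|^2dr+\int_t^sd\hat D_r\le|\hat Y_s|^2+2\int_t^sd\hat R_r+2\int_t^s|\hat Y_r|\,d\hat N_r-2\int_t^s\langle\hat Y_r,\hat Z_r dB_r\rangle .
\]
After dividing by $1-\lambda$ and discarding the nonnegative $|\hat Y_t|^2$ on the left, this is the hypothesis of Proposition~\ref{an-prop-dz} for $(\hat Y,\hat Z)$ with constants $a,\gamma$ depending only on $\lambda$; its conclusion, rewritten through $\hat Y=e^{V}Y$, $\hat Z=e^{V}Z$, is exactly (\ref{an-11}).

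\textbf{Part~II.} I would run the same scheme with the power $2$ replaced by $q$. Multiplying (\ref{an-13}) by the nonnegative weight $q|Y_r|^{q-2}\mathbf{1}_{Y_r\neq0}$ and substituting the result for the term $q\int_t^s|Y_r|^{q-2}\mathbf{1}_{Y_r\neq0}[\,d\tilde R_r+\langle Y_r,dK_r\rangle\,]$ in (\ref{an-12})(i), then absorbing $\frac{q}{2}n_q\lambda\int|Y_r|^{q-2}\mathbf{1}_{Y_r\neq0}|Z_r|^2dr$ on the left and using the monotonicity argument once more, $|Y|^q$ is a continuous semimartingale and the product rule applied to $e^{qV_t}|Y_t|^q$ cancels the $q\,e^{qV_t}|Y_t|^q\,dV_t$ term against the $q\int|Y_r|^qdV_r$ term. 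With $\hat Y=e^{V}Y$, $\hat Z=e^{V}Z$ and $\hat D,\hat R,\hat N$ as before (and $\mathbf{1}_{Y_r\neq0}=\mathbf{1}_{\hat Y_r\neq0}$), the resulting inequality is of the form handled by Proposition~\ref{an-prop-ydz} with its exponent ``$p$'' taken equal to the present $q$ and the constant $\frac{q}{2}n_q(1-\lambda)$ in front of $\int|\hat Z_r|^2$ absorbed into the constants; assumption (\ref{an-12})(ii), i.e.\ $\mathbb{E}\sup_{r\in[\sigma,\theta]}e^{qV_r}|Y_r|^q<\infty$, provides exactly the integrability of $\sup|\hat Y|^q$ that Proposition~\ref{an-prop-ydz} requires. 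Translating (\ref{an5-a}) and (\ref{an5-b}) back through $\hat Y=e^{V}Y$ yields the two chains of inequalities in (\ref{an-14}).

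The main obstacle --- indeed the only non-routine step --- is recognising that the cancellation of the $dV$--terms dictates the weight exponents $2$ (Part~I) and $q$ (Part~II); once this is seen, the statement is a direct corollary of Propositions~\ref{an-prop-dz} and \ref{an-prop-ydz}. What remains is technical: keeping track of the constants $a,\gamma$ --- hence of the dependence of $C_{q,\lambda}$ and $C_{q,\lambda}^{\prime}$ --- produced on dividing by $1-\lambda$, respectively $\frac{q}{2}n_q(1-\lambda)$; justifying the passage from the integral inequalities (\ref{an-10-b}), (\ref{an-13}) to their differential (signed-measure) forms and the admissibility of multiplying them by the nonnegative weight $|Y_r|^{q-2}\mathbf{1}_{Y_r\neq0}$; and noting that, in Part~II, $|Y|^q$ --- a priori only assumed continuous --- is a semimartingale thanks to the structure of (\ref{an-12})(i).
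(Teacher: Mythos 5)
Your proposal is correct and follows essentially the same route as the paper: substitute (\ref{an-10-b}) (resp. (\ref{an-13})) into (\ref{an-10-a}) (resp. (\ref{an-12})), pass to the exponentially weighted processes $e^{V}Y$, $e^{V}Z$ so the $dV$--terms are absorbed, and then apply Proposition \ref{an-prop-dz} for Part I and Proposition \ref{an-prop-ydz} (with $p=q$) for Part II. The only difference is that you justify the weighting step directly by the product rule and the nondecreasing-difference semimartingale argument, whereas the paper cites a stochastic Gronwall-type result (\cite[Proposition 6.69]{pa-ra/14}, or \cite[Lemma 12]{ma-ra/07}) whose proof is exactly that computation.
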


\begin{proof}
Using inequality (\ref{an-10-b}) and (\ref{an-10-a}) we obtain for all $0\leq
t\leq s<\infty$%
\[
\left\vert Y_{t}\right\vert ^{2}+\left(  1-\lambda\right)  {%
{\displaystyle\int_{t}^{s}}
}\left\vert Z_{r}\right\vert ^{2}dr+{%
{\displaystyle\int_{t}^{s}}
}dD_{r}\leq\left\vert Y_{s}\right\vert ^{2}+{%
{\displaystyle\int_{t}^{s}}
}\left[  \left(  2dR_{r}+2|Y_{r}|dN_{r}\right)  +|Y_{r}|^{2}d\left(
2V_{r}\right)  \right]  -2%
{\displaystyle\int_{t}^{s}}
\,\langle Y_{r},Z_{r}dB_{r}\rangle
\]
which yields, applying \cite[Proposition 6.69]{pa-ra/14} (or \cite[Lemma
12]{ma-ra/07}),%
\begin{align*}
&  \left\vert e^{V_{t}}Y_{t}\right\vert ^{2}+\left(  1-\lambda\right)  {%
{\displaystyle\int_{t}^{s}}
}\left\vert e^{V_{r}}Z_{r}\right\vert ^{2}dr+{%
{\displaystyle\int_{t}^{s}}
}e^{2V_{r}}dD_{r}\\
&  \leq\left\vert e^{V_{s}}Y_{s}\right\vert ^{2}+2{%
{\displaystyle\int_{t}^{s}}
}\left[  e^{2V_{r}}dR_{r}+|e^{V_{r}}Y_{r}|e^{V_{r}}dN_{r}\right]  -2%
{\displaystyle\int_{t}^{s}}
\,\langle e^{V_{r}}Y_{r},e^{V_{r}}Z_{r}dB_{r}\rangle.
\end{align*}
Now inequality (\ref{an-11}) clearly follows by Proposition \ref{an-prop-dz}.

In the same manner, using (\ref{an-13}) and (\ref{an-12}), \cite[Proposition
6.69]{pa-ra/14} and Proposition \ref{an-prop-ydz} we infer%
\[%
\begin{array}
[c]{l}%
\left\vert e^{V_{t}}Y_{t}\right\vert ^{q}+\dfrac{q}{2}n_{q}\left(
1-\lambda\right)
{\displaystyle\int_{t}^{s}}
\left\vert e^{V_{r}}Y_{r}\right\vert ^{q-2}\mathbf{1}_{Y_{r}\neq0}\left\vert
e^{V_{r}}Z_{r}\right\vert ^{2}dr+%
{\displaystyle\int_{t}^{s}}
\left\vert e^{V_{r}}Y_{r}\right\vert ^{q-2}\mathbf{1}_{Y_{r}\neq0}e^{2V_{r}%
}dD_{r}\medskip\\
\leq\left\vert e^{V_{s}}Y_{s}\right\vert ^{q}+q%
{\displaystyle\int_{t}^{s}}
\left\vert e^{V_{r}}Y_{r}\right\vert ^{q-2}\mathbf{1}_{Y_{r}\neq0}%
\mathbf{1}_{q\geq2}e^{2V_{r}}dR_{r}+|e^{V_{r}}Y_{r}|^{q-1}e^{V_{r}}%
dN_{r}\medskip\\
\quad-q%
{\displaystyle\int_{t}^{s}}
\left\vert e^{V_{r}}Y_{r}\right\vert ^{q-2}\mathbf{1}_{Y_{r}\neq0}\left\langle
e^{V_{r}}Y_{r},e^{V_{r}}Z_{r}dB_{r}\right\rangle ,
\end{array}
\]
which yields (\ref{an-14}).\hfill$\medskip$
\end{proof}

With a similar proof we deduce (see also \cite[Corollary 6.81]{pa-ra/14}) the
next results.

\begin{proposition}
[{see \cite[Proposition 6.80]{pa-ra/14}}]\label{Appendix_result 1}Let $\left(
Y,Z\right)  \in S_{m}^{0}\times\Lambda_{m\times k}^{0}$ satisfying%
\[
Y_{t}=Y_{T}+\int_{t}^{T}dK_{s}-\int_{t}^{T}Z_{s}dB_{s},\;0\leq t\leq
T,\quad\mathbb{P}\text{--a.s.},
\]
where $K\in S_{m}^{0}$ and $K_{\cdot}\left(  \omega\right)  \in BV_{loc}%
\left(  \mathbb{R}_{+};\mathbb{R}^{m}\right)  $, $\mathbb{P}$--a.s.

Let $\tau$ and $\sigma$ be two stopping times such that $0\leq\tau\leq
\sigma<\infty$. Assume that there exists three increasing and continuous
p.m.s.p. $D,R,N$ with $D_{0}=R_{0}=N_{0}=0$ and a bounded variation p.m.s.p.
$V$ with $V_{0}=0$ such that for $\lambda<1,$%
\[
dD_{t}+\left\langle Y_{t},dK_{t}\right\rangle \leq dR_{t}+|Y_{t}|dN_{t}%
+|Y_{t}|^{2}dV_{t}+\dfrac{\lambda}{2}\left\vert Z_{t}\right\vert ^{2}dt.
\]
Then, for any $q>0$, there exists a positive constant $C_{q,\lambda}$ such
that $\mathbb{P}$--a.s.%
\[%
\begin{array}
[c]{l}%
\displaystyle\mathbb{E}^{\mathcal{F}_{\tau}}\Big(\int_{\tau}^{\sigma}%
e^{2V_{s}}dD_{s}\Big)^{q/2}+\mathbb{E}^{\mathcal{F}_{\tau}}\Big(\int_{\tau
}^{\sigma}e^{2V_{s}}\left\vert Z_{s}\right\vert ^{2}ds\Big)^{q/2}\medskip\\
\multicolumn{1}{r}{\displaystyle\leq C_{q,\lambda}\mathbb{E}^{\mathcal{F}%
_{\tau}}\left[  \sup\limits_{s\in\left[  \tau,\sigma\right]  }\left\vert
e^{V_{s}}Y_{s}\right\vert ^{q}+\Big(\int_{\tau}^{\sigma}e^{2V_{s}}%
dR_{s}\Big)^{q/2}+\Big(\int_{\tau}^{\sigma}e^{V_{s}}dN_{s}\Big)^{q}\right]  .}%
\end{array}
\]
Moreover, if $p>1$ and%
\[%
\begin{array}
[c]{l}%
\displaystyle dD_{t}+\left\langle Y_{t},dK_{t}\right\rangle \leq\left(
\mathbf{1}_{p\geq2}dR_{t}+|Y_{t}|dN_{t}+|Y_{t}|^{2}dV_{t}\right)
+\dfrac{n_{p}}{2}\lambda\left\vert Z_{t}\right\vert ^{2}dt,\medskip\\
\displaystyle\mathbb{E}\sup\nolimits_{s\in\left[  \tau,\sigma\right]
}e^{pV_{s}}\left\vert Y_{s}\right\vert ^{p}<\infty,
\end{array}
\]
then there exists a positive constant $C_{p,\lambda}$ such that $\mathbb{P}%
$--a.s.,%
\begin{equation}%
\begin{array}
[c]{l}%
\displaystyle\mathbb{E}^{\mathcal{F}_{\tau}}\Big(\sup\nolimits_{s\in\left[
\tau,\sigma\right]  }\left\vert e^{V_{s}}Y_{s}\right\vert ^{p}\Big)+\mathbb{E}%
^{\mathcal{F}_{\tau}}\Big(\int_{\tau}^{\sigma}e^{2V_{s}}dD_{s}\Big)^{p/2}%
+\mathbb{E}^{\mathcal{F}_{\tau}}\Big(\int_{\tau}^{\sigma}e^{2V_{s}}\left\vert
Z_{s}\right\vert ^{2}ds\Big)^{p/2}\medskip\\
\displaystyle\leq C_{p,\lambda}~\mathbb{E}^{\mathcal{F}_{\tau}}\left[
\left\vert e^{V_{\sigma}}Y_{\sigma}\right\vert ^{p}+\Big(\int_{\tau}^{\sigma
}e^{2V_{s}}\mathbf{1}_{p\geq2}dR_{s}\Big)^{p/2}+\Big(\int_{\tau}^{\sigma
}e^{V_{s}}dN_{s}\Big)^{p}\right]  .
\end{array}
\label{an3a}%
\end{equation}

\end{proposition}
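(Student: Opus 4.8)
The plan is to obtain Proposition \ref{Appendix_result 1} from Propositions \ref{an-prop-dz} and \ref{an-prop-ydz} by deriving their structural hypotheses from the backward representation of $Y$, exactly as Proposition \ref{an-prop-dk} was proved; the only new point is that here the energy identity is produced by It\^{o}'s formula rather than assumed. Since $dY_{t}=-dK_{t}+Z_{t}dB_{t}$, It\^{o}'s formula for $\left\vert Y_{t}\right\vert ^{2}$ gives, for all $0\leq t\leq s<\infty$, $\mathbb{P}$-a.s.,
\[
\left\vert Y_{t}\right\vert ^{2}+{\int_{t}^{s}}\left\vert Z_{r}\right\vert ^{2}dr=\left\vert Y_{s}\right\vert ^{2}+2{\int_{t}^{s}}\left\langle Y_{r},dK_{r}\right\rangle -2{\int_{t}^{s}}\langle Y_{r},Z_{r}dB_{r}\rangle .
\]
Adding $2\int_{t}^{s}dD_{r}$ to both sides and inserting the standing bound $dD_{r}+\left\langle Y_{r},dK_{r}\right\rangle \leq dR_{r}+\left\vert Y_{r}\right\vert dN_{r}+\left\vert Y_{r}\right\vert ^{2}dV_{r}+\frac{\lambda}{2}\left\vert Z_{r}\right\vert ^{2}dr$ gives, after rearranging,
\[
\left\vert Y_{t}\right\vert ^{2}+\left(  1-\lambda\right)  {\int_{t}^{s}}\left\vert Z_{r}\right\vert ^{2}dr+2{\int_{t}^{s}}dD_{r}\leq\left\vert Y_{s}\right\vert ^{2}+2{\int_{t}^{s}}\left(  dR_{r}+\left\vert Y_{r}\right\vert dN_{r}\right)  +{\int_{t}^{s}}\left\vert Y_{r}\right\vert ^{2}d\left(  2V_{r}\right)  -2{\int_{t}^{s}}\langle Y_{r},Z_{r}dB_{r}\rangle .
\]

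Next I would kill the $dV$ term by the exponential change of weight of \cite[Proposition 6.69]{pa-ra/14} (equivalently \cite[Lemma 12]{ma-ra/07}) with weight $e^{2V_{r}}$: the displayed inequality becomes the same inequality written for the pair $\left(  e^{V}Y,e^{V}Z\right)$, with $D,R,N$ replaced by $\int_{0}^{\cdot}e^{2V}dD$, $\int_{0}^{\cdot}e^{2V}dR$, $\int_{0}^{\cdot}e^{V}dN$ and the $dV$ term gone. This is precisely the structural hypothesis of Proposition \ref{an-prop-dz} (with a coefficient $a$ depending only on $\lambda$ and $\gamma=-2$), so that proposition delivers at once the first displayed estimate of the statement.

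For the second part ($p>1$) I would set $q=p\wedge2$ (so that $n_{q}=n_{p}$) and apply It\^{o}'s formula to $\left(  \left\vert Y_{t}\right\vert ^{2}+\delta\right)  ^{q/2}$, using the regularisation $\delta>0$, a localising sequence of stopping times that turns the stochastic integrals into genuine martingales, and then the limit $\delta\downarrow0$, exactly as in the proofs of Propositions \ref{an-prop-dz} and \ref{an-prop-ydz}. Adding $q\int_{t}^{s}\left\vert Y_{r}\right\vert ^{q-2}\mathbf{1}_{Y_{r}\neq0}dD_{r}$ and using the refined hypothesis of the statement leaves a strictly positive coefficient $\frac{q}{2}n_{q}\left(  1-\lambda\right)$ in front of $\int\left\vert Y\right\vert ^{q-2}\left\vert Z\right\vert ^{2}dr$; after the exponential change with weight $e^{qV_{r}}$, Proposition \ref{an-prop-ydz}, whose integrability hypothesis is exactly the assumed $\mathbb{E}\sup_{r\in\left[  \tau,\sigma\right]  }e^{pV_{r}}\left\vert Y_{r}\right\vert ^{p}<\infty$, bounds $\mathbb{E}^{\mathcal{F}_{\tau}}\sup_{r\in\left[  \tau,\sigma\right]  }\left\vert e^{V_{r}}Y_{r}\right\vert ^{p}$ by the right-hand side of \eqref{an3a}. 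Finally, with this quantity now known to be finite, I would return to the $q=2$ exponentially weighted inequality of the first step, which remains valid under the refined hypothesis since its $\left\vert Z\right\vert ^{2}$-coefficient $1-n_{p}\lambda$ is positive, and invoke Proposition \ref{an-prop-dz} once more to control $\big(\int e^{2V}dD\big)^{p/2}$ and $\big(\int e^{2V}\left\vert Z\right\vert ^{2}\big)^{p/2}$; combining the three estimates gives \eqref{an3a}.

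There is nothing genuinely deep here, since the statement is essentially a corollary; the only delicate point is the regularisation-and-localisation bookkeeping, namely checking that on each localised interval the martingale parts $\int\langle\left\vert Y\right\vert ^{q-2}Y,Z\,dB\rangle$ are true martingales so that conditioning annihilates them, that Fatou's lemma and dominated convergence may be passed as $\delta\downarrow0$ and along the localising sequence, and that the constants coming out of Propositions \ref{an-prop-dz} and \ref{an-prop-ydz} depend only on $\left(  p,\lambda\right)$. This is the argument of Proposition \ref{an-prop-dk} verbatim, with the energy inequality now supplied by It\^{o}'s formula for the backward equation for $Y$.
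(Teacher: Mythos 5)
Your proposal is correct and follows essentially the same route as the paper, which states this result "with a similar proof" to Proposition \ref{an-prop-dk} (and cites \cite[Proposition 6.80]{pa-ra/14}): It\^{o}'s formula for $\left\vert Y\right\vert ^{2}$ and for $\left(  \left\vert Y\right\vert ^{2}+\delta\right)  ^{q/2}$ with $q=p\wedge2$ supplies the energy inequalities, the exponential weight of \cite[Proposition 6.69]{pa-ra/14} removes the $dV$ term, and Propositions \ref{an-prop-dz} and \ref{an-prop-ydz} then yield the two estimates. Your final step of returning to the $q=2$ weighted inequality (with coefficient $1-n_{p}\lambda>0$) to convert the $\left\vert Y\right\vert ^{q-2}$-weighted bounds into the $\big(\int e^{2V}dD\big)^{p/2}$ and $\big(\int e^{2V}\left\vert Z\right\vert ^{2}\big)^{p/2}$ terms of \eqref{an3a} is exactly the missing bookkeeping the paper leaves implicit.
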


Based mainly on this previous result one can prove :

\begin{proposition}
[{see \cite[Corollary 6.81]{pa-ra/14}}]\label{Appendix_result 2}Let $\left(
Y,Z\right)  \in S_{m}^{0}\times\Lambda_{m\times k}^{0}$ satisfying%
\[
Y_{t}=Y_{T}+\int_{t}^{T}dK_{s}-\int_{t}^{T}Z_{s}dB_{s},\;0\leq t\leq
T,\quad\mathbb{P}-a.s.,
\]
where $K\in S_{m}^{0}$ and $K_{\cdot}\left(  \omega\right)  \in BV_{loc}%
\left(  \mathbb{R}_{+};\mathbb{R}^{m}\right)  ,\;\mathbb{P}$--a.s.

Let $\tau$ and $\sigma$ be two stopping times such that $0\leq\tau\leq
\sigma<\infty$. Assume that there exists two increasing and continuous
p.m.s.p. $D,N$ with $N_{0}=0$ and a bounded variation p.m.s.p. $V$ with
$V_{0}=0$ such that for $\lambda<1,$%
\[%
\begin{array}
[c]{l}%
dD_{t}+\left\langle Y_{t},dK_{t}\right\rangle \leq|Y_{t}|dN_{t}+|Y_{t}%
|^{2}dV_{t}~,\medskip\\
\mathbb{E}\sup\nolimits_{s\in\left[  \tau,\sigma\right]  }\left\vert e^{V_{s}%
}Y_{s}\right\vert <\infty.
\end{array}
\]
Then%
\[
e^{V_{\tau}}\left\vert Y_{\tau}\right\vert \leq\mathbb{E}^{\mathcal{F}_{\tau}%
}e^{V_{\sigma}}\left\vert Y_{\sigma}\right\vert +\mathbb{E}^{\mathcal{F}%
_{\tau}}\int_{\tau}^{\sigma}e^{V_{s}}dN_{s}%
\]
and for all $0<a<1$%
\[%
\begin{array}
[c]{l}%
\displaystyle\sup\nolimits_{s\in\left[  \tau,\sigma\right]  }\left[
\mathbb{E}\left(  e^{V_{s}}\left\vert Y_{s}\right\vert \right)  \right]
^{a}+\mathbb{E}\Big(\sup\nolimits_{s\in\left[  \tau,\sigma\right]  }\left\vert
e^{V_{s}}Y_{s}\right\vert ^{a}\Big)+\mathbb{E}\Big(\int_{\tau}^{\sigma
}e^{2V_{s}}\left\vert Z_{s}\right\vert ^{2}ds\Big)^{a/2}+\mathbb{E}%
\Big(\int_{\tau}^{\sigma}e^{2V_{s}}dD_{s}\Big)^{a/2}\smallskip\\
\displaystyle\leq C_{a}\Big(\mathbb{E}\left(  e^{V_{\sigma}}\left\vert
Y_{\sigma}\right\vert \right)  \Big)^{a}+C_{a}\Big(\mathbb{E}\int_{\tau
}^{\sigma}e^{V_{s}}dN_{s}\Big)^{a}%
\end{array}
\]

\end{proposition}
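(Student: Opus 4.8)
The plan is to adapt the scheme underlying Proposition \ref{an-prop-dk} and \cite[Proposition 6.80]{pa-ra/14} to the (regularized) Euclidean norm of $Y$. Fix $\varepsilon>0$ and set $\Gamma_t^{\varepsilon}:=\big(|Y_t|^{2}+\varepsilon\big)^{1/2}$. Writing the equation as $dY_t=-dK_t+Z_t\,dB_t$ and applying the It\^{o}--type formula of Proposition \ref{p1-ito} (inequality (\ref{ito4}) of Remark \ref{r1-ito}) to $\Gamma^{\varepsilon}$, the second order (trace) term is nonnegative because $\Gamma_t^{\varepsilon}\geq|Y_t|$; hence, using the hypothesis $dD_t+\langle Y_t,dK_t\rangle\leq|Y_t|\,dN_t+|Y_t|^{2}\,dV_t$ together with $|Y_t|/\Gamma_t^{\varepsilon}\leq1$, one gets for all $0\leq t\leq s$
\[
\Gamma_t^{\varepsilon}+\int_t^{s}\frac{dD_r}{\Gamma_r^{\varepsilon}}\leq\Gamma_s^{\varepsilon}+\int_t^{s}dN_r+\int_t^{s}\frac{|Y_r|^{2}}{\Gamma_r^{\varepsilon}}\,dV_r-\int_t^{s}\frac{\langle Y_r,Z_r\,dB_r\rangle}{\Gamma_r^{\varepsilon}}\,.
\]
I would then introduce the weight $e^{V}$ by the product rule exactly as in the proof of Proposition \ref{an-prop-dk} (via \cite[Proposition 6.69]{pa-ra/14}); the two terms carrying $dV$ combine into $e^{V_r}\,(\varepsilon/\Gamma_r^{\varepsilon})\,dV_r$, whose absolute variation is bounded by $\sqrt{\varepsilon}\int e^{V_r}\,d\Vert V\Vert_r$ and is therefore negligible in the limit $\varepsilon\to0$.

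The second step is to turn the stochastic integral into a genuine martingale. Localizing with stopping times of the form (\ref{an4-a}) — so that on each localized interval $\Gamma^{\varepsilon}$, $\int|Z|^2$, $\Vert V\Vert$, $D$, $N$ are all bounded — the weighted integral $\int e^{V_r}\langle Y_r/\Gamma_r^{\varepsilon},Z_r\,dB_r\rangle$ is a martingale there (and the first assertion of Proposition \ref{Appendix_result 1} with $q=1$, $R\equiv0$, gives $\mathbb{E}^{\mathcal{F}_{\tau}}\big(\int_{\tau}^{\sigma}e^{2V_r}|Z_r|^{2}\,dr\big)^{1/2}<\infty$, which is needed when removing the localization). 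Taking $\mathbb{E}^{\mathcal{F}_{\tau}}$ with $t=\tau$ and $s$ the localizing time, discarding $\int e^{V}\,dD/\Gamma^{\varepsilon}\geq0$, letting $\varepsilon\to0_{+}$ (so $\Gamma^{\varepsilon}\downarrow|Y|$ and the $O(\sqrt{\varepsilon})$ term vanishes), and then removing the localization by Fatou / dominated convergence (using $\mathbb{E}\sup_{[\tau,\sigma]}|e^{V}Y|<\infty$ and, without loss of generality, $\mathbb{E}\int_{\tau}^{\sigma}e^{V}\,dN<\infty$) yields the first assertion
\[
e^{V_{\tau}}|Y_{\tau}|\leq\mathbb{E}^{\mathcal{F}_{\tau}}e^{V_{\sigma}}|Y_{\sigma}|+\mathbb{E}^{\mathcal{F}_{\tau}}\int_{\tau}^{\sigma}e^{V_r}\,dN_r\,.
\]

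For the $0<a<1$ estimate I would put $\xi:=e^{V_{\sigma}}|Y_{\sigma}|+\int_{\tau}^{\sigma}e^{V_r}\,dN_r$, a nonnegative $\mathcal{F}_{\sigma}$--measurable random variable, and observe that the inequality just proved, applied with $\tau$ replaced by an arbitrary $s\in[\tau,\sigma]$, gives $e^{V_{s}}|Y_{s}|\leq\mathbb{E}^{\mathcal{F}_{s}}\xi$ for every such $s$. Applying \cite[Proposition 1.56]{pa-ra/14} to the martingale $s\mapsto\mathbb{E}^{\mathcal{F}_{s}}\xi$ bounds both $\mathbb{E}\sup_{s\in[\tau,\sigma]}|e^{V_{s}}Y_{s}|^{a}$ and $\sup_{s\in[\tau,\sigma]}\big[\mathbb{E}(e^{V_{s}}|Y_{s}|)\big]^{a}$ by $\frac{C_a}{1-a}(\mathbb{E}\xi)^{a}$, with $(\mathbb{E}\xi)^{a}\leq 2^{a}\big[(\mathbb{E}e^{V_{\sigma}}|Y_{\sigma}|)^{a}+(\mathbb{E}\int_{\tau}^{\sigma}e^{V_r}\,dN_r)^{a}\big]$. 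The remaining two terms come from the first assertion of Proposition \ref{Appendix_result 1} with $q=a$ and $R\equiv0$: after taking expectations,
\[
\mathbb{E}\Big(\int_{\tau}^{\sigma}e^{2V_r}\,dD_r\Big)^{a/2}+\mathbb{E}\Big(\int_{\tau}^{\sigma}e^{2V_r}|Z_r|^{2}\,dr\Big)^{a/2}\leq C_a\,\mathbb{E}\Big[\sup_{s\in[\tau,\sigma]}|e^{V_{s}}Y_{s}|^{a}+\Big(\int_{\tau}^{\sigma}e^{V_r}\,dN_r\Big)^{a}\Big],
\]
and the bound already obtained for $\mathbb{E}\sup|e^{V}Y|^{a}$, together with Jensen's inequality (valid since $a<1$) applied to the last term, closes the estimate.

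The computations above are routine, but they conceal the only genuine difficulty: since $V$ is merely of bounded variation (not monotone), inserting the exponential weight and discarding the residual term $e^{V_r}(\varepsilon/\Gamma_r^{\varepsilon})\,dV_r$ must be handled with the care developed in \cite{pa-ra/14}, and the whole argument has to be run under localization before the integrability furnished by Proposition \ref{Appendix_result 1} is available; coordinating the localization, the limit $\varepsilon\to0$, and the martingale property is where the work lies.
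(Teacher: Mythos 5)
Your proof is correct and follows essentially the route the paper intends (it only sketches this result, deferring to \cite[Corollary 6.81]{pa-ra/14}): the conditional $L^{1}$ bound via It\^{o}'s formula for the regularized norm $(|Y|^{2}+\varepsilon)^{1/2}$ with the $e^{V}$ weight, localization and $\varepsilon\to 0$, and then the $0<a<1$ estimate by dominating $e^{V_{s}}|Y_{s}|$ by the martingale $\mathbb{E}^{\mathcal{F}_{s}}\xi$ and invoking \cite[Proposition 1.56]{pa-ra/14} together with Proposition \ref{Appendix_result 1} (with $q=a$, $R\equiv 0$) and Jensen's inequality for the $Z$ and $D$ terms. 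The handling of the signed $dV$ residual $e^{V_r}(\varepsilon/\Gamma_r^{\varepsilon})\,dV_r$ and the order of limits (first $\varepsilon\to 0$, then removing the localization) are exactly the delicate points, and you treat them correctly.
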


\subsection{An It\^{o}'s formula and some backward stochastic inequalities}

\begin{proposition}
\label{p1-ito} (see \cite[Section 2.3.1]{pa-ra/14}) Let $p\in\mathbb{R}$,
$\rho\geq0$ and $\delta\geq0$ if $p\geq2$ and $\delta>0$ if $p<2.$ Let $Y\in
S_{d}^{0}$ be a local \textit{semimartingale of the form}%
\begin{equation}%
\begin{array}
[c]{l}%
\displaystyle Y_{t}=Y_{0}-\int_{0}^{t}F_{r}dr+\int_{0}^{t}R_{r}dB_{r},\quad
t\geq0\medskip\quad\text{or equivalently}\\
\displaystyle Y_{t}=Y_{T}+\int_{t}^{T}F_{r}dr-\int_{t}^{T}R_{r}dB_{r}%
,\quad\text{for all }0\leq t\leq T,
\end{array}
\label{ito1}%
\end{equation}
\textit{where }$R\in\Lambda_{m\times k}^{0}~,$ $F\in S_{m}^{0}$. Let
$\varphi=\varphi_{\rho,\delta}:\mathbb{R}^{d}\rightarrow\left]  0,\infty
\right[  $%
\[
\varphi\left(  x\right)  =\varphi_{\rho,\delta}\left(  x\right)  =\left(
\dfrac{\left\vert x\right\vert ^{2}}{1+\rho\left\vert x\right\vert ^{2}%
}+\delta\right)  ^{1/2}.
\]
By It\^{o}'s formula for $\varphi_{\rho,\delta}^{p}\left(  Y_{t}\right)  $,
$p\in\mathbb{R}$ we have for all $0\leq t\leq s\leq T:$%
\begin{equation}%
\begin{array}
[c]{l}%
\displaystyle\varphi_{\rho,\delta}^{p}\left(  Y_{t}\right)  +\dfrac{p}{2}%
{\displaystyle\int_{t}^{s}}
R_{r}^{\left(  p,\rho,\delta\right)  }dr+\dfrac{p}{2}\left[  L_{s}^{\left(
p,\rho,\delta\right)  }-L_{t}^{\left(  p,\rho,\delta\right)  }\right]
\medskip\\
\displaystyle=\varphi_{\rho,\delta}^{p}\left(  Y_{s}\right)  +\dfrac{p}{2}%
{\displaystyle\int_{t}^{s}}
Q_{r}^{\left(  p,\rho,\delta\right)  }dr+p%
{\displaystyle\int_{t}^{s}}
\big\langle U_{r}^{\left(  p,\rho,\delta\right)  },F_{r}\big\rangle-p%
{\displaystyle\int_{t}^{s}}
\big\langle U_{r}^{\left(  p,\rho,\delta\right)  },R_{r}dB_{r}%
\big\rangle,\quad a.s.
\end{array}
\label{ito2}%
\end{equation}
where$\medskip$

$U_{r}^{\left(  p,\rho,\delta\right)  }=\varphi_{\rho,\delta}^{p-2}\left(
Y_{r}\right)  \,\dfrac{1}{(1+\rho|Y_{r}|^{2})^{2}}\,Y_{r}~,\medskip$

$R_{r}^{\left(  p,\rho,\delta\right)  }=\varphi_{\rho,\delta}^{p-4}\left(
Y_{r}\right)  \,\dfrac{1}{(1+\rho|Y_{r}|^{2})^{3}}\,\bigg[\dfrac{p-1}%
{1+\rho\left\vert Y_{r}\right\vert ^{2}}\left\vert R_{r}^{\ast}Y_{r}%
\right\vert ^{2}+\left(  \left\vert R_{r}\right\vert ^{2}\left\vert
Y_{r}\right\vert ^{2}-\left\vert R_{r}^{\ast}Y_{r}\right\vert ^{2}\right)
\bigg]\medskip$

$L_{t}^{\left(  p,\rho,\delta\right)  }=\delta~%
{\displaystyle\int_{0}^{t}}
\varphi_{\rho,\delta}^{p-4}\left(  Y_{r}\right)  \,\dfrac{1}{(1+\rho
|Y_{r}|^{2})^{3}}\,\left[  \left\vert R_{r}\right\vert ^{2}+\rho\left(
\left\vert R_{r}\right\vert ^{2}\left\vert Y_{r}\right\vert ^{2}-\left\vert
R_{r}^{\ast}Y_{r}\right\vert ^{2}\right)  \right]  dr.\medskip$

and$\medskip$

$Q_{r}^{\left(  p,\rho,\delta\right)  }=\varphi_{\rho,\delta}^{p-2}\left(
Y_{r}\right)  \,\dfrac{3\rho}{(1+\rho|Y_{r}|^{2})^{3}}\,\left\vert R_{r}%
^{\ast}Y_{r}\right\vert ^{2},\medskip$

In the case $\rho=0$ we have%
\begin{equation}%
\begin{array}
[c]{l}%
\displaystyle\left(  \left\vert Y_{t}\right\vert ^{2}+\delta\right)
^{p/2}\medskip\\
\displaystyle\quad+\dfrac{p}{2}%
{\displaystyle\int_{t}^{s}}
\left(  \left\vert Y_{r}\right\vert ^{2}+\delta\right)  ^{\left(  p-4\right)
/2}\left[  \left(  p-1\right)  \left\vert R_{r}^{\ast}Y_{r}\right\vert
^{2}+\left(  \left\vert R_{r}\right\vert ^{2}\left\vert Y_{r}\right\vert
^{2}-\left\vert R_{r}^{\ast}Y_{r}\right\vert ^{2}\right)  +\delta\left\vert
R_{r}\right\vert ^{2}\right]  \medskip\\
\displaystyle=\left(  \left\vert Y_{s}\right\vert ^{2}+\delta\right)  ^{p/2}+p%
{\displaystyle\int_{t}^{s}}
\left(  \left\vert Y_{r}\right\vert ^{2}+\delta\right)  ^{\left(  p-2\right)
/2}\left\langle Y_{r},F_{r}\right\rangle -p%
{\displaystyle\int_{t}^{s}}
\left(  \left\vert Y_{r}\right\vert ^{2}+\delta\right)  ^{\left(  p-2\right)
/2}\left\langle Y_{r},R_{r}dB_{r}\right\rangle .
\end{array}
\label{ito3}%
\end{equation}

\end{proposition}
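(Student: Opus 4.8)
The plan is to read the identity as the classical multidimensional It\^{o} formula applied to the deterministic function $u:=\varphi_{\rho,\delta}^{p}$, followed by an explicit computation of its first two derivatives and a regrouping of the second-order term into the named processes $R^{(p,\rho,\delta)},Q^{(p,\rho,\delta)},L^{(p,\rho,\delta)}$. First I would settle regularity. Writing $w(x)=1+\rho|x|^{2}\ge 1$ and $g(x)=\frac{|x|^{2}}{w(x)}+\delta=\varphi_{\rho,\delta}^{2}(x)$, the map $g$ is $C^{\infty}$ on $\mathbb{R}^{d}$ (its denominator never vanishes) and, when $\delta>0$, is bounded below by $\delta>0$; hence $u=g^{p/2}$ is $C^{\infty}$ for every real $p$. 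When $\delta=0$ (allowed only for $p\ge2$) one has $g\sim|x|^{2}$ near the origin, so $u\sim|x|^{p}$ is still of class $C^{2}$. In either case It\^{o}'s formula applies to the continuous semimartingale $Y_{t}=Y_{0}-\int_{0}^{t}F_{r}\,dr+\int_{0}^{t}R_{r}\,dB_{r}$, and the identities below hold $\mathbb{P}$-a.s. for every $0\le t\le s$.

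Next I would compute the gradient. Since $\nabla g=\frac{2x}{w^{2}}$, we get $\nabla u=\frac{p}{2}g^{p/2-1}\nabla g=p\,\varphi_{\rho,\delta}^{p-2}\frac{x}{w^{2}}=p\,U_{\cdot}^{(p,\rho,\delta)}$. Because $dY_{r}=-F_{r}\,dr+R_{r}\,dB_{r}$, the first-order part of the It\^{o} expansion is $-p\langle U_{r}^{(p,\rho,\delta)},F_{r}\rangle\,dr+p\langle U_{r}^{(p,\rho,\delta)},R_{r}\,dB_{r}\rangle$; after isolating $u(Y_{t})$ this produces exactly the terms $+p\int_{t}^{s}\langle U_{r}^{(p,\rho,\delta)},F_{r}\rangle$ and $-p\int_{t}^{s}\langle U_{r}^{(p,\rho,\delta)},R_{r}\,dB_{r}\rangle$ appearing in (\ref{ito2}).

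The core is the second-order term. Differentiating once more gives $D^{2}g=\frac{2}{w^{2}}I-\frac{8\rho}{w^{3}}\,x\otimes x$, whence
\[
D^{2}u=\frac{p}{2}\Big[\big(\tfrac{p}{2}-1\big)g^{p/2-2}\,\nabla g\otimes\nabla g+g^{p/2-1}D^{2}g\Big].
\]
Contracting $D^{2}u$ with $R_{r}R_{r}^{\ast}$ and using the identities $\mathrm{Tr}(R_{r}R_{r}^{\ast})=|R_{r}|^{2}$ and $\langle x,R_{r}R_{r}^{\ast}x\rangle=|R_{r}^{\ast}x|^{2}$ (with $x=Y_{r}$) yields
\[
\tfrac12\mathrm{Tr}\big(D^{2}u\,R_{r}R_{r}^{\ast}\big)=\tfrac{p}{2}\Big[(p-2)\varphi_{\rho,\delta}^{p-4}\frac{|R_{r}^{\ast}Y_{r}|^{2}}{w^{4}}+\varphi_{\rho,\delta}^{p-2}\frac{|R_{r}|^{2}}{w^{2}}-4\rho\,\varphi_{\rho,\delta}^{p-2}\frac{|R_{r}^{\ast}Y_{r}|^{2}}{w^{3}}\Big].
\]
The main obstacle is purely algebraic: I must show this equals $\frac{p}{2}\big(R^{(p,\rho,\delta)}_{r}+\frac{d}{dr}L^{(p,\rho,\delta)}_{r}-Q^{(p,\rho,\delta)}_{r}\big)$. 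The decisive substitution is $\varphi_{\rho,\delta}^{p-2}=\varphi_{\rho,\delta}^{p-4}\,\frac{|Y_{r}|^{2}+\delta w}{w}$ together with $w-1=\rho|Y_{r}|^{2}$; feeding these in, the $\delta$-free part collapses to $R^{(p,\rho,\delta)}_{r}$, the factor-$\delta$ part to $\frac{d}{dr}L^{(p,\rho,\delta)}_{r}$, and the factor-$3\rho$ part to $-Q^{(p,\rho,\delta)}_{r}$, while all remaining pieces cancel through the relation $1-w+\rho|Y_{r}|^{2}=0$. Combining with the first-order analysis gives (\ref{ito2}).

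Finally I would treat the two degenerate regimes. For $\delta=0$, $p\ge2$, I would pass to the limit $\delta\downarrow0$ in the formula already established for $\delta>0$, using dominated convergence on the finite-variation integrals and $L^{2}$-continuity of the stochastic integral, with the convention that the integrands vanish on $\{Y_{r}=0\}$. The display (\ref{ito3}) is then the specialization $\rho=0$: there $w\equiv1$, the factor $3\rho$ forces $Q^{(p,0,\delta)}\equiv0$, $L^{(p,0,\delta)}$ reduces to $\delta\int_{0}^{\cdot}\varphi_{0,\delta}^{p-4}(Y_{r})|R_{r}|^{2}\,dr$, and $R^{(p,0,\delta)}_{r}+\frac{d}{dr}L^{(p,0,\delta)}_{r}$ merges into the single bracket $(p-1)|R_{r}^{\ast}Y_{r}|^{2}+\big(|R_{r}|^{2}|Y_{r}|^{2}-|R_{r}^{\ast}Y_{r}|^{2}\big)+\delta|R_{r}|^{2}$ recorded there.
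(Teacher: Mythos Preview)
Your argument is correct. The paper does not actually supply a proof of this proposition: it is stated in the Appendix with the parenthetical reference ``(see \cite[Section 2.3.1]{pa-ra/14})'' and is used without justification. Your approach---apply the classical It\^o formula to $u=\varphi_{\rho,\delta}^{p}$, compute $\nabla u$ and $D^{2}u$ via $g(x)=\dfrac{|x|^{2}}{1+\rho|x|^{2}}+\delta$, and then regroup the trace term algebraically using $\varphi^{p-2}=\varphi^{p-4}\cdot\dfrac{|Y|^{2}+\delta w}{w}$ and $w-1=\rho|Y|^{2}$---is exactly the natural (and essentially the only) route to such an identity, and your verification that the three named pieces $R^{(p,\rho,\delta)}$, $\tfrac{d}{dr}L^{(p,\rho,\delta)}$, $Q^{(p,\rho,\delta)}$ recombine into $\tfrac12\mathrm{Tr}(D^{2}u\,RR^{\ast})$ is accurate. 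The treatment of the borderline case $\delta=0$, $p\ge 2$ by a $\delta\downarrow0$ limit is also standard; note that the direct $C^{2}$ argument you sketch already suffices there, so the limiting step is not strictly needed.
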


\begin{remark}
\label{r1-ito}If $p\geq1$ and $n_{p}=\left(  p-1\right)  \wedge1$ then
\begin{align*}
&  \left(  p-1\right)  \left\vert R_{r}^{\ast}Y_{r}\right\vert ^{2}+\left(
\left\vert R_{r}\right\vert ^{2}\left\vert Y_{r}\right\vert ^{2}-\left\vert
R_{r}^{\ast}Y_{r}\right\vert ^{2}\right)  +\delta\left\vert R_{r}\right\vert
^{2}\\
&  \geq n_{p}\left[  \left\vert R_{r}^{\ast}Y_{r}\right\vert ^{2}+\left(
\left\vert R_{r}\right\vert ^{2}\left\vert Y_{r}\right\vert ^{2}-\left\vert
R_{r}^{\ast}Y_{r}\right\vert ^{2}\right)  +\delta\left\vert R_{r}\right\vert
^{2}\right] \\
&  =n_{p}\left(  \left\vert Y_{r}\right\vert ^{2}+\delta\right)  \left\vert
R_{r}\right\vert ^{2}%
\end{align*}
and from (\ref{ito3}) we infer%
\begin{equation}%
\begin{array}
[c]{l}%
\displaystyle\left(  \left\vert Y_{t}\right\vert ^{2}+\delta\right)
^{p/2}+\dfrac{p}{2}n_{p}%
{\displaystyle\int_{t}^{s}}
\left(  \left\vert Y_{r}\right\vert ^{2}+\delta\right)  ^{\left(  p-2\right)
/2}\left\vert R_{r}\right\vert ^{2}\medskip\\
\displaystyle\leq\left(  \left\vert Y_{s}\right\vert ^{2}+\delta\right)
^{p/2}+p%
{\displaystyle\int_{t}^{s}}
\left(  \left\vert Y_{r}\right\vert ^{2}+\delta\right)  ^{\left(  p-2\right)
/2}\left\langle Y_{r},F_{r}\right\rangle \medskip\\
\displaystyle\quad-p%
{\displaystyle\int_{t}^{s}}
\left(  \left\vert Y_{r}\right\vert ^{2}+\delta\right)  ^{\left(  p-2\right)
/2}\left\langle Y_{r},R_{r}dB_{r}\right\rangle ,\quad\mathbb{P}\text{--a.s.,}%
\end{array}
\label{ito4}%
\end{equation}
for all $0\leq t\leq s\leq T.$
\end{remark}

\subsection{Smoothing approximations}

\begin{lemma}
\label{L1-approx}Let $\varepsilon>0.$ Let $Q:\Omega\times\mathbb{R}%
_{+}\rightarrow\mathbb{R}_{+},$ $Q_{0}=0,$ be a strictly increasing continuous
stochastic process, $\lim_{t\rightarrow\infty}Q_{t}=\infty$ and $G:\Omega
\times\mathbb{R}_{+}\rightarrow\mathbb{R}^{m}$ be a measurable stochastic
process such that $\sup_{t\in\mathbb{R}_{+}}\left\vert G_{t}\left(
\omega\right)  \right\vert <\infty$, $\mathbb{P}$-a.s.. Define%
\[
G_{t}^{\varepsilon}=%
{\displaystyle\int_{t\vee\varepsilon}^{\infty}}
\frac{1}{Q_{\varepsilon}}e^{-\frac{Q_{r}-Q_{t\vee\varepsilon}}{Q_{\varepsilon
}}}G_{r}dQ_{r}\,.
\]
Then $G^{\varepsilon}:\Omega\times\mathbb{R}_{+}\rightarrow\mathbb{R}^{m}$ are
continuous stochastic processes and $\mathbb{P}$-a.s.%
\begin{equation}%
\begin{array}
[c]{ll}%
\left(  a\right)  & \left\vert G_{t}^{\varepsilon}\left(  \omega\right)
\right\vert \leq\sup\nolimits_{r\geq0}\left\vert G_{r}\left(  \omega\right)
\right\vert ,\quad\text{for all }t\geq0;\medskip\\
\left(  b\right)  & \lim\nolimits_{\varepsilon\rightarrow0}G_{t}^{\varepsilon
}\left(  \omega\right)  =G_{t}~\left(  \omega\right)  ,\;\;\text{a.e.}%
\mathbb{\;}t\geq0;\medskip\\
\left(  c\right)  & \left\vert G_{t}^{\varepsilon}-G_{t}\right\vert \leq
\sup\nolimits_{r\geq0}\left\vert G_{r}\right\vert ~\exp\left(  2-\frac
{1}{\sqrt{Q_{\varepsilon}}}\right)  \medskip\\
& \quad+\sup\nolimits_{r\geq0}\left\{  \left\vert G_{r}-G_{t}\right\vert
:0\leq Q_{r}-Q_{t}\leq\sqrt{Q_{\varepsilon}}\right\}  ,\quad\text{for all
}t\geq0.
\end{array}
\label{a1}%
\end{equation}
If, moreover, $G$ is a continuous stochastic process then for all $T>0:$
\begin{equation}
\lim\limits_{\varepsilon\rightarrow0}\left(  \sup\limits_{s\in\left[
0,T\right]  }\left\vert G_{s}^{\varepsilon}\left(  \omega\right)
-G_{s}\left(  \omega\right)  \right\vert \right)  =0,\quad\mathbb{P}%
\text{-a.s..} \label{a2}%
\end{equation}

\end{lemma}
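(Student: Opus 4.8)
The plan is to reduce every assertion to one change of variables inside the integral defining $G_t^{\varepsilon}$. Since each path $Q_\cdot(\omega)$ is a strictly increasing continuous bijection of $\mathbb{R}_+$ with $Q_0=0$ and $Q_t\to+\infty$, the substitution $Q_r=Q_{t\vee\varepsilon}+Q_\varepsilon v$ (equivalently $r=\tau(v):=Q^{-1}(Q_{t\vee\varepsilon}+Q_\varepsilon v)$, where $Q^{-1}$ is the pathwise inverse of $r\mapsto Q_r$, so that $dQ_r=Q_\varepsilon\,dv$ and $v$ runs over $[0,\infty)$) is legitimate and gives
\[
G_t^{\varepsilon}=\int_0^{\infty}e^{-v}\,G_{\tau(v)}\,dv .
\]
Since $\int_0^{\infty}e^{-v}\,dv=1$, this exhibits $G_t^{\varepsilon}$ as a convex average of values of $G$, which immediately yields $(a)$: $\left\vert G_t^{\varepsilon}\right\vert\le\sup_{r\ge0}\left\vert G_r\right\vert$. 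For the continuity of $t\mapsto G_t^{\varepsilon}$ I would rather use the factorisation $G_t^{\varepsilon}=Q_\varepsilon^{-1}e^{Q_{t\vee\varepsilon}/Q_\varepsilon}\,\Psi(t\vee\varepsilon)$, where $\Psi(s):=\int_s^{\infty}e^{-Q_r/Q_\varepsilon}G_r\,dQ_r$ is finite, $\left\vert\Psi(s)\right\vert\le Q_\varepsilon\sup_{r\ge0}\left\vert G_r\right\vert$, and continuous because $\left\vert\Psi(s)-\Psi(s')\right\vert\le\sup_{r\ge0}\left\vert G_r\right\vert\,\left\vert Q_s-Q_{s'}\right\vert\to0$ by continuity of $Q$; hence $G^{\varepsilon}$ has continuous paths and, being jointly measurable, is a stochastic process.

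The heart of the matter is estimate $(c)$. From the representation and $\int_0^{\infty}e^{-v}\,dv=1$,
\[
G_t^{\varepsilon}-G_t=\int_0^{\infty}e^{-v}\big(G_{\tau(v)}-G_t\big)\,dv ,
\]
and one notes $Q_{\tau(v)}-Q_t=(Q_{t\vee\varepsilon}-Q_t)+Q_\varepsilon v\ge0$. I would split this $v$-integral at the value where $Q_{\tau(v)}-Q_t$ reaches $\sqrt{Q_\varepsilon}$. On the ``near'' part the integrand is at most $\sup\{\left\vert G_r-G_t\right\vert:0\le Q_r-Q_t\le\sqrt{Q_\varepsilon}\}$ while the exponential mass there is $\le1$; on the ``far'' part the integrand is at most $2\sup_{r\ge0}\left\vert G_r\right\vert$, and the remaining exponential tail is bounded, after a short computation treating the two cases $t\ge\varepsilon$ (crossing level $v=1/\sqrt{Q_\varepsilon}$) and $t<\varepsilon$ (where $Q_{t\vee\varepsilon}=Q_\varepsilon$, producing one extra factor $e$), by one and the same quantity $e^{2-1/\sqrt{Q_\varepsilon}}$. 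Adding the two contributions gives $(c)$.

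Finally, $(b)$ and the last (uniform) assertion follow from $(c)$ when $G$ is continuous: the term $\sup_{r\ge0}\left\vert G_r\right\vert\,e^{2-1/\sqrt{Q_\varepsilon}}$ tends to $0$ since $Q_\varepsilon\to Q_0=0$, and for continuous $G$ the oscillation $\sup\{\left\vert G_r-G_t\right\vert:0\le Q_r-Q_t\le\sqrt{Q_\varepsilon}\}$ tends to $0$ pointwise, and uniformly for $t\in[0,T]$ because the constraint forces $Q_r\le Q_T+\sqrt{Q_\varepsilon}$, confining $r$ to a fixed compact on which $G$ and $Q^{-1}$ are uniformly continuous. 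For a merely bounded measurable $G$, $(c)$ does not give $(b)$ — the oscillation term need not vanish — and there I would argue directly from the representation: for fixed $t$ and $\varepsilon<t$ one has $G_t^{\varepsilon}=\int_0^{\infty}e^{-v}\widetilde G(Q_t+Q_\varepsilon v)\,dv$ with $\widetilde G(u):=G_{Q^{-1}(u)}$ bounded measurable, and the kernels $s\mapsto Q_\varepsilon^{-1}e^{-s/Q_\varepsilon}\mathbf{1}_{s>0}$ form a one-sided approximate identity, so by the Lebesgue differentiation theorem $G_t^{\varepsilon}\to\widetilde G(Q_t)=G_t$ at every right Lebesgue point of $\widetilde G$, i.e.\ for a.e.\ $t$ (the exceptional set being $dQ$-negligible, hence $dt$-negligible whenever $dt\ll dQ$, as in the applications with $Q_t=t+A_t$). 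I expect the only delicate ingredients to be keeping the constant in $(c)$ uniform over the two cases $t\ge\varepsilon$, $t<\varepsilon$, and this measure-theoretic point in $(b)$; all the rest is the elementary substitution above.
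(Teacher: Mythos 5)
Your proposal is correct and follows essentially the same route as the paper: the bound $(a)$ comes from the kernel having total mass one, $(c)$ from splitting the integral at $Q^{-1}\left(Q_{t}+\sqrt{Q_{\varepsilon}}\right)$ with the tail mass estimated by $e^{1-1/\sqrt{Q_{\varepsilon}}}$ (the factor $2$ being absorbed since $2\leq e$), and the uniform convergence (\ref{a2}) from $(c)$ for continuous $G$. For $(b)$ your direct Lebesgue-differentiation argument is the same mechanism the paper uses (it splits the $s$--integral at $n$ and invokes the a.e.\ vanishing of the mean oscillation of $u\mapsto G_{Q^{-1}(u)}$), and your explicit caveat that the exceptional set is a priori only $dQ$--null, so that one needs $dt\ll dQ_t$ (as in the application $Q_t=t+A_t$) to read the conclusion Lebesgue-a.e., is a genuine point that the paper's proof leaves implicit.
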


\begin{proof}
$\left(  b\right)  $ Let $n\in\mathbb{N}^{\ast}.$ We can assume $0<\varepsilon
<t.$%
\begin{align*}
\left\vert G_{t}^{\varepsilon}-G_{t}\right\vert  &  \leq\int_{t}^{\infty
}e^{-\frac{Q_{r}-Q_{t}}{Q_{\varepsilon}}}\dfrac{1}{Q_{\varepsilon}}%
|G_{r}-G_{t}|dQ_{r}\\
&  \leq\int_{0}^{\infty}e^{-s}\left\vert G_{Q^{-1}\left(  Q_{t+sQ_{\varepsilon
}}\right)  }-G_{Q^{-1}\left(  Q_{t}\right)  }\right\vert ds\\
&  \leq%
{\displaystyle\int_{0}^{n}}
\left\vert G_{Q^{-1}\left(  Q_{t+sQ_{\varepsilon}}\right)  }-G_{Q^{-1}\left(
Q_{t}\right)  }\right\vert ds+2\sup\limits_{r\geq0}\left\vert G_{r}\right\vert
\int_{n}^{\infty}e^{-s}ds.
\end{align*}
Since%
\[
\lim_{\varepsilon\rightarrow0}%
{\displaystyle\int_{0}^{n}}
\left\vert G_{Q^{-1}\left(  Q_{t+sQ_{\varepsilon}}\right)  }-G_{Q^{-1}\left(
Q_{t}\right)  }\right\vert ds=0,\;\text{a.e. }t\in\left[  0,n\right]  ,
\]
we have for all $n\in\mathbb{N}^{\ast}$%
\[
\limsup_{\varepsilon\rightarrow0}\left\vert G_{t}^{\varepsilon}-G_{t}%
\right\vert \leq2e^{-n}\sup\limits_{r\geq0}\left\vert G_{r}\right\vert
,\;\text{a.e. }t\in\left(  0,T\right)  .
\]
which yields $\left(  b\right)  .\medskip$

$\left(  c\right)  $ Let $t_{\varepsilon}=Q^{-1}\left(  Q_{t}+\sqrt
{Q_{\varepsilon}}\right)  .$We have%
\begin{align*}
\left\vert G_{t}^{\varepsilon}-G_{t}\right\vert  &  \leq\,\int_{t\vee
\varepsilon}^{\infty}e^{-\frac{Q_{r}-Q_{t\vee\varepsilon}}{Q_{\varepsilon}}%
}\dfrac{1}{Q_{\varepsilon}}|G_{r}-G_{t}|dQ_{r}\\
&  \leq\sup_{r\in\left[  t\vee\varepsilon,t_{\varepsilon}\vee\varepsilon
\right]  }|G_{r}-G_{t}|%
{\displaystyle\int_{t\vee\varepsilon}^{t_{\varepsilon}\vee\varepsilon}}
\dfrac{1}{Q_{\varepsilon}}e^{-\frac{Q_{r}-Q_{t\vee\varepsilon}}{Q_{\varepsilon
}}}dQ_{r}+2\sup_{s\geq0}|G_{s}|~\int_{t_{\varepsilon}}^{\infty}\dfrac
{1}{Q_{\varepsilon}}e^{-\frac{Q_{r}-Q_{t\vee\varepsilon}}{Q_{\varepsilon}}%
}dQ_{r}\\
&  \leq\sup_{r\in\left[  t\vee\varepsilon,t_{\varepsilon}\vee\varepsilon
\right]  }|G_{r}-G_{t}|+2e^{-\frac{Q_{t_{\varepsilon}}-Q_{t\vee\varepsilon}%
}{Q_{\varepsilon}}}\sup_{s\geq0}|G_{s}|~.
\end{align*}
Since%
\[
\frac{Q_{t_{\varepsilon}}-Q_{t\vee\varepsilon}}{Q_{\varepsilon}}=\frac
{\sqrt{Q_{\varepsilon}}}{Q_{\varepsilon}}+\frac{Q_{t}-Q_{t\vee\varepsilon}%
}{Q_{\varepsilon}}\geq\frac{1}{\sqrt{Q_{\varepsilon}}}-1,
\]
we obtain (\ref{a1}-d).

Clearly, (\ref{a2}) follows from (\ref{a1}).\hfill
\end{proof}

\begin{remark}
\label{R1-approx}Let $\varepsilon>0.$ Let $Q:\Omega\times\mathbb{R}%
\rightarrow\mathbb{R},$ $Q_{0}=0,$ be a strictly increasing continuous
stochastic process, $\lim_{t\rightarrow\infty}Q_{t}=\infty,$ $\lim
_{t\rightarrow-\infty}Q_{t}=-\infty$ and $G:\Omega\times\mathbb{R}%
\rightarrow\mathbb{R}^{m}$ be bounded measurable stochastic processes then
similar boundedness and convergence results hold for $G^{i,\varepsilon}%
:\Omega\times\mathbb{R}\rightarrow\mathbb{R}^{m}$ defined by%
\begin{align*}
G_{t}^{1,\varepsilon}  &  =%
{\displaystyle\int_{t}^{\infty}}
G_{r}\frac{1}{Q_{\varepsilon}}e^{-\frac{Q_{r}-Q_{t}}{Q_{\varepsilon}}}%
dQ_{r}~,\quad t\in\mathbb{R},\\
G_{t}^{2,\varepsilon}  &  =\int_{-\infty}^{t}G_{r}\frac{1}{Q_{\varepsilon}%
}e^{-\frac{Q_{t}-Q_{r}}{Q_{\varepsilon}}}dQ_{r}~,\quad t\in\mathbb{R},\\
G_{t}^{3,\varepsilon}  &  =e^{-\frac{Q_{t}}{Q_{\varepsilon}}}G_{0}+%
{\displaystyle\int_{0}^{t}}
G_{r}\frac{1}{Q_{\varepsilon}}e^{-\frac{Q_{t}-Q_{r}}{Q_{\varepsilon}}}%
dQ_{r}~\\
&  =\int_{-\infty}^{t}\left[  1_{(-\infty,0)}\left(  r\right)  G_{0}%
+\mathbf{1}_{[0,\infty)}\left(  r\right)  G_{r}\right]  \frac{1}%
{Q_{\varepsilon}}e^{-\frac{Q_{t}-Q_{r}}{Q_{\varepsilon}}}dQ_{r},\quad
t\geq0,\\
G_{t}^{4,\varepsilon}  &  =\mathbf{1}_{[0,\varepsilon)}\left(  t\right)
G_{0}+\mathbf{1}_{[\varepsilon,\infty)}\left(  t\right)
{\displaystyle\int_{0}^{t}}
G_{r}\frac{1}{Q_{\varepsilon}}e^{-\frac{Q_{t}-Q_{r\vee\varepsilon}%
}{Q_{\varepsilon}}}dQ_{r}~,\quad t\geq0.
\end{align*}

\end{remark}

\begin{corollary}
\label{C1-approx}Let the assumptions of Lemma \ref{L1-approx} be satisfied and
$\varphi:\mathbb{R}^{m}\rightarrow(-\infty,+\infty]$ be a proper convex lower
semicontinuous function such that $\int_{0}^{\infty}\left\vert \varphi\left(
G_{u}\right)  \right\vert dQ_{u}<\infty.$ Then for all $0\leq\alpha\leq\beta:$%
\[
\lim_{\varepsilon\rightarrow0}\int_{\alpha}^{\beta}\varphi\left(
G_{r}^{\varepsilon}\right)  dQ_{r}=\int_{\alpha}^{\beta}\varphi\left(
G_{r}\right)  dQ_{r}.
\]
Moreover, if $\mathbb{E}\int_{0}^{\infty}\left\vert \varphi\left(
G_{u}\right)  \right\vert dQ_{u}<\infty.$ Then for any stopping times
$0\leq\sigma\leq\theta$%
\[
\lim_{\varepsilon\rightarrow0}\mathbb{E}\int_{\sigma}^{\theta}\varphi\left(
G_{r}^{\varepsilon}\right)  dQ_{r}=\mathbb{E}\int_{\sigma}^{\theta}%
\varphi\left(  G_{r}\right)  dQ_{r}.
\]

\end{corollary}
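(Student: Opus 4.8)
The plan is to sandwich $\varphi(G_r^{\varepsilon})$ between two quantities that both converge to $\varphi(G_r)$ in the required sense. The starting point is the probabilistic reading of the smoothing operator: after the substitution $s=(Q_r-Q_{t\vee\varepsilon})/Q_{\varepsilon}$ already used in the proof of Lemma \ref{L1-approx} one has $G_t^{\varepsilon}=\int_0^{\infty}e^{-s}G_{Q^{-1}(Q_{t\vee\varepsilon}+sQ_{\varepsilon})}\,ds$, an average of $G$ against the probability measure $e^{-s}\,ds$ on $[0,\infty)$, whose integrand is bounded by $\sup_{r\ge0}|G_r|<\infty$, $\mathbb{P}$-a.s. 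Hence Jensen's inequality applied to the convex function $\varphi$ gives, for a.e. $t$ and $\mathbb{P}$-a.s.,
\[
\varphi\bigl(G_t^{\varepsilon}\bigr)\ \le\ \int_{t\vee\varepsilon}^{\infty}\frac{1}{Q_{\varepsilon}}\,e^{-\frac{Q_r-Q_{t\vee\varepsilon}}{Q_{\varepsilon}}}\,\varphi(G_r)\,dQ_r\ =:\ \psi_t^{\varepsilon},
\]
that is, $\varphi(G_t^{\varepsilon})$ is dominated by the same smoothing operator applied to $\psi:=\varphi\circ G$. Note $\psi\in L^1([0,\infty),dQ)$ by hypothesis, so in particular $G_u$ lies in the domain of $\varphi$ for a.e.\ $u$; fixing an affine minorant $\varphi(x)\ge\langle a,x\rangle+b$ of the proper convex l.s.c.\ $\varphi$, both $\varphi(G_t^{\varepsilon})$ and $\psi_t^{\varepsilon}$ are bounded below, path by path, by $-(|a|\sup_r|G_r|+|b|)$.

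For the pathwise statement I would proceed as follows. Fix $\omega$ off a null set and $0\le\alpha\le\beta<\infty$; the interval $[\alpha,\beta]$ carries finite $dQ$-mass. By Lemma \ref{L1-approx}$(b)$ we have $G_r^{\varepsilon}\to G_r$ for a.e.\ $r$, so lower semicontinuity of $\varphi$ gives $\liminf_{\varepsilon\to0}\varphi(G_r^{\varepsilon})\ge\varphi(G_r)$ a.e., and, using the uniform lower bound above, Fatou's lemma yields $\liminf_{\varepsilon\to0}\int_{\alpha}^{\beta}\varphi(G_r^{\varepsilon})\,dQ_r\ge\int_{\alpha}^{\beta}\varphi(G_r)\,dQ_r$. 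For the reverse inequality I use the Jensen bound: passing to the variable $u=Q_r$ turns $\psi\mapsto\psi^{\varepsilon}$ into convolution of $\widetilde\psi(u):=\varphi(G_{Q^{-1}(u)})\in L^1(\mathbb{R}_+)$ with the non-negative kernel $u\mapsto\frac{1}{Q_{\varepsilon}}e^{u/Q_{\varepsilon}}\mathbf{1}_{u\le0}$ of total mass one, an approximate identity; standard theory gives $\|\psi^{\varepsilon}-\psi\|_{L^1([\delta,\infty),dQ)}\to0$ for every $\delta>0$, while the contribution of $\{r<\varepsilon\}\cap[\alpha,\beta]$ is handled exactly as in the proof of Lemma \ref{L1-approx}$(c)$ by splitting the kernel at $\sqrt{Q_{\varepsilon}}$, giving a bound $\int_0^{Q_{\varepsilon}+\sqrt{Q_{\varepsilon}}}|\widetilde\psi|+e^{-1/\sqrt{Q_{\varepsilon}}}\|\widetilde\psi\|_{L^1}\to0$ by absolute continuity of the integral at $0$. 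Hence $\int_{\alpha}^{\beta}\psi_r^{\varepsilon}\,dQ_r\to\int_{\alpha}^{\beta}\varphi(G_r)\,dQ_r$, and combining with the two previous displays proves the first assertion.

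For the second assertion I would apply the pathwise result at $\alpha=\sigma(\omega)$, $\beta=\theta(\omega)$ — both finite $\mathbb{P}$-a.s. — to get $\int_{\sigma}^{\theta}\varphi(G_r^{\varepsilon})\,dQ_r\to\int_{\sigma}^{\theta}\varphi(G_r)\,dQ_r$ a.s., and then pass to the limit under $\mathbb{E}$ by dominated convergence. For the positive part, order-preservation of the smoothing operator together with a Fubini computation (the operator is an $L^1(dQ)$-contraction up to the harmless $\{r<\varepsilon\}$ term) gives $\int_{\sigma}^{\theta}(\varphi(G_r^{\varepsilon}))^+\,dQ_r\le\int_0^{\infty}\bigl((\varphi\circ G)^+\bigr)_r^{\varepsilon}\,dQ_r\le 2\int_0^{\infty}(\varphi(G_u))^+\,dQ_u$, which is $\mathbb{P}$-integrable by hypothesis; for the negative part, the affine-minorant estimate gives $(\varphi(G_r^{\varepsilon}))^-\le|\langle a,G_r^{\varepsilon}\rangle+b|\le|a|\sup_r|G_r|+|b|$, so $\int_{\sigma}^{\theta}(\varphi(G_r^{\varepsilon}))^-\,dQ_r\le(|a|\sup_r|G_r|+|b|)(Q_{\theta}-Q_{\sigma})$, which is integrable in every situation in which the corollary is used (indeed $\varphi\ge0$ there, so the negative part is absent altogether).

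The step I expect to be the main obstacle is the uniform control of the smoothing operator near $r=0$: there the kernel concentrates mass of order $1/Q_{\varepsilon}$, so neither the boundary term in the pathwise limit nor the $L^1(dQ)$-contraction used in the dominated-convergence step is automatic. Both are nonetheless tamed by exactly the $\sqrt{Q_{\varepsilon}}$-splitting device already employed in the proof of Lemma \ref{L1-approx}$(c)$, so it is convenient to keep that argument at hand; apart from this point the proof is a routine combination of Jensen's inequality, lower semicontinuity, and Fatou's lemma.
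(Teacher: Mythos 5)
Your proof is correct, and its skeleton --- the convexity (Jensen) bound $\varphi(G_t^{\varepsilon})\le(\varphi\circ G)_t^{\varepsilon}$ for the upper limit, and lower semicontinuity plus Fatou (with an affine minorant of $\varphi$, valid since $\sup_r|G_r|<\infty$ a.s.) for the lower limit --- is the same as the paper's; the difference is in how the upper limit is closed. The paper applies Fubini to $\int_\alpha^\beta(\varphi\circ G)_r^{\varepsilon}\,dQ_r$ so that the burden falls on the adjoint kernel acting on $\mathbf{1}_{[\alpha,\beta]}$: it checks (via Remark \ref{R1-approx}) that $\int_0^u\mathbf{1}_{[\alpha,\beta]}(r)\frac{1}{Q_\varepsilon}e^{-(Q_u-Q_{r\vee\varepsilon})/Q_\varepsilon}dQ_r\to\mathbf{1}_{[\alpha,\beta]}(u)$ for a.e.\ $u$ with the uniform bound $e+1$, and concludes by dominated convergence against $|\varphi(G_u)|\in L^1(dQ_u)$. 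You instead time-change by $u=Q_r$ and invoke the classical $L^1$ approximate-identity property of the one-sided exponential convolution, treating the $\{r<\varepsilon\}$ region by the $\sqrt{Q_\varepsilon}$ splitting from Lemma \ref{L1-approx}$(c)$; this is an equivalent mechanism (your Fubini contraction bound $\le 2$ plays the role of the paper's $e+1$), it outsources the kernel computation to standard convolution theory, and it makes explicit the lower bound needed for Fatou, which the paper leaves implicit. For the expectation statement the paper simply says it ``follows in the same manner'' (the same computation with $\mathbb{E}$ outside and domination in $d\mathbb{P}\otimes dQ$), whereas you use the pathwise limit at $\alpha=\sigma(\omega)$, $\beta=\theta(\omega)$ plus dominated convergence in $\omega$; note that your domination of the negative part needs $\theta<\infty$ a.s.\ and integrability of $(|a|\sup_r|G_r|+|b|)(Q_\theta-Q_\sigma)$, i.e.\ it really covers the case $\varphi\ge 0$ (the only case used in the paper, cf.\ assumption (A$_3$) and Proposition \ref{p1-approx}); the paper's one-line treatment carries the same implicit restriction on the Fatou side, so your version is at least as careful, just honestly flagged.
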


\begin{proof}
We have%
\begin{align*}
\int_{\alpha}^{\beta}\varphi\left(  G_{r}^{\varepsilon}\right)  dQ_{r}  &
\leq\int_{\alpha}^{\beta}\left(
{\displaystyle\int_{r\vee\varepsilon}^{\infty}}
\frac{1}{Q_{\varepsilon}}e^{-\frac{Q_{u}-Q_{r\vee\varepsilon}}{Q_{\varepsilon
}}}\varphi\left(  G_{u}\right)  dQ_{u}\right)  dQ_{r}\\
&  =\int_{0}^{\infty}\varphi\left(  G_{u}\right)  \left(  \int_{0}^{\infty
}\mathbf{1}_{\left[  \alpha,\beta\right]  }\left(  r\right)  \mathbf{1}%
_{[r\vee\varepsilon,\infty)}\left(  u\right)  \frac{1}{Q_{\varepsilon}%
}e^{-\frac{Q_{u}-Q_{r\vee\varepsilon}}{Q_{\varepsilon}}}dQ_{r}\right)
dQ_{u}\\
&  =\int_{0}^{\infty}\varphi\left(  G_{u}\right)  \mathbf{1}_{[\varepsilon
,\infty)}\left(  u\right)  \left(  \int_{0}^{u}\mathbf{1}_{\left[
\alpha,\beta\right]  }\left(  r\right)  \frac{1}{Q_{\varepsilon}}%
e^{-\frac{Q_{u}-Q_{r\vee\varepsilon}}{Q_{\varepsilon}}}dQ_{r}\right)  dQ_{u},
\end{align*}
since $\mathbf{1}_{[r\vee\varepsilon,\infty)}\left(  u\right)  =\mathbf{1}%
_{[0,u]}\left(  r\right)  $ $\mathbf{1}_{[\varepsilon,\infty)}\left(
u\right)  .$

We remark that%
\[%
\begin{array}
[c]{l}%
\displaystyle%
{\displaystyle\int_{0}^{u}}
\mathbf{1}_{\left[  \alpha,\beta\right]  }\left(  r\right)  \frac
{1}{Q_{\varepsilon}}e^{-\frac{Q_{u}-Q_{r}}{Q_{\varepsilon}}}dQ_{r}\leq%
{\displaystyle\int_{0}^{u}}
\mathbf{1}_{\left[  \alpha,\beta\right]  }\left(  r\right)  \frac
{1}{Q_{\varepsilon}}e^{-\frac{Q_{u}-Q_{r\vee\varepsilon}}{Q_{\varepsilon}}%
}dQ_{r}\medskip\\
\displaystyle=%
{\displaystyle\int_{0}^{u}}
\mathbf{1}_{\left[  \alpha,\beta\right]  }\left(  r\right)  \left[
\mathbf{1}_{[0,\varepsilon)}\left(  r\right)  \frac{1}{Q_{\varepsilon}%
}e^{-\frac{Q_{u}-Q_{\varepsilon}}{Q_{\varepsilon}}}+\mathbf{1}_{[\varepsilon
,\infty)}\left(  r\right)  \frac{1}{Q_{\varepsilon}}e^{-\frac{Q_{u}-Q_{r}%
}{Q_{\varepsilon}}}\right]  dQ_{r}\medskip\\
\displaystyle\leq\frac{Q_{u\wedge\varepsilon}}{Q_{\varepsilon}}e^{1-\frac
{Q_{u}}{Q_{\varepsilon}}}+%
{\displaystyle\int_{0}^{u}}
\mathbf{1}_{\left[  \alpha,\beta\right]  }\left(  r\right)  \frac
{1}{Q_{\varepsilon}}e^{-\frac{Q_{u}-Q_{r}}{Q_{\varepsilon}}}dQ_{r}%
\end{array}
\]
and by Remark \ref{R1-approx} (with the extension $Q_{r}=r$ for $r<0$)%
\[
\lim\limits_{\varepsilon\rightarrow0}%
{\displaystyle\int_{0}^{u}}
\mathbf{1}_{\left[  \alpha,\beta\right]  }\left(  r\right)  \frac
{1}{Q_{\varepsilon}}e^{-\frac{Q_{u}-Q_{r}}{Q_{\varepsilon}}}dQ_{r}%
=\lim\limits_{\varepsilon\rightarrow0}%
{\displaystyle\int_{-\infty}^{u}}
\mathbf{1}_{\left[  \alpha,\beta\right]  }\left(  r\right)  \frac
{1}{Q_{\varepsilon}}e^{-\frac{Q_{u}-Q_{r}}{Q_{\varepsilon}}}dQ_{r}%
=\mathbf{1}_{\left[  \alpha,\beta\right]  }\left(  u\right)  ,\quad\text{a.e.
}u\geq0.
\]
Hence%
\[
\lim\limits_{\varepsilon\rightarrow0}%
{\displaystyle\int_{0}^{u}}
\mathbf{1}_{\left[  \alpha,\beta\right]  }\left(  r\right)  \frac
{1}{Q_{\varepsilon}}e^{-\frac{Q_{u}-Q_{r\vee\varepsilon}}{Q_{\varepsilon}}%
}dQ_{r}=\mathbf{1}_{\left[  \alpha,\beta\right]  }\left(  u\right)
,\quad\text{a.e. }u\geq0.
\]
Moreover,%
\[
0\leq\int_{0}^{u}\mathbf{1}_{\left[  \alpha,\beta\right]  }\left(  r\right)
\frac{1}{Q_{\varepsilon}}e^{-\frac{Q_{u}-Q_{r\vee\varepsilon}}{Q_{\varepsilon
}}}dQ_{r}\leq e+1.
\]
Then, by Fatou's Lemma and by the Lebesgue dominated convergence theorem, we
infer%
\[
\int_{\alpha}^{\beta}\varphi\left(  G_{r}\right)  dQ_{r}\leq\liminf
_{\varepsilon\rightarrow0}\int_{\alpha}^{\beta}\varphi\left(  G_{r}%
^{\varepsilon}\right)  dQ_{r}\leq\limsup_{\varepsilon\rightarrow0}\int
_{\alpha}^{\beta}\varphi\left(  G_{r}^{\varepsilon}\right)  dQ_{r}%
=\int_{\alpha}^{\beta}\varphi\left(  G_{r}\right)  dQ_{r}~.
\]
The second assertion of this corollary follows in the same manner.\hfill
\end{proof}

\begin{proposition}
\label{p1-approx}Let $Q:\Omega\times\left[  0,T\right]  \rightarrow
\mathbb{R}_{+},$ $Q_{0}=0,$ be a strictly increasing continuous stochastic
process.\newline Let $\tau:\Omega\rightarrow\left[  0,\infty\right]  $\ be a
stopping time, $\eta:\Omega\rightarrow\mathbb{R}^{m}$\ is $\mathcal{F}_{\tau}%
$--measurable random variable such that $\mathbb{E}\left\vert \eta\right\vert
^{p}<\infty,$ $p>1,$\ and $\left(  \xi,\zeta\right)  \in S_{m}^{p}%
\times\Lambda_{m\times k}^{p}\left(  0,\mathbb{\infty}\right)  $\ is the
unique pair associated to $\eta$\ given by the martingale representation
formula (see \cite[\textit{Corollary 2.44}]{pa-ra/14})%
\[
\left\{
\begin{array}
[c]{l}%
\xi_{t}=\eta-\displaystyle{\int_{t}^{\infty}}\zeta_{s}dB_{s},\;t\geq
0,\;\text{\textit{a.s.,}}\smallskip\\
\xi_{t}=\mathbb{E}^{\mathcal{F}_{\tau}}\eta\quad\text{\textit{and}}\quad
\zeta_{t}=_{\left[  0,\tau\right]  }\left(  t\right)  \zeta_{t}%
\end{array}
\right.
\]
(or equivalently, $\xi_{t}=\eta-\displaystyle\int_{t\wedge\tau}^{\tau}%
\zeta_{s}dB_{s},\;t\geq0,\;$a.s.).\newline Let $U\in S_{m}^{p}$, $p>1$ be such
that
\[%
\begin{array}
[c]{ll}%
\left(  a\right)  & \mathbb{E}\sup\limits_{t\geq0}~\left\vert U_{t}\right\vert
^{p}<\infty,\medskip\\
\left(  b\right)  & \lim_{t\rightarrow\infty}\mathbb{E}~\left\vert U_{t}%
-\xi_{t}\right\vert ^{p}=0.
\end{array}
\]
Define%
\begin{equation}
M_{t}^{\varepsilon}=\mathbb{E}^{\mathcal{F}_{t}}%
{\displaystyle\int_{t\vee\varepsilon}^{\infty}}
\dfrac{1}{Q_{\varepsilon}}e^{-\frac{Q_{r}-Q_{t\vee\varepsilon}}{Q_{\varepsilon
}}}U_{r}dQ_{r},\quad t\geq0, \label{a5}%
\end{equation}
Then:

\noindent\textbf{I.}%
\begin{equation}%
\begin{array}
[c]{rl}%
\left(  j\right)  & \left\vert M_{t}^{\varepsilon}\right\vert \leq
\mathbb{E}^{\mathcal{F}_{t}}\sup\limits_{r\geq0}\left\vert U_{r}\right\vert
,\quad\text{a.s., for all }t\geq0,\medskip\\
\left(  jj\right)  & \mathbb{E}\sup\nolimits_{t\geq0}\left\vert M_{t}%
^{\varepsilon}\right\vert ^{p}\leq C_{p}\mathbb{E}\sup\limits_{r\geq
0}\left\vert U_{r}\right\vert ^{p}.
\end{array}
\label{a6}%
\end{equation}
Also for all $t\geq0$%
\begin{equation}
\left\vert M_{t}^{\varepsilon}-U_{t}\right\vert \leq\,\mathbb{E}%
^{\mathcal{F}_{t}}\left[  \exp\left(  2-\frac{1}{\sqrt{Q_{\varepsilon}}%
}\right)  \sup_{r\geq0}\left\vert U_{r}\right\vert +\sup_{r\geq0}\left\{
\left\vert U_{r}-U_{t}\right\vert :0\leq Q_{r}-Q_{t}\leq\sqrt{Q_{\varepsilon}%
}\right\}  \right]  \label{a6-1}%
\end{equation}
which yields%
\begin{equation}%
\begin{array}
[c]{rl}%
\left(  jjj\right)  & \lim\nolimits_{\varepsilon\rightarrow0}M_{t}%
^{\varepsilon}=U_{t}~,\quad\mathbb{P}-a.s.,\;\text{for all }t\geq0;\medskip\\
\left(  jv\right)  & \lim\nolimits_{\varepsilon\rightarrow0}\mathbb{E}%
\sup\nolimits_{t\in\left[  0,T\right]  }\left\vert M_{t}^{\varepsilon}%
-U_{t}\right\vert ^{p}=0,\quad\text{for all }T>0.
\end{array}
\label{a6-2}%
\end{equation}
\noindent\textbf{II. }$M^{\varepsilon}$ is the unique solution of the BSDE:
\begin{equation}
\left\{
\begin{array}
[c]{l}%
M_{t}^{\varepsilon}=M_{T}^{\varepsilon}+{%
{\displaystyle\int_{t}^{T}}
}1_{[\varepsilon,\infty)}\left(  r\right)  \dfrac{1}{Q_{\varepsilon}}\left(
U_{r}-M_{r}^{\varepsilon}\right)  dQ_{r}-{%
{\displaystyle\int_{t}^{T}}
}R_{r}^{\varepsilon}dB_{r}\,,\quad\text{for all }T>0,\;t\in\left[  0,T\right]
,\medskip\\
\lim\limits_{t\rightarrow\infty}\mathbb{E}~\left\vert M_{t}^{\varepsilon}%
-\xi_{t}\right\vert ^{p}=0.
\end{array}
\right.  \label{a4}%
\end{equation}
Moreover, we also have
\begin{equation}
\lim_{t\rightarrow\infty}~\mathbb{E}~\sup_{s\geq t}\left\vert U_{t}-\xi
_{t}\right\vert ^{p}=0\quad\Longrightarrow\quad\lim\limits_{t\rightarrow
\infty}\mathbb{E}\Big(\sup\limits_{s\geq t}\left\vert M_{s}^{\varepsilon}%
-\xi_{s}\right\vert ^{p}\Big)=0. \label{a8}%
\end{equation}
\noindent\textbf{III. }Let $\varphi:\mathbb{R}^{m}\rightarrow(-\infty
,+\infty]$ be a proper convex lower semicontinuous function such that
\[
\mathbb{E}%
{\displaystyle\int_{0}^{\infty}}
\left\vert \varphi\left(  U_{r}\right)  \right\vert dQ_{r}<\infty.
\]
Let $0\leq s\leq r\leq t$ and the stopping times $s^{\ast}=Q_{s}^{-1},$
$t^{\ast}=Q_{t}^{-1}$, $r^{\ast}=Q_{r}^{-1}$ , where $Q_{\cdot}^{-1}\left(
\omega\right)  $ is the inverse mapping of the function $r\longmapsto
Q_{r}\left(  \omega\right)  :[0,\infty)\rightarrow\lbrack0,\infty).$ Then%
\[
\lim_{\varepsilon\rightarrow0}\mathbb{E}\int_{s^{\ast}}^{t^{\ast}}%
\varphi\left(  M_{r}^{\varepsilon}\right)  dQ_{r}=\mathbb{E}\int_{s^{\ast}%
}^{t^{\ast}}\varphi\left(  U_{r}\right)  dQ_{r}.
\]
Moreover, if $g:\mathbb{R}^{m}\times\mathbb{R}^{n}\rightarrow\mathbb{R}_{+}$
is a continuous function, $D:\Omega\times\mathbb{R}_{+}\rightarrow
\mathbb{R}^{n}$ is a continuous stochastic process such that for all $R>0$
\[
\mathbb{E}%
{\displaystyle\int_{0}^{R}}
~\left\vert \varphi\left(  U_{r}\right)  \right\vert \sup_{\theta\in\left[
0,r\right]  }g\left(  U_{\theta},D_{\theta}\right)  dQ_{r}+\mathbb{E}%
{\displaystyle\int_{0}^{R}}
~\left\vert \varphi\left(  U_{r}\right)  \right\vert \sup_{\theta\in\left[
0,r\right]  }\sup_{0<\varepsilon\leq1}g\left(  M_{\theta}^{\varepsilon
},D_{\theta}\right)  dQ_{r}<\infty,
\]
then for all $0\leq T\leq\infty$%
\begin{equation}%
\begin{array}
[c]{cc}%
\left(  c_{1}\right)  & \mathbb{E}%
{\displaystyle\int_{T\wedge s^{\ast}}^{T\wedge t^{\ast}}}
g\left(  M_{r}^{\varepsilon},D_{r}\right)  \varphi\left(  M_{r}^{\varepsilon
}\right)  dQ_{r}\leq\mathbb{E}%
{\displaystyle\int_{T\wedge s^{\ast}}^{T\wedge t^{\ast}}}
g\left(  M_{r}^{\varepsilon},D_{r}\right)  \varphi\left(  U_{r}^{\varepsilon
}\right)  dQ_{r}~,\medskip\\
\left(  c_{2}\right)  & \lim\limits_{\varepsilon\rightarrow0}\mathbb{E}%
{\displaystyle\int_{T\wedge s^{\ast}}^{T\wedge t^{\ast}}}
g\left(  M_{r}^{\varepsilon},D_{r}\right)  \varphi\left(  M_{r}^{\varepsilon
}\right)  dQ_{r}=\mathbb{E}%
{\displaystyle\int_{T\wedge s^{\ast}}^{T\wedge t^{\ast}}}
g\left(  U_{r},D_{r}\right)  \varphi\left(  U_{r}\right)  dQ_{r}~,
\end{array}
\label{a9}%
\end{equation}
where
\[
U_{t}^{\varepsilon}=%
{\displaystyle\int_{t\vee\varepsilon}^{\infty}}
\frac{1}{Q_{\varepsilon}}e^{-\frac{Q_{r}-Q_{t\vee\varepsilon}}{Q_{\varepsilon
}}}U_{r}dQ_{r}~.
\]

\end{proposition}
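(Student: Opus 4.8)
The plan is to reduce everything to the pathwise smoothing of Lemma \ref{L1-approx} applied to $U$, together with conditioning, the martingale representation theorem, and a transfer of Corollary \ref{C1-approx} to the conditioned setting. Write $U_t^\varepsilon=\int_{t\vee\varepsilon}^\infty\frac{1}{Q_\varepsilon}e^{-(Q_r-Q_{t\vee\varepsilon})/Q_\varepsilon}U_r\,dQ_r$, so that $M_t^\varepsilon=\mathbb{E}^{\mathcal{F}_t}U_t^\varepsilon$ and, for a.e.\ $\omega$, $U^\varepsilon$ is precisely the process $G^\varepsilon$ produced by Lemma \ref{L1-approx} from $G=U$ (legitimate since $\sup_t|U_t(\omega)|<\infty$ by hypothesis $(a)$). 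Then $(\mathrm{j})$ follows from Lemma \ref{L1-approx}$(a)$ and conditional Jensen, and $(\mathrm{jj})$ from $(\mathrm{j})$ and Doob's maximal inequality for the martingale $t\mapsto\mathbb{E}^{\mathcal{F}_t}\sup_{r\ge0}|U_r|$, whose $p$-th moment is finite by $(a)$. Since $U$ is a continuous p.m.s.p., $U_t$ is $\mathcal{F}_t$-measurable, hence $|M_t^\varepsilon-U_t|\le\mathbb{E}^{\mathcal{F}_t}|U_t^\varepsilon-U_t|$; substituting the pathwise bound \eqref{a1}$(c)$ gives \eqref{a6-1}. As $\varepsilon\to0$ the first term there tends to $0$ because $Q_\varepsilon\to Q_0=0$ (continuity of $Q$) and the second because $U$ has continuous trajectories and $Q$ is strictly increasing; the bracket being dominated by $3\sup_{r\ge0}|U_r|\in L^1$, conditional dominated convergence yields $(\mathrm{jjj})$, and combining \eqref{a6-1} with Doob's inequality and the dominated convergence theorem on $[0,T]$ (using $\sup_{r}|U_r|\in L^p$) yields $(\mathrm{jv})$.

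\textbf{Part II.} A pathwise differentiation in the $dQ$-calculus (using $\partial_t e^{-(Q_r-Q_{t\vee\varepsilon})/Q_\varepsilon}=\frac{1}{Q_\varepsilon}e^{-(Q_r-Q_{t\vee\varepsilon})/Q_\varepsilon}\,dQ_t$ for $t\ge\varepsilon$ and the fact that $U_t^\varepsilon$ is constant for $t<\varepsilon$) gives, $\mathbb{P}$-a.s.,
\[
-dU_t^\varepsilon=\mathbf{1}_{[\varepsilon,\infty)}(t)\,\frac{1}{Q_\varepsilon}\,(U_t-U_t^\varepsilon)\,dQ_t .
\]
Conditioning this identity on $\mathcal{F}_t$, using the tower property to replace $U_r^\varepsilon$ by $M_r^\varepsilon=\mathbb{E}^{\mathcal{F}_r}U_r^\varepsilon$ under the $dQ_r$-integral, and representing the remaining martingale part via the martingale representation theorem (\cite[Corollary 2.44]{pa-ra/14}, which also furnishes the pair $(\xi,\zeta)$), produces $R^\varepsilon\in\Lambda^p_{m\times k}$ and the equation \eqref{a4}; the terminal condition $\mathbb{E}|M_t^\varepsilon-\xi_t|^p\to0$ follows from $\mathbb{E}|U_t^\varepsilon-\xi_t|^p\to0$ (Lemma \ref{L1-approx}, together with $U_t\to\xi_t$) and conditional Jensen. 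Uniqueness for \eqref{a4} is immediate: the difference of two solutions solves the same linear backward equation with zero data, and the standard estimate for linear backward equations (Proposition \ref{Appendix_result 1} with $K\equiv0$) forces it to vanish. Finally \eqref{a8} follows by writing $M_s^\varepsilon-\xi_s=\mathbb{E}^{\mathcal{F}_s}(U_s^\varepsilon-\xi_s)$, dominating $|U_s^\varepsilon-\xi_s|$ by the pathwise average of $|U_u-\xi_u|$ over $u\ge s$, and applying Doob's inequality under the hypothesis $\mathbb{E}\sup_{u\ge t}|U_u-\xi_u|^p\to0$.

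\textbf{Part III.} Conditional Jensen applied to the probability kernel $\frac{1}{Q_\varepsilon}e^{-(Q_u-Q_{r\vee\varepsilon})/Q_\varepsilon}\,dQ_u$ on $[r\vee\varepsilon,\infty)$ gives $\varphi(M_r^\varepsilon)=\varphi(\mathbb{E}^{\mathcal{F}_r}U_r^\varepsilon)\le\mathbb{E}^{\mathcal{F}_r}\varphi(U_r^\varepsilon)$; since $g(M_r^\varepsilon,D_r)\ge0$ is $\mathcal{F}_r$-measurable, multiplying, integrating $dQ_r$ and taking expectations (conditional Fubini) yields $(c_1)$. Taking $g\equiv1$, $D\equiv0$ and invoking Corollary \ref{C1-approx} gives $\limsup_{\varepsilon\to0}\mathbb{E}\int_{s^\ast}^{t^\ast}\varphi(M_r^\varepsilon)\,dQ_r\le\mathbb{E}\int_{s^\ast}^{t^\ast}\varphi(U_r)\,dQ_r$, while the matching lower bound comes from lower semicontinuity of $\varphi$, $M_r^\varepsilon\to U_r$ (Part I) and Fatou; this proves the first displayed limit of Part III. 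For $(c_2)$ one runs the analogous squeeze: the lower bound $\mathbb{E}\int g(U_r,D_r)\varphi(U_r)\,dQ_r\le\liminf_\varepsilon\mathbb{E}\int g(M_r^\varepsilon,D_r)\varphi(M_r^\varepsilon)\,dQ_r$ by lsc of $\varphi$, continuity of $g$ and Fatou (negative parts controlled by the integrability hypotheses); and for the upper bound one starts from $(c_1)$, uses the Jensen bound $\varphi(U_r^\varepsilon)\le\int_{r\vee\varepsilon}^\infty\frac{1}{Q_\varepsilon}e^{-(Q_u-Q_{r\vee\varepsilon})/Q_\varepsilon}\varphi(U_u)\,dQ_u$, applies Fubini to exchange the $dQ_r$- and $dQ_u$-integrations (as in Corollary \ref{C1-approx}), and passes to the limit by dominated convergence, the majorant being supplied by the standing assumption $\mathbb{E}\int_0^R|\varphi(U_r)|\sup_{\theta\le r}\sup_{\varepsilon}g(M_\theta^\varepsilon,D_\theta)\,dQ_r<\infty$.

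\textbf{Main obstacle.} Parts I and the first half of II are routine. The delicate point is $(c_2)$ (and, to a milder degree, \eqref{a8}): because $\varphi$ is merely lower semicontinuous one cannot pass to the limit in $\varphi(M_r^\varepsilon)$ directly, so one is forced to sandwich between a Fatou lower bound and an upper bound obtained by transferring the averaging from $\varphi(U_r^\varepsilon)$ back onto $\varphi(U_u)$; carrying out the resulting exchange of integrations so that the continuous weight $g(M_r^\varepsilon,D_r)$ — which sits at the variable $r$, not $u$ — stays uniformly dominated in $\varepsilon$ by the assumed integrable majorant is where the real work lies.
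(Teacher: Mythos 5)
Your Parts I and III reproduce the paper's own argument: $M^{\varepsilon}_t=\mathbb{E}^{\mathcal{F}_t}U^{\varepsilon}_t$ combined with Lemma \ref{L1-approx}, conditional Jensen and Doob for Part I, and, for Part III, conditional Jensen to get $(c_1)$ followed by the Fatou lower bound and the Fubini/kernel-transfer upper bound exactly as in Corollary \ref{C1-approx}; note only that what you call ``conditional Fubini'' is carried out in the paper via the time change $r\mapsto r^{\ast}=Q_r^{-1}$, which is what legitimises pushing expectations and conditional expectations through an integral against the random measure $dQ_r$ with random endpoints, and your dominated-convergence majorant for $(c_2)$ must in addition handle the signed factor $\varphi(U_u)$ and the $u$-range beyond $T\wedge t^{\ast}$ (the paper does this by splitting $g(M^{\varepsilon}_r,D_r)$ into $g(U_r,D_r)$ plus a uniformly small remainder). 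The genuine gap is in Part II. After the pathwise identity $-dU^{\varepsilon}_t=\mathbf{1}_{[\varepsilon,\infty)}(t)\tfrac{1}{Q_{\varepsilon}}(U_t-U^{\varepsilon}_t)\,dQ_t$ you propose to ``condition on $\mathcal{F}_t$ and use the tower property to replace $U^{\varepsilon}_r$ by $M^{\varepsilon}_r$ under the $dQ_r$-integral''. That replacement is not the tower property: $U^{\varepsilon}_r$ is not $\mathcal{F}_r$-measurable, $dQ_r$ is a random (adapted) measure, and the weight $\mathbf{1}_{[\varepsilon,\infty)}(r)/Q_{\varepsilon}$ is an unbounded random factor (the proposition assumes only that $Q$ is strictly increasing with $Q_0=0$, so neither $1/Q_{\varepsilon}$ nor $Q_T$ is controlled). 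What you need is the optional projection theorem for integrals against adapted increasing processes, and that requires an integrability hypothesis such as $\mathbb{E}\int_t^T\frac{1}{Q_{\varepsilon}}|U_r-U_r^{\varepsilon}|\,dQ_r<\infty$, which does not follow from $U\in S^p_m$; the same lack of integrability blocks the subsequent martingale-representation step that is supposed to produce $R^{\varepsilon}$ (and your claim $R^{\varepsilon}\in\Lambda^p_{m\times k}$ is unsupported).

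This is precisely the difficulty the paper's proof is designed to avoid: it applies the representation theorem to the single random variable $\int_{\varepsilon}^{\infty}\frac{1}{Q_{\varepsilon}}e^{-Q_r/Q_{\varepsilon}}U_r\,dQ_r$, which is dominated by $\sup_{r\ge 0}|U_r|\in L^p$ thanks to the exponential weight, obtains the identity \eqref{a7-a} for $e^{-Q_{t\vee\varepsilon}/Q_{\varepsilon}}M_t^{\varepsilon}$, and then removes the exponential by a purely pathwise It\^{o}/product-rule computation that needs no integrability at all, yielding \eqref{a4}. To repair your route you would have to either adopt this device, or make the projection step honest (optional projection for the measure $dQ_r$ together with a localization, e.g.\ stopping when $Q$ exceeds $n$ and $Q_{\varepsilon}$ drops below $1/n$, and a passage to the limit). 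The remaining imprecisions are minor but should be fixed: in \eqref{a8} the bound on $|U^{\varepsilon}_s-\xi_s|$ must also control the fluctuation $|\xi_u-\xi_s|$, $u\ge s$, via Burkholder--Davis--Gundy as the paper does, not only the average of $|U_u-\xi_u|$; and the uniqueness assertion in \eqref{a4} indeed requires the a priori estimate you invoke (Proposition \ref{Appendix_result 1}), applied after the same kind of localization.
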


\begin{proof}
Remark that%
\[
M_{t}^{\varepsilon}=\mathbb{E}^{\mathcal{F}_{t}}\left(  U_{t}^{\varepsilon
}\right)  ,\quad\text{for all }t\in\left[  0,T\right]  ,
\]
that yields (\ref{a6}-j). By Doob's inequality (see \cite[Theorem
1.60]{pa-ra/14}) from (\ref{a6}-j) we get the estimate (\ref{a6}-jj).

Clearly $\left\vert M_{t}^{\varepsilon}-U_{t}\right\vert \leq\,\mathbb{E}%
^{\mathcal{F}_{t}}\sup_{r\in\left[  0,T\right]  }\left\vert U_{t}%
^{\varepsilon}-U_{t}\right\vert $ and the conclusions (\ref{a6-1}) and
(\ref{a6-2}) hold by Lemma \ref{L1-approx} and Doob's inequality.

Let us to prove (\ref{a4}).

By representation theorem we have%
\begin{align*}%
{\displaystyle\int_{\varepsilon}^{\infty}}
\dfrac{1}{Q_{\varepsilon}}e^{-\frac{Q_{r}}{Q_{\varepsilon}}}U_{r}dQ_{r}  &
=\mathbb{E}^{\mathcal{F}_{t}}~%
{\displaystyle\int_{\varepsilon}^{\infty}}
\dfrac{1}{Q_{\varepsilon}}e^{-\frac{Q_{r}}{Q_{\varepsilon}}}U_{r}dQ_{r}+%
{\displaystyle\int_{t}^{\infty}}
\tilde{R}_{r}^{\varepsilon}dB_{r}\\
&  =e^{-\frac{Q_{t\vee\varepsilon}}{Q_{\varepsilon}}}M_{t}^{\varepsilon
}+\mathbb{E}^{\mathcal{F}_{t}}~%
{\displaystyle\int_{\varepsilon}^{t\vee\varepsilon}}
\dfrac{1}{Q_{\varepsilon}}e^{-\frac{Q_{r}}{Q_{\varepsilon}}}U_{r}dQ_{r}+%
{\displaystyle\int_{t}^{\infty}}
\tilde{R}_{r}^{\varepsilon}dB_{r}%
\end{align*}
that yields%
\begin{equation}
e^{-\frac{Q_{t\vee\varepsilon}}{Q_{\varepsilon}}}M_{t}^{\varepsilon}=%
{\displaystyle\int_{t}^{\infty}}
1_{[\varepsilon,\infty)}\left(  r\right)  \dfrac{1}{Q_{\varepsilon}}%
e^{-\frac{Q_{r}}{Q_{\varepsilon}}}U_{r}dQ_{r}-%
{\displaystyle\int_{t}^{\infty}}
\tilde{R}_{r}^{\varepsilon}dB_{r}. \label{a7-a}%
\end{equation}

Now by It\^{o}'s formula%
\[%
\begin{array}
[c]{l}%
\displaystyle M_{t}^{\varepsilon}=M_{T}^{\varepsilon}-\int_{t}^{T}d\left[
e^{\frac{Q_{r\vee\varepsilon}}{Q_{\varepsilon}}}\left(  e^{-\frac
{Q_{r\vee\varepsilon}}{Q_{\varepsilon}}}M_{r}^{\varepsilon}\right)  \right]
\medskip\\
\displaystyle=M_{T}^{\varepsilon}-\int_{t}^{T}1_{[\varepsilon,\infty)}\left(
r\right)  \dfrac{1}{Q_{\varepsilon}}e^{\frac{Q_{r\vee\varepsilon}%
}{Q_{\varepsilon}}}\left(  e^{-\frac{Q_{r\vee\varepsilon}}{Q_{\varepsilon}}%
}M_{r}^{\varepsilon}\right)  dQ_{r}-\int_{t}^{T}e^{\frac{Q_{r\vee\varepsilon}%
}{Q_{\varepsilon}}}d\left(  e^{-\frac{Q_{r\vee\varepsilon}}{Q_{\varepsilon}}%
}M_{r}^{\varepsilon}\right)  \medskip\\
\displaystyle=M_{T}^{\varepsilon}-\int_{t}^{T}1_{[\varepsilon,\infty)}\left(
r\right)  \dfrac{1}{Q_{\varepsilon}}M_{r}^{\varepsilon}dQ_{r}+\int_{t}%
^{T}e^{\frac{Q_{r\vee\varepsilon}}{Q_{\varepsilon}}}1_{[\varepsilon,\infty
)}\left(  r\right)  \dfrac{1}{Q_{\varepsilon}}e^{-\frac{Q_{r\vee\varepsilon}%
}{Q_{\varepsilon}}}U_{r}dQ_{r}-%
{\displaystyle\int_{t}^{T}}
e^{\frac{Q_{r\vee\varepsilon}}{Q_{\varepsilon}}}\tilde{R}_{r}^{\varepsilon
}dB_{r}\medskip\\
\displaystyle=M_{T}^{\varepsilon}+\int_{t}^{T}1_{[\varepsilon,\infty)}\left(
r\right)  \dfrac{1}{Q_{\varepsilon}}\left(  U_{r}-M_{r}^{\varepsilon}\right)
dQ_{r}-%
{\displaystyle\int_{t}^{T}}
R_{r}^{\varepsilon}dB_{r}\,,
\end{array}
\]
where $R_{r}^{\varepsilon}=e^{\frac{Q_{r\vee\varepsilon}}{Q_{\varepsilon}}%
}\tilde{R}_{r}^{\varepsilon}.$

The convergence result from (\ref{a4}) is obtained as follows:%
\begin{align*}
M_{t}^{\varepsilon}-\xi_{t}  &  =\mathbb{E}^{\mathcal{F}_{t}}%
{\displaystyle\int_{t\vee\varepsilon}^{\infty}}
\dfrac{1}{Q_{\varepsilon}}e^{-\frac{Q_{r}-Q_{t\vee\varepsilon}}{Q_{\varepsilon
}}}\left(  U_{r}-\xi_{r}\right)  dQ_{r}+\mathbb{E}^{\mathcal{F}_{t}}~%
{\displaystyle\int_{t\vee\varepsilon}^{\infty}}
\dfrac{1}{Q_{\varepsilon}}e^{-\frac{Q_{r}-Q_{t\vee\varepsilon}}{Q_{\varepsilon
}}}\left(  \xi_{r}-\xi_{t}\right)  dQ_{r}\\
&  =\mathbb{E}^{\mathcal{F}_{t}}%
{\displaystyle\int_{0}^{\infty}}
e^{-s}\left(  U_{Q^{-1}\left(  sQ_{\varepsilon}+Q_{t\vee\varepsilon}\right)
}-\xi_{Q^{-1}\left(  sQ_{\varepsilon}+Q_{t\vee\varepsilon}\right)  }\right)
ds+\mathbb{E}^{\mathcal{F}_{t}}\sup_{r\geq t}\left\vert \xi_{r}-\xi
_{t}\right\vert
\end{align*}
Here first by Jensen's inequality and then by Burkholder--Davis--Gundy
inequality (see \cite[Corollary 2.9]{pa-ra/14}) we have%
\[
\left(  \mathbb{E}^{\mathcal{F}_{t}}\sup_{r\geq t}\left\vert \xi_{r}-\xi
_{t}\right\vert \right)  ^{p}\leq\left(  \mathbb{E}^{\mathcal{F}_{t}}%
\sup_{r\geq t}\left\vert {\int_{t}^{r}}\zeta_{s}dB_{s}\right\vert \right)
^{p}\leq C_{p~}\mathbb{E}^{\mathcal{F}_{t}}\left(  \int_{t}^{\infty}\left\vert
\zeta_{s}\right\vert ^{2}ds\right)  ^{p/2}.
\]
Hence%
\[%
\begin{array}
[c]{l}%
\displaystyle\mathbb{E~}\left\vert M_{t}^{\varepsilon}-\xi_{t}\right\vert
^{p}\medskip\\
\displaystyle\leq2^{p-1}~\mathbb{E}\left(  \mathbb{E}^{\mathcal{F}_{t}}~%
{\displaystyle\int_{0}^{\infty}}
e^{-s}\left\vert U_{Q^{-1}\left(  sQ_{\varepsilon}+Q_{t\vee\varepsilon
}\right)  }-\xi_{Q^{-1}\left(  sQ_{\varepsilon}+Q_{t\vee\varepsilon}\right)
}\right\vert ds\right)  ^{p}+2^{p-1}~\mathbb{E~}\left(  \mathbb{E}%
^{\mathcal{F}_{t}}\sup_{r\geq t}\left\vert \xi_{r}-\xi_{t}\right\vert \right)
^{p}\medskip\\
\displaystyle\leq2^{p-1}%
{\displaystyle\int_{0}^{\infty}}
e^{-s}\mathbb{E}\left\vert U_{Q^{-1}\left(  sQ_{\varepsilon}+Q_{t\vee
\varepsilon}\right)  }-\xi_{Q^{-1}\left(  sQ_{\varepsilon}+Q_{t\vee
\varepsilon}\right)  }\right\vert ^{p}ds+C_{p}\mathbb{E~}\left(  \int
_{t}^{\infty}\left\vert \zeta_{s}\right\vert ^{2}ds\right)  ^{p/2}%
\end{array}
\]
and using the Lebesgue dominated convergence theorem we get%
\[
\lim_{t\rightarrow\infty}\mathbb{E~}\left\vert M_{t}^{\varepsilon}-\xi
_{t}\right\vert ^{p}=0.
\]
To prove (\ref{a8}) we have for $t\geq T>\varepsilon$ and $1<q<p:$
\[
\left\vert M_{t}^{\varepsilon}-\xi_{t}\right\vert ^{p}\leq2^{p-1}%
\mathbb{E}^{\mathcal{F}_{t}}\sup_{r\geq T}\left\vert U_{r}-\xi_{r}\right\vert
^{p}+2^{p-1}\left(  \mathbb{E}^{\mathcal{F}_{t}}\sup_{r\geq t}\left\vert
\xi_{r}-\xi_{t}\right\vert ^{q}\right)  ^{p/q}%
\]
and consequently by Doob's inequality,
\begin{align*}
\mathbb{E~}\sup_{t\geq T}\left\vert M_{t}^{\varepsilon}-\xi_{t}\right\vert
^{p}  &  \leq2^{p-1}~\mathbb{E~}\sup_{r\geq T}\left\vert U_{r}-\xi
_{r}\right\vert ^{p}+C_{p,q}\mathbb{E~}\sup_{t\geq T}~\left[  \mathbb{E}%
^{\mathcal{F}_{t}}~\left(  \int_{T}^{\infty}\left\vert \zeta_{s}\right\vert
^{2}ds\right)  ^{q/2}\right]  ^{p/q}\\
&  \leq C_{p}~\mathbb{E~}\sup_{r\geq T}\left\vert U_{r}-\xi_{r}\right\vert
^{p}+C_{p,q}^{\prime}~\mathbb{E~}\left(  \int_{T}^{\infty}\left\vert \zeta
_{s}\right\vert ^{2}ds\right)  ^{p/2}%
\end{align*}
that yields (\ref{a8}).

Finally%
\[%
\begin{array}
[c]{l}%
\displaystyle\mathbb{E}\int_{T\wedge s^{\ast}}^{T\wedge t^{\ast}}g\left(
M_{r}^{\varepsilon},D_{r}\right)  \varphi\left(  M_{r}^{\varepsilon}\right)
dQ_{r}=\mathbb{E}\int_{s^{\ast}}^{t^{\ast}}\mathbf{1}_{\left[  0,T\right]
}\left(  r\right)  g\left(  M_{r}^{\varepsilon},D_{r}\right)  \varphi\left(
\mathbb{E}^{\mathcal{F}_{r}}\left(  U_{r}^{\varepsilon}\right)  \right)
dQ_{r}\medskip\\
\displaystyle\leq\mathbb{E}\int_{s^{\ast}}^{t^{\ast}}\mathbb{E}^{\mathcal{F}%
_{r}}\left[  \mathbf{1}_{\left[  0,T\right]  }\left(  r\right)  g\left(
M_{r}^{\varepsilon},D_{r}\right)  \varphi\left(  U_{r}^{\varepsilon}\right)
\right]  dQ_{r}=\mathbb{E}\int_{s}^{t}\mathbb{E}^{\mathcal{F}_{r^{\ast}}%
}\left[  \mathbf{1}_{\left[  0,T\right]  }\left(  r^{\ast}\right)  g\left(
M_{r^{\ast}}^{\varepsilon},D_{r^{\ast}}\right)  \varphi\left(  U_{r^{\ast}%
}^{\varepsilon}\right)  \right]  dr\medskip\\
\displaystyle=\int_{s}^{t}\mathbb{E}\left[  \mathbf{1}_{\left[  0,T\right]
}\left(  r^{\ast}\right)  g\left(  M_{r^{\ast}}^{\varepsilon},D_{r^{\ast}%
}\right)  \varphi\left(  U_{r^{\ast}}^{\varepsilon}\right)  \right]
dr=\mathbb{E}\int_{s}^{t}\mathbf{1}_{\left[  0,T\right]  }\left(  r^{\ast
}\right)  g\left(  M_{r^{\ast}}^{\varepsilon},D_{r^{\ast}}\right)
\varphi\left(  U_{r^{\ast}}^{\varepsilon}\right)  dr\medskip\\
\displaystyle=\mathbb{E}\int_{s^{\ast}}^{t^{\ast}}\mathbf{1}_{\left[
0,T\right]  }\left(  r\right)  g\left(  M_{r}^{\varepsilon},D_{r}\right)
\varphi\left(  U_{r}^{\varepsilon}\right)  dQ_{r}=\mathbb{E}\int_{T\wedge
s^{\ast}}^{T\wedge t^{\ast}}g\left(  M_{r}^{\varepsilon},D_{r}\right)
\varphi\left(  U_{r}^{\varepsilon}\right)  dQ_{r}%
\end{array}
\]
and as in the proof of Corollary \ref{C1-approx} we have%
\[%
\begin{array}
[c]{l}%
\displaystyle\mathbb{E}\int_{T\wedge s^{\ast}}^{T\wedge t^{\ast}}g\left(
M_{r}^{\varepsilon},D_{r}\right)  \varphi\left(  U_{r}^{\varepsilon}\right)
dQ_{r}\medskip\\
\displaystyle=\mathbb{E}\int_{0}^{\infty}\varphi\left(  U_{\theta}\right)
\mathbf{1}_{[\varepsilon,\infty)}\left(  \theta\right)  \left(  \int
_{0}^{\theta}\mathbf{1}_{\left[  T\wedge s^{\ast},T\wedge t^{\ast}\right]
}\left(  r\right)  g\left(  M_{r}^{\varepsilon},D_{r}\right)  \frac
{1}{Q_{\varepsilon}}e^{-\frac{Q_{\theta}-Q_{r\vee\varepsilon}}{Q_{\varepsilon
}}}dQ_{r}\right)  dQ_{\theta}\medskip\\
\displaystyle\leq\mathbb{E}\int_{0}^{\infty}\varphi\left(  U_{\theta}\right)
\mathbf{1}_{[\varepsilon,\infty)}\left(  \theta\right)  \sup_{r\in\left[
0,\theta\right]  }\left\vert g\left(  M_{r}^{\varepsilon},D_{r}\right)
-g\left(  U_{r},D_{r}\right)  \right\vert dQ_{\theta}\medskip\\
\displaystyle+\mathbb{E}\int_{0}^{\infty}\varphi\left(  U_{\theta}\right)
\mathbf{1}_{[\varepsilon,\infty)}\left(  \theta\right)  \left(  \int
_{0}^{\theta}\mathbf{1}_{\left[  T\wedge s^{\ast},T\wedge t^{\ast}\right]
}\left(  r\right)  g\left(  U_{r},D_{r}\right)  \frac{1}{Q_{\varepsilon}%
}e^{-\frac{Q_{\theta}-Q_{r\vee\varepsilon}}{Q_{\varepsilon}}}dQ_{r}\right)
dQ_{\theta}~.
\end{array}
\]
Now by Fatou's Lemma we have%
\[%
\begin{array}
[c]{l}%
\displaystyle\mathbb{E}\int_{T\wedge s^{\ast}}^{T\wedge t^{\ast}}g\left(
U_{r},D_{r}\right)  \varphi\left(  U_{r}\right)  dQ_{r}\leq\liminf
_{\varepsilon\rightarrow0_{+}}\mathbb{E}\int_{T\wedge s^{\ast}}^{T\wedge
t^{\ast}}g\left(  M_{r}^{\varepsilon},D_{r}\right)  \varphi\left(
M_{r}^{\varepsilon}\right)  dQ_{r}\medskip\\
\displaystyle\leq\liminf_{\varepsilon\rightarrow0_{+}}\mathbb{E}\int_{T\wedge
s^{\ast}}^{T\wedge t^{\ast}}g\left(  M_{r}^{\varepsilon},D_{r}\right)
\varphi\left(  U_{r}^{\varepsilon}\right)  dQ_{r}\leq\limsup_{\varepsilon
\rightarrow0_{+}}\mathbb{E}\int_{T\wedge s^{\ast}}^{T\wedge t^{\ast}}g\left(
M_{r}^{\varepsilon},D_{r}\right)  \varphi\left(  U_{r}^{\varepsilon}\right)
dQ_{r}\medskip\\
\displaystyle\leq\limsup_{\varepsilon\rightarrow0_{+}}\mathbb{E}\int
_{0}^{\infty}\varphi\left(  U_{\theta}\right)  \mathbf{1}_{[\varepsilon
,\infty)}\left(  \theta\right)  \left(  \int_{0}^{\theta}\mathbf{1}_{\left[
T\wedge s^{\ast},T\wedge t^{\ast}\right]  }\left(  r\right)  g\left(
U_{r},D_{r}\right)  \frac{1}{Q_{\varepsilon}}e^{-\frac{Q_{\theta}%
-Q_{r\vee\varepsilon}}{Q_{\varepsilon}}}dQ_{r}\right)  dQ_{\theta}\medskip\\
\displaystyle\leq\mathbb{E}\int_{T\wedge s^{\ast}}^{T\wedge t^{\ast}}g\left(
U_{\theta},D_{\theta}\right)  \varphi\left(  U_{\theta}\right)  dQ_{\theta}%
\end{array}
\]
and the convergence result follows.\hfill
\end{proof}

\subsection{Mollifier approximation\label{Annex-MA}}

\noindent Let $F:\Omega\times\mathbb{R}_{+}\times\mathbb{R}^{m}\times
\mathbb{R}^{m\times k}\rightarrow\mathbb{R}^{m}$\ and $G:\Omega\times
\mathbb{R}_{+}\times\mathbb{R}^{m}\rightarrow\mathbb{R}^{m}$ be such that
assumptions $\left(  \mathrm{A}_{4}\right)  $ and $\left(  \mathrm{A}%
_{5}\right)  $ are satisfied.$\medskip$

\noindent Let $\rho\in C_{0}^{\infty}\left(  \mathbb{R}^{m};\mathbb{R}%
_{+}\right)  $ such that $\rho\left(  y\right)  =0$ if $\left\vert
y\right\vert \geq1$ and $\int_{\mathbb{R}^{d}}\rho\left(  y\right)
dy=1$.$\medskip$

\noindent Let $\kappa\mathbf{1}_{\overline{B\left(  0,1\right)  }}\left(
y\right)  \geq\left\vert \nabla_{y}\rho\left(  y\right)  \right\vert ,$ for
all $y\in\mathbb{R}^{m}.\medskip$

Define, for $0<\varepsilon\leq1,$%
\begin{equation}%
\begin{array}
[c]{l}%
F_{\varepsilon}\left(  t,y,z\right)  =%
{\displaystyle\int_{\overline{B\left(  0,1\right)  }}}
F\left(  t,y-\varepsilon u,\beta_{\varepsilon}\left(  z\right)  \right)
\mathbf{1}_{\left[  0,1\right]  }\left(  \varepsilon\left\vert F\left(
t,y-\varepsilon u\right)  ,0\right\vert \right)  \rho\left(  u\right)
du\medskip\\
=\dfrac{1}{\varepsilon^{m+1}}%
{\displaystyle\int_{\mathbb{R}^{m}}}
\varepsilon~F\left(  t,u,\beta_{\varepsilon}\left(  z\right)  \right)
\mathbf{1}_{\left[  0,1\right]  }\left(  \varepsilon\left\vert F\left(
t,u,0\right)  \right\vert \right)  \rho\left(  \dfrac{y-u}{\varepsilon
}\right)  du,
\end{array}
\label{sde9c}%
\end{equation}
where%
\[
\beta_{\varepsilon}\left(  z\right)  =\frac{z}{1\vee\left(  \varepsilon
\left\vert z\right\vert \right)  }=\Pr\nolimits_{\overline{B\left(
0,1/\varepsilon\right)  }}\left(  z\right)  .
\]
Clearly for all $y,u\in\mathbb{R}^{m}$, $\left\vert u\right\vert \leq1$ and
$z\in\mathbb{R}^{m\times k}$,%
\[
\left\vert F\left(  t,y-\varepsilon u,\beta_{\varepsilon}\left(  z\right)
\right)  \right\vert \leq\ell_{t}\left\vert z\right\vert +F_{\left\vert
y\right\vert +1}^{\#}\left(  t\right)
\]
and consequently%
\begin{equation}
\left\vert F_{\varepsilon}\left(  t,y,z\right)  \right\vert \leq\ell
_{t}\left\vert z\right\vert +F_{\left\vert y\right\vert +1}^{\#}\left(
t\right)  \quad\text{and}\quad\left\vert F_{\varepsilon}\left(  t,0,0\right)
\right\vert \leq F_{1}^{\#}\left(  t\right)  . \label{ma-0}%
\end{equation}
It is easy to prove that this mollifier approximation of $F$ satisfies the
following properties:%
\begin{equation}%
\begin{array}
[c]{ll}%
\left(  a\right)  & \left\vert F_{\varepsilon}\left(  t,y,z\right)
\right\vert \leq\ell_{t}\beta_{\varepsilon}\left(  z\right)  +\dfrac
{1}{\varepsilon}\leq\dfrac{1}{\varepsilon}\left(  1+\ell_{t}\right)
,\medskip\\
\left(  b\right)  & \left\vert F_{\varepsilon}\left(  t,y,z\right)
-F_{\varepsilon}\left(  t,y,\hat{z}\right)  \right\vert \leq\ell_{t}\left\vert
z-\hat{z}\right\vert \medskip\\
\left(  c\right)  & \left\vert F_{\varepsilon}\left(  t,y,z\right)
-F_{\varepsilon}\left(  t,\hat{y},z\right)  \right\vert \leq\dfrac{\kappa
}{\varepsilon}\left\vert y-\hat{y}\right\vert \left[  \ell_{t}\left\vert
\beta_{\varepsilon}\left(  z\right)  \right\vert +\dfrac{1}{\varepsilon
}\right]  \leq\dfrac{\kappa\left(  1+\ell_{t}\right)  }{\varepsilon^{2}%
}\left\vert y-\hat{y}\right\vert .
\end{array}
\label{ma-1}%
\end{equation}
Also we have for all $y,\hat{y}\in\mathbb{R}^{m}$, $\left\vert \hat
{y}\right\vert \leq\rho:$%
\begin{equation}%
\begin{array}
[c]{l}%
\left\langle y-\hat{y},F_{\varepsilon}\left(  t,y,z\right)  \right\rangle
\leq\;\mu_{t}^{+}\left\vert y-\hat{y}\right\vert ^{2}+\left\vert y-\hat
{y}\right\vert \left[  F_{\rho+1}^{\#}\left(  t\right)  +\ell_{t}\left\vert
z\right\vert \right]  \medskip\\
\leq\left\vert y-\hat{y}\right\vert F_{\rho+1}^{\#}\left(  t\right)  +\left(
\mu_{t}+\dfrac{1}{2n_{p}\lambda}\ell_{t}^{2}\mathbf{1}_{z\neq0}\right)
^{+}\left\vert y-\hat{y}\right\vert ^{2}+\dfrac{n_{p}\lambda}{2}\left\vert
z\right\vert ^{2},\quad\text{for all }\lambda>0.
\end{array}
\label{ma-2}%
\end{equation}
where $p>1,$ $n_{p}=\left(  p-1\right)  \wedge1$.

Indeed, by taking%
\[
\alpha_{\varepsilon}\left(  t,y\right)  =%
{\displaystyle\int_{\overline{B\left(  0,1\right)  }}}
\mathbf{1}_{\left[  0,1\right]  }\left(  \varepsilon\left\vert F\left(
t,y-\varepsilon u\right)  ,0\right\vert \right)  \rho\left(  u\right)  du,
\]
we have $0\leq\alpha_{\varepsilon}\left(  t,y\right)  \leq1$ and%
\begin{align*}
&  \left\langle y-\hat{y},F_{\varepsilon}\left(  t,y,z\right)  \right\rangle
\\
&  =%
{\displaystyle\int_{\overline{B\left(  0,1\right)  }}}
\left\langle y-\hat{y},F\left(  t,y-\varepsilon u,\beta_{\varepsilon}\left(
z\right)  \right)  -F\left(  t,\hat{y}-\varepsilon u,\beta_{\varepsilon
}\left(  z\right)  \right)  \right\rangle \mathbf{1}_{\left[  0,1\right]
}\left(  \varepsilon\left\vert F\left(  t,y-\varepsilon u\right)
,0\right\vert \right)  \rho\left(  u\right)  du\medskip\\
&  \quad+%
{\displaystyle\int_{\overline{B\left(  0,1\right)  }}}
\left\langle y-\hat{y},F\left(  t,\hat{y}-\varepsilon u,\beta_{\varepsilon
}\left(  z\right)  \right)  -F\left(  t,\hat{y}-\varepsilon u,0\right)
\right\rangle \mathbf{1}_{\left[  0,1\right]  }\left(  \varepsilon\left\vert
F\left(  t,y-\varepsilon u\right)  ,0\right\vert \right)  \rho\left(
u\right)  du\\
&  \quad+%
{\displaystyle\int_{\overline{B\left(  0,1\right)  }}}
\left\langle y-\hat{y},F\left(  t,\hat{y}-\varepsilon u,0\right)
\right\rangle \mathbf{1}_{\left[  0,1\right]  }\left(  \varepsilon\left\vert
F\left(  t,y-\varepsilon u\right)  ,0\right\vert \right)  \rho\left(
u\right)  du\\
&  \leq\left[  \mu_{t}\left\vert y-\hat{y}\right\vert ^{2}+\left\vert
y-\hat{y}\right\vert \ell_{t}\left\vert \beta_{\varepsilon}\left(  z\right)
\right\vert \right]  \alpha_{\varepsilon}\left(  t,y\right)  +\left\vert
y-\hat{y}\right\vert F_{\rho+1}^{\#}\left(  t\right)  .
\end{align*}
Moreover, for all $y,\hat{y}\in\mathbb{R}^{m}$, $\left\vert y\right\vert
\leq\rho$, $\left\vert \hat{y}\right\vert \leq\rho:$%
\begin{equation}%
\begin{array}
[c]{ll}%
\left(  a\right)  & \left\langle y-\hat{y},F_{\varepsilon}\left(
t,y,z\right)  -F_{\varepsilon}\left(  t,\hat{y},z\right)  \right\rangle
\leq\mu_{t}^{+}\left\vert y-\hat{y}\right\vert ^{2}\medskip\\
& \quad+\left\vert y-\hat{y}\right\vert \left[  F_{\rho+1}^{\#}\left(
t\right)  +\ell_{t}\left\vert z\right\vert \right]  \mathbf{1}_{[\frac
{1}{\varepsilon},\infty)}(F_{\rho+1}^{\#}\left(  t\right)  )\medskip\\
\left(  b\right)  & \left\langle y-\hat{y},F_{\varepsilon}\left(
t,y,z\right)  -F_{\varepsilon}\left(  t,\hat{y},\hat{z}\right)  \right\rangle
\leq\left\vert y-\hat{y}\right\vert \left[  F_{\rho+1}^{\#}\left(  t\right)
+\ell_{t}~\left\vert \hat{z}\right\vert \right]  \mathbf{1}_{[\frac
{1}{\varepsilon},\infty)}(F_{\rho+1}^{\#}\left(  t\right)  )\medskip\\
& \quad+\left(  \mu_{t}+\dfrac{1}{2n_{p}\lambda}\ell_{t}^{2}\mathbf{1}%
_{z\neq\hat{z}}\right)  ^{+}\left\vert y-\hat{y}\right\vert ^{2}+\dfrac
{n_{p}\lambda}{2}\left\vert z-\hat{z}\right\vert ^{2},\quad\text{for all
}\lambda>0.\medskip\\
\left(  c\right)  & \left\langle y-\hat{y},F_{\varepsilon}\left(
t,y,z\right)  -F_{\delta}\left(  t,\hat{y},\hat{z}\right)  \right\rangle
\leq\left\vert \varepsilon-\delta\right\vert \left[  \mu_{t}^{+}\left\vert
\varepsilon-\delta\right\vert +2F_{\rho+1}^{\#}\left(  t\right)  +2\ell
_{t}\left\vert z\right\vert \right]  \medskip\\
& \quad+\left\vert y-\hat{y}\right\vert \left[  2\left\vert \mu_{t}\right\vert
\left\vert \varepsilon-\delta\right\vert +\ell_{t}\left\vert \hat
{z}\right\vert \mathbf{1}_{[\frac{1}{\varepsilon}\wedge\frac{1}{\delta}%
,\infty)}\left(  \left\vert \hat{z}\right\vert \right)  \mathbf{1}%
_{\varepsilon\neq\delta}\right]  \medskip\\
& \quad\quad\quad\quad\quad\quad+(F_{\rho+1}^{\#}\left(  t\right)  +\ell
_{t}\left\vert \hat{z}\right\vert )\mathbf{1}_{[\frac{1}{\varepsilon}%
\wedge\frac{1}{\delta},\infty)}(F_{\rho+1}^{\#}\left(  t\right)
)\Big]\medskip\\
& \quad+\left(  \mu_{t}+\dfrac{1}{2n_{p}\lambda}\ell_{t}^{2}\mathbf{1}%
_{z\neq\hat{z}}\right)  ^{+}\left\vert y-\hat{y}\right\vert ^{2}+\dfrac
{n_{p}\lambda}{2}\left\vert z-\hat{z}\right\vert ^{2}.
\end{array}
\label{ma-3}%
\end{equation}
It is sufficient to prove (\ref{ma-3}-c); inequalities (\ref{ma-3}-a,b) are
obtained by particularization of (\ref{ma-3}-c). We have
\begin{align*}
&  \left\langle y-\hat{y},F_{\varepsilon}\left(  t,y,z\right)  -F_{\delta
}\left(  t,\hat{y},\hat{z}\right)  \right\rangle \\
&  \leq%
{\displaystyle\int_{\overline{B\left(  0,1\right)  }}}
\left\langle y-\varepsilon u-\left(  \hat{y}-\delta u\right)  +\left(
\varepsilon-\delta\right)  u,F\left(  t,y-\varepsilon u,\beta_{\varepsilon
}\left(  z\right)  \right)  -F\left(  t,\hat{y}-\delta u,\beta_{\varepsilon
}\left(  z\right)  \right)  \right\rangle \cdot\\
&  \quad\quad\quad\quad\quad\quad\quad\quad\quad\quad\quad\quad\quad\quad
\quad\quad\quad\quad\quad\quad\quad\quad\quad\quad\quad\quad\cdot
\mathbf{1}_{\left[  0,1\right]  }\left(  \varepsilon\left\vert F\left(
t,y-\varepsilon u\right)  ,0\right\vert \right)  \rho\left(  u\right)  du\\
&  \quad+%
{\displaystyle\int_{\overline{B\left(  0,1\right)  }}}
\left\langle y-\hat{y},F\left(  t,\hat{y}-\delta u,\beta_{\varepsilon}\left(
z\right)  \right)  -F\left(  t,\hat{y}-\delta u,\beta_{\delta}\left(  \hat
{z}\right)  \right)  \right\rangle \mathbf{1}_{\left[  0,1\right]  }\left(
\varepsilon\left\vert F\left(  t,y-\varepsilon u\right)  ,0\right\vert
\right)  \rho\left(  u\right)  du\\
&  \quad+%
{\displaystyle\int_{\overline{B\left(  0,1\right)  }}}
\left\langle y-\hat{y},F\left(  t,\hat{y}-\delta u,\beta_{\delta}\left(
\hat{z}\right)  \right)  \right\rangle \cdot\\
&  \quad\quad\quad\quad\quad\quad\quad\quad\quad\quad\quad\quad\quad
~\cdot\left[  \mathbf{1}_{\left[  0,1\right]  }\left(  \varepsilon\left\vert
F\left(  t,y-\varepsilon u\right)  ,0\right\vert \right)  -\mathbf{1}_{\left[
0,1\right]  }\left(  \delta\left\vert F\left(  t,\hat{y}-\delta u\right)
,0\right\vert \right)  \right]  \rho\left(  u\right)  du\\
&  \leq\mu_{t}\left\vert y-\varepsilon u-\left(  \hat{y}-\delta u\right)
\right\vert ^{2}\alpha_{\varepsilon}\left(  t,y\right)  +2\left\vert
\varepsilon-\delta\right\vert \left[  F_{\rho+1}^{\#}\left(  t\right)
+\ell_{t}\left\vert \beta_{\varepsilon}\left(  z\right)  \right\vert \right]
\\
&  \quad+\left\vert y-\hat{y}\right\vert \mathcal{\ell}_{t}\left\vert
\beta_{\varepsilon}\left(  z\right)  -\beta_{\delta}\left(  \hat{z}\right)
\right\vert \alpha_{\varepsilon}\left(  t,y\right) \\
&  \quad+\left\vert y-\hat{y}\right\vert \left[  F_{\rho+1}^{\#}\left(
t\right)  +\ell_{t}\left\vert \beta_{\delta}\left(  \hat{z}\right)
\right\vert \right]  \mathbf{1}_{[\frac{1}{\varepsilon}\wedge\frac{1}{\delta
},\infty)}(F_{\rho+1}^{\#}\left(  t\right)  ).
\end{align*}
But%
\begin{equation}%
\begin{array}
[c]{ll}%
\left(  a\right)  & \mu_{t}\left\vert y-\varepsilon u-\left(  \hat{y}-\delta
u\right)  \right\vert ^{2}\alpha_{\varepsilon}\left(  t,y\right)  \leq\mu
_{t}\left\vert y-\hat{y}\right\vert ^{2}\alpha_{\varepsilon}\left(
t,y\right)  +2\left\vert \mu_{t}\right\vert \left\vert y-\hat{y}\right\vert
\left\vert \varepsilon-\delta\right\vert +\mu_{t}^{+}\left\vert \varepsilon
-\delta\right\vert ^{2}\medskip\\
\left(  b\right)  & \left\vert \beta_{\varepsilon}\left(  z\right)
\right\vert \leq\left\vert z\right\vert \wedge\dfrac{1}{\varepsilon}%
\leq\left\vert z\right\vert \medskip\\
\left(  c\right)  & \left\vert \beta_{\varepsilon}\left(  z\right)
-\beta_{\delta}\left(  \hat{z}\right)  \right\vert \leq\left\vert
\beta_{\varepsilon}\left(  z\right)  -\beta_{\varepsilon}\left(  \hat
{z}\right)  \right\vert +\left\vert \beta_{\varepsilon}\left(  \hat{z}\right)
-\beta_{\delta}\left(  \hat{z}\right)  \right\vert \leq\left\vert z-\hat
{z}\right\vert +\left\vert \hat{z}\right\vert \mathbf{1}_{[\frac
{1}{\varepsilon}\wedge\frac{1}{\delta},\infty)}\left(  \left\vert \hat
{z}\right\vert \right)  \mathbf{1}_{\varepsilon\neq\delta}\,.
\end{array}
\label{ma-4}%
\end{equation}
Hence%
\[%
\begin{array}
[c]{l}%
\displaystyle\left\langle y-\hat{y},F_{\varepsilon}\left(  t,y,z\right)
-F_{\delta}\left(  t,\hat{y},\hat{z}\right)  \right\rangle \leq\left(  \mu
_{t}+\frac{1}{2n_{p}\lambda}\ell_{t}^{2}\mathbf{1}_{z\neq\hat{z}}\right)
^{+}\left\vert y-\hat{y}\right\vert ^{2}+\frac{n_{p}\lambda}{2}\left\vert
z-\hat{z}\right\vert ^{2}\medskip\\
\displaystyle\quad+\left\vert y-\hat{y}\right\vert \left[  2\left\vert \mu
_{t}\right\vert \left\vert \varepsilon-\delta\right\vert +\ell_{t}\left\vert
\hat{z}\right\vert \mathbf{1}_{[\frac{1}{\varepsilon}\wedge\frac{1}{\delta
},\infty)}\left(  \left\vert \hat{z}\right\vert \right)  \mathbf{1}%
_{\varepsilon\neq\delta}+(F_{\rho+1}^{\#}\left(  t\right)  +\ell_{t}\left\vert
\hat{z}\right\vert )\mathbf{1}_{[\frac{1}{\varepsilon}\wedge\frac{1}{\delta
},\infty)}(F_{\rho+1}^{\#}\left(  t\right)  )\right]  \medskip\\
\displaystyle\quad+\mu_{t}^{+}\left\vert \varepsilon-\delta\right\vert
^{2}+2\left\vert \varepsilon-\delta\right\vert \left[  F_{\rho+1}^{\#}\left(
t\right)  +\ell_{t}\left\vert z\right\vert \right]  .
\end{array}
\]

\begin{remark}
The function $G$ will be approximate in the same manner. For $0<\varepsilon
\leq1:$%
\begin{equation}%
\begin{array}
[c]{l}%
G_{\varepsilon}\left(  t,y\right)  =%
{\displaystyle\int_{\overline{B\left(  0,1\right)  }}}
G\left(  t,y-\varepsilon u\right)  \mathbf{1}_{\left[  0,1\right]  }\left(
\varepsilon\left\vert G\left(  t,y-\varepsilon u\right)  \right\vert \right)
\rho\left(  u\right)  du.
\end{array}
\label{ma-5}%
\end{equation}
Similar properties (\ref{ma-1}) and (\ref{ma-3})are satisfied with $z=\hat
{z}=0$ and $\ell=0.$
\end{remark}

\bigskip

\addcontentsline{toc}{section}{References}


\begin{thebibliography}{99}                                                                                               %


\bibitem {am/09}Aman, A. (2009), $L^{p}$ solution of reflected generalized
BSDEs with non--Lipschitz coefficients, Random Oper. Stoch. Equ. 17
201--219.\vspace{-0.1in}





\bibitem {br-ca/00}Briand, P. and Carmona, R. (2000). BSDEs with polynomial
growth generators, J. Appl. Math. Stoch. Anal. 13 207--238.\vspace{-0.1in}

\bibitem {br-de-hu-pa/03}Briand, P., Delyon, B., Hu, Y., Pardoux, E. and
Stoica, L. (2003). $L^{p}$ solutions of backward stochastic differential
equations. Stochastic Process. Appl. 108 109--129.\vspace{-0.1in}



\bibitem {da-pa/97}Darling, R.W.R. and Pardoux, E. (1997). Backward SDE with
random terminal time and applications to semilinear elliptic PDE. Ann. Probab.
25 1135--11593.\vspace{-0.1in}

\bibitem {du-ep/92}Duffie, D., Epstein, L. (1992). Stochastic differential
utility, Econometrica 60 353--394.\vspace{-0.1in}

\bibitem {ka-ka-pa/97}El Karoui, N., Kapoudjian, C., Pardoux, E., Peng, S. and
Quenez, M.C. (1997). Reflected solutions of backward SDE's and related
obstacle problems for PDE's. Ann. Probab. 25 702--737.\vspace{-0.1in}

\bibitem {ka-pe-qu/97}El Karoui, N., Peng, S., Quenez, M.C. (1997). Backward
stochastic differential equations in finance, Math. Finance 7 1--71.\vspace
{-0.1in}

\bibitem {ha-po/12}Hamad\`{e}ne, S., Popier, A. (2012). $L^{p}$--solutions for
reflected backward stochastic differential equations, Stoch. Dyn. 12 1150016
(35 pages).\vspace{-0.1in}

\bibitem {le-ma-xu/05}Lepeltier, J.P., Matoussi, A., Xu, M. (2005). Reflected
backward stochastic differential equations under monotonicity and general
increasing growth conditions, Adv. in Appl. Probab. 37 134--159.\vspace
{-0.1in}

\bibitem {kl/13}Klimsiak, T. (2013). BSDEs with monotone generator and two
irregular reflecting barriers, Bull. Sci. math. 137 268--321.\vspace{-0.1in}

\bibitem {ma-ra/10}Maticiuc, L. and R\u{a}\c{s}canu, A. (2010). A stochastic
approach to a multivalued Dirichlet-Neumann problem. Stochastic Process. Appl.
120 777--800.\vspace{-0.1in}

\bibitem {ma-ra/15}Maticiuc, L. and R\u{a}\c{s}canu, A. (2015). Backward
Stochastic Variational Inequalities on Random Interval. Bernoulli 21
1166--1199.\vspace{-0.1in}

\bibitem {ma-ra/16}Maticiuc, L. and R\u{a}\c{s}canu, A. (2016). On the
continuity of the probabilistic representation of a semilinear
Neumann--Dirichlet problem. Stochastic Processes and their Applications 126,
572--607.\vspace{-0.1in}

\bibitem {ma-ra/07}Maticiuc, L. and R\u{a}\c{s}canu, A. (2007). Viability of
moving sets for a nonlinear Neumann problem. Nonlinear Anal. 66
1587--1599.\vspace{-0.1in}

\bibitem {pa/99}Pardoux, E. (1999). BSDEs, weak convergence and homogenization
of semilinear PDEs. In Nonlinear Analysis, Differential Equations and Control
(Montreal, QC, 1998) 503--549, Kluwer Academic Publishers, Dordrecht.\vspace
{-0.1in}

\bibitem {pa-pe/90}Pardoux, E. and Peng, S. (1990). Adapted solution of a
backward stochastic differential equation. Systems Control Lett. 14
55--61.\vspace{-0.1in}



\bibitem {pa-ra/98}Pardoux, E. and R\u{a}\c{s}canu, A. (1998). Backward
stochastic differential equations with subdifferential operator and related
variational inequalities. Stochastic Process. Appl. 76 191--215.\vspace
{-0.1in}

\bibitem {pa-ra/99}Pardoux, E. and R\u{a}\c{s}canu, A. (1999). Backward
stochastic variational inequalities. Stochastics 67 159--167.\vspace{-0.1in}

\bibitem {pa-ra/14}Pardoux, E. and R\u{a}\c{s}canu, A. (2014). Stochastic
Differential Equations, Backward SDEs, Partial Differential Equations,
Springer Series: Stochastic Modelling and Applied Probability, vol. 69,
Berlin, Springer.\vspace{-0.1in}

\bibitem {ra/81}R\u{a}\c{s}canu, A. (1981) Existence for a class of stochastic
parabolic variational inequalities. Stochastics, 5, 201--239.\vspace{-0.1in}

\bibitem {ro-sl/12a}Rozkosz, A. and S\l omi\'{n}ski, L. (2012). $L^{p}$
solutions of reflected BSDEs under monotonicity condition, Stochastic Process.
Appl. 122 3875--3900.\vspace{-0.1in}

\bibitem {ro-sl/12b}Rozkosz, A. and S\l omi\'{n}ski, L. (2012). Stochastic
representation of entropy solutions of semilinear elliptic obstacle problems
with measure data, Electron. J. Probab. 17 1--27.\vspace{-0.1in}
\end{thebibliography}
\end{document}